\numberwithin{equation}{section}
\theoremstyle{plain}
\newtheorem{thm}{\protect\theoremname}[section]
\theoremstyle{remark}
\newtheorem{rem}[thm]{\protect\remarkname}
\theoremstyle{plain}
\newtheorem{prop}[thm]{\protect\propositionname}
\theoremstyle{plain}
\newtheorem{lem}[thm]{\protect\lemmaname}
\theoremstyle{plain}
\newtheorem{cor}[thm]{\protect\corollaryname}
\providecommand{\corollaryname}{Corollary}
\providecommand{\lemmaname}{Lemma}
\providecommand{\propositionname}{Proposition}
\providecommand{\remarkname}{Remark}
\providecommand{\theoremname}{Theorem}
\begin{document}
\global\long\def\bbC{\mathbb{C}}%
\global\long\def\bbN{\mathbb{N}}%
\global\long\def\bbQ{\mathbb{Q}}%
\global\long\def\bbR{\mathbb{R}}%
\global\long\def\bbS{\mathbb{S}}%
\global\long\def\bbZ{\mathbb{Z}}%

\global\long\def\bfD{{\bf D}}%

\global\long\def\calE{\mathcal{E}}%
\global\long\def\calH{\mathcal{H}}%
\global\long\def\calK{\mathcal{K}}%
\global\long\def\calL{\mathcal{L}}%
\global\long\def\calN{\mathcal{N}}%
\global\long\def\calU{\mathcal{U}}%
\global\long\def\calO{\mathcal{O}}%
\global\long\def\calP{\mathcal{P}}%
\global\long\def\calZ{\mathcal{Z}}%

\global\long\def\frka{\mathfrak{a}}%
\global\long\def\frkb{\mathfrak{b}}%
\global\long\def\frkc{\mathfrak{c}}%
\global\long\def\frkn{\mathfrak{n}}%
\global\long\def\frks{\mathfrak{s}}%
\global\long\def\frku{\mathfrak{u}}%

\global\long\def\eps{\epsilon}%
\global\long\def\gmm{\gamma}%
\global\long\def\tht{\theta}%
\global\long\def\lmb{\lambda}%
\global\long\def\Lmb{\Lambda}%

\global\long\def\rd{\partial}%
\global\long\def\aleq{\lesssim}%
\global\long\def\ageq{\gtrsim}%

\global\long\def\peq{\mathrel{\phantom{=}}}%
\global\long\def\To{\longrightarrow}%
\global\long\def\weakto{\rightharpoonup}%
\global\long\def\embed{\hookrightarrow}%
\global\long\def\Re{\mathrm{Re}}%
\global\long\def\Im{\mathrm{Im}}%
\global\long\def\chf{\mathbf{1}}%
\global\long\def\td#1{\widetilde{#1}}%
\global\long\def\br#1{\overline{#1}}%
\global\long\def\ul#1{\underline{#1}}%
\global\long\def\wh#1{\widehat{#1}}%
\global\long\def\tint#1#2{{\textstyle \int_{#1}^{#2}}}%
\global\long\def\tsum#1#2{{\textstyle \sum_{#1}^{#2}}}%

\global\long\def\RR{\mathrm{RR}}%
\global\long\def\inn{\mathrm{in}}%
\global\long\def\out{\mathrm{out}}%
\global\long\def\rough{\mathrm{rough}}%
\global\long\def\conn{\mathrm{conn}}%
\global\long\def\match{\mathrm{match}}%

\title[Sharp blow-up rate for stable blow-up]{Sharp universal rate for stable blow-up of corotational wave maps}
\author{Kihyun Kim}
\email{khyun@ihes.fr}
\address{IHES, 35 route de Chartres, Bures-sur-Yvette 91440, France}
\subjclass[2010]{35B44, 35L05}
\begin{abstract}
We consider the energy-critical (corotational) 1-equivariant wave
maps into the two-sphere. By the seminal work \cite{RaphaelRodnianski2012Publ.Math.}
of Raphaël and Rodnianski, there is an open set of initial data whose
forward-in-time development blows up in finite time with the blow-up
rate $\lmb(t)=(T-t)e^{-\sqrt{|\log(T-t)|}+O(1)}$. In this paper,
we show that this $e^{O(1)}$-factor in fact converges to the universal
constant $2e^{-1}$, and hence these solutions contract at the \emph{universal
rate} $\lmb(t)=2e^{-1}(T-t)e^{-\sqrt{|\log(T-t)|}}(1+o_{t\to T}(1))$.
Our proof is inspired by recent works on type-II blow-up dynamics
for parabolic equations. The key improvement is in the construction
of an \emph{explicit} invariant subspace decomposition for the linearized
operator perturbed by the scaling generator in the dispersive case,
from which we obtain a more precise ODE system determining $\lmb(t)$.
\end{abstract}

\maketitle
\tableofcontents{}

\section{Introduction}

\subsection{Equivariant wave maps}

We consider the energy-critical wave maps on $\bbR^{1+2}$ with the
$\bbS^{2}$-target. These are defined by formal critical points to
the action 
\begin{equation}
\calL(\phi)=\frac{1}{2}\int_{\bbR^{1+2}}(-|\rd_{t}\phi|^{2}+|\nabla\phi|^{2})dxdt.\label{eq:WMaction}
\end{equation}
The Euler--Lagrange equation associated with the action \eqref{eq:WMaction},
which is 
\begin{equation}
\rd_{tt}\phi-\Delta\phi=(-|\rd_{t}\phi|^{2}+|\nabla\phi|^{2})\phi\label{eq:WM-nonradial}
\end{equation}
in our case, is called the \emph{wave maps equation} from $\bbR^{1+2}$
into $\bbS^{2}$. The model under consideration is also known as the
$O(3)$ sigma model on the plane in the physics literature. It is
one of the simplest nonlinear scalar field models that admit topological
solitons given by harmonic maps. A particular interest is drawn from
the fact that harmonic maps come from solutions to the first-order
Bogomol'nyi equation \cite{Bogomolnyi1976}, manifesting the self-dual
structure of the equation. Mathematically, the wave maps equation
is a natural geometric generalization of the free wave equation whose
nonlinear structure is tied to the geometry of the target manifold.
We refer to \cite{MantonSutcliffe,GebaGrillakisWMbook,ShatahStruwe1998book}
for a nice introduction to the subject.

The action \eqref{eq:WMaction} is invariant under the symmetries
of both the Minkowski space and the target space $\bbS^{2}$. In particular,
from the time-translation symmetry, the \emph{energy} functional is
conserved under the flow \eqref{eq:WM-nonradial}:
\[
E(\phi,\rd_{t}\phi)=\frac{1}{2}\int_{\bbR^{2}}(|\rd_{t}\phi|^{2}+|\nabla\phi|^{2})dx
\]
Moreover, \eqref{eq:WM-nonradial} is \emph{scaling invariant}: if
$\phi(t,x)$ is a solution to \eqref{eq:WM-nonradial}, then so is
$\phi_{\lmb}(t,x)=\phi(\frac{t}{\lmb},\frac{x}{\lmb})$. \eqref{eq:WM-nonradial}
is called \emph{energy-critical} because this scaling preserves the
energy.

In this paper, we consider \eqref{eq:WM-nonradial} within \emph{1-equivariance}.
This means that we restrict to solutions of the form 
\begin{equation}
\phi(t,r,\theta)=(\sin u(t,r)\cos\theta,\sin u(t,r)\sin\theta,\cos u(t,r))\in\bbS^{2},\label{eq:def-1-equiv}
\end{equation}
where $(r,\theta)$ denote polar coordinates on $\bbR^{2}$ and $u=u(t,r)$
is the new unknown function taking values in $\bbR$. The wave maps
equation \eqref{eq:WM-nonradial} then reads 
\begin{equation}
\rd_{tt}u-\rd_{rr}u-\frac{1}{r}\rd_{r}u+\frac{\sin(2u)}{2r^{2}}=0.\label{eq:WM}
\end{equation}
We note that one can also consider \emph{$k$-equivariant maps}, where
$k\in\bbN$ and the map $\phi$ takes the form \eqref{eq:def-1-equiv}
with the two $\theta$s in the right hand side being replaced by $k\theta$.
The equation \eqref{eq:WM} is then changed by multiplying $k^{2}$
to the last term.

We will use the first-order formulation of \eqref{eq:WM}. In other
words, we consider the $\bbR^{2}$-valued function $\bm{u}=\bm{u}(t,r)$
defined by 
\[
\bm{u}\coloneqq\begin{bmatrix}u\\
\dot{u}
\end{bmatrix}\coloneqq\begin{bmatrix}u\\
\rd_{t}u
\end{bmatrix}
\]
and rewrite the equation \eqref{eq:WM} as 
\begin{equation}
\rd_{t}\bm{u}=\rd_{t}\begin{bmatrix}u\\
\dot{u}
\end{bmatrix}=\begin{bmatrix}\dot{u}\\
\rd_{rr}u+\frac{1}{r}\rd_{r}u-\frac{\sin(2u)}{2r^{2}}
\end{bmatrix}.\label{eq:u-vec-eqn}
\end{equation}
The energy functional reads 
\[
E(\bm{u})=2\pi\int_{0}^{\infty}\frac{1}{2}\Big(|\dot{u}|^{2}+|\rd_{r}u|^{2}+\frac{\sin^{2}u}{r^{2}}\Big)rdr.
\]

The equation is known to be well-posed in the \emph{energy space}
$\calE$, defined by the set of $1$-equivariant maps with finite
energy. For any $\bm{u}\in\calE$ both limits of $u(r)$ as $r\to+\infty$
and $r\to0$ exist and take values in $\pi\bbZ$. Thus the space $\calE$
is a disjoint union of its connected components $\calE_{\ell,m}$
for $\ell,m\in\bbZ$, whose elements $\bm{u}$ additionally satisfy
$\lim_{r\to0}u(r)=\ell\pi$ and $\lim_{r\to+\infty}u(r)=m\pi$. The
dynamics of \eqref{eq:WM} for finite energy solutions are then considered
separately on each component $\calE_{\ell,m}$. Note that we may always
assume $\ell=0$ because \eqref{eq:WM} is invariant under the transforms
$u\mapsto u+\pi$ and $u\mapsto-u$.

In the energy space $\calE$, there exists a nontrivial static solution
(i.e., nonconstant harmonic map)
\[
Q(r)\coloneqq2\arctan(r),
\]
which is unique up to sign and addition of a multiple of $\pi$. $\bm{Q}\coloneqq(Q,0)$
belongs to $\calE_{0,1}$, has energy 
\[
E(\bm{Q})=4\pi,
\]
and is characterized (up to scaling symmetry) as the energy minimizer
in the class $\calE_{0,1}$. Moreover, $Q$ is the \emph{ground state
without symmetry} (the corresponding map $\phi$ being the stereographic
projection) in the sense that $Q$ has the least energy among nontrivial
harmonic maps. Within $k$-equivariance for general $k\in\bbN$, the
ground state is given by 
\[
Q^{(k)}(r)\coloneqq2\arctan(r^{k})\qquad\text{with}\qquad E(\bm{Q}^{(k)})=4\pi k.
\]

The local-in-time Cauchy problem of \eqref{eq:WM-nonradial} is by
now well-understood. The local well-posedness for regular solutions,
being a semilinear problem, is classical. Through the efforts of many
authors, the regularity threshold for local well-posedness has been
pushed down to the scaling critical regularity; see for example \cite{KlainermanMachedon1993CPAM,KlainermanMachedon1995Duke,Tao2001CMP,Krieger2004CMP,Tataru2005AJM}.
We refer to \cite{GebaGrillakisWMbook} and references therein for
more discussions on the local theory of wave maps.

Moreover, there have been spectacular developments in the description
of the long-term dynamics for wave maps. Earlier works include the
global regularity results by Christodoulou--Tahvildar-Zadeh \cite{ChristodoulouTahvildarZadeh1993CPAM},
Shatah--Tahvildar-Zadeh \cite{ShatahTahvildarZadeh1992CPAM,ShatahTahvildarZadeh1994CPAM},
and the bubbling result of Struwe \cite{Struwe2003CPAM}, under symmetry
conditions. These works also say that, if a (symmetric) wave map develops
a singularity in finite time then (i) the energy does not concentrate
at the backward lightcone and (ii) the solution bubbles off a nontrivial
harmonic map as approaching to the blow-up time. The latter fact suggests
the following \emph{threshold conjecture}: if a wave map has energy
less than the ground state energy, then it exists globally (and scatters).
Note that the threshold is taken to be $+\infty$ if there are no
nontrivial harmonic maps, and the threshold for the $\bbS^{2}$-target
is $E(\bm{Q})=4\pi$. Even \emph{without symmetry}, Krieger--Schlag
\cite{KriegerSchlagWMbook}, Sterbenz--Tataru \cite{SterbenzTataru2010CMP2,SterbenzTataru2010CMP1},
and Tao \cite{TaoWM3-7} established the threshold conjecture for
reasonable targets. (See also \cite{CoteKenigMerle2008CMP} for equivariant
data.) For the $\bbS^{2}$-target, $4\pi$ is the real threshold as
demonstrated by the finite-time blow-up constructions with energy
arbitrarily close to $4\pi$, due to Krieger--Schlag--Tataru \cite{KriegerSchlagTataru2008Invent}
and Raphaël--Rodnianski \cite{RaphaelRodnianski2012Publ.Math.}.
We will come back to these blow-up solutions in more details.

A further interesting observation in the threshold conjecture is that,
if one restricts to \emph{degree zero} wave maps (this corresponds
to $\bm{u}\in\calE_{0,0}$ in our equivariance reduction) then the
real threshold is the twice of the energy of the ground state ($2E(\bm{Q})=8\pi$
for the $\bbS^{2}$-target) due to topological reasons; see \cite{CoteKenigLawrieSchlag2015AJM1,LawrieOh2016CMP}.
For the $\bbS^{2}$-target within $k$-equivariance, the complete
classification of degree zero wave maps with the threshold energy\footnote{Within $k$-equivariance, the threshold energy is $8\pi k$ because
$\bm{Q}^{(k)}$ is the ground state with energy $4\pi k$.} $8\pi k$ is provided by Jendrej, Lawrie, and Rodriguez \cite{Jendrej2019AJM,JendrejLawrie2018Invent,JendrejLawrie2020arXiv1,JendrejLawrie2020arXiv2,Rodriguez2021APDE}.
These threshold wave maps can develop a singularity only by forming
a pure two-bubble solution, which exists globally in time for $k\geq2$
and blows up in finite time for $k=1$.

Remarkably, recent works by Jendrej--Lawrie \cite{JendrejLawrie2021arXiv}
(for all $k\geq1$) and Duyckaerts--Kenig--Martel--Merle \cite{DuyckaertsKenigMartelMerle2021arXiv}
(for $k=1$ and 4D radial critical wave equation (NLW)) established
 \emph{soliton resolution for equivariant wave maps into the $\bbS^{2}$-target}:
any finite energy equivariant wave maps asymptotically decompose into
the sum of decoupled (in scales) harmonic maps and a radiation. In
the context of equivariant wave maps, some prior works to the resolution
are \cite{Cote2015CPAM,CoteKenigLawrieSchlag2015AJM1,CoteKenigLawrieSchlag2015AJM2,JiaKenig2017AJM},
where deep insights from the works \cite{DuyckaertsKenigMerle2011JEMS,DuyckaertsKenigMerle2012GAFA,DuyckaertsKenigMerle2013CambJMath}
on the critical NLW play a crucial role. See also \cite{DuyckaertsJiaKenigMerle2017GAFA,DuyckaertsJiaKenigMerle2018IMRN,Grinis2017CMP}
for nonradial results in this direction. We refer to \cite{DuyckaertsKenigMerle2013CambJMath,DuyckaertsKenigMere2019arXiv1,DuyckaertsKenigMartelMerle2021arXiv,CollotDuyckaertsKenigMere2022arXiv}
for soliton resolution for the radial critical wave equations in various
dimensions.

In this paper, we focus on the singularity formation via one bubble.
More precisely, we consider finite energy blow-up solutions $\bm{u}(t,r)$
which decompose as 
\begin{equation}
u(t)-Q_{\lmb(t)}^{(k)}\to z^{\ast}\qquad\text{as }t\to T,\label{eq:intro-decomp}
\end{equation}
where $Q_{\lmb(t)}^{(k)}=Q^{(k)}(\cdot/\lmb(t))$, $k\geq1$, $T\in(0,+\infty]$,
and $z^{\ast}=z^{\ast}(r)$ is the asymptotic profile (or radiation).
The convergence in \eqref{eq:intro-decomp} indeed holds in the energy
topology, but it suffices to consider much weaker convergences at
this moment.

The first rigorous constructions of one bubble blow-up solutions are
due to Krieger--Schlag--Tataru \cite{KriegerSchlagTataru2008Invent}
(for $k=1$), Rodnianski--Sterbenz \cite{RodnianskiSterbenz2010Ann.Math.}
(for $k\geq4$) and Raphaël--Rodnianski \cite{RaphaelRodnianski2012Publ.Math.}
(for all $k\geq1$). However, the blow-up solutions in \cite{KriegerSchlagTataru2008Invent}
have quite different characters from those of \cite{RodnianskiSterbenz2010Ann.Math.,RaphaelRodnianski2012Publ.Math.}.
In \cite{KriegerSchlagTataru2008Invent} (and \cite{GaoKrieger2015CPAA}),
the authors construct $1$-equivariant finite-time blow-up solutions
with the blow-up rates 
\begin{equation}
\lmb(t)=(T-t)^{1+\nu}\label{eq:KST-rate-intro}
\end{equation}
for any $\nu>0$ via the method of backward construction. These solutions
have very limited regularity at the backward light cone $r=|T-t|$
especially for small values of $\nu$. As the method heavily uses
the fact that the linearized operator (see \eqref{eq:def-H} below)
when $k=1$ has the zero resonance 
\begin{equation}
\Lmb Q(r)=\frac{2r}{1+r^{2}},\label{eq:LmbQ-formula}
\end{equation}
it seems difficult to extend the construction for higher $k$ (see
however \cite{KriegerSchlagTataru2009AdvMath}). Recently, for small
values of $\nu$, Krieger, Miao, and Schlag \cite{KriegerMiao2020Duke,KriegerMiaoSchlag2020arXiv}
proved the stability of these solutions under smooth non-equivariant(!)
perturbations supported inside the light cone, so that the perturbation
still preserves the shock at the light cone. Moreover, Pillai \cite{Pillai2019arXiv,Pillai2020arXiv}
extended the approach to global-in-time solutions and constructed
infinite-time blow-up (and also oscillating or relaxing) solutions
for $k=1$.

On the other hand, the authors in \cite{RodnianskiSterbenz2010Ann.Math.,RaphaelRodnianski2012Publ.Math.}
describe a \emph{stable} finite-time blow-up regime via the method
of forward construction. More precisely, there is an \emph{open} set
(in $H^{2}\times H^{1}$ topology) of initial data within $k$-equivariance
for all $k\geq1$, containing \emph{smooth} finite energy initial
data, such that forward-in-time maximal solutions starting from this
set blow up in finite time in a universal regime: 
\begin{equation}
\lmb(t)=\begin{cases}
c_{k}(T-t)|\log(T-t)|^{-\frac{1}{2k-2}}(1+o_{t\to T}(1)) & \text{if }k\geq2,\\
(T-t)e^{-\sqrt{|\log(T-t)|}+O(1)} & \text{if }k=1,
\end{cases}\label{eq:blow-rate-intr}
\end{equation}
where $c_{k}$ is some universal constant depending only on $k$.
Key features of the proof are approximation of the blow-up dynamics
by a finite-dimensional dynamics of well-prepared blow-up profiles
and the forward-in-time control of remainders using monotonicity (more
precisely, \emph{repulsivity}).

Let us finally mention a recent work \cite{JendrejLawrieRodriguez2019arXiv}
of Jendrej, Lawrie, and Rodriguez, who investigated the relation between
the blow-up speed $\lmb(t)$ and the asymptotic profile $z^{\ast}(r)$.

\subsection{\label{subsec:Main-results}Main results}

The goal of this paper is to refine the description of the finite-time
blow-up solutions constructed in \cite{RaphaelRodnianski2012Publ.Math.}
for $k=1$.

Before we recall this result, we first give the definition of the
function space $\bm{\calH}_{Q}^{2}$, where the initial data of these
smooth blow-up solutions belong to. Roughly speaking, $\bm{\calH}_{Q}^{2}$
will look like $(\dot{H}^{2}\times\dot{H}^{1})\cap\calE_{0,1}$. We
define the function spaces $\dot{\calH}_{1}^{2}$ and $\dot{H}_{1}^{1}$
with the norms:
\begin{align*}
\|f\|_{\dot{\calH}_{1}^{2}}^{2} & \coloneqq\|\rd_{yy}f\|_{L^{2}}^{2}+\Big\|\frac{1}{y\langle\log y\rangle}|f|_{-1}\Big\|_{L^{2}}^{2},\\
\|g\|_{\dot{H}_{1}^{1}}^{2} & \coloneqq\|\rd_{y}g\|_{L^{2}}^{2}+\Big\|\frac{1}{y}g\Big\|_{L^{2}}^{2}.
\end{align*}
The affine space $\bm{\calH}_{Q}^{2}$ is then defined by\footnote{We note that the space $\bm{\calH}_{Q}^{2}$ defined here and the
space $\calH_{a}^{2}$ used in \cite[(1.19)]{RaphaelRodnianski2012Publ.Math.}
are the same. Indeed, $\|\rd_{yy}f\|_{L^{2}}^{2}+\|\chf_{(0,1]}\frac{1}{y}(\rd_{y}f-\frac{f}{y})\|_{L^{2}}^{2}$
(plus some subcoercive term $\|\chf_{y\sim1}f\|_{L^{2}}^{2}$) of
$\calH_{a}^{2}$ can control $\|\frac{1}{y\langle\log y\rangle}|f|_{-1}\|_{L^{2}}^{2}$
of $\dot{\calH}_{1}^{2}$. Conversely, for $f\in\dot{\calH}_{1}^{2}$
(note that $f\equiv1$ is not allowed), $\|\rd_{yy}f\|_{L^{2}}^{2}$
can control $\|\chf_{(0,1]}\frac{1}{y}(\rd_{y}f-\frac{f}{y})\|_{L^{2}}^{2}$
of $\calH_{a}^{2}$. See for example the proof of \eqref{eq:loc-subcoer-1}
of this paper.}
\begin{equation}
\bm{\calH}_{Q}^{2}=\bm{Q}+\bm{\calH}^{2},\qquad\text{where}\quad\bm{\calH}^{2}=(\dot{\calH}_{1}^{2}\cap\dot{H}_{1}^{1})\times H_{1}^{1}.\label{eq:def-H2Q}
\end{equation}
We are now ready to recall the result of \cite{RaphaelRodnianski2012Publ.Math.}
for $k=1$.
\begin{thm}[Raphaël--Rodnianski blow-up solutions for $k=1$ \cite{RaphaelRodnianski2012Publ.Math.}]
\label{thm:RR-blow-up-intro}There exists an open set $\bm{\calO}$
in $\bm{\calH}_{Q}^{2}$ such that for any $\bm{u}_{0}=(u_{0},\dot{u}_{0})\in\bm{\calO}$
its forward-in-time solution $\bm{u}=(u,\dot{u})$ to \eqref{eq:WM}
satisfies the following:
\begin{itemize}
\item (Finite-time blow-up) $\bm{u}$ blows up in finite time $T=T(\bm{u}_{0})\in(0,\infty)$;
\item (Description of the blow-up) There exist $\lmb(t)\in C^{1}([0,T),\bbR_{+})$
and $\bm{u}^{\ast}=(u^{\ast},\dot{u}^{\ast})\in\dot{H}_{1}^{1}\times L^{2}$
such that 
\[
\bm{u}(t)-\bm{Q}_{\lmb(t)}\to\bm{u}^{\ast}\text{ in }\dot{H}_{1}^{1}\times L^{2}
\]
as $t\to T^{-}$ with 
\begin{equation}
\lmb(t)=(T-t)e^{-\sqrt{|\log(T-t)|}+O(1)}.\label{eq:RR-blow-up-rate-intro}
\end{equation}
\item (Additional properties) $\bm{u}$ also satisfies the properties in
Proposition~\ref{prop:RR-blow-up-original}.
\end{itemize}
\end{thm}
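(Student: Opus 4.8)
The statement is precisely the Raphaël--Rodnianski blow-up theorem, so the plan is to reproduce its \emph{forward construction}: a modulation analysis around the harmonic map bubble, a refined approximate profile carrying the correct tail, a finite-dimensional (ODE) reduction yielding the blow-up law, and a mixed energy--virial monotonicity (\emph{repulsivity}) closing a bootstrap. The $k=1$ feature driving the exotic rate is that the linearized operator $H=-\rd_{rr}-\tfrac1r\rd_{r}+\tfrac{\cos(2Q)}{r^{2}}=A_{Q}^{*}A_{Q}$, with $A_{Q}=-\rd_{r}+\tfrac{\cos Q}{r}$, has a zero element $\Lmb Q=2r/(1+r^{2})$ that fails to lie in $L^{2}$ only logarithmically. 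First I would build a family of approximate profiles $\bm{Q}_{b}=\bm{Q}+b\,\bm{T}_{1}+b^{2}\bm{T}_{2}+\cdots$ solving the renormalized flow $\rd_{s}\bm{Q}_{b}+b\,\Lmb\bm{Q}_{b}-\bm{F}(\bm{Q}_{b})=O(b^{N+1})$ in the self-similar variable $y=r/\lmb$, where $s$ is the rescaled time $\tfrac{ds}{dt}=\tfrac1\lmb$, $b\coloneqq-\lmb_{s}/\lmb$, and $\bm{F}$ is the first-order wave-map operator in \eqref{eq:u-vec-eqn}. For $k=1$ the corrections $\bm{T}_{j}$ grow in $y$ (reflecting the borderline resonance), so $\bm{Q}_{b}$ must be truncated near the self-similar light cone $y\sim|\log b|/b$, i.e.\ $r\sim T-t$; the errors generated by this cutoff are the source of the logarithmic corrections, and taming them by matched asymptotics is one delicate point.

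The next step is modulation. Decompose the solution as $\bm{u}(t)=(\bm{Q}_{b(t)})_{\lmb(t)}+\bm{\eps}(t)$ (plus the cutoff), and impose two orthogonality conditions on $\bm{\eps}$ (against directions built from $\Lmb Q$ and the generalized kernel) to fix $(\lmb,b)$. Projecting the equation onto these directions yields a system of the form
\begin{align}
\frac{\lmb_{s}}{\lmb} &= -b + O\bigl(\|\bm{\eps}\| + b^{N}\bigr), \label{eq:plan-mod1} \\
b_{s} + \frac{b^{2}}{2|\log b|}\bigl(1+o(1)\bigr) &= O\bigl(\|\bm{\eps}\|_{\mathrm{loc}} + b^{N}\bigr), \label{eq:plan-mod2}
\end{align}
the $|\log b|^{-1}$ correction being absent for $k\ge2$. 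Integrating \eqref{eq:plan-mod1}--\eqref{eq:plan-mod2} formally (given $\tfrac{ds}{dt}=\tfrac1\lmb$) gives $b\approx e^{-\sqrt{|\log(T-t)|}}$, a finite blow-up time $T<\infty$, and $\lmb(t)=(T-t)e^{-\sqrt{|\log(T-t)|}+O(1)}$.

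The heart of the argument is the energy estimate controlling $\bm{\eps}$. Using $H=A_{Q}^{*}A_{Q}$ I would introduce an adapted higher-order energy for $\bm{\eps}$ --- at the $\bm{\calH}_{Q}^{2}$ level, with $b$- and $y$-dependent (logarithmic) weights built from $A_{Q}$ and $H$ --- and add a localized virial-type correction $\sim b\,\langle(\text{weight})\,\rd_{s}\bm{\eps},\Lmb\bm{\eps}\rangle$. The claim is that the combined functional $\mathcal{G}$ is a Lyapunov functional, $\tfrac{d}{ds}\mathcal{G}\aleq\tfrac{b}{|\log b|}\mathcal{G}+(\text{profile error})$, using (i) the \emph{repulsivity}/positive-commutator property of $H$ perturbed by the scaling generator $b\Lmb$ outside a compact region, and (ii) coercivity of $H$ modulo the two orthogonality conditions on that region (Hardy- and subcoercivity-type inequalities adapted to $A_{Q}$). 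This forces $\|\bm{\eps}\|$ below the profile error and closes \eqref{eq:plan-mod1}--\eqref{eq:plan-mod2}. I expect this to be the main obstacle: calibrating the weights and the derivative count so that the monotonic gain beats simultaneously the slowly decaying truncated tail and the $k=1$ logarithmic losses --- precisely the point where $k=1$ is harder than $k\ge3$.

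Finally I would run a bootstrap on a regime $\{b>0\text{ small},\ b_{s}<0,\ \|\bm{\eps}\|\le b^{K},\dots\}$ in $\bm{\calH}_{Q}^{2}$; all estimates close strictly except the single formally unstable direction in the $b$-equation, which is handled by a one-dimensional Brouwer shooting argument over the initial value $b(0)$. Passing to the limit $t\to T$ produces the decomposition $\bm{u}(t)-\bm{Q}_{\lmb(t)}\to\bm{u}^{*}$ in $\dot{H}_{1}^{1}\times L^{2}$ with the rate \eqref{eq:RR-blow-up-rate-intro}; openness of $\bm{\calO}$ in $\bm{\calH}_{Q}^{2}$ follows because all bootstrap hypotheses are open conditions, and the remaining quantitative statements collected in Proposition~\ref{prop:RR-blow-up-original} fall out of the same estimates.
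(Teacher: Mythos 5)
First, a point of orientation: the paper does not prove this theorem. It is quoted verbatim from \cite{RaphaelRodnianski2012Publ.Math.} and used as a black box; the paper's entire ``proof'' consists of restating the needed quantitative content as Proposition~\ref{prop:RR-blow-up-original} with pointers into the original article, plus Appendix~\ref{sec:Proof-lmb-b-rel} for the one estimate \eqref{eq:lmb-b-rel} not stated there. So the comparison is really between your sketch and the original Raphaël--Rodnianski construction. At that level your architecture is the right one and matches what Section~\ref{subsec:RR-blow-up-sol} summarizes: the profile $P^{\RR}=Q+b^{2}T^{\RR}$ obtained by the tail computation and truncated at $B_{1}=|\log b|/b$, the modulation system $\lmb_{s}/\lmb=-b$ and $b_{s}+\tfrac{b^{2}}{2|\log b|}\approx0$ (the logarithm coming from $\Lmb Q\notin L^{2}$), the self-dual factorization $H=A^{*}A$, a weighted energy--virial functional, and a forward bootstrap. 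Two small inaccuracies: the decisive repulsivity is that of the supersymmetric conjugate $\td H=AA^{*}$ acting on $W=A_{\lmb}w^{\RR}$ (with potential $\td V=4/(1+y^{2})$, nonnegative and monotone), not of $H$ itself, which has the zero resonance $\Lmb Q$; and RR impose a single orthogonality condition \eqref{eq:RR-orthog} fixing $\lmb$, with $b$ defined by $\wh\lmb_{t}+\wh b=0$ rather than by a second orthogonality condition.

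The genuine gap is your final step. You close the bootstrap ``except the single formally unstable direction in the $b$-equation, which is handled by a one-dimensional Brouwer shooting argument over the initial value $b(0)$,'' and then assert in the next sentence that $\bm{\calO}$ is open because all bootstrap conditions are open. These two claims are incompatible: a shooting argument over $b(0)$ selects a codimension-one set of data and would destroy exactly the openness (i.e., the \emph{stability}) that the theorem asserts and that this paper's title and analysis rely on. In the RR $k=1$ regime there is no unstable mode to kill: the forward dynamical system for $(\lmb,b)$ is a (degenerate) attractor toward the universal law, and the exit conditions of the bootstrap regime are all ruled out by strict improvement, so $\bm{\calO}$ is open with no topological argument. (Brouwer-type arguments appear in the \emph{conditionally} stable, finite-codimension regimes of \cite{HadzicRaphael2019JEMS,CollotGhoulMasmoudiNguyen2019arXiv2} and in excited blow-up constructions, not here. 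The ``exponential instability'' of $\tfrac{b}{\nu}-1$ exploited in Section~\ref{subsec:Proof-of-Theorem} is a different object: it concerns the renormalized variable $\nu=\lmb/(T-t)$, which presupposes knowledge of $T$ and is integrated \emph{backwards} from $\tau=+\infty$ after blow-up is already established; it is not an unstable direction of the forward Cauchy problem.) As written, your plan would prove existence of some blow-up solutions but not the open-set statement, which is the substantive content of the theorem.
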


Note that the blow-up rate \eqref{eq:RR-blow-up-rate-intro} has a
room of $O(1)$-freedom. The question of whether this $O(1)$-freedom
is present or not has been left open, though the numerical simulations
in Ovchinnikov--Sigal \cite{OvchinnikovSigal2011PhysD} suggest the
non-existence of such $O(1)$-freedom and the convergence of $e^{O(1)}$
to some universal constant. Our main result provides a rigorous proof
of this \emph{universality} and \emph{computes} this value precisely.
\begin{thm}[Sharp universal blow-up rates]
\label{thm:MainThm}Let $\bm{u}$ be a finite-time blow-up solution
as in Theorem~\ref{thm:RR-blow-up-intro}. Then, we have 
\begin{equation}
\lmb(t)=2e^{-1}(T-t)e^{-\sqrt{|\log(T-t)|}}(1+o_{t\to T}(1)).\label{eq:sharp-blow-up-rate}
\end{equation}
\end{thm}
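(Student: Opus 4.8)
The plan is to sharpen the modulation-theoretic analysis of \cite{RaphaelRodnianski2012Publ.Math.} by extracting a \emph{precise} ODE for the scaling parameter $\lmb(t)$, rather than the rough one which only controls $\lmb$ up to the $e^{O(1)}$-ambiguity. Let me set up coordinates: one writes the solution as $\bm{u}(t,r)=\bm{Q}_{\lmb(t)}(r)+\bm{\eps}(t,r/\lmb(t))$ (modulo the usual modulation choices fixing the unstable/zero directions), and passes to the self-similar-type variable $y=r/\lmb$ together with the rescaled time $s$ with $\frac{ds}{dt}=\lmb^{-1}$. Recall that for $k=1$ the linearized operator $H=-\rd_{rr}-\frac1r\rd_r+\frac{\cos(2Q)}{r^2}$ has the zero-energy resonance $\Lmb Q=\frac{2r}{1+r^2}$ given in \eqref{eq:LmbQ-formula}, which is precisely the borderline non-$L^2$ obstruction responsible for the anomalously slow (logarithmically corrected self-similar) rate. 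The first step is to quantify the interaction of $\bm{\eps}$ with $\Lmb Q$ through a suitably regularized/truncated inner product, and to track how the slowly-varying "size of the tail of $\eps$ at the light cone" feeds back into $\dot\lmb$.

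The heart of the matter, as the abstract signals, is the following improvement over \cite{RaphaelRodnianski2012Publ.Math.}: I would construct an \emph{explicit invariant subspace decomposition} for the operator $H$ perturbed by the scaling generator, i.e. for (a conjugated version of) $H - b\,\Lmb$ where $b=-\lmb\dot\lmb$ (equivalently $b = -\frac{d\lmb}{ds}/\lmb$ up to sign) is the second modulation parameter. The point is that in the parabolic analogues (type-II blow-up for harmonic map heat flow and for NLH), one has an exact, algebraically explicit family of eigenfunctions / a triangular structure for the analogous operator, which pins down the numerical constant in front of the ODE. I would find the corresponding explicit structure here — presumably a distinguished inner/outer decomposition in which the profile equation $H\Phi_b = b\Lmb\Phi_b + \ldots$ is solved to the precision needed to see the constant $2e^{-1}$ — using the fact that $H$ factorizes (via the Bogomol'nyi/self-dual structure mentioned in the introduction) through first-order operators built from $\Lmb Q$. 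From this refined profile one reads off the corrected law of motion: one gets a closed system of the schematic form
\begin{equation}
-\frac{d\lmb}{ds} = b,\qquad \frac{db}{ds} = -\,c\,\frac{b}{|\log b|}\,(1+o(1)),
\end{equation}
where the crucial output of the explicit construction is that the constant $c$, together with the precise subleading term, is such that integrating the system yields exactly $b \sim \frac{1}{2\sqrt{|\log(T-t)|}}$-type asymptotics with the constant $2e^{-1}$ in \eqref{eq:sharp-blow-up-rate}. Getting the \emph{constant} right (not merely the leading logarithmic form) is exactly what forces one to go beyond the soft a priori estimates and to compute the relevant regularized inner products $\langle H^{-1}(\text{stuff}),\Lmb Q\rangle_{\text{truncated at }y\sim 1/b}$ explicitly, where the logarithmic divergence of $\|\Lmb Q\|_{L^2}^2$ over $\{y\le 1/b\}$ produces the $|\log b|$ and its finite part produces the $e^{-1}$.

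Next I would run the bootstrap/energy estimates: one sets up an adapted coercive energy functional for the remainder $\bm{\eps}$ (a virial/Morawetz-type or "repulsivity" monotonicity as in \cite{RaphaelRodnianski2012Publ.Math.}, now with the improved profile subtracted) to show that $\bm{\eps}$ stays smaller than the size of the corrections we have tracked, so that the ODE system above is valid with the claimed error $o(1)$. Since we are told $\bm{u}$ is \emph{already known} to be a Raphaël–Rodnianski solution — in particular $\lmb(t)=(T-t)e^{-\sqrt{|\log(T-t)|}+O(1)}$ and the $H^2$-smallness and repulsivity estimates of Proposition~\ref{prop:RR-blow-up-original} hold — this step is more of an \emph{upgrade} of known bounds than a construction from scratch: the rough rate already localizes $b=-\lmb\dot\lmb$ to $\sim(T-t)/\lmb\cdot$const and gives $|\log b|\sim|\log(T-t)|$, which is the slot in which the sharpened ODE must be shown to hold. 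Finally, integrating $\frac{db}{ds}=-c\,b/|\log b|\,(1+o(1))$ and then $-d\lmb/ds=b$, and translating back via $\frac{ds}{dt}=\lmb^{-1}$ and $\int_t^T \frac{d\tau}{\lmb}=s(t)+O(1)\to\infty$, yields $\lmb(t)=2e^{-1}(T-t)e^{-\sqrt{|\log(T-t)|}}(1+o(1))$, which is \eqref{eq:sharp-blow-up-rate}.

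The main obstacle I expect is the explicit invariant subspace decomposition for $H$ perturbed by $b\Lmb$ \emph{in the dispersive setting}: unlike the parabolic case, the wave equation's linearized flow does not have the smoothing/parabolic scaling that makes the analogous decomposition automatic, and $\Lmb Q\notin L^2$ means one must carefully choose the matching radius (around $y\sim 1/b$, the transition to the self-similar/light-cone zone) and control the error committed there — this is where the $e^{O(1)}\to 2e^{-1}$ collapse is won or lost. A secondary difficulty is ensuring that the remainder estimates are sharp enough (to relative order $1/|\log b|$) that they do not swamp the constant; this likely requires a second-order (two-parameter) modulation analysis and a carefully weighted energy norm that sees the logarithmic tail of $\eps$ near the light cone.
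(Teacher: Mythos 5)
Your overall architecture --- a refined modulation ODE obtained from an explicit invariant decomposition of the linearized operator perturbed by the scaling generator, with constants computed from explicit inner products and remainder bounds imported from the Raphaël--Rodnianski result rather than re-derived --- matches the paper's. But two genuine gaps would prevent your argument from producing the constant $2e^{-1}$. First, the operator you propose to diagonalize, $H-b\Lmb$, is the \emph{parabolic} analogue and is not the right object. Because the wave equation is second order in time, the operator arising in self-similar coordinates $\rho=r/(T-t)$ is the matrix operator $\mathbf{M}_{\nu}$ of \eqref{eq:def-M_nu}, whose eigenfunctions solve $[H_{\nu}+(\lmb+\Lmb_{0})(\lmb+\Lmb)]\varphi=0$ --- a perturbation \emph{quadratic} in the scaling vector field. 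This changes the analysis qualitatively: the outer problem acquires a regular singular point at the light cone $\rho=1$ and reduces to a hypergeometric equation, and the universal constant is ultimately traced to the digamma values $\psi(\tfrac12),\psi(\tfrac32)$ entering the connection formula \eqref{eq:h1-expn-rho-variable}; this is where the $\log2$ in $b_{2}=\frac{5}{12}-\frac{\log2}{2}$, hence the factor $2$ in $2e^{-1}$, comes from. Your heuristic that the finite part of the truncated $\|\Lmb Q\|_{L^{2}}^{2}$ produces the $e^{-1}$ captures only part of the computation. Relatedly, the invariant functionals do not come from the self-dual factorization $H=A^{\ast}A$ (which is used only for coercivity); they come from the observation that $H_{\nu}+(\lmb+\Lmb_{0})(\lmb+\Lmb)$ is formally self-adjoint in $L^{2}(g\rho d\rho)$ with $g=(1-\rho^{2})^{\lmb-\frac12}$ sharply cut off at the light cone, which yields the explicit $\ell_{j}$ of \eqref{eq:def-ell_j}.

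Second, the schematic system $\frac{db}{ds}=-c\,b/|\log b|\,(1+o(1))$ cannot determine the constant: integrating it gives only $|\log b|=\sqrt{\tau}\,(1+o(1))$, which does not even remove the $e^{O(1)}$ ambiguity. One needs the $b_{\tau}$-equation to \emph{relative} precision $O(|\log b|^{-5/2})$, i.e.\ the exact coefficient of the $|\log b|^{-2}$ term, together with the refined compatibility $\frac{b}{\nu}-1=\frac{1/6}{|\log\nu|}+\dots$ obtained by backward integration of an exponentially unstable quantity (Lemma~\ref{lem:compatibility}). Moreover the $\bm{\eps}$-contributions to the modulation equations are of size $\nu|\log\nu|^{-1/2}$ and are \emph{not} perturbative at this precision; they can be absorbed only because the invariance $\ell_{j}(\mathbf{M}_{\nu}\bm{\eps})=\lmb_{j}\ell_{j}(\bm{\eps})$ turns them into total derivatives $(\rd_{\tau}-\lmb_{j})[\ell_{j}(\bm{\eps})]$, which are then incorporated as corrections to $b$. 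Your plan to show by an energy/repulsivity argument that $\bm{\eps}$ stays smaller than the tracked corrections would not close: the required dissipativity of $\mathbf{M}_{\nu}$ on the invariant complement is precisely what is not available in the dispersive setting (Remark~\ref{rem:on-dissip}), which is why the paper leans on the Raphaël--Rodnianski bounds and on the invariance structure instead.
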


\begin{rem}[Sharp universal blow-up rate]
Our result is inspired by recent works \cite{CollotGhoulMasmoudiNguyen2019arXiv1,CollotGhoulMasmoudiNguyen2019arXiv2}
by Collot--Ghoul--Masmoudi--Nguyen on the 2D Keller-Segel system,
who applied the eigenfunction expansion method introduced by Hadžić
and Raphaël \cite{HadzicRaphael2019JEMS} to refine the blow-up construction
in Raphaël--Schweyer \cite{RaphaelSchweyer2014MathAnn}. They in
particular obtained the sharp asymptotics of the blow-up rate, proved
the stability of the blow-up under nonradial perturbations, and also
constructed other blow-up regimes that are conditionally stable (of
finite codimension).

Our universal constant for the blow-up rate, $2e^{-1}\approx0.736$,
does not match with the value $\sqrt{0.146}\approx0.382$ obtained
numerically in \cite{OvchinnikovSigal2011PhysD}. $2e^{-1}$ is close
to the twice of this numerical value.
\end{rem}

\begin{rem}[Comments on the proof]
The key ingredient of the proof of Theorem~\ref{thm:MainThm} is
the extension of (some parts of) the approach of \cite{HadzicRaphael2019JEMS,CollotMerleRaphael2020JAMS,CollotGhoulMasmoudiNguyen2019arXiv1,CollotGhoulMasmoudiNguyen2019arXiv2}
on type-II blow-up problems to the dispersive setting. More precisely,
we will obtain an \emph{explicit} formal invariant subspace decomposition
(or, partial diagonalization) for the linearized operator $\mathbf{M}_{\nu}$
(see \eqref{eq:def-M_nu} below) arising in the self-similar coordinates.
As a consequence, we deduce Theorem~\ref{thm:MainThm} from Theorem~\ref{thm:RR-blow-up-intro}.
The part that could not be extended in this paper is the dissipativity
of $\mathbf{M}_{\nu}$ with respect to our invariant subspace decomposition.
See Remark \ref{rem:on-dissip} for more discussions. This is the
reason why our blow-up analysis is built upon the result of \cite{RaphaelRodnianski2012Publ.Math.}.
However, to put it differently, our work demonstrates that the approach
of \cite{RaphaelRodnianski2012Publ.Math.}  combined with the invariant
subspace decomposition for $\mathbf{M}_{\nu}$ yields (i) the existence
of stable blow-up regime for \eqref{eq:WM} and (ii) the sharp universality
of the blow-up rate for such blow-up solutions.
\end{rem}

\begin{rem}[On Krieger--Schlag--Tataru blow-up solutions]
As mentioned above, the finite-time blow-up solutions considered
here arise from smooth initial data. This is also reflected in our
analysis of the linear operator $\mathbf{M}_{\nu}$, as our blow-up
regime is derived from determining some smooth eigenpairs $(\lmb_{j},\bm{\varphi}_{j})$
to $\mathbf{M}_{\nu}$ and studying the dynamics of the eigenfunctions
$\bm{\varphi}_{j}$ under \eqref{eq:WM} (more precisely, the dynamics
of $\bm{P}$; see \eqref{eq:def-mod-prof}). The same operator also
appears in the blow-up construction of \cite{KriegerSchlagTataru2008Invent}.
However, the latter construction involves approximate formal inversions
of the operator $\mathbf{M}_{\nu}-\lmb$ with various choices of $\lmb<0$
depending on the prescribed blow-up rate \eqref{eq:KST-rate-intro}
and hence results in non-smooth solution ansatz at the light cone
$r=|T-t|$ (See for example the behavior of the fundamental solutions
$h_{1}$ and $h_{2}$ at $z=0$ in Section~\ref{subsec:Outer-eigenfunctions}
of this paper).
\end{rem}

\begin{rem}[Extension to weaker equivariance and rotational instability]
The equivariant symmetry used in this paper is a strong restriction;
$\phi(t,r,\theta)$ with fixed $\theta$ is confined in one great
circle of $\bbS^{2}$. Thus one may also consider the dynamics under
a weaker version of equivariant symmetry, which only requires the
property $\phi(t,r,\theta)=R(\theta)\phi(t,r,0)$ where $R(\theta)$
is the rotation on $\bbS^{2}$ by the angle $\theta$ around the $z$-axis.
It is believed that the blow-up regime considered here is no longer
stable under this symmetry. The blow-up is expected to be stable under
codimension-1 perturbations and rather exhibit the \emph{rotational
instability}: solutions stop concentrating at small but nonzero scale,
take a quick rotation by the angle $\pi$, and then start to spread
out. We refer to the discussions in \cite{MerleRaphaelRodnianski2013InventMath,BergWilliams2013}
for Schrödinger maps and the rigorous construction of solution families
exhibiting the rotational instability \cite{KimKwon2019arXiv} for
the self-dual Chern--Simons--Schrödinger equation (which is also
a self-dual model). It is an interesting open problem to show this
rotational instability rigorously.
\end{rem}

\subsection{\label{subsec:Strategy-of-the}Strategy of the proof}

We use the notation collected in Section~\ref{subsec:Notations}.

We use modulation analysis to prove Theorem~\ref{thm:MainThm}. We
proceed similarly as in recent works \cite{HadzicRaphael2019JEMS,CollotMerleRaphael2020JAMS,CollotGhoulMasmoudiNguyen2019arXiv1,CollotGhoulMasmoudiNguyen2019arXiv2}
on type-II blow-up dynamics for parabolic equations. We extend some
parts of this approach (namely, the invariant subspace decomposition)
to type-II blow-up problems in the \emph{dispersive} setting, and
show by its consequence that our main Theorem~\ref{thm:MainThm}
can be deduced from Theorem~\ref{thm:RR-blow-up-intro}.

Since we will assume the result of Theorem~\ref{thm:RR-blow-up-intro},
we already know the blow-up time $T<+\infty$ of $\bm{u}$ and its
refined description near the blow-up time. As the blow-up speed is
almost self-similar, it is natural to study the dynamics of $\bm{u}$
in the self-similar coordinates $(\tau,\rho)$ defined by 
\begin{equation}
\tau\coloneqq-\log(T-t)\qquad\text{and}\qquad\rho\coloneqq\frac{r}{T-t}.\label{eq:def-self-similar-variables}
\end{equation}
Denoting by $\bm{v}$ the renormalized solution of $\bm{u}$
\begin{equation}
\bm{v}(\tau,\rho)=\begin{bmatrix}v(\tau,\rho)\\
\dot{v}(\tau,\rho)
\end{bmatrix}\coloneqq\begin{bmatrix}u(t,r)\\
(T-t)\dot{u}(t,r)
\end{bmatrix},\label{eq:def-v}
\end{equation}
the equation \eqref{eq:u-vec-eqn} reads 
\begin{equation}
\rd_{\tau}\bm{v}=\rd_{\tau}\begin{bmatrix}v\\
\dot{v}
\end{bmatrix}=\begin{bmatrix}-\Lmb v+\dot{v}\\
\rd_{\rho\rho}v+\frac{1}{\rho}\rd_{\rho}v-\frac{\sin(2v)}{2r^{2}}-\Lmb_{0}\dot{v}
\end{bmatrix}.\label{eq:v-vec-eqn}
\end{equation}
Note that $\bm{v}$ can be written as 
\[
\bm{v}(\tau,\rho)=\bm{Q}_{\nu(\tau)}(\rho)+\td{\bm{v}}(\tau,\rho),
\]
where $\nu(\tau)=\lmb(t)/(T-t)$ is a \emph{slowly varying} scale
going to $0$ as $\tau\to+\infty$ and $\td{\bm{v}}$ is the remainder
term. Linearizing \eqref{eq:v-vec-eqn} around $\bm{Q}_{\nu}$ in
the self-similar coordinates, we have 
\begin{equation}
\rd_{\tau}\td{\bm{v}}=-(\rd_{\tau}\bm{Q}_{\nu}+\bm{\Lmb Q}_{\nu})+\mathbf{M}_{\nu}\td{\bm{v}}-\begin{bmatrix}0\\
R_{\mathrm{NL}}(\td v)
\end{bmatrix},\label{eq:td-v-eqn}
\end{equation}
where $\mathbf{M}_{\nu}$ is the linear operator defined by 
\begin{equation}
{\bf M}_{\nu}\coloneqq\begin{bmatrix}-\Lmb & 1\\
-H_{\nu} & -\Lmb_{0}
\end{bmatrix},\label{eq:def-M_nu}
\end{equation}
$H_{\nu}$ is the usual linearized operator around $Q_{\nu}$ 
\begin{equation}
H_{\nu}\coloneqq-\rd_{\rho\rho}-\frac{1}{\rho}\rd_{\rho}+\frac{V_{\nu}}{\rho^{2}}\coloneqq-\rd_{\rho\rho}-\frac{1}{\rho}\rd_{\rho}+\frac{\cos(2Q_{\nu})}{\rho^{2}},\label{eq:def-H}
\end{equation}
and $R_{\mathrm{NL}}(\td v)$ is a nonlinear term in $\td v$. As
seen in \cite{RaphaelRodnianski2012Publ.Math.} for $k=1$, the nonlinear
term $R_{\mathrm{NL}}(\td v)$ does not affect the modulation equation
(i.e., the evolution law) for $\nu$. Therefore, \emph{any sharper
information on the modulation dynamics lies in the analysis of the
linear operator $\mathbf{M}_{\nu}$ in the regime $0<\nu\ll1$.}\vspace{5bp}

\noindent \emph{1. Construction of the first two eigenpairs for $\mathbf{M}_{\nu}$.}
We construct the first two eigenpairs $(\lmb_{0},\bm{\varphi}_{0})$
and $(\lmb_{1},\bm{\varphi}_{1})$ of $\mathbf{M}_{\nu}$ with $\lmb_{0}\approx1$
and $\lmb_{1}\approx0$. We refer to Remark \ref{rem:comparison-exact-self-sim}
for more discussions on these eigenvalues, and let us focus on the
construction of these eigenpairs here. We find that the matching argument
of \cite{HadzicRaphael2019JEMS,CollotMerleRaphael2020JAMS,CollotGhoulMasmoudiNguyen2019arXiv2}
in the parabolic case can be extended to our operator $\mathbf{M}_{\nu}$.

To explain the argument, we first notice that if $(\lmb,\bm{\varphi})$
is an eigenpair of $\mathbf{M}_{\nu}$, then $\bm{\varphi}$ has the
structure 
\[
\bm{\varphi}=\begin{bmatrix}\varphi\\
(\lmb+\Lmb)\varphi
\end{bmatrix}
\]
and $\varphi$ solves the second-order differential equation 
\begin{equation}
[H_{\nu}+(\lmb+\Lmb_{0})(\lmb+\Lmb)]\varphi=0.\label{eq:eigen-ft-rel-no-j}
\end{equation}
We need to find eigenvalues near $0$ and $1$ such that $\varphi$
is a smooth solution to \eqref{eq:eigen-ft-rel-no-j}, in particular
on $[0,1]$.

The matching argument consists of the following three steps. First,
one observes that for any $\lmb$ (with $\lmb\approx0$ or $\lmb\approx1$),
one can construct a solution $\varphi_{\inn}$ in the region $\rho\leq2\delta_{0}\ll1$
(which we call the \emph{inner eigenfunction}) to \eqref{eq:eigen-ft-rel-no-j}
which is smooth at $\rho=0$. When $\lmb$ and $\nu$ are fixed, such
a solution is unique up to multiplication by scalars. Next, one observes
that for any $\lmb$, one can construct a solution $\varphi_{\out}$
in the region $\rho\geq\frac{1}{2}\delta_{0}$ (which we call the
\emph{outer eigenfunction}) to \eqref{eq:eigen-ft-rel-no-j} which
is smooth at the light cone $\rho=1$. Again, such a solution is unique
up to multiplication by scalars. The final step is to glue the functions
$\varphi_{\inn}$ and $\varphi_{\out}$ at $\rho=\delta_{0}$. Since
we have a freedom of multiplying $\varphi_{\inn}$ or $\varphi_{\out}$
by any scalars, one can have $\varphi_{\inn}|_{\rho=\delta_{0}}=\varphi_{\out}|_{\rho=\delta_{0}}$
for any $\lmb$. However, the first derivatives of $\varphi_{\inn}$
and $\varphi_{\out}$ at $\rho=\delta_{0}$ are in general different,
and matching them is possible only for \emph{non-generic} values of
$\lmb$. These non-generic values are the eigenvalues, because the
glued function becomes a smooth solution to \eqref{eq:eigen-ft-rel-no-j}.

For the construction of the inner eigenfunctions, since $\rho\ll1$,
we can treat $(\lmb+\Lmb_{0})(\lmb+\Lmb)\varphi$ as a perturbative
term. Starting from $\frac{1}{\nu}\Lmb Q_{\nu}$, which is a smooth
kernel element of $H_{\nu}$, we perform a fixed-point argument to
construct $\varphi_{\inn}$. However, due to the \emph{critical nature}
of our spectral problem similarly as in \cite{CollotGhoulMasmoudiNguyen2019arXiv2},
we will need to study more refined structure of $\varphi_{\inn}$;
see Section~\ref{subsec:Refined-inner-eigenfunctions}. For the construction
of the outer eigenfunctions, the term $(\lmb+\Lmb_{0})(\lmb+\Lmb)\varphi$
should be taken into account, but we can approximate $H_{\nu}$ by
the $1$-equivariant Laplacian $-\Delta_{1}$, thanks to $\nu\ll1$.
The operator $-\Delta_{1}+(\lmb+\Lmb_{0})(\lmb+\Lmb)$ can be transformed
into the hypergeometric differential operator and we can construct
the outer eigenfunction $\varphi_{\out}$ by starting from a hypergeometric
function that is smooth at $\rho=1$. Finally, we can match $\varphi_{\inn}$
and $\varphi_{\out}$ for non-generic values of $\lmb$ in view of
the implicit function theorem.

\vspace{5bp}

\noindent \emph{2. Formal invariant subspace decomposition of $\mathbf{M}_{\nu}$.}
Having constructed the eigenpairs $(\lmb_{0},\bm{\varphi}_{0})$ and
$(\lmb_{1},\bm{\varphi}_{1})$, we construct two linear functionals
$\ell_{0}$ and $\ell_{1}$ such that their kernels are invariant
under $\mathbf{M}_{\nu}$ and transversal to the eigenfunctions $\bm{\varphi}_{0}$
and $\bm{\varphi}_{1}$. These linear functionals are crucial ingredients
of the proof of Theorem~\ref{thm:MainThm} because they will serve
as test functions to detect the \emph{refined modulation equations}
in our later blow-up analysis.

In contrast to the parabolic case, a new input is required to find
such linear functionals because $\mathbf{M}_{\nu}$ is highly non-self-adjoint.
In the present work, we find a very simple explicit form of these
linear functionals $\ell_{0}$ and $\ell_{1}$: 
\begin{align}
\ell_{j}(\bm{\eps}) & \coloneqq\langle(\lmb_{j}+\Lmb_{0})\eps+\dot{\eps},g_{j}\varphi_{j}\rangle,\label{eq:def-ell_j}\\
g_{j}(\nu;\rho) & \coloneqq\chf_{(0,1]}(\rho)\cdot(1-\rho^{2})^{\lmb_{j}-\frac{1}{2}}.\label{eq:def-g_j}
\end{align}
The formula is not limited to our situation, and it should be extended
to other wave equations in the self-similar coordinates (see Remark~\ref{rem:ell_j-extension-exact-self-sim}).
The appearance of a weight function of the form $(1-\rho^{2})^{p}$
in self-similar coordinates is in fact classical (see e.g., \cite{AntoniniMerle2001IMRN}),
but the formula of the invariant linear functional \eqref{eq:def-ell_j}
itself does not seem to appear explicitly in the literature. Thus
we take this opportunity to state this explicitly and use it to derive
refined modulation equations in Section~\ref{sec:Blow-up-analysis}.

\vspace{5bp}

\noindent \emph{3. Review of the Raphaël--Rodnianski blow-up solutions.}
As mentioned above, we work with finite-time blow-up solutions $\bm{u}$
constructed in \cite{RaphaelRodnianski2012Publ.Math.}. In particular,
we already know that $u$ blows up in finite time $T$, and it moreover
admits the decomposition 
\[
u(t,r)=[P^{\RR}(\wh b(t);\cdot)+\eps^{\RR}(t,\cdot)]\Big(\frac{r}{\wh{\lmb}(t)}\Big)
\]
with some modified profile $P^{\RR}$ for the blow-up part and $\wh b(t)\coloneqq-\wh{\lmb}_{t}(t)>0$,
where $\wh{\lmb}$ and $\wh b$ satisfy (see Appendix \ref{sec:Proof-lmb-b-rel})
\begin{equation}
\Big|\frac{\wh{\lmb}}{T-t}-\wh b\Big|\lesssim\frac{\wh b}{|\log\wh b|},\label{eq:rough-compat-intro}
\end{equation}
and $\eps^{\RR}$ satisfies the \emph{smallness estimate inside the
lightcone}: (see Section~\ref{subsec:RR-blow-up-sol} for more precise
statements and notation)
\begin{equation}
\|\eps^{\RR}\|_{\dot{H}^{2}(y\lesssim\wh b^{-1})}+\|\dot{\eps}^{\RR}\|_{\dot{H}^{1}(y\lesssim\wh b^{-1})}\lesssim\frac{\wh b^{2}}{|\log\wh b|}.\label{eq:smallness-intro}
\end{equation}
The logarithmic gain in the above display is crucial. This gain was
one of the key observations in \cite{RaphaelRodnianski2012Publ.Math.}
and is also indispensable for our error analysis. To put the above
information into our setting, we translate it into the first-order
formulation and in terms of the self-similar coordinates, say 
\[
\bm{v}(\tau,\rho)=\bm{P}_{\wh{\nu}(\tau)}^{\RR}(\wh b(\tau);\rho)+\bm{\eps}_{\wh{\nu}(\tau)}^{\RR}(\tau,\rho).
\]

\vspace{5bp}

\noindent \emph{4. Decomposition of solutions. }Next, we introduce
a new decomposition of $\bm{v}$: 
\[
\bm{v}(\tau,\rho)=\bm{P}(\nu(\tau),b(\tau);\rho)+\bm{\eps}(\tau,\rho),
\]
where $\bm{P}=\bm{P}(\nu,b;\rho)$ is a new modified profile 
\[
\bm{P}=\bm{Q}_{\nu}+\frac{b}{2}\Big\{(\bm{\varphi}_{0}-\bm{\varphi}_{1})+\begin{bmatrix}0\\
\Lmb Q_{\nu}
\end{bmatrix}\Big\}.
\]
$\bm{P}$ is chosen to achieve the following two goals: (i) $\bm{P}$
is described in terms of $Q$ and $\bm{\varphi}_{j}$ so that it is
fitted for applying the previous linear analysis (and computations)
of $\mathbf{M}_{\nu}$, and (ii) $\bm{P}$ is sufficiently close to
$\bm{P}^{\RR}$ (after correcting parameters) so that the smallness
estimates \eqref{eq:smallness-intro} for $\bm{\eps}^{\RR}$ can be
transferred to $\bm{\eps}$. Note that $\bm{\eps}$ does not satisfy
orthogonality conditions anymore, but its smallness will suffice.

\vspace{5bp}

\noindent \emph{5. Refined modulation equations.} To obtain sharp
evolution laws of the modulation parameters $\nu$ and $b$, we simply
test the evolution equation of $\bm{v}$ against each $\ell_{j}$.
This improves the precision of the modulation equations in \cite{RaphaelRodnianski2012Publ.Math.}.

First, as our definition of $\ell_{j}$ is sharply localized to (the
inside of) the light cone $\rho\leq1$ with the correct weight function
$g_{j}$, the computations do not encounter any cutoff errors even
at the self-similar scale $\rho\sim1$. This allows us to obtain the
modulation equations 
\begin{equation}
\left\{ \begin{aligned}\frac{\nu_{\tau}}{\nu}+\Big(\frac{b}{\nu}-1\Big)+\frac{\frac{1}{3}}{|\log\nu|} & =(\bm{\eps}\text{-term)}+O\Big(\frac{1}{|\log\nu|^{2}}\Big),\\
\frac{b_{\tau}}{b}+\Big(\frac{b}{\nu}-1\Big)\frac{\frac{1}{2}}{|\log\nu|}+\frac{\frac{1}{2}}{|\log\nu|}+\frac{\frac{5}{12}-\frac{\log2}{2}}{|\log\nu|^{2}} & =(\bm{\eps}\text{-term)}+O\Big(\frac{1}{|\log\nu|^{3}}\Big).
\end{aligned}
\right.\label{eq:mod-eqn-intro}
\end{equation}
The first improvement is the determination of the precise $\frac{1}{|\log\nu|^{2}}$-order
term in \eqref{eq:mod-eqn-intro} for $b_{\tau}$. See also Remark
\ref{rem:mod-est}.

The second improvement is in the structure of $(\bm{\eps}\text{-term})$,
thanks to the invariance of $\ell_{j}$. We refer to Section~\ref{subsec:Modulation-estimates}
for details. We remark that $(\bm{\eps}\text{-term})$ itself cannot
be considered as a perturbative term. However, the additional structure
allows us to integrate the refined modulation equations \eqref{eq:mod-eqn-intro}
by absorbing $(\bm{\eps}\text{-term})$ as a correction to the modulation
parameters. This part crucially uses the logarithmic gain in the smallness
estimate \eqref{eq:smallness-intro}.

\vspace{5bp}

\noindent \emph{6. Integration of the modulation equations.} To integrate
the modulation equations \eqref{eq:mod-eqn-intro}, assuming $\bm{\eps}=0$
temporarily, we reduce \eqref{eq:mod-eqn-intro} into a single equation
of $b$. This requires a more refined relation between $\nu$ and
$b$ than \eqref{eq:rough-compat-intro}. For this purpose, we observe
that the quantity 
\[
\frac{b}{\nu}-1,
\]
which is a priori of size $O(\frac{1}{|\log\nu|})$ by \eqref{eq:rough-compat-intro},
is exponentially unstable under the evolution \eqref{eq:mod-eqn-intro}.
Thanks to this exponential instability, a backward-in-time integration
of the evolution equation for $\frac{b}{\nu}-1$ with the boundary
condition $\frac{b}{\nu}-1\to0$ as $\tau\to+\infty$ gives a refined
relation 
\[
\frac{b}{\nu}-1\approx\frac{\frac{1}{6}}{|\log\nu|}+(\bm{\eps}\text{-term})
\]
with a certain structure on $(\bm{\eps}\text{-term})$. Substituting
this into the $b_{\tau}$-equation of \eqref{eq:mod-eqn-intro} gives
\[
\frac{b_{\tau}}{b}+\frac{\frac{1}{2}}{|\log b|}+\frac{\frac{1}{2}-\frac{\log2}{2}}{|\log b|^{2}}=(\bm{\eps}\text{-term})+O\Big(\frac{1}{|\log b|^{\frac{5}{2}}}\Big).
\]
Integrating this equation by absorbing $(\bm{\eps}\text{-term})$
as a correction to $b$ and using $\nu=b(1+o_{\tau\to\infty}(1))$
again, the sharp asymptotics of the scaling parameter $\nu(\tau)$
follows. Substituting this into $\lmb(t)=(T-t)\nu(\tau)$ completes
the proof of Theorem~\ref{thm:MainThm}.

\subsection{\label{subsec:Organization}Organization of the paper}

Sections~\ref{sec:First-two-eigenpairs}-\ref{sec:Formal-invariant-subspace}
are devoted to the linear analysis of the operator $\mathbf{M}_{\nu}$.
In Section~\ref{sec:First-two-eigenpairs}, we construct and study
the refined properties of the first two eigenpairs $(\lmb_{0},\bm{\varphi}_{0})$
and $(\lmb_{1},\bm{\varphi}_{0})$. In Section~\ref{sec:Formal-invariant-subspace},
we use these eigenpairs to obtain a formal invariant subspace decomposition
for $\mathbf{M}_{\nu}$. In particular, we construct key linear functionals
$\ell_{0}$ and $\ell_{1}$ that are invariant under $\mathbf{M}_{\nu}$.
Section~\ref{sec:Blow-up-analysis} is devoted to the blow-up analysis
and the proof of Theorem~\ref{thm:MainThm}.

\subsection{\label{subsec:Notations}Notation}

For $A\in\bbR$ and $B>0$, we use the standard asymptotic notation
$A\aleq B$ or $A=O(B)$ to denote the relation $|A|\leq CB$ for
some positive constant $C$. The dependencies of $C$ are specified
by subscripts, e.g., $A\aleq_{E}B\Leftrightarrow A=O_{E}(B)\Leftrightarrow|A|\leq C(E)B$.
We also introduce the shorthands 
\[
\langle x\rangle=(1+x^{2})^{\frac{1}{2}},\quad\log_{+}x=\max\{0,\log x\},\quad\log_{-}x=\max\{0,-\log x\}.
\]

We let $\chi$ be a smooth radial cutoff function such that $\chi(r)=1$
for $r\leq1$ and $\chi(r)=0$ for $r\geq2$. For any $R>0$, we define
$\chi_{R}(r)\coloneqq\chi(r/R)$ and $\chi_{\gtrsim R}\coloneqq1-\chi_{R}$.
We also use the sharp cutoff function on a set $A$, denoted by $\chf_{A}$.

For $f:(0,\infty)\to\bbR$, we use the shorthand for integrals: 
\[
\int f\coloneqq\int f(r)rdr=\frac{1}{2\pi}\int_{\bbR^{2}}f(|x|)dx.
\]
For functions $f,g:(0,\infty)\to\bbR$, their $L^{2}$ inner product
is defined by 
\[
\langle f,g\rangle\coloneqq\int fg.
\]
All the $L^{p}$ norms are equipped with the $rdr$-measure unless
otherwise stated. For $\lmb>0$, we define 
\[
f_{\lmb}(r)\coloneqq f\Big(\frac{r}{\lmb}\Big).
\]
For $s\in\bbR$, let $\Lmb_{s}$ be the infinitesimal generator of
the $\dot{H}^{s}$-invariant scaling: 
\begin{align*}
\Lmb_{s}f & \coloneqq\frac{d}{d\lmb}\Big|_{\lmb=1}\lmb^{1-s}f(\lmb\cdot)=(r\rd_{r}+1-s)f,\\
\Lmb f & \coloneqq\Lmb_{1}f.
\end{align*}
For a vector $\bm{f}=(f,\dot{f})^{t}$, we define 
\[
\bm{f}_{\lmb}(r)\coloneqq\begin{bmatrix}f(\frac{r}{\lmb})\\
\frac{1}{\lmb}\dot{f}(\frac{r}{\lmb})
\end{bmatrix}.
\]

We also use the 1-equivariant Laplacian $\Delta_{1}\coloneqq\rd_{rr}+\frac{1}{r}\rd_{r}-\frac{1}{r^{2}}$.
For a function $f$ and a norm $\|\cdot\|_{X}$, we write $f=O_{X}(B)$
to denote $\|f\|_{X}\lesssim B$. For $k\in\bbN$, we define 
\[
|f|_{k}\coloneqq\sup_{0\leq\ell\leq k}|r^{\ell}\rd_{r}^{\ell}f|\qquad\text{and}\qquad|f|_{-k}\coloneqq\sup_{0\leq\ell\leq k}|r^{-\ell}\rd_{r}^{k-\ell}f|.
\]

We also need notation related to hypergeometric functions. We denote
by $\Gamma$ the usual gamma function. We also use Pochhammer's symbol
\begin{equation}
(z)_{n}=\frac{\Gamma(z+n)}{\Gamma(z)}=\begin{cases}
1 & \text{if }n=0,\\
z(z+1)\cdots(z+n-1) & \text{if }n=1,2,\dots.
\end{cases}\label{eq:def-Pochhammer}
\end{equation}
We will also need the \emph{digamma function} 
\begin{equation}
\psi(z)\coloneqq\frac{\Gamma'(z)}{\Gamma(z)},\label{eq:def-digamma}
\end{equation}
i.e., the logarithmic derivative of the gamma function $\Gamma$.
Let us record several known properties of $\psi$ (see \cite[Section 6.3]{AbramowitzStegun1964}):
\begin{equation}
\left\{ \begin{aligned}\psi(1) & =-\gamma, & \psi\Big(\frac{1}{2}\Big) & =-\gamma-2\log2,\\
\psi(2) & =-\gamma+1, & \psi\Big(\frac{3}{2}\Big) & =-\gamma-2\log2+2,
\end{aligned}
\right.\label{eq:digamma-values}
\end{equation}
where $\gamma$ is the Euler--Mascheroni constant, and 
\begin{equation}
\psi(z+1)=\psi(z)+\frac{1}{z},\quad\psi(z)-\psi(w)=\sum_{n=0}^{\infty}\Big(\frac{1}{n+w}-\frac{1}{n+z}\Big).\label{eq:digamma-props}
\end{equation}

\noindent \vspace{5bp}

\noindent \mbox{\textbf{Acknowledgements.} }The author would like
to thank Charles~Collot, \mbox{Sung-Jin}~Oh, and Pierre~Raphaël
for helpful discussions and suggestions for this paper. The author
is supported by Huawei Young Talents Programme at IHES.\vspace{5bp}

\section{\label{sec:First-two-eigenpairs}First two eigenpairs for ${\bf M}_{\nu}$}

In this section, we need two small parameters $\nu^{\ast}$ and $\delta_{0}$
to be chosen in the course of the proof, satisfying the parameter
dependence $0<\nu^{\ast}\ll\delta_{0}\ll1$. Next, $\nu$ will vary
in the range $(0,\nu^{\ast})$. Also, $\lmb$ in this section always
means a spectral parameter which ranges in either $|\lmb|\aleq\frac{1}{|\log\nu|}$
or $|\lmb-1|\aleq\frac{1}{|\log\nu|}$.\footnote{In the blow-up analysis, $\lmb$ or $\lmb(t)$ in general denotes
the scale of the soliton $\bm{Q}_{\lmb(t)}$.}

The goal (Proposition~\ref{prop:FirstTwoEigenpairs}) of this section
is to construct and study the refined properties of the first two
\emph{smooth} eigenpairs $(\lmb_{0},\bm{\varphi}_{0})$ and $(\lmb_{1},\bm{\varphi}_{1})$
to the linear operator 
\[
{\bf M}_{\nu}=\begin{bmatrix}-\Lmb & 1\\
-H_{\nu} & -\Lmb_{0}
\end{bmatrix}\tag{\ref{eq:def-M_nu}}
\]
in the regime $0<\nu\ll1$ such that $\lmb_{0}\approx1$, $\lmb_{1}\approx0$,
and each $\bm{\varphi}_{j}$ is smooth (on $[0,1]$, in particular).
Note that $\lmb_{j}$ and $\bm{\varphi}_{j}$ depend on $\nu$. Recall
that $\mathbf{M}_{\nu}$ naturally appears in the linearization of
\eqref{eq:WM} in the self-similar coordinates.

The two eigenpairs $(\lmb_{0},\bm{\varphi}_{0})$ and $(\lmb_{1},\bm{\varphi}_{1})$
are crucial ingredients for our blow-up analysis. They will directly
appear in our blow-up profile ansatz \eqref{eq:def-mod-prof} and
in the explicit construction of the linear functionals $\ell_{j}$
\eqref{eq:def-ell_j} that are invariant under $\mathbf{M}_{\nu}$.
The latter functionals will be the key testing functions that enable
the refined modulation estimates as well as the sharp universal blow-up
rate \eqref{eq:sharp-blow-up-rate}.

To be explained in Section~\ref{subsec:Refined-inner-eigenfunctions},
the first two eigenvalues $\lmb_{0}$ and $\lmb_{1}$ turn out to
be approximate solutions to $p(\nu;\lmb)=0$, where\footnote{Recall the digamma function $\psi=\Gamma'/\Gamma$.}
\begin{align}
p(\nu;\lmb) & \coloneqq\lmb(\lmb-1)\Big(|\log\nu|-1-\frac{d_{0}(\lmb)}{2}\Big)+\lmb-\frac{5}{6},\label{eq:def-p}\\
d_{0}(\lmb) & \coloneqq-\psi(1)-\psi(2)+\psi\Big(\frac{\lmb}{2}+1\Big)+\psi\Big(\frac{\lmb+1}{2}\Big).\label{eq:def-d0}
\end{align}
For each $j\in\{0,1\}$, it is easy to see by the implicit function
theorem that 
\begin{equation}
\exists!\wh{\lmb}_{j}=\wh{\lmb}_{j}(\nu)\text{ satisfying }p(\nu;\wh{\lmb}_{j})=0\text{ in the class }|\wh{\lmb}_{j}-(1-j)|\lesssim\frac{1}{|\log\nu|}.\label{eq:def-lmb-hat}
\end{equation}
Moreover, since (due to \eqref{eq:digamma-values}) 
\[
d_{0}(1-j)=1-2\log2-2j,
\]
each $\wh{\lmb}_{j}$ satisfies the following $\frac{1}{|\log\nu|}$-expansion:
\begin{equation}
\left\{ \begin{aligned}\wh{\lmb}_{0} & =1+\frac{-\frac{1}{6}}{|\log\nu|}+\frac{-\frac{1}{9}+\frac{1}{6}\log2}{|\log\nu|^{2}}+O\Big(\frac{1}{|\log\nu|^{3}}\Big),\\
\wh{\lmb}_{1} & =\frac{-\frac{5}{6}}{|\log\nu|}+\frac{-\frac{5}{9}+\frac{5}{6}\log2}{|\log\nu|^{2}}+O\Big(\frac{1}{|\log\nu|^{3}}\Big).
\end{aligned}
\right.\label{eq:lmb-hat-asymp}
\end{equation}
The above expansion also holds after taking $\nu\rd_{\nu}$ to the
both sides. We can now state the main result of this section.
\begin{prop}[First two eigenpairs]
\label{prop:FirstTwoEigenpairs}There exists $\nu^{\ast}>0$ such
that for any $\nu\in(0,\nu^{\ast})$ and $j\in\{0,1\}$, there exist
unique eigenvalue $\lmb_{j}=\lmb_{j}(\nu)$ and corresponding eigenfunction
$\bm{\varphi}_{j}=\bm{\varphi}_{j}(\nu;\rho)$ to the linear operator
${\bf M}_{\nu}$ with the following properties.
\begin{itemize}
\item (Smooth eigenfunction $\bm{\varphi}_{j}$) The eigenfunctions $\bm{\varphi}_{j}(\nu;\cdot)$
are globally defined and analytic. $\bm{\varphi}_{j}$ has the structure
\begin{equation}
\bm{\varphi}_{j}=\begin{bmatrix}\varphi_{j}\\
(\lmb_{j}+\Lmb)\varphi_{j}
\end{bmatrix}\label{eq:bmphi-to-phi}
\end{equation}
and $\varphi_{j}$ solves the differential equation 
\begin{equation}
[H_{\nu}+(\lmb_{j}+\Lmb_{0})(\lmb_{j}+\Lmb)]\varphi_{j}=0.\label{eq:eigen-ft-rel}
\end{equation}
\item (Estimates for the eigenvalues) The eigenvalues $\lmb_{j}=\lmb_{j}(\nu)$
satisfy 
\begin{align}
|\lmb_{j}-\wh{\lmb}_{j}| & \aleq\nu^{2}|\log\nu|,\label{eq:sharp-eig-val}\\
|\nu\rd_{\nu}\lmb_{j}| & \aleq\frac{1}{|\log\nu|^{2}}.\label{eq:nudnu-eig-val}
\end{align}
In particular, the expansion \eqref{eq:lmb-hat-asymp} holds for $\lmb_{j}$.
\item (Sharp estimates of $\varphi_{j}$ inside the light cone) For $\rho\in(0,1]$,
we have 
\begin{equation}
\begin{aligned}\varphi_{j}(\nu;\rho) & =\Big[\frac{1}{\nu}\Lmb Q+\nu T_{1}+(2\lmb_{j}-1)\nu S_{1}+\lmb_{j}(\lmb_{j}-1)\nu U_{1}\Big]\Big(\frac{\rho}{\nu}\Big)\\
 & \peq+\chi_{\ageq\nu}(\rho)\lmb_{j}(\lmb_{j}-1)U_{\infty}(\lmb_{j};\rho)+\td{\varphi}_{j}(\nu;\rho),
\end{aligned}
\label{eq:SharpEigftDecomp}
\end{equation}
where the profiles $T_{1}(y)$, $S_{1}(y)$, $U_{1}(y)$, and $U_{\infty}(\lmb_{j};\rho)$
are defined in \eqref{eq:def-T1S1U1} and \eqref{eq:def-U-infty},
and the remainder $\td{\varphi}_{j}$ satisfies for any $k\in\bbN$
\begin{equation}
\chf_{(0,1]}(|\td{\varphi}_{j}|_{k}+|\nu\rd_{\nu}\td{\varphi}_{j}|_{k})\aleq_{k}\nu^{2}\Big(\chf_{(0,\nu]}\rho\Big(\frac{\rho}{\nu}\Big)^{4}+\chf_{[\nu,1]}\rho\big\langle\log(\frac{\rho}{\nu})\big\rangle^{2}\Big).\label{eq:SharpEigftEst}
\end{equation}
\item (Rough estimates of $\varphi_{j}$) For any $k\in\bbN$, we have 
\begin{align}
\chf_{(0,1]}|\varphi_{j}|_{k} & \aleq_{k}\chf_{(0,\nu]}\frac{\rho}{\nu^{2}}+\chf_{[\nu,1]}\frac{1}{\rho},\label{eq:EigenFuncEst-1}\\
\chf_{(0,1]}\Big|\varphi_{j}-\frac{1}{\nu}\Lmb Q_{\nu}\Big|_{k} & \aleq_{k}\chf_{(0,\nu]}\rho\Big(\frac{\rho}{\nu}\Big)^{2}+\chf_{[\nu,1]}\frac{\rho\langle\log\rho\rangle}{|\log\nu|},\label{eq:EigenFuncEst-2}
\end{align}
and 
\begin{align}
\chf_{(0,1]}|\nu\rd_{\nu}\varphi_{j}|_{k} & \aleq_{k}\chf_{(0,\nu]}\frac{\rho}{\nu^{2}}+\chf_{[\nu,1]}\Big(\frac{\rho\langle\log\rho\rangle}{|\log\nu|^{2}}+\frac{\nu^{2}}{\rho^{3}}\Big),\label{eq:EigenFuncEst-3}\\
\chf_{(0,1]}\Big|\nu\rd_{\nu}\varphi_{j}+\frac{1}{\nu}\Lmb_{0}\Lmb Q_{\nu}\Big|_{k} & \aleq_{k}\chf_{(0,\nu]}\frac{\rho^{3}}{\nu^{2}}+\chf_{[\nu,1]}\Big(\frac{\rho\langle\log\rho\rangle}{|\log\nu|^{2}}+\frac{\nu^{2}\langle\log(\frac{\rho}{\nu})\rangle}{\rho}\Big).\label{eq:EigenfuncEst-4}
\end{align}
\end{itemize}
\end{prop}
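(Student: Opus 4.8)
The plan is to prove Proposition~\ref{prop:FirstTwoEigenpairs} by carrying out rigorously the matching argument sketched in Section~\ref{subsec:Strategy-of-the}, in four logical stages: inner eigenfunction construction, outer eigenfunction construction, matching via the implicit function theorem, and a final bookkeeping step that upgrades the rough estimates to the sharp expansions \eqref{eq:SharpEigftDecomp}--\eqref{eq:SharpEigftEst}. Throughout I fix $\nu\in(0,\nu^{\ast})$ and regard $\lmb$ as a spectral parameter in one of the two windows $|\lmb|\lesssim|\log\nu|^{-1}$ or $|\lmb-1|\lesssim|\log\nu|^{-1}$; the target ODE is \eqref{eq:eigen-ft-rel}, i.e.\ $[H_\nu+(\lmb+\Lmb_0)(\lmb+\Lmb)]\varphi=0$.

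\emph{Step 1 (inner region $\rho\le 2\delta_0$).} After rescaling $y=\rho/\nu$ the operator $H_\nu$ becomes the fixed, $\nu$-independent linearized operator $H=-\rd_{yy}-\tfrac1y\rd_y+\tfrac{\cos 2Q}{y^2}$, whose smooth kernel element is $\Lmb Q(y)=\tfrac{2y}{1+y^2}$. I treat the zeroth-order term $(\lmb+\Lmb_0)(\lmb+\Lmb)$ as a perturbation on $y\lesssim\delta_0/\nu$ and solve \eqref{eq:eigen-ft-rel} by a fixed-point argument in a weighted space, starting the iteration from $\tfrac1\nu\Lmb Q_\nu$. Because the spectral problem is \emph{critical} — the formal power-series solution contains a resonant $\log y$ growth — I must expand $\varphi_{\inn}$ to enough orders in $\lmb$ (and in $\nu^2 y^2$) to isolate the profiles $T_1,S_1,U_1$ and the growing tail $U_\infty(\lmb;\cdot)$ appearing in \eqref{eq:SharpEigftDecomp}; this is the content of the ``refined inner eigenfunction'' analysis referenced as Section~\ref{subsec:Refined-inner-eigenfunctions}. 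The precise $\log y$ coefficient in the large-$y$ asymptotics of $\varphi_{\inn}$ is what will produce the function $d_0(\lmb)$ in the matching relation $p(\nu;\lmb)=0$ of \eqref{eq:def-p}.

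\emph{Step 2 (outer region $\rho\ge\tfrac12\delta_0$) and matching.} Here $\nu\ll1$ lets me replace $H_\nu$ by the $1$-equivariant Laplacian $-\Delta_1$ up to an error of size $O(\nu^2/\rho^2)$, so the leading equation is $[-\Delta_1+(\lmb+\Lmb_0)(\lmb+\Lmb)]\varphi=0$; the substitution $z=\rho^2$ (or $z=1-\rho^2$) conjugates this to a hypergeometric equation, and I select the solution $\varphi_{\out}$ regular at the light cone $\rho=1$ from the standard hypergeometric basis, then pull in the $O(\nu^2)$ potential error by another fixed-point/variation-of-parameters argument on $[\tfrac12\delta_0,1]$. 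Matching: both $\varphi_{\inn}$ and $\varphi_{\out}$ are unique up to scalars, so I normalize them to agree at $\rho=\delta_0$ and define the Wronskian-type mismatch $W(\nu;\lmb):=(\varphi_{\inn}'\varphi_{\out}-\varphi_{\inn}\varphi_{\out}')|_{\rho=\delta_0}$; using the connection formula for hypergeometric functions (digamma values \eqref{eq:digamma-values}--\eqref{eq:digamma-props}) together with the large-$y$ expansion from Step~1, I show $W(\nu;\lmb)=c\,\nu^{?}\,p(\nu;\lmb)\bigl(1+O(|\log\nu|^{-1})\bigr)$ with $\rd_\lmb p\ne0$ near $\lmb=1-j$, whence the implicit function theorem yields the unique root $\lmb_j(\nu)$ and the estimate $|\lmb_j-\wh\lmb_j|\lesssim\nu^2|\log\nu|$ in \eqref{eq:sharp-eig-val}; differentiating the relation $W(\nu;\lmb_j(\nu))=0$ in $\nu$ gives \eqref{eq:nudnu-eig-val}, and \eqref{eq:lmb-hat-asymp} follows by Taylor-expanding $p$ using $d_0(1-j)=1-2\log2-2j$.

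\emph{Step 3 (estimates for $\bm\varphi_j$).} Set $\varphi_j$ to be the glued function (which is now genuinely smooth at $\rho=\delta_0$ since the mismatch vanishes), analytic on $[0,\infty)$ because away from $\rho=1$ it solves an ODE with analytic coefficients and at $\rho=1$ it coincides with the chosen regular hypergeometric solution; then $\bm\varphi_j=(\varphi_j,(\lmb_j+\Lmb)\varphi_j)^t$ by the structural identity \eqref{eq:bmphi-to-phi}. The sharp decomposition \eqref{eq:SharpEigftDecomp} and its estimate \eqref{eq:SharpEigftEst} come from reading off the first few terms of the Step~1 expansion and bounding the fixed-point remainder $\td\varphi_j$; the rough bounds \eqref{eq:EigenFuncEst-1}--\eqref{eq:EigenfuncEst-4} and their $\nu\rd_\nu$ versions follow by differentiating the fixed-point identities in $\nu$ (the $\nu\rd_\nu\lmb_j$ contributions are harmless by \eqref{eq:nudnu-eig-val}) and matching the interior and exterior bounds at $\rho\sim\nu$ and $\rho\sim\delta_0$; the $\chi_{\gtrsim\nu}$ cutoff on the $U_\infty$ term simply records that $U_\infty(\lmb_j;\cdot)$ is the piece of $\varphi_{\out}$ that was \emph{not} already visible in the inner profiles. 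I expect the main obstacle to be Step~1, specifically extracting the \emph{exact} constant $d_0(\lmb)$ (hence the constants $-\tfrac16,-\tfrac56,\tfrac5{12}$, etc.) from the resonant logarithmic term: one has to track several orders of a perturbative expansion of a critical ODE and then connect the inner $\log$-coefficient to the hypergeometric connection coefficients in Step~2 without losing the $O(|\log\nu|^{-2})$-level precision that the later blow-up analysis in Section~\ref{sec:Blow-up-analysis} demands.
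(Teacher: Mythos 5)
Your proposal follows essentially the same route as the paper: a fixed-point construction of the inner eigenfunction seeded by $\frac1\nu\Lmb Q_\nu$ with the refined $T_1,S_1,U_1,U_\infty$ expansion, a hypergeometric construction of the outer eigenfunction regular at $\rho=1$, and a matching at $\rho=\delta_0$ resolved by the implicit function theorem (the paper's matching functional $\Phi=\varphi_{\inn}'-\varphi_{\out}'\varphi_{\inn}/\varphi_{\out}$ is just your Wronskian divided by $\varphi_{\out}$). The one imprecision is that the mismatch is $p(\nu;\lmb)$ times an $O(1)$ factor \emph{plus} an additive error $O(\nu^2|\log\nu|^2)$ — not $p$ times a multiplicative $(1+O(|\log\nu|^{-1}))$ factor, which would force $\lmb_j=\wh\lmb_j$ exactly — and it is this additive error, divided by $\rd_\lmb\Phi\gtrsim|\log\nu|$, that produces the nontrivial bound \eqref{eq:sharp-eig-val}.
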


\begin{rem}[$\frac{1}{|\log\nu|}$-expansion]
Our spectral problem has a critical nature as in \cite{CollotGhoulMasmoudiNguyen2019arXiv2};
$\wh{\lmb}_{j}$ (and hence $\lmb_{j})$ does not have a polynomial
expansion, but rather has an $\frac{1}{|\log\nu|}$-expansion. In
our derivation of the refined modulation equations (see Section~\ref{subsec:Modulation-estimates}),
the precise $\frac{1}{|\log\nu|^{2}}$-order term of $\lmb_{1}$ is
necessary. However, we only need the $\frac{1}{|\log\nu|}$-order
term for $\lmb_{0}$.
\end{rem}

\begin{rem}[Structure of $\varphi_{j}$]
The profiles in the first line of RHS\eqref{eq:SharpEigftDecomp}
will be derived in the inner scale analysis, and this ansatz is sufficiently
accurate in the region $\rho\ll1$. However, the ansatz is no more
accurate in the region $\rho\sim1$, in which region the eigenfunction
$\varphi_{j}(\nu;\rho)$ in fact behaves like a hypergeometric function
that is smooth at the light cone $\rho=1$. The $U_{\infty}$-correction
term of RHS\eqref{eq:SharpEigftDecomp} is introduced to incorporate
this mismatch.
\end{rem}

\begin{rem}[Technical bounds on eigenfunctions]
Roughly speaking, by substituting $\rho\sim1$, the bounds \eqref{eq:SharpEigftEst},
\eqref{eq:EigenFuncEst-2}, and \eqref{eq:EigenFuncEst-1} might be
considered as $O(\nu^{2}|\log\nu|^{2})$, $O(\frac{1}{|\log\nu|})$,
and $O(1)$ bounds, respectively. Thus \eqref{eq:SharpEigftEst} is
a very sharp bound. However, the rough bounds \eqref{eq:EigenFuncEst-1}-\eqref{eq:EigenfuncEst-4}
will suffice in many places of our blow-up analysis.
\end{rem}

\begin{rem}[Comparison with the spectral property for exact self-similar solutions]
\label{rem:comparison-exact-self-sim}For higher-dimensional wave
maps, which are energy-supercritical, there are exact self-similar
(type-I) blow-up solutions \cite{BizonBiernat2015CMP}. There is always
an unstable eigenvalue $1$, corresponding to the time-translation
symmetry. Except this eigenvalue, the \emph{mode stability} result
of \cite{CostinDonningerGlogic2017CMP} shows that there is no other
eigenvalue $\lmb$ with $\Re(\lmb)\geq0$ for some exact self-similar
blow-up solutions. In particular, $0$ is not an eigenvalue for that
problem. For our spectral problem, although our blow-up solution is
not exactly self-similar, there is an eigenvalue $\lmb_{0}$ near
$1$. The eigenvalue $\lmb_{1}$ near $0$ (in fact $\lmb_{1}(\nu)\to0$
as $\nu\to0$) is special to our situation and responsible for our
almost self-similar blow-up.
\end{rem}

The rest of this section is devoted to the proof of Proposition~\ref{prop:FirstTwoEigenpairs}.
We use the matching argument as explained in Section~\ref{subsec:Strategy-of-the}.

\subsection{\label{subsec:Rough-inner-eigenfunctions}Rough inner eigenfunctions}

The goal of this subsection is to construct inner eigenfunctions to
the problem \eqref{eq:eigen-ft-rel-no-j} in the inner region $\rho\leq2\delta_{0}$.
In this region, in the sense of constructing the ansatz, the operator
$(\lmb+\Lmb_{0})(\lmb+\Lmb)$ of \eqref{eq:eigen-ft-rel-no-j} is
perturbative with respect to $H_{\nu}$; we rewrite the eigenproblem
as 
\begin{equation}
H_{\nu}\varphi=-(\Lmb_{0}+\lmb)(\Lmb+\lmb)\varphi,\label{eq:eigen-varphi-inner}
\end{equation}
Since $H_{\nu}\varphi$ is the main linear term, it is convenient
to zoom in to the soliton scale. Thus we introduce the inner variables
\begin{equation}
\varphi_{\inn}(\lmb,\nu;\rho)=\frac{1}{\nu}\phi_{\inn}(\lmb,\nu;y),\qquad y=\frac{\rho}{\nu},\label{eq:def-y}
\end{equation}
and rewrite the eigenproblem \eqref{eq:eigen-varphi-inner} as 
\begin{equation}
H\phi_{\inn}=-\nu^{2}(\Lmb_{0}+\lmb)(\Lmb+\lmb)\phi_{\inn}.\label{eq:eigen-phi-inner}
\end{equation}
We then construct the inner eigenfunction by considering the RHS\eqref{eq:eigen-phi-inner}
as a perturbative term.

If RHS\eqref{eq:eigen-phi-inner} were zero, $\phi$ must be a kernel
element of $H$. As we want $\phi$ to be smooth at the origin $y=0$,
we may choose $\phi=\Lmb Q$. Starting from this, it is natural to
consider an expansion of the form
\[
\phi_{\inn}=\Lmb Q+\nu^{2}(T_{1}+(2\lmb-1)S_{1}+\lmb(\lmb-1)U_{1})+O(\nu^{4}).
\]
Substituting this expansion into \eqref{eq:eigen-phi-inner} and taking
the $\nu^{2}$-order terms, we obtain the equations for the profiles
$T_{1},S_{1},U_{1}$: 
\begin{equation}
\left\{ \begin{aligned}HT_{1}+\Lmb_{0}\Lmb_{0}\Lmb Q & =0,\\
HS_{1}+\Lmb_{0}\Lmb Q & =0,\\
HU_{1}+\Lmb Q & =0.
\end{aligned}
\right.\label{eq:T1,S1,U1-eqn}
\end{equation}

To construct these profiles, we need to invert the operator $H$.
A fundamental system associated with $H$ is $\{J_{1},J_{2}\}$ with
\begin{equation}
J_{1}\coloneqq\Lmb Q\quad\text{and}\quad J_{2}\coloneqq\Lmb Q\int_{1}^{y}\frac{dy'}{y'(\Lmb Q)^{2}}.\label{eq:def-J1J2}
\end{equation}
Note that $J_{2}$ is singular at the origin $y=0$. As mentioned
above, we require $\phi$ to be smooth at $y=0$. Thus we determine
the higher order expansions using the outgoing Green's function of
$H$; we choose a formal right inverse of $H$ by the formula 
\begin{equation}
H^{-1}f\coloneqq J_{1}\int_{0}^{y}fJ_{2}y'dy'-J_{2}\int_{0}^{y}fJ_{1}y'dy'.\label{eq:def-HQ-inv}
\end{equation}
so that repeated applications of $H^{-1}$ improve regularity (or,
degeneracy) at the origin $y=0$. Using this $H^{-1}$, we define
the smooth profiles $T_{1},S_{1},U_{1}$ by 
\begin{equation}
\left\{ \begin{aligned}T_{1} & \coloneqq-H^{-1}\Lmb_{0}\Lmb_{0}\Lmb Q,\\
S_{1} & \coloneqq-H^{-1}\Lmb_{0}\Lmb Q,\\
U_{1} & \coloneqq-H^{-1}\Lmb Q.
\end{aligned}
\right.\label{eq:def-T1S1U1}
\end{equation}
The following pointwise estimates are immediate consequences of \eqref{eq:T1,S1,U1-eqn}
and \eqref{eq:def-HQ-inv}, whose proof is omitted.
\begin{lem}[Pointwise estimates for inner profiles]
For any $k\in\bbN$, the following estimates hold.
\begin{itemize}
\item (Pointwise bounds) We have 
\begin{align}
|\Lmb Q|_{k}=|J_{1}|_{k} & \aleq_{k}\chf_{(0,1]}y+\chf_{[1,+\infty)}y^{-1},\label{eq:LmbQ-bound}\\
|J_{2}|_{k} & \aleq_{k}\chf_{(0,1]}y^{-1}+\chf_{[1,+\infty)}y,\label{eq:J2-bound}
\end{align}
and 
\begin{equation}
\left\{ \begin{aligned}|T_{1}|_{k}+|S_{1}|_{k} & \aleq_{k}\chf_{(0,1]}y^{3}+\chf_{[1,+\infty)}y,\\
|U_{1}|_{k} & \aleq_{k}\chf_{(0,1]}y^{3}+\chf_{[1,+\infty)}y\langle\log y\rangle,
\end{aligned}
\right.\label{eq:T1S1U1-bound}
\end{equation}
\item (Asymptotics and degeneracy estimates as $y\to\infty$) For $y\geq1$,
we have 
\begin{equation}
\Big|\Lmb Q-\frac{2}{y}\Big|_{k}+|\Lmb_{0}\Lmb Q|_{k}\aleq_{k}\frac{1}{y^{3}}.\label{eq:LmbQ-asymptotics}
\end{equation}
and 
\begin{equation}
\left\{ \begin{aligned}\Big|T_{1}+\frac{1}{3}y\Big|_{k}+\Big|S_{1}-\frac{1}{2}y\Big|_{k}+|U_{1}-(y\log y-y)|_{k} & \aleq_{k}\frac{\langle\log y\rangle}{y},\\
|\Lmb_{2}T_{1}|_{k}+|\Lmb_{2}S_{1}|_{k}+|\Lmb_{2}U_{1}-y|_{k} & \aleq_{k}\frac{\langle\log y\rangle}{y}.
\end{aligned}
\right.\label{eq:T1S1U1-asymptotics}
\end{equation}
\item (Mapping property of $H^{-1}$) We have 
\begin{equation}
\begin{aligned}|H^{-1}f|_{2} & \aleq|J_{1}|_{2}\int_{0}^{y}|fJ_{2}|y'dy'+|J_{2}|_{2}\int_{0}^{y}|fJ_{1}|y'dy'\\
 & \peq+y^{2}\Big(|(y\rd_{y}J_{1})J_{2}|+|(y\rd_{y}J_{2})J_{1}|\Big)|f|.
\end{aligned}
\label{eq:MappingHQinv}
\end{equation}
\end{itemize}
\end{lem}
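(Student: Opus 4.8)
The plan is to reduce everything to elementary one-dimensional calculus, exploiting that all the objects in sight are given either by closed formulas or by one-dimensional integrals of such: first record explicit expressions and crude bounds for $J_1=\Lmb Q$, $J_2$, and the sources $\Lmb Q$, $\Lmb_0\Lmb Q$, $\Lmb_0\Lmb_0\Lmb Q$; next derive the mapping property \eqref{eq:MappingHQinv} by differentiating the defining formula \eqref{eq:def-HQ-inv} for $H^{-1}$; finally feed the sources through $H^{-1}$ and read off the pointwise bounds in \eqref{eq:T1S1U1-bound}, the far-field asymptotics and the degeneracy estimates in \eqref{eq:T1S1U1-asymptotics}. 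The $|\cdot|_k$-versions cost nothing extra: every quantity occurring is a finite sum of (rational function of $y$)$\times(\log y)^a$ (with at most one dilogarithm term that disappears after one application of $y\rd_y$), and $y\rd_y$ preserves the homogeneity class at $y=0$ and $y=\infty$; so all estimates reduce to the $k=0$ (or $k\le1$) case.

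\emph{Steps 1--2 (explicit profiles and the mapping property).} From $Q=2\arctan y$ one gets $J_1=\Lmb Q=\tfrac{2y}{1+y^2}$, $\Lmb_0\Lmb Q=\tfrac{4y}{(1+y^2)^2}$, $\Lmb_0\Lmb_0\Lmb Q=\tfrac{8y(1-y^2)}{(1+y^2)^3}$, each $O(y)$ near the origin and $O(y^{-3})$ for $y\ge1$ (resp. $J_1-\tfrac2y=O(y^{-3})$), which gives \eqref{eq:LmbQ-bound}, \eqref{eq:LmbQ-asymptotics}; and since $\tfrac1{y(\Lmb Q)^2}=\tfrac{(1+y^2)^2}{4y^3}=\tfrac14(y^{-3}+2y^{-1}+y)$, integrating yields $J_2=\tfrac{y^4+4y^2\log y-1}{4y(1+y^2)}$ together with the Wronskian $y(J_1\rd_y J_2-J_2\rd_y J_1)=1$, whence \eqref{eq:J2-bound} and the fact that $H^{-1}$ of \eqref{eq:def-HQ-inv} is a genuine right inverse of $H$. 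For the mapping property: differentiating \eqref{eq:def-HQ-inv} once with $y\rd_y$, the two boundary contributions $\pm J_1 J_2 f y^2$ cancel, so $y\rd_y(H^{-1}f)=(y\rd_y J_1)\int_0^y fJ_2\,y'dy'-(y\rd_y J_2)\int_0^y fJ_1\,y'dy'$; a second $y\rd_y$ now leaves an uncancelled boundary term $\pm(y\rd_y J_i)J_{3-i}fy^2$, and, converting $y^\ell\rd_y^\ell$ to polynomials in $y\rd_y$, one arrives at exactly \eqref{eq:MappingHQinv} — no derivative of $f$ ever appears.

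\emph{Step 3 (estimates and asymptotics for $T_1,S_1,U_1$).} Insert $f\in\{\Lmb Q,\Lmb_0\Lmb Q,\Lmb_0\Lmb_0\Lmb Q\}$ into \eqref{eq:MappingHQinv}. Near $y=0$ the integrals $\int_0^y fJ_2\,y'dy'$ and $\int_0^y fJ_1\,y'dy'$ vanish to orders $2$ and $4$, so $T_1,S_1,U_1=O(y^3)$, giving the near-origin parts of \eqref{eq:T1S1U1-bound}. For $y\to\infty$: the sources of $T_1$ and $S_1$ decay like $y^{-3}$, so $\int_0^y fJ_1\,y'dy'$ converges to a constant $c_f$ (evaluated under $u=y^2$: $c_{\Lmb_0\Lmb Q}=2$, $c_{\Lmb_0\Lmb_0\Lmb Q}=-\tfrac43$) while $\int_0^y fJ_2\,y'dy'$ grows only like $\log y$, whose contribution cancels against the $\tfrac{\log y}{y}$-term of $J_2$, leaving $S_1=\tfrac14 c_{\Lmb_0\Lmb Q}\,y+O(1/y)=\tfrac12 y+O(1/y)$ and $T_1=\tfrac14 c_{\Lmb_0\Lmb_0\Lmb Q}\,y+O(1/y)=-\tfrac13 y+O(1/y)$, which supplies the $T_1,S_1$ parts of \eqref{eq:T1S1U1-bound} and \eqref{eq:T1S1U1-asymptotics}. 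For $U_1=-H^{-1}\Lmb Q$ the source decays only like $y^{-1}$, so $\int_0^y(\Lmb Q)^2y'dy'=2\log(1+y^2)+\tfrac2{1+y^2}-2$ diverges; this is precisely the resonance of $-\Delta_1$ at rate $y^{-1}$ (note $\Delta_1(y\log y)=\tfrac2y$), and it produces the leading $y\log y$, the subleading $-y$ and the ensuing logarithmic correction being read off by carrying the $O(1)$-terms through the two variation-of-parameters integrals (the correction also receiving a contribution from the potential $-\tfrac8{(1+y^2)^2}$ in $H$). Finally the second line of \eqref{eq:T1S1U1-asymptotics} follows by applying $\Lmb_2=y\rd_y-1$ to these expansions, using $\Lmb_2 y=0$, $\Lmb_2(y\log y)=y$, and $|\Lmb_2 E|_k\lesssim|E|_{k+1}$ on the error terms.

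\emph{Main obstacle.} The only genuinely delicate point is the constant bookkeeping in Step 3: extracting the exact coefficients $-\tfrac13$, $\tfrac12$ and the precise $(y\log y-y)$-normalisation of $U_1$ requires tracking both the $O(1)$ constants and all logarithmic corrections in the variation-of-parameters integrals, which under $u=y^2$ reduce to rational functions and a single dilogarithm (whose large-argument expansion is what supplies the $\tfrac{\pi^2}{6}$-type constants). Once this is done, the $\lesssim_k$-structure and all the pointwise and degeneracy bounds follow mechanically, which is why the proof can reasonably be omitted.
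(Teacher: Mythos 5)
Your proposal is correct and is exactly the (omitted) intended argument: the paper dismisses this lemma as an immediate consequence of the explicit formulas for $J_{1},J_{2}$, the defining relations \eqref{eq:T1,S1,U1-eqn}, and the variation-of-parameters formula \eqref{eq:def-HQ-inv}, which is precisely what you carry out (your formulas for $J_{2}$, the Wronskian, the constants $c_{\Lmb_{0}\Lmb Q}=2$, $c_{\Lmb_{0}\Lmb_{0}\Lmb Q}=-\tfrac{4}{3}$, and the resulting coefficients $\tfrac12$, $-\tfrac13$, and the $y\log y-y$ normalisation of $U_{1}$ all check out). The only cosmetic remark is that the asserted exact cancellation of the $\tfrac{\log y}{y}$-terms in $S_{1},T_{1}$ is not needed, since the target bound already tolerates an $O(\langle\log y\rangle/y)$ error.
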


By a simple fixed-point argument, we can construct inner eigenfunctions:
\begin{lem}[Rough inner eigenfunctions]
\label{lem:RoughInnerEigenfunction}For any $\lmb$ with either $|\lmb|\aleq\frac{1}{|\log\nu|}$
or $|\lmb-1|\aleq\frac{1}{|\log\nu|}$, there exists unique smooth
solution $\phi_{\inn}(\lmb,\nu;y)$ to \eqref{eq:eigen-phi-inner}
in the region $y\in(0,\frac{2\delta_{0}}{\nu}]$, which admits the
decomposition 
\[
\phi_{\inn}(\lmb,\nu;y)=\Lmb Q(y)+\nu^{2}\{T_{1}(y)+(2\lmb-1)S_{1}(y)+\lmb(\lmb-1)U_{1}(y)\}+\td{\phi}_{\rough}(\lmb,\nu;y)
\]
with the pointwise estimates 
\[
|\td{\phi}_{\rough}|_{k}\aleq_{k}\nu^{4}(\chf_{(0,1]}y^{5}+\chf_{[1,2\delta_{0}/\nu]}y^{3}),\qquad\forall k\in\bbN.
\]
\end{lem}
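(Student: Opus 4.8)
The plan is to construct $\td\phi_{\rough}$ by a contraction mapping, treating $-\nu^{2}(\Lmb_{0}+\lmb)(\Lmb+\lmb)$ as a perturbation of $H$ on the interval $y\in(0,2\delta_{0}/\nu]$. First I would record the operator identity
\[
(\Lmb_{0}+\lmb)(\Lmb+\lmb)=\Lmb_{0}\Lmb_{0}+(2\lmb-1)\Lmb_{0}+\lmb(\lmb-1),
\]
which holds because $\Lmb_{0}=\Lmb+1$; combined with the definition \eqref{eq:def-T1S1U1} of $T_{1},S_{1},U_{1}$, it shows that $W\coloneqq T_{1}+(2\lmb-1)S_{1}+\lmb(\lmb-1)U_{1}$ solves $HW=-(\Lmb_{0}+\lmb)(\Lmb+\lmb)\Lmb Q$ exactly. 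Writing $\phi_{\inn}=\Phi_{0}+\td\phi_{\rough}$ with $\Phi_{0}\coloneqq\Lmb Q+\nu^{2}W$, equation \eqref{eq:eigen-phi-inner} becomes
\[
H\td\phi_{\rough}=F_{0}+\nu^{2}\calN\td\phi_{\rough},\qquad\calN\coloneqq-(\Lmb_{0}+\lmb)(\Lmb+\lmb),
\]
where, by the above cancellation, the residual collapses to $F_{0}=-\nu^{4}(\Lmb_{0}+\lmb)(\Lmb+\lmb)W$, which is of pure order $\nu^{4}$.

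I would then solve $\td\phi_{\rough}=H^{-1}F_{0}+\nu^{2}H^{-1}(\calN\td\phi_{\rough})$ by Picard iteration, using the right inverse $H^{-1}$ of \eqref{eq:def-HQ-inv} on the closed ball $\{\|f\|\le R\}$ of the weighted space with norm $\|f\|\coloneqq\sup_{0<y\le2\delta_{0}/\nu}|f|_{2}/w_{0}(y)$, where $w_{0}(y)\coloneqq\nu^{4}(\chf_{(0,1]}y^{5}+\chf_{[1,2\delta_{0}/\nu]}y^{3})$ and $R$ is a suitably large fixed multiple of the implicit constant appearing below. The source is estimated from \eqref{eq:T1S1U1-bound}, $|\lmb|\aleq1$, and the crucial fact that $|\lmb(\lmb-1)|\aleq|\log\nu|^{-1}$ for $\lmb$ in either admissible range, which gives $|F_{0}|_{k}\aleq_{k}\nu^{4}\big(\chf_{(0,1]}y^{3}+\chf_{[1,\infty)}(y+\tfrac{y\langle\log y\rangle}{|\log\nu|})\big)$; plugging this into \eqref{eq:MappingHQinv} and absorbing the logarithm via $\langle\log y\rangle\aleq|\log\nu|$ on $y\le2\delta_{0}/\nu$ yields $|H^{-1}F_{0}|_{2}\aleq w_{0}$. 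For the perturbative term, $|\calN f|\aleq|f|_{2}$ together with \eqref{eq:MappingHQinv} and $y^{2}\le4\delta_{0}^{2}\nu^{-2}$ on the interval give $\nu^{2}\|H^{-1}\calN f\|\aleq\max(\nu^{2},\delta_{0}^{2})\|f\|$. Hence, for $\nu^{\ast},\delta_{0}$ small, the map preserves the ball and is a contraction, and its fixed point is the asserted $\td\phi_{\rough}$. Uniqueness of $\phi_{\inn}$ within the stated class follows since, $\nu^{2}y^{2}\le4\delta_{0}^{2}<1$ keeping the equation genuinely second order, $y=0$ is a regular singular point with indicial roots $\pm1$, so the solutions bounded at $0$ form a one-dimensional space which the normalization $\phi_{\inn}=\Lmb Q+O(\nu^{2})$ singles out; smoothness of $\td\phi_{\rough}$ at $y=0$ is inherited from the integral representation \eqref{eq:def-HQ-inv}.

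The bounds $|\td\phi_{\rough}|_{k}$ for $k\ge3$ I would then obtain by bootstrapping the ODE: solving it for $\td\phi_{\rough}''$ and differentiating repeatedly, each step using the already-established $|\cdot|_{2}$ bound and the $|\cdot|_{k}$ bounds on $F_{0}$ and $\Phi_{0}$ (equivalently, one may re-run the fixed point in the $|\cdot|_{k}$-weighted space after noting the evident $|\cdot|_{k}$-analogue of \eqref{eq:MappingHQinv}). I expect the main (and only) delicate points to be bookkeeping ones: first, recognizing that the contraction is driven by the smallness of $\delta_{0}^{2}$, not of any power of $\nu$ --- near $y\sim\delta_{0}/\nu$ the operator $\nu^{2}H^{-1}\calN$ has size $\sim\delta_{0}^{2}$, so $\delta_{0}$ must be fixed small first and $\nu^{\ast}\ll\delta_{0}$ afterwards; and second, making sure that the logarithm produced by the asymptotics $U_{1}\sim y\log y-y$ does not spoil the clean $\nu^{4}y^{3}$ bound at the outer end of the interval, which is precisely where the smallness $|\lmb(\lmb-1)|\aleq|\log\nu|^{-1}$ of the coefficient of $U_{1}$ is used to trade the stray $\log y$ against $|\log\nu|$. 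Beyond these, the proof is a routine Picard iteration.
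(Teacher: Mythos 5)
Your proposal is correct and follows essentially the same route as the paper: the same ansatz $\Lmb Q+\nu^{2}(T_{1}+(2\lmb-1)S_{1}+\lmb(\lmb-1)U_{1})$, the same pure-$\nu^{4}$ residual, a contraction in the same weighted space via \eqref{eq:MappingHQinv} with smallness coming from $\delta_{0}^{2}$, the same use of $|\lmb(\lmb-1)|\aleq|\log\nu|^{-1}$ to absorb the $\langle\log y\rangle$ from $U_{1}$, and the same ODE bootstrap for higher derivatives. Your two additions — the operator identity $(\Lmb_{0}+\lmb)(\Lmb+\lmb)=\Lmb_{0}\Lmb_{0}+(2\lmb-1)\Lmb_{0}+\lmb(\lmb-1)$ making the cancellation explicit, and the indicial-root argument for uniqueness — are correct refinements of details the paper leaves implicit.
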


\begin{proof}
By \eqref{eq:T1,S1,U1-eqn}, $\td{\phi}_{\rough}$ should solve the
equation 
\begin{equation}
H\td{\phi}_{\rough}=-\Psi_{\rough}-\nu^{2}(\Lmb_{0}+\lmb)(\Lmb+\lmb)\td{\phi}_{\rough},\label{eq:RoughInnEig-5}
\end{equation}
where $\Psi_{\rough}$ is the inhomogeneous error term from our ansatz:
\begin{equation}
\Psi_{\rough}\coloneqq\nu^{4}(\Lmb_{0}+\lmb)(\Lmb+\lmb)(T_{1}+(2\lmb-1)S_{1}+\lmb(\lmb-1)U_{1}).\label{eq:def-Psi-rough}
\end{equation}
Thus we may construct $\td{\phi}_{\rough}$ via the integral equation
\begin{equation}
\td{\phi}_{\rough}=-H^{-1}\{\Psi_{\rough}+\nu^{2}(\Lmb_{0}+\lmb)(\Lmb+\lmb)\td{\phi}_{\rough}\}.\label{eq:RoughInnEig-1}
\end{equation}

Next, we set up a fixed-point argument for \eqref{eq:RoughInnEig-1}.
For a function $f$ defined on $(0,\frac{2\delta_{0}}{\nu}]$, define
the norm $\|f\|_{X}$ by the least number satisfying 
\begin{equation}
|f|_{2}(y)\leq\|f\|_{X}(\chf_{(0,1]}y^{5}+\chf_{[1,2\delta_{0}/\nu]}y^{3}).\label{eq:RoughInnEig-4}
\end{equation}
With this norm, we claim that 
\begin{align}
\|H^{-1}\Psi_{\rough}\|_{X} & \aleq\nu^{4},\label{eq:RoughInnEig-2}\\
\|H^{-1}\{\nu^{2}(\Lmb_{0}+\lmb)(\Lmb+\lmb)f\}\|_{X} & \aleq(\delta_{0})^{2}\|f\|_{X}.\label{eq:RoughInnEig-3}
\end{align}

\emph{Proof of \eqref{eq:RoughInnEig-2}.} By $|\lmb(\lmb-1)|\aleq\frac{1}{|\log\nu|}$
and \eqref{eq:T1S1U1-bound}, we have 
\begin{equation}
\begin{aligned}|\Psi_{\rough}|_{k} & \aleq_{k}\nu^{4}\Big(|T_{1}|_{k+2}+|S_{1}|_{k+2}+\frac{1}{|\log\nu|}|U_{1}|_{k+2}\Big)\\
 & \aleq_{k}\nu^{4}(\chf_{(0,1]}y^{3}+\chf_{[1,2\delta_{0}/\nu]}y)
\end{aligned}
\label{eq:RoughInnEig-6}
\end{equation}
for any $k\in\bbN$. Applying \eqref{eq:MappingHQinv}, we have 
\[
|H^{-1}\Psi_{\rough}|_{2}\aleq\nu^{4}(\chf_{(0,1]}y^{5}+\chf_{[1,2\delta_{0}/\nu]}y^{3}).
\]
This completes the proof of \eqref{eq:RoughInnEig-2}.

\emph{Proof of \eqref{eq:RoughInnEig-3}.} By \eqref{eq:MappingHQinv},
\begin{align*}
 & |H^{-1}\{\nu^{2}(\Lmb_{0}+\lmb)(\Lmb+\lmb)f\}|_{2}\\
 & \quad\aleq\nu^{2}\Big\{|J_{1}|_{2}\int_{0}^{y}|f|_{2}|J_{2}|y'dy'+|J_{2}|_{2}\int_{0}^{y}|f|_{2}|J_{1}|y'dy'\\
 & \qquad\qquad\qquad+y^{2}\Big(|(y\rd_{y}J_{1})J_{2}|+|(y\rd_{y}J_{2})J_{1}|\Big)|f|_{2}\Big\}.
\end{align*}
Substituting the estimates \eqref{eq:LmbQ-bound}-\eqref{eq:J2-bound}
and \eqref{eq:RoughInnEig-4} into the above, we have 
\begin{align*}
 & \chf_{(0,2\delta_{0}/\nu]}|H^{-1}\{\nu^{2}(\Lmb_{0}+\lmb)(\Lmb+\lmb)f\}|_{2}\\
 & \peq\aleq\nu^{2}\|f\|_{X}(\chf_{(0,1]}y^{7}+\chf_{[1,2\delta_{0}/\nu]}y^{5})\\
 & \peq\aleq(\delta_{0})^{2}\|f\|_{X}(\chf_{(0,1]}y^{5}+\chf_{[1,2\delta_{0}/\nu]}y^{3}).
\end{align*}
This completes the proof of \eqref{eq:RoughInnEig-3}.

Having settled the proofs of the claims \eqref{eq:RoughInnEig-2}-\eqref{eq:RoughInnEig-3},
provided that $\delta_{0}\ll1$, an application of the contraction
principle yields the unique existence of $\td{\phi}_{\rough}$ in
the space $\|\td{\phi}_{\rough}\|_{X}\aleq\nu^{4}$.

It remains to show the smoothness of $\phi_{\inn}$ and higher derivative
estimates of $\td{\phi}_{\rough}$ in the region $y\in(0,\frac{2\delta_{0}}{\nu}]$.
Substituting the identity 
\begin{align*}
 & H+\nu^{2}(\Lmb_{0}+\lmb)(\Lmb+\lmb)\\
 & \peq=-(1-(\nu y)^{2})\rd_{yy}-\Big(\frac{1}{y}-\nu^{2}(2\lmb+2)y\Big)\rd_{y}+\Big(\frac{V}{y^{2}}+\nu^{2}\lmb(\lmb+1)\Big)
\end{align*}
into \eqref{eq:RoughInnEig-5}, we obtain 
\begin{align*}
\rd_{yy}\td{\phi}_{\rough} & =\frac{1}{(1-(\nu y)^{2})}\Big\{\Psi_{\rough}\\
 & \qquad-\Big(\frac{1}{y}-\nu^{2}(2\lmb+2)y\Big)\rd_{y}\phi_{\rough}+\Big(\frac{V}{y^{2}}+\nu^{2}\lmb(\lmb+1)\Big)\phi_{\rough}\Big\}.
\end{align*}
Repeated applications of the above identity with \eqref{eq:RoughInnEig-6}
completes the proof.
\end{proof}
The rough ansatz in Lemma~\ref{lem:RoughInnerEigenfunction} is not
quite accurate in the region $\nu y\sim1$ and it is \emph{insufficient}
to derive several delicate bounds in Proposition~\ref{prop:FirstTwoEigenpairs}.
In Section~\ref{subsec:Refined-inner-eigenfunctions}, we will find
a higher order correction to our rough ansatz to understand the refined
structure of the inner eigenfunctions.

\subsection{\label{subsec:Outer-eigenfunctions}Outer eigenfunctions}

The goal of this subsection is to construct outer eigenfunctions to
the problem \eqref{eq:eigen-ft-rel-no-j} in the outer region $\rho\geq\frac{1}{2}\delta_{0}$.
In this region, the operator $(\lmb+\Lmb_{0})(\lmb+\Lmb)$ of \eqref{eq:eigen-ft-rel-no-j}
is no longer perturbative and it must be included into the main linear
operator. On the other hand, we may approximate the potential $\frac{V_{\nu}}{\rho^{2}}$
by $\frac{1}{\rho^{2}}$: 
\[
\frac{V_{\nu}}{\rho^{2}}=\frac{1}{\rho^{2}}-\frac{8\nu^{2}}{(\nu^{2}+\rho^{2})^{2}}=\frac{1}{\rho^{2}}+O_{\delta_{0}}(\nu^{2}).
\]
This motivates us to rewrite the eigenproblem \eqref{eq:eigen-ft-rel-no-j}
as\footnote{Recall the $1$-equivariant Laplacian $\Delta_{1}=\rd_{\rho\rho}+\frac{1}{\rho}\rd_{\rho}-\frac{1}{\rho^{2}}$.}
\begin{equation}
(-\Delta_{1}+(\Lmb_{0}+\lmb)(\Lmb+\lmb))\varphi=\frac{8\nu^{2}}{(\nu^{2}+\rho^{2})^{2}}\varphi.\label{eq:outer-varphi-prob}
\end{equation}
We will construct outer eigenfunctions by treating the RHS as a perturbative
term.

The homogeneous linear differential equation associated with \eqref{eq:outer-varphi-prob},
if written in the variable $\rho^{2}$, has three regular singular
points at $\rho^{2}=0,1,\infty$. Hence it can be transformed into
the \emph{hypergeometric differential equation}. Indeed, if we write
\[
\varphi_{\out}(\lmb,\nu;\rho)=\frac{1}{\rho}\phi_{\out}(\lmb,\nu;z),\qquad z=1-\rho^{2},
\]
then \eqref{eq:outer-varphi-prob} is rewritten as
\begin{align}
\calK_{\lmb}\phi_{\out} & =-\frac{2\nu^{2}}{(\nu^{2}+1-z)^{2}}\phi_{\out},\label{eq:eigen-phi-outer}\\
\calK_{\lmb} & \coloneqq z(1-z)\rd_{zz}+\Big(\lmb+\frac{1}{2}\Big)(1-z)\rd_{z}-\frac{\lmb(\lmb-1)}{4}.\label{eq:def-K-lmb}
\end{align}
In order to construct $\varphi_{\out}$ that is smooth at the light
cone $\rho=1$, we construct $\phi_{\out}$ that is smooth at $z=0$.

The homogeneous linear equation associated with \eqref{eq:eigen-phi-outer},
i.e., 
\begin{equation}
\calK_{\lmb}h=0,\label{eq:homogen-eqn}
\end{equation}
has a unique solution that is smooth at $z=0$ and has value 1 at
$z=0$.\footnote{See $h_{2}$ below for a singular solution.} We denote
it by 
\begin{equation}
h_{1}(\lmb;z)\coloneqq F\Big(\frac{\lmb}{2},\frac{\lmb-1}{2};\lmb+\frac{1}{2};z\Big),\label{eq:def-h1}
\end{equation}
where $F$ is the (Gaussian) \emph{hypergeometric function}\footnote{Recall Pochhammer's symbols \eqref{eq:def-Pochhammer}.}
\begin{equation}
F(a,b;c;z)\coloneqq{}_{2}F_{1}(a,b;c;z)\coloneqq\sum_{n=0}^{\infty}\frac{(a)_{n}(b)_{n}}{(c)_{n}}\frac{z^{n}}{n!}.\label{eq:def-hypergeometric}
\end{equation}
Note that $\lmb+\frac{1}{2}$ is never a negative integer for $\lmb\approx1$
or $\lmb\approx0$. Thus the series converges absolutely when $|z|<1$.
Moreover, as \eqref{eq:homogen-eqn} has no singularities in $(-\infty,0)$,
the function $h_{1}(\lmb;\cdot)$ analytically extends over the region
$(-\infty,0)$. Thus $h_{1}(\lmb;\cdot)$ is defined and analytic
on $(-\infty,1)$. Note that there is another solution to \eqref{eq:homogen-eqn}
that is linearly independent of $h_{1}$: 
\begin{equation}
h_{2}(\lmb;z)\coloneqq z^{\frac{1}{2}-\lmb}F\Big(\frac{1-\lmb}{2},-\frac{\lmb}{2};\frac{3}{2}-\lmb;z\Big).\label{eq:def-h2}
\end{equation}
Notice that $h_{2}$ is defined for $z\in(0,1)$ and is not smooth
at $z=0$ due to the asymptotics $h_{2}(\lmb;z)\approx z^{\frac{1}{2}-\lmb}$
as $z\to0^{+}$. 

In Lemma~\ref{lem:Outer-z-var} below, we construct the outer eigenfunction
$\phi_{\out}$ using the Frobenius method. For this purpose, we need
to ensure certain growth bounds (in fact, the polynomial growth) on
the Taylor coefficients of $h_{1}$ and find it convenient to introduce
the following definition. For a function $f(\lmb,\nu;z)$ defined
for $\lmb$ with either $|\lmb|\lesssim\frac{1}{|\log\nu|}$ or $|\lmb-1|\aleq\frac{1}{|\log\nu|}$,
$\nu\in(0,\nu^{\ast})$, $|z|<1$, and $K\in\bbR$, we say 
\begin{equation}
f\in\calP_{K}\quad\Leftrightarrow\quad\left\{ \begin{aligned} & f(\lmb,\nu;z)=\sum_{n=0}^{\infty}f^{(n)}(\lmb,\nu)z^{n},\quad\text{and}\\
 & |f^{(n)}(\lmb,\nu)|\leq C(n+1)^{K}\quad\text{for some }C>0.
\end{aligned}
\right.\label{eq:def-polygrowth-K}
\end{equation}
We also say 
\[
f\in\calP\quad\Leftrightarrow\quad f\in\calP_{K}\quad\text{for some }K\in\bbR.
\]
We record some useful asymptotic formulas and pointwise estimates
of $h_{1}$:
\begin{lem}[Estimates for $h_{1}$]
\label{lem:OuterProfileEstimates}For any $\lmb$ with either $|\lmb|\aleq\frac{1}{|\log\nu|}$
or $|\lmb-1|\aleq\frac{1}{|\log\nu|}$, the following estimates hold.
\begin{itemize}
\item (An expansion for $\frac{1}{\rho}h_{1}(\lmb;1-\rho^{2})$) We have
the expansion 
\begin{equation}
\frac{1}{\rho}h_{1}(\lmb;1-\rho^{2})=\frac{\Gamma(\lmb+\frac{1}{2})}{2\Gamma(\frac{\lmb}{2}+1)\Gamma(\frac{\lmb+1}{2})}\Big\{\frac{2}{\rho}+\lmb(\lmb-1)\rho\sum_{n=0}^{\infty}c_{n}\rho^{2n}[\log\rho+\frac{d_{n}}{2}]\Big\}.\label{eq:h1-expn-rho-variable}
\end{equation}
Here, the constants $c_{n}=c_{n}(\lmb)$ and $d_{n}=d_{n}(\lmb)$
are defined by\footnote{Recall the digamma function $\psi=\Gamma'/\Gamma$.}
\begin{equation}
\left\{ \begin{aligned}c_{n}(\lmb) & \coloneqq\frac{(\frac{\lmb}{2}+1)_{n}(\frac{\lmb+1}{2})_{n}}{n!(n+1)!},\\
d_{n}(\lmb) & \coloneqq-\psi(n+1)-\psi(n+2)+\psi\Big(\frac{\lmb}{2}+1+n\Big)+\psi\Big(\frac{\lmb+1}{2}+n\Big).
\end{aligned}
\right.\label{eq:def-cn-dn}
\end{equation}
\item (Polynomial growth of Taylor coefficients) We have 
\begin{equation}
h_{1},\rd_{\lmb}h_{1}\in\calP.\label{eq:h1-poly-coefficient}
\end{equation}
\item (Rough pointwise bounds) For any $k\in\bbN$ and $z\in(0,1-\frac{1}{4}\delta_{0}^{2}]$,
we have 
\begin{equation}
|\rd_{z}^{k}h_{1}|+|\rd_{z}^{k}(\rd_{\lmb}h_{1})|\aleq_{k,\delta_{0}}1,\label{eq:h1-bound}
\end{equation}
\end{itemize}
\end{lem}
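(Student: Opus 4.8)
The statement to prove is Lemma~\ref{lem:OuterProfileEstimates}, which collects three facts about $h_1(\lmb;z)=F(\frac{\lmb}{2},\frac{\lmb-1}{2};\lmb+\frac{1}{2};z)$: an explicit expansion of $\frac{1}{\rho}h_1(\lmb;1-\rho^2)$ near $\rho=0$ (i.e.\ near $z=1$), polynomial growth of its Taylor coefficients at $z=0$ (both for $h_1$ and $\rd_\lmb h_1$), and crude pointwise derivative bounds on a compact $z$-interval bounded away from $z=1$.

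\textbf{Plan of proof.}

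\emph{The expansion near $z=1$.} The plan is to invoke the classical connection formula for the hypergeometric function at $z=1$ in the logarithmic case. Here $c-a-b = (\lmb+\frac12)-\frac{\lmb}{2}-\frac{\lmb-1}{2} = 1$, which is a nonnegative integer, so the standard formula (see \cite[15.3.11]{AbramowitzStegun1964}, with $m=1$) expresses $F(a,b;c;z)$ for $z$ near $1$ as
\[
F(a,b;c;1-\rho^2) = \frac{\Gamma(c)}{\Gamma(a)\Gamma(b)}\Big\{\frac{\Gamma(1)}{\Gamma(c-a)\Gamma(c-b)}\cdot\frac{1}{\rho^{\,?}} + \rho^{2}\sum_{n\ge 0}\frac{(a)_{n+1}(b)_{n+1}}{n!\,(n+1)!}\rho^{2n}\big(\log \rho^2 - \text{(digamma sum)}\big)\Big\},
\]
where the digamma sum is $\psi(n+1)+\psi(n+2)-\psi(a+n+1)-\psi(b+n+1)$. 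Substituting $a=\frac{\lmb}{2}$, $b=\frac{\lmb-1}{2}$, $c=\lmb+\frac12$, one gets $c-a=\frac{\lmb+1}{2}$, $c-b=\frac{\lmb}{2}+1$, so $\frac{\Gamma(c)}{\Gamma(a)\Gamma(b)}\cdot\frac{1}{\Gamma(c-a)\Gamma(c-b)}$ produces the prefactor $\frac{\Gamma(\lmb+\frac12)}{2\Gamma(\frac{\lmb}{2}+1)\Gamma(\frac{\lmb+1}{2})}$ after one uses $\Gamma(a)\Gamma(a+\frac12)$-type identities to extract the leading $\frac{2}{\rho}$ term; the factor $(a)_{n+1}(b)_{n+1}=ab\,(a+1)_n(b+1)_n$ supplies the overall $\lmb(\lmb-1)/4$ (since $ab = \frac{\lmb}{2}\cdot\frac{\lmb-1}{2}$) and the shifted Pochhammer symbols $(\frac{\lmb}{2}+1)_n(\frac{\lmb+1}{2})_n$ matching $c_n(\lmb)$ after reindexing, while $\log\rho^2 = 2\log\rho$ absorbs into $2\log\rho + d_n$ with the digamma values giving $d_n(\lmb)$ as in \eqref{eq:def-cn-dn}. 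The only real work is bookkeeping: carefully matching Abramowitz--Stegun's normalization to the target form, checking the $n=0$ term separately (it is the one responsible for the $\frac 2\rho$ singularity), and confirming the factor $\frac12$ via $\Gamma(\lmb+\frac12) = \frac{\Gamma(\frac{\lmb}{2}+\frac12)\Gamma(\frac{\lmb}{2}+1)}{\Gamma(\frac12)}2^{\ldots}$ duplication. I would present this as a direct citation plus a short identification of the constants, since all the steps are classical.

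\emph{Polynomial growth of Taylor coefficients.} For $h_1\in\calP$, I would write $h_1^{(n)}(\lmb,\nu) = \frac{(\frac{\lmb}{2})_n(\frac{\lmb-1}{2})_n}{(\lmb+\frac12)_n\,n!}$ and bound it directly: since $\lmb\approx 0$ or $\lmb\approx 1$, all the factors $\frac{\lmb}{2}+k$, $\frac{\lmb-1}{2}+k$, $\lmb+\frac12+k$ are $\sim k$ for large $k$, so the quotient $\frac{(\frac{\lmb}{2})_n(\frac{\lmb-1}{2})_n}{(\lmb+\frac12)_n\, n!}$ is, up to a bounded constant, comparable to $\frac{n!\cdot n!}{n!\cdot n!}$ times lower-order corrections, hence bounded by $C(n+1)^K$ for suitable $C,K$ (in fact one can take a fairly small $K$). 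The cleanest route is: factor out the first few terms (where $\frac{\lmb-1}{2}$ can be near $0$, but is never a nonpositive integer since $\lmb\ne 1$ exactly... actually $\lmb$ can equal $1$? no — $\lmb$ is the spectral parameter ranging in $|\lmb-1|\lesssim 1/|\log\nu|$, which does include $1$, but then $(\frac{\lmb-1}{2})_n$ vanishes for all $n\ge 1$, so $h_1\equiv 1$ and the bound is trivial; otherwise the ratio of consecutive coefficients is $\frac{(\frac{\lmb}{2}+n)(\frac{\lmb-1}{2}+n)}{(\lmb+\frac12+n)(n+1)}\to 1$, giving at worst polynomial growth). For $\rd_\lmb h_1$, differentiate the series termwise: $\rd_\lmb h_1^{(n)}$ is $h_1^{(n)}$ times a sum of digamma-type terms $\frac12\psi(\ldots)$, each of which grows like $\log n$, so $|\rd_\lmb h_1^{(n)}|\le |h_1^{(n)}|\cdot C\log(n+2)\le C'(n+1)^{K'}$; termwise differentiation is justified by local uniform convergence on $|z|<1$. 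I expect this to be routine.

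\emph{Rough pointwise bounds on $z\in(0,1-\frac14\delta_0^2]$.} On this compact interval $h_1(\lmb;\cdot)$ and $\rd_\lmb h_1(\lmb;\cdot)$, and all their $z$-derivatives, are continuous in $(\lmb,z)$ on a compact set (the $\lmb$-range is a fixed bounded closed set independent of $\nu$, since $\delta_0$ and hence the cutoff $1-\frac14\delta_0^2$ is fixed); hence they are bounded, with the constant depending on $k$ and $\delta_0$ only. One can either argue by compactness or give an explicit estimate from the power series (which converges geometrically on $|z|\le 1-\frac14\delta_0^2 < 1$ by the polynomial-coefficient bound just proved), differentiating under the sum. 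This is immediate.

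\textbf{Main obstacle.} The genuinely delicate point is the first item: getting the connection formula at $z=1$ in the logarithmic ($c-a-b\in\bbZ_{\ge0}$) case exactly right, i.e.\ matching Abramowitz--Stegun's normalization to the stated form of \eqref{eq:h1-expn-rho-variable}, pinning down the prefactor $\frac{\Gamma(\lmb+\frac12)}{2\Gamma(\frac{\lmb}{2}+1)\Gamma(\frac{\lmb+1}{2})}$ (in particular the numerical $\tfrac12$, which requires the Legendre duplication formula), and verifying that the digamma combination reduces to $d_n(\lmb)$ in \eqref{eq:def-cn-dn}. Everything else (parts on $\calP$ and the pointwise bounds) is a direct estimate on explicit Pochhammer ratios.
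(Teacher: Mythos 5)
Your plan is correct and follows essentially the same route as the paper: the expansion is exactly the logarithmic connection formula \cite[15.3.11]{AbramowitzStegun1964} with $m=c-a-b=1$ followed by the substitution $z=1-\rho^{2}$, the membership in $\calP$ comes from polynomial growth of Pochhammer ratios plus logarithmic growth of the digamma terms, and the pointwise bounds follow from the coefficient bounds on the compact interval. One small simplification: the $\tfrac12$ in the prefactor of \eqref{eq:h1-expn-rho-variable} needs no duplication formula — it arises merely from writing $\frac1\rho=\frac12\cdot\frac2\rho$ after the factor $\frac{1}{\Gamma(\frac{\lmb}{2})\Gamma(\frac{\lmb-1}{2})}=\frac{\lmb(\lmb-1)}{4}\cdot\frac{1}{\Gamma(\frac{\lmb}{2}+1)\Gamma(\frac{\lmb+1}{2})}$ has been extracted from the logarithmic part.
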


\begin{rem}
It is possible to derive sharper pointwise estimates for $\rd_{z}^{k}h_{1}$
and $\rd_{z}^{k}(\rd_{\lmb}h_{1})$ in the region $z\in(0,1)$ using
the expansion \eqref{eq:h1-expn-z-variable} below. However, we chose
to state the rough estimate \eqref{eq:h1-bound} for simplicity.
\end{rem}

\begin{proof}
We first show \eqref{eq:h1-expn-rho-variable}. We use the connection
formula for the hypergeometric function \cite[p.559 (15.3.11)]{AbramowitzStegun1964}
to have 
\begin{equation}
\begin{aligned}h_{1}(\lmb;z) & =\frac{\Gamma(\lmb+\frac{1}{2})}{\Gamma(\frac{\lmb}{2}+1)\Gamma(\frac{\lmb+1}{2})}\\
 & \peq\times\Big\{1+\frac{\lmb(\lmb-1)}{4}(1-z)\sum_{n=0}^{\infty}c_{n}(1-z)^{n}[\log(1-z)+d_{n}]\Big\}.
\end{aligned}
\label{eq:h1-expn-z-variable}
\end{equation}
Substituting $z=1-\rho^{2}$ and multiplying the above by $\rho^{-1}$,
\eqref{eq:h1-expn-rho-variable} follows.

The proof of \eqref{eq:h1-poly-coefficient} is immediate from the
following two easy facts: (i) For any $a,b\notin\{-1,-2,\dots\}$,
the sequence of ratios $((a)_{n}/(b)_{n})_{n\in\bbN}$ is of polynomial
growth. (ii) The digamma function $\psi(z)$ diverges logarithmically
as $z\to+\infty$ and the trigamma function $\psi'(z)$ behaves like
$\frac{1}{z}$ as $z\to+\infty$ (in particular they are of polynomial
growth). Finally, \eqref{eq:h1-bound} follows directly from \eqref{eq:h1-poly-coefficient}.
This completes the proof.
\end{proof}
Using the estimates in Lemma~\ref{lem:OuterProfileEstimates}, we
are now ready to construct the solutions $\phi_{\out}$ to the problem
\eqref{eq:eigen-phi-outer}.
\begin{lem}[Outer eigenfunctions in the $z$-variable]
\label{lem:Outer-z-var}For any $\lmb$ with either $|\lmb|\aleq\frac{1}{|\log\nu|}$
or $|\lmb-1|\aleq\frac{1}{|\log\nu|}$, there exists unique analytic
solution $\phi_{\out}(\lmb,\nu;z)$ to \eqref{eq:eigen-phi-outer}
in the region $z\in(-\infty,1)$ such that it admits the decomposition
\[
\phi_{\out}(\lmb,\nu;z)=h_{1}(\lmb;z)+\td{\phi}_{\out}(\lmb,\nu;z)
\]
with the following pointwise estimates in the region $z\in[0,1-\frac{\delta_{0}^{2}}{4}]$:
\begin{equation}
\left\{ \begin{aligned}|\td{\phi}_{\out}|+|\nu\rd_{\nu}\td{\phi}_{\out}|+|\rd_{\lmb}\td{\phi}_{\out}| & \aleq_{\delta_{0}}\nu^{2}z,\\
|\rd_{z}^{k}\td{\phi}_{\out}|+|\rd_{z}^{k}(\nu\rd_{\nu}\td{\phi}_{\out})|+|\rd_{z}^{k}(\rd_{\lmb}\td{\phi}_{\out})| & \aleq_{k,\delta_{0}}\nu^{2},\quad\forall k\in\bbN.
\end{aligned}
\right.\label{eq:OuterEig-1}
\end{equation}
\end{lem}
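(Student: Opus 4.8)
The plan is to set up a fixed-point argument for the integral equation obtained from \eqref{eq:eigen-phi-outer} by inverting the homogeneous operator $\calK_\lmb$ against the particular solution $h_1(\lmb;\cdot)$, treating the right-hand side $-\tfrac{2\nu^2}{(\nu^2+1-z)^2}\phi_{\out}$ as a perturbation. First I would record a second solution to the homogeneous equation \eqref{eq:homogen-eqn} that is linearly independent of $h_1$; the natural choice on $z\in(0,1)$ is a suitable multiple of $h_2(\lmb;z)=z^{\frac12-\lmb}F(\tfrac{1-\lmb}2,-\tfrac\lmb2;\tfrac32-\lmb;z)$ from \eqref{eq:def-h2}, and I would compute the Wronskian $W(z)=h_1 h_2' - h_1' h_2$, which for the operator $\calK_\lmb = z(1-z)\rd_{zz}+(\lmb+\tfrac12)(1-z)\rd_z - \tfrac{\lmb(\lmb-1)}4$ satisfies a first-order ODE and hence has the explicit form $W(z) = c(\lmb)\, z^{-(\lmb+1/2)}(1-z)^{\lmb-1/2}$ for an explicit nonzero constant $c(\lmb)$ (nonvanishing since $\lmb+\tfrac12\notin\bbZ_{\le 0}$ in our range). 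Using $h_1,h_2,W$ I would write the Green's operator $\calG_\lmb f(z) = h_1(z)\int_0^z \tfrac{h_2 f}{W(1-\cdot)z'(1-z')}\,dz' - h_2(z)\int_0^z \tfrac{h_1 f}{(\cdots)}\,dz'$, normalized so that $\calG_\lmb f$ is the solution of $\calK_\lmb g = f$ that is smooth at $z=0$ (this is possible because the index difference at $z=0$ is $\lmb-\tfrac12$, which is bounded away from $\bbZ$, so only the $h_1$-branch is regular and the singular $h_2$-contribution to the particular solution cancels to leading order at $z=0$); then $\td\phi_{\out}$ solves $\td\phi_{\out} = -\calG_\lmb\big[\tfrac{2\nu^2}{(\nu^2+1-z)^2}(h_1+\td\phi_{\out})\big]$.

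Next I would carry out the contraction estimate on the Banach space of analytic functions on $z\in[0,1-\tfrac{\delta_0^2}{4}]$ with norm $\|f\| = \sup_z (|f(z)| + \sum_{k\ge 1} |\rd_z^k f|)$ weighted so as to match the target bound $|\td\phi_{\out}|\aleq_{\delta_0}\nu^2 z$; the key inputs are (i) the pointwise bounds \eqref{eq:h1-bound} on $h_1$ and its derivatives on this range of $z$, (ii) the elementary bound $\tfrac{2\nu^2}{(\nu^2+1-z)^2}\aleq_{\delta_0}\nu^2$ together with the vanishing factor $z$ picked up from integrating $\tfrac{\nu^2}{(\nu^2+1-z)^2}$ from $0$, which on $z\lesssim 1$ is harmless (the singularity of the forcing coefficient is at $z=1$, i.e. $\rho=0$, where $\tfrac{2\nu^2}{(\nu^2+1-z)^2} = \tfrac{2\nu^2}{(\nu^2+\rho^2)^2}$ is in fact the shifted potential, so the weight $\tfrac1{(\nu^2+1-z)^2}$ is integrable on any compact subset of $z<1$ with bound $O_{\delta_0}(1)$), and (iii) the explicit form of $W$ showing $\calG_\lmb$ gains no singularity and maps bounded functions to bounded functions on $[0,1-\tfrac{\delta_0^2}{4}]$ with operator norm $O_{\delta_0}(1)$. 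Combining, $f\mapsto -\calG_\lmb[\tfrac{2\nu^2}{(\nu^2+1-z)^2}f]$ has operator norm $O_{\delta_0}(\nu^2)\ll 1$, and applied to the constant input $h_1$ it produces something of size $O_{\delta_0}(\nu^2)$ (with the extra $z$-vanishing), so the contraction mapping principle gives the unique $\td\phi_{\out}$ with $|\td\phi_{\out}|\aleq_{\delta_0}\nu^2 z$. Differentiating the fixed-point equation in $z$ (using the ODE to trade $\rd_z^2$ for lower-order terms, exactly as in the last step of the proof of Lemma~\ref{lem:RoughInnerEigenfunction}) yields the higher-derivative bounds $|\rd_z^k\td\phi_{\out}|\aleq_{k,\delta_0}\nu^2$.

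For the $\nu\rd_\nu$ and $\rd_\lmb$ bounds I would differentiate the fixed-point equation in the corresponding parameter. Applying $\nu\rd_\nu$ produces $\nu\rd_\nu\td\phi_{\out} = -\calG_\lmb[\tfrac{2\nu^2}{(\nu^2+1-z)^2}\,\nu\rd_\nu\td\phi_{\out}] - \calG_\lmb[(\nu\rd_\nu\tfrac{2\nu^2}{(\nu^2+1-z)^2})(h_1+\td\phi_{\out})]$; the first term is again a contraction with small constant, and the inhomogeneity is bounded using $|\nu\rd_\nu\tfrac{2\nu^2}{(\nu^2+1-z)^2}|\aleq_{\delta_0}\nu^2$ (the $\nu$-derivative does not worsen the estimate on $z\le 1-\tfrac{\delta_0^2}4$) together with $\|h_1\|\aleq_{\delta_0}1$ and the just-proven bound on $\td\phi_{\out}$, giving $|\nu\rd_\nu\td\phi_{\out}|\aleq_{\delta_0}\nu^2 z$ and its $z$-derivatives. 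The $\rd_\lmb$ bounds are entirely analogous, now also invoking $\rd_\lmb h_1,\rd_\lmb(\rd_z^k h_1)\in\calP$ and $\aleq_{k,\delta_0}1$ from Lemma~\ref{lem:OuterProfileEstimates} to control $\rd_\lmb\calG_\lmb$ (the $\lmb$-dependence of $h_1$, $h_2$, $W$, and the index is all smooth and of at most polynomial growth in the Taylor coefficients by \eqref{eq:h1-poly-coefficient}). Finally, analyticity of $\phi_{\out}$ on $z\in(-\infty,1)$ follows from the analyticity of $h_1$ there (noted after \eqref{eq:def-h1}) together with the fact that $\calK_\lmb g = f$ with $f$ analytic on $(-\infty,1)$ and no singular points of $\calK_\lmb$ in $(-\infty,0]\cup(0,1)$ forces any smooth-at-$z=0$ solution to extend analytically; the decomposition and estimates are then automatic from the fixed point. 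The main obstacle I anticipate is correctly normalizing the Green's operator $\calG_\lmb$ so that it selects exactly the solution regular at $z=0$ despite the presence of the singular branch $h_2\sim z^{1/2-\lmb}$, i.e. verifying that the $h_2$-weighted integral against the forcing vanishes to the right order at $z=0$ — this is where the explicit Wronskian and the bound $\lmb-\tfrac12$ away from $\bbZ$ are essential, and it is the same mechanism by which $H^{-1}$ in \eqref{eq:def-HQ-inv} selects the solution regular at $y=0$ in the inner analysis.
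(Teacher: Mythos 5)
Your plan is correct, but it takes a genuinely different route from the paper. The paper does not invert $\calK_{\lmb}$ by variation of parameters at all: it isolates the Euler-type part $K_{\lmb}=z\rd_{zz}+(\lmb+\tfrac12)\rd_{z}$, which acts diagonally on monomials via $K_{\lmb}z^{n+1}=(n+1)(n+\lmb+\tfrac12)z^{n}$ with nonvanishing eigenvalues (as $\lmb+\tfrac12$ is never a nonpositive integer), and solves the resulting recurrence for the Taylor coefficients of $(\td{\phi}_{\out},\nu\rd_{\nu}\td{\phi}_{\out},\rd_{\lmb}\td{\phi}_{\out})$ simultaneously, starting from $f^{(0)}=0$; the class $\calP$ of polynomially growing coefficients then delivers convergence on $|z|<1$, the $\nu^{2}z$ vanishing, the derivative bounds, and uniqueness all at once, with no Wronskian and no Green's kernel. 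Your Green's-function-plus-contraction route works and buys a more robust, less combinatorial argument, but it forces you to confront exactly the issues you flag: the regular singular point at $z=0$ (which you at one point incorrectly claim is not a singular point of $\calK_{\lmb}$) and the singular branch $h_{2}\sim z^{\frac12-\lmb}$. The cleanest way to discharge your "main obstacle" is to note that $h_{2}/W$ is proportional to $z\,F(\tfrac{1-\lmb}{2},-\tfrac{\lmb}{2};\tfrac32-\lmb;z)$, hence analytic, and that in the second term of the kernel the factor $z^{\frac12-\lmb}(z')^{\lmb+\frac12}=z\,(z'/z)^{\lmb+\frac12}$ integrates to an analytic function of $z$ after the substitution $z'=zs$; the $\log z$ terms produced by $\rd_{\lmb}$ of these fractional powers cancel in the same combination, which is the point your appeal to "smooth $\lmb$-dependence" glosses over. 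Two further slips, neither fatal on $z\leq1-\tfrac{\delta_{0}^{2}}{4}$: the Wronskian exponent at $z=1$ is $c-a-b-1=0$ for this hypergeometric equation, so $W=c(\lmb)z^{-(\lmb+\frac12)}$ with no $(1-z)^{\lmb-\frac12}$ factor; and the norm $\sup_{z}(|f|+\sum_{k\geq1}|\rd_{z}^{k}f|)$ is not finite on analytic functions, so you should contract in a $C^{0}$ or $C^{2}$ norm and recover higher derivatives by bootstrapping through the ODE, as you in fact propose in the next breath.
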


\begin{proof}
The equation for $\td{\phi}_{\out}$ is 
\begin{equation}
\calK_{\lmb}\td{\phi}_{\out}=-\frac{2\nu^{2}}{(\nu^{2}+1-z)^{2}}(h_{1}+\td{\phi}_{\out}).\label{eq:OuterEig-2}
\end{equation}
In order to construct $\td{\phi}_{\out}$ (as well as $\nu\rd_{\nu}\td{\phi}_{\out}$
and $\rd_{\lmb}\td{\phi}_{\out}$) using the Frobenius method, it
is more convenient to introduce the operator 
\[
K_{\lmb}\coloneqq z\rd_{zz}+\Big(\lmb+\frac{1}{2}\Big)\rd_{z}
\]
and further rewrite \eqref{eq:OuterEig-2} as 
\begin{equation}
K_{\lmb}\td{\phi}_{\out}=\frac{1}{1-z}\Big\{-\frac{2\nu^{2}}{(\nu^{2}+1-z)^{2}}(h_{1}+\td{\phi}_{\out})+\frac{\lmb(\lmb-1)}{4}\td{\phi}_{\out}\Big\}.\label{eq:OuterEig-3}
\end{equation}
This motivates us to introduce 
\begin{align*}
\Psi_{\out}(\lmb,\nu;z) & \coloneqq-\frac{2\nu^{2}h_{1}(\lmb;z)}{(1-z)(\nu^{2}+1-z)^{2}},\\
V_{\out}(\lmb,\nu;z) & \coloneqq-\frac{1}{1-z}\Big(\frac{2\nu^{2}}{(\nu^{2}+1-z)^{2}}-\frac{\lmb(\lmb-1)}{4}\Big),
\end{align*}
and rewrite \eqref{eq:OuterEig-3} as the following system: 
\begin{equation}
\left\{ \begin{aligned}K_{\lmb}\td{\phi}_{\out} & =\Psi_{\out}+V_{\out}\td{\phi}_{\out},\\
K_{\lmb}(\nu\rd_{\nu}\td{\phi}_{\out}) & =(\nu\rd_{\nu}\Psi_{\out})+V_{\out}(\nu\rd_{\nu}\td{\phi}_{\out})+(\nu\rd_{\nu}V_{\out})\td{\phi}_{\out},\\
K_{\lmb}(\rd_{\lmb}\td{\phi}_{\out}) & =(\rd_{\lmb}\Psi_{\out})+V_{\out}(\rd_{\lmb}\td{\phi}_{\out})+(-\rd_{\lmb}K_{\lmb}+\rd_{\lmb}V_{\out})\td{\phi}_{\out}.
\end{aligned}
\right.\label{eq:OuterEig-4}
\end{equation}
Note that the second and third rows of \eqref{eq:OuterEig-4} are
obtained by taking $\nu\rd_{\nu}$ and $\rd_{\lmb}$ to the first
row of \eqref{eq:OuterEig-4}, respectively. Note also that $\rd_{\lmb}K_{\lmb}$
is simply $\rd_{z}$. These equations are necessary to estimate $\nu\rd_{\nu}\td{\phi}_{\out}$
and $\rd_{\lmb}\td{\phi}_{\out}$.

Next, we claim that 
\begin{align}
\Psi_{\out},\nu\rd_{\nu}\Psi_{\out},\rd_{\lmb}\Psi_{\out} & \in\nu^{2}\calP,\label{eq:OuterEig-5}\\
V_{\out},\nu\rd_{\nu}V_{\out},\rd_{\lmb}V_{\out} & \in\calP_{1}.\label{eq:OuterEig-6}
\end{align}
The claim \eqref{eq:OuterEig-5} easily follows from \eqref{eq:h1-poly-coefficient}
and $(\nu^{2}+1-z)^{-2}\in\calP$ (and also for its $\nu\rd_{\nu}$-derivative),
and the algebra property of $\calP$ (i.e., $\calP\times\calP\to\calP$).
For the proof of \eqref{eq:OuterEig-6}, since $(1-z)^{-1}\in\calP_{0}$
and $\calP_{0}\times\calP_{0}\to\calP_{1}$, it suffices to show 
\[
\frac{\nu^{2}}{(\nu^{2}+1-z)^{2}},\ \frac{\nu^{4}}{(\nu^{2}+1-z)^{3}}\in\calP_{0},
\]
which follow from the expansions 
\begin{align*}
\frac{\nu^{2}}{(\nu^{2}+1-z)^{2}} & =\sum_{n=0}^{\infty}\frac{(n+1)\nu^{2}}{(1+\nu^{2})^{n+2}}z^{n},\\
\frac{\nu^{4}}{(\nu^{2}+1-z)^{3}} & =\sum_{n=0}^{\infty}\frac{\frac{1}{2}(n+1)(n+2)\nu^{4}}{(1+\nu^{2})^{n+3}}z^{n},
\end{align*}
and the inequalities $(1+\nu^{2})^{n+2}\geq(n+2)\nu^{2}$ and $(1+\nu^{2})^{n+3}\geq\frac{1}{2}(n+2)(n+3)\nu^{4}$.

By the claims \eqref{eq:OuterEig-5}-\eqref{eq:OuterEig-6}, if we
write $(\td{\phi}_{\out},\nu\rd_{\nu}\td{\phi}_{\out},\rd_{\lmb}\td{\phi}_{\out})=(f_{1},f_{2},f_{3})$
and $f_{k}=\sum_{n=0}^{\infty}f_{k}^{(n)}z^{n}$, then \eqref{eq:OuterEig-4}
takes the form 
\begin{equation}
\left\{ \begin{aligned}K_{\lmb}\sum_{n=0}^{\infty}f_{1}^{(n)}z^{n} & =\nu^{2}\sum_{n=0}^{\infty}\Psi_{1}^{(n)}z^{n}+\sum_{n=0}^{\infty}\Big(\sum_{\ell+m=n}V_{1}^{(\ell)}f_{1}^{(m)}\Big)z^{n}\\
K_{\lmb}\sum_{n=0}^{\infty}f_{2}^{(n)}z^{n} & =\nu^{2}\sum_{n=0}^{\infty}\Psi_{2}^{(n)}z^{n}+\sum_{n=0}^{\infty}\Big(\sum_{\ell+m=n}(V_{2}^{(\ell)}f_{1}^{(m)}+V_{1}^{(\ell)}f_{2}^{(m)})\Big)z^{n}\\
K_{\lmb}\sum_{n=0}^{\infty}f_{3}^{(n)}z^{n} & =\nu^{2}\sum_{n=0}^{\infty}\Psi_{3}^{(n)}z^{n}+\sum_{n=0}^{\infty}\Big(-(n+1)f_{1}^{(n+1)}\\
 & \qquad\qquad\qquad\qquad\quad+\sum_{\ell+m=n}(V_{3}^{(\ell)}f_{1}^{(m)}+V_{1}^{(\ell)}f_{3}^{(m)})\Big)z^{n},
\end{aligned}
\right.\label{eq:OuterEig-7}
\end{equation}
where $\Psi_{k}^{(n)}$ and $V_{k}^{(n)}$ are polynomially growing
sequences and $V_{k}^{(n)}$ satisfy the linear growth bounds $|V_{k}^{(n)}|\aleq n+1$.
In view of 
\[
K_{\lmb}\sum_{n=0}^{\infty}f_{k}^{(n)}z^{n}=\sum_{n=0}^{\infty}(n+1)(n+\lmb+\frac{1}{2})f_{k}^{(n+1)}z^{n},
\]
the system \eqref{eq:OuterEig-7} gives recurrence relations for the
coefficients $f_{k}^{(n+1)}$.

Now, we can construct a solution to the system \eqref{eq:OuterEig-7}
starting from the initial condition
\[
f_{k}^{(0)}=0,\qquad\forall k\in\{1,2,3\}.
\]
The recurrence relations uniquely\footnote{In fact, one can prove uniqueness in a much larger function space;
by showing that the linear differential equation \eqref{eq:eigen-phi-outer}
has a fundamental system $\{\td h_{1},\td h_{2}\}$ with $\td h_{1}=1+O(z)$
and $\td h_{2}=z^{\frac{1}{2}-\lmb}(1+O(z))$, the uniqueness holds
in any function space that removes the freedom of adding $\td h_{1}$
or $\td h_{2}$.} define \emph{formal} power series solutions $f_{k}$. We show that
$f_{k}\in\nu^{2}\calP$. To show $f_{1}\in\nu^{2}\calP$, we show
the bound $f_{1}^{(n)}\leq C_{1}\nu^{2}(n+1)^{K_{1}}$ for some constants
$C_{1},K_{1}\gg1$ (to be chosen later) by induction. This inductive
bound combined with the recurrence relations give the following inequality
(where we fixed some $K$ such that $|\Psi_{1}^{(n)}|\aleq(n+1)^{K}$):
\begin{align*}
|f_{k}^{(n+1)}| & =\Big|\frac{\nu^{2}\Psi_{1}^{(n)}+\sum_{\ell+m=n}V_{1}^{(\ell)}f_{1}^{(m)}}{(n+1)(n+\lmb+\frac{1}{2})}\Big|\\
 & \aleq\nu^{2}\frac{(n+1)^{K}+C_{1}\sum_{\ell=0}^{n}(\ell+1)(n+1-\ell)^{K_{1}}}{(n+2)^{2}}\\
 & \aleq\nu^{2}\Big((n+2)^{K-2}+\frac{C_{1}}{K_{1}}(n+2)^{K_{1}}\Big).
\end{align*}
The right hand side can be bounded by $C_{1}\nu^{2}(n+2)^{K_{1}}$
if $C_{1}$ and $K_{1}$ are chosen sufficiently large. We remark
that the property $V_{\out}\in\calP_{1}$ (or, $|V_{1}^{(n)}|\aleq n+1$)
is crucially used. By induction, we obtain $f_{1}\in\nu^{2}\calP$.
We omit the proof of $f_{2},f_{3}\in\calP$, which can be proved in
a similar fashion.

If we set $\td{\phi}_{\out}=f_{1}\in\nu^{2}\calP$, then $\td{\phi}_{\out}$
exists (as an analytic function of $z$) in the region $z\in(-1,1)$,
and $\nu\rd_{\nu}\td{\phi}_{\out}=f_{2}$ and $\rd_{\lmb}\td{\phi}_{\out}=f_{3}$
by uniqueness. The estimate \eqref{eq:OuterEig-1} follows from the
fact that $\td{\phi}_{\out}$, $\nu\rd_{\nu}\td{\phi}_{\out}$, and
$\rd_{\lmb}\td{\phi}_{\out}\in\nu^{2}\calP$ and the zero initial
condition. Finally, the solution $\phi_{\out}=h_{1}+\td{\phi}_{\out}$
analytically extends over the region $z\in(-\infty,1)$ because the
linear differential equation \eqref{eq:eigen-phi-outer} does not
have singular points in $(-\infty,0)$. This completes the proof.
\end{proof}
In terms of the self-similar variable $\rho$, we have the following.
\begin{cor}[Outer eigenfunctions]
\label{cor:OuterEigenfunctions}For any $\lmb$ with either $|\lmb|\aleq\frac{1}{|\log\nu|}$
or $|\lmb-1|\aleq\frac{1}{|\log\nu|}$, define 
\begin{equation}
\varphi_{\out}(\lmb,\nu;\rho)\coloneqq\frac{1}{\rho}\phi_{\out}(\lmb,\nu;1-\rho^{2})\label{eq:OuterDef}
\end{equation}
for all $\rho\in(0,\infty)$. Then, $\varphi_{\out}$ solves \eqref{eq:outer-varphi-prob}.
Moreover, in the region $\rho\in[\frac{1}{2}\delta_{0},1]$, it admits
the decomposition 
\begin{equation}
\varphi_{\out}(\lmb,\nu;\rho)=\frac{1}{\rho}h_{1}(\lmb;1-\rho^{2})+\td{\varphi}_{\out}(\lmb,\nu;\rho)\label{eq:OuterEigDecomp}
\end{equation}
and satisfies the pointwise estimates 
\begin{equation}
|\td{\varphi}_{\out}|_{k}+|\nu\rd_{\nu}\td{\varphi}_{\out}|_{k}+|\rd_{\lmb}\td{\varphi}_{\out}|_{k}\aleq_{k,\delta_{0}}\nu^{2},\qquad\forall k\in\bbN.\label{eq:OuterEigEst}
\end{equation}
\end{cor}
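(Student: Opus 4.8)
The plan is to deduce Corollary~\ref{cor:OuterEigenfunctions} from Lemma~\ref{lem:Outer-z-var} by reversing the change of variables $z=1-\rho^{2}$, $\varphi=\frac{1}{\rho}\phi$ that was used to pass from \eqref{eq:outer-varphi-prob} to \eqref{eq:eigen-phi-outer}--\eqref{eq:def-K-lmb}. First I would check that $\varphi_{\out}$ defined by \eqref{eq:OuterDef} solves \eqref{eq:outer-varphi-prob}: this is exactly that derivation read backwards, since the substitution conjugates $-\Delta_{1}+(\Lmb_{0}+\lmb)(\Lmb+\lmb)$ acting on $\varphi$ into $\rho^{-1}\calK_{\lmb}$ acting on $\phi$, and the potential $8\nu^{2}(\nu^{2}+\rho^{2})^{-2}$ into $2\nu^{2}(\nu^{2}+1-z)^{-2}$. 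Lemma~\ref{lem:Outer-z-var} furnishes a solution $\phi_{\out}(\lmb,\nu;z)$ of \eqref{eq:eigen-phi-outer} that is analytic on all of $z\in(-\infty,1)$, so $\varphi_{\out}(\lmb,\nu;\rho)=\rho^{-1}\phi_{\out}(\lmb,\nu;1-\rho^{2})$ is defined and solves \eqref{eq:outer-varphi-prob} for every $\rho\in(0,\infty)$; it is precisely the analytic continuation of $\phi_{\out}$ past $z=0$ that lets $\varphi_{\out}$ be defined on the whole half-line and not merely near the light cone $\rho=1$.

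Next, applying \eqref{eq:OuterDef} to the splitting $\phi_{\out}=h_{1}+\td{\phi}_{\out}$ of Lemma~\ref{lem:Outer-z-var} and setting $\td{\varphi}_{\out}(\lmb,\nu;\rho)\coloneqq\rho^{-1}\td{\phi}_{\out}(\lmb,\nu;1-\rho^{2})$ gives the decomposition \eqref{eq:OuterEigDecomp} by inspection. For the bounds \eqref{eq:OuterEigEst}, note that $\rho\in[\frac{1}{2}\delta_{0},1]$ corresponds under $z=1-\rho^{2}$ exactly to $z\in[0,1-\frac{1}{4}\delta_{0}^{2}]$, the range where \eqref{eq:OuterEig-1} holds. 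Since $\rho$ and $1-\rho^{2}$ are independent of $\nu$ and $\lmb$, the operators $\nu\rd_{\nu}$ and $\rd_{\lmb}$ commute with $\rho^{-1}(\,\cdot\,)(1-\rho^{2})$, so it suffices to estimate $|\td{\varphi}_{\out}|_{k}$ in terms of the $z$-derivatives of $\td{\phi}_{\out}$, the bounds for $\nu\rd_{\nu}\td{\varphi}_{\out}$ and $\rd_{\lmb}\td{\varphi}_{\out}$ following by running the same argument with $\nu\rd_{\nu}\td{\phi}_{\out}$ and $\rd_{\lmb}\td{\phi}_{\out}$ in place of $\td{\phi}_{\out}$. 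From $\rd_{\rho}=-2\rho\,\rd_{z}$ one sees by the product and chain rules that, for $0\le\ell\le k$, the function $\rho^{\ell}\rd_{\rho}^{\ell}\bigl(\rho^{-1}g(1-\rho^{2})\bigr)$ is a finite linear combination of the quantities $(\rd_{z}^{j}g)(1-\rho^{2})$, $0\le j\le\ell$, with coefficients bounded on $\rho\in[\frac{1}{2}\delta_{0},1]$ by constants depending only on $\ell$ and $\delta_{0}$. Feeding in the second line of \eqref{eq:OuterEig-1}, namely $|\rd_{z}^{j}\td{\phi}_{\out}|+|\rd_{z}^{j}(\nu\rd_{\nu}\td{\phi}_{\out})|+|\rd_{z}^{j}(\rd_{\lmb}\td{\phi}_{\out})|\aleq_{j,\delta_{0}}\nu^{2}$, yields \eqref{eq:OuterEigEst}.

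There is no real obstacle here: the argument is a routine change-of-variables computation. The one point to keep an eye on is that the negative powers of $\rho$ generated when differentiating the prefactor $\rho^{-1}$ do not degenerate — but this is automatic because on the interval $[\frac{1}{2}\delta_{0},1]$ the variable $\rho$ stays bounded away from $0$ (and from $\infty$), so all the chain-rule coefficients are genuinely $O_{k,\delta_{0}}(1)$. At $\ell=0$ one could alternatively invoke the sharper first line of \eqref{eq:OuterEig-1}, $|\td{\phi}_{\out}|\aleq_{\delta_{0}}\nu^{2}z$, together with the boundedness of $z/\rho=(1-\rho^{2})/\rho$ on $[\frac{1}{2}\delta_{0},1]$, but this refinement is not needed for \eqref{eq:OuterEigEst}.
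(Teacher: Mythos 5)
Your proposal is correct and follows essentially the same route as the paper: the paper likewise deduces the corollary directly from Lemma~\ref{lem:Outer-z-var} via the change of variables, recording only the identity $(\rho\rd_{\rho})\big(\tfrac{1}{\rho}f(\lmb,\nu;1-\rho^{2})\big)=\tfrac{1}{\rho}[(2z-2)\rd_{z}f-f]$ (the same chain-rule computation you spell out) and omitting the remaining details. Your observations that $\rho\in[\tfrac{1}{2}\delta_{0},1]$ maps onto the range of validity of \eqref{eq:OuterEig-1} and that $\rho$ stays bounded away from $0$ there are exactly the points that make the omitted details routine.
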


\begin{proof}
This follows from the definition \eqref{eq:OuterDef} and Lemma~\ref{lem:Outer-z-var}.
Note that it is convenient to use 
\[
\rho\rd_{\rho}\Big(\frac{1}{\rho}f(\lmb,\nu;1-\rho^{2})\Big)=\frac{1}{\rho}[(2z-2)\rd_{z}f-f](\lmb,\nu;1-\rho^{2}),
\]
where $z=1-\rho^{2}$. We omit the details.
\end{proof}

\subsection{\label{subsec:Refined-inner-eigenfunctions}Refined inner eigenfunctions}

The goal of this subsection is to obtain refined description of the
inner eigenfunctions. As mentioned in Section~\ref{subsec:Rough-inner-eigenfunctions},
the ansatz for the inner eigenfunction used in Lemma~\ref{lem:RoughInnerEigenfunction}
is not quite accurate in the region $\nu y\sim1$ (i.e., $\rho\sim1$)
and its higher order expansion is still missing. In this subsection,
we will look more carefully at the structure of this higher order
expansion. As a byproduct, we also motivate the definitions of $p(\nu;\lmb)$
and $\wh{\lmb}_{j}$ introduced at the beginning of this subsection.

In view of Corollary~\ref{cor:OuterEigenfunctions}, the outer eigenfunction
$\varphi_{\out}$ in the region $\rho\sim\delta_{0}$ is well-approximated
by $\frac{1}{\rho}h_{1}(\lmb;1-\rho^{2})$, which has the following
$\rho$-expansion \eqref{eq:h1-expn-rho-variable}: 
\begin{equation}
\varphi_{\out}\approx\frac{\Gamma(\lmb+\frac{1}{2})}{2\Gamma(\frac{\lmb}{2}+1)\Gamma(\frac{\lmb+1}{2})}\Big\{\frac{2}{\rho}+\lmb(\lmb-1)\rho\sum_{n=0}^{\infty}c_{n}\rho^{2n}[\log\rho+\frac{d_{n}}{2}]\Big\}.\label{eq:varphi-out-asymp}
\end{equation}
On the other hand, using the rough ansatz from Lemma~\ref{lem:RoughInnerEigenfunction}
and \eqref{eq:T1S1U1-asymptotics}, the inner eigenfunction $\varphi_{\inn}$
(see \eqref{eq:def-y}) in the region $\rho\sim\delta_{0}$ has \emph{rough}
asymptotics 
\begin{equation}
\varphi_{\inn}(\lmb,\nu;\rho)=\frac{1}{\nu}\phi_{\inn}(\lmb,\nu;\frac{\rho}{\nu})\approx\frac{2}{\rho}-\frac{1}{3}\rho+(2\lmb-1)\frac{1}{2}\rho+\lmb(\lmb-1)\rho(\log(\frac{\rho}{\nu})-1).\label{eq:varphi-in-rough-asymp}
\end{equation}

If the matching happens at $\rho=\delta_{0}\ll1$ for some $\lmb$,
then the inner eigenfunction (which has a \emph{rough} asymptotics
\eqref{eq:varphi-in-rough-asymp}) and the outer eigenfunction (which
has the asymptotics \eqref{eq:varphi-out-asymp}) must be parallel
in the region $\rho\sim\delta_{0}$. First, looking at the $\frac{1}{\rho}$-order
terms of \eqref{eq:varphi-out-asymp} and \eqref{eq:varphi-in-rough-asymp},
we may define our connection coefficient 
\begin{equation}
c_{\conn}(\lmb)\coloneqq\frac{2\Gamma(\frac{\lmb}{2}+1)\Gamma(\frac{\lmb+1}{2})}{\Gamma(\lmb+\frac{1}{2})}\label{eq:def-c-conn}
\end{equation}
so that \eqref{eq:varphi-out-asymp} multiplied by $c_{\conn}$ matches
\eqref{eq:varphi-in-rough-asymp} at the $\frac{1}{\rho}$-order.
Next, we look at the $\rho$-order terms. We rewrite RHS\eqref{eq:varphi-in-rough-asymp}
as 
\begin{equation}
\varphi_{\inn}(\lmb,\nu;\rho)\approx\Big\{\frac{2}{\rho}+\lmb(\lmb-1)\rho\cdot(\log\rho+\frac{d_{0}}{2})\Big\}+p(\nu;\lmb)\rho\label{eq:varphi-in-rough-asymp2}
\end{equation}
with 
\[
p(\nu;\lmb)=\lmb(\lmb-1)\Big(|\log\nu|-1-\frac{d_{0}(\lmb)}{2}\Big)+\lmb-\frac{5}{6}\tag{\ref{eq:def-p}}
\]
and observe that the terms in the curly bracket of \eqref{eq:varphi-in-rough-asymp2}
match the $\frac{1}{\rho}$- and $\rho$-order terms in the curly
bracket of \eqref{eq:varphi-out-asymp}. Therefore, it is natural
to expect that the inner eigenfunction would have large $y$ asymptotics
of the form 
\[
\Big\{\frac{2}{\rho}+\lmb(\lmb-1)\rho\sum_{n=0}^{\infty}c_{n}\rho^{2n}[\log\rho+\frac{d_{n}}{2}]\Big\}+p(\nu;\lmb)(\rho+\cdots).
\]
Therefore, (i) the missing part in our rough ansatz for the inner
eigenfunction is $\lmb(\lmb-1)U_{\infty}$, where $U_{\infty}=U_{\infty}(\lmb;\rho)$
is defined by\footnote{Note that this series converges absolutely for all $\rho\in(0,1)$.
See also Lemma~\ref{lem:U-infty} below.} 
\begin{equation}
U_{\infty}(\lmb;\rho)\coloneqq\rho\sum_{n=1}^{\infty}c_{n}\rho^{2n}[\log\rho+\frac{d_{n}}{2}],\label{eq:def-U-infty}
\end{equation}
and (ii) the eigenvalues $\lmb$ enabling the matching procedure is
an approximate solution to the equation 
\[
p(\nu;\lmb)=0.
\]
This motivates the definitions of $p(\nu;\lmb)$ and $\wh{\lmb}_{j}$.

In the following lemma, we make the above discussion into rigorous
estimates.
\begin{lem}[$U_{\infty}$-correction]
\label{lem:U-infty}Let $\lmb$ satisfy either $|\lmb|\aleq\frac{1}{|\log\nu|}$
or $|\lmb-1|\aleq\frac{1}{|\log\nu|}$.
\begin{itemize}
\item (Connection coefficient) We have 
\begin{equation}
c_{\conn}=2+O\Big(\frac{1}{|\log\nu|}\Big)\quad\text{and}\quad|\rd_{\lmb}c_{\conn}|+|\rd_{\lmb\lmb}c_{\conn}|\aleq1.\label{eq:c-conn-est}
\end{equation}
\item (Analytic extension and pointwise estimates of $U_{\infty}$) The
function $U_{\infty}(\lmb;\rho)$ extends analytically over the region
$\rho\in(0,+\infty)$. Moreover, for $\rho\in(0,1]$ and $k\in\bbN$,
we have the pointwise estimates 
\begin{equation}
|U_{\infty}|_{k}+|\rd_{\lmb}U_{\infty}|_{k}\aleq_{k}\rho^{3}\langle\log\rho\rangle.\label{eq:U-infty-est}
\end{equation}
\item (Connection to the outer eigenfunction) Let 
\begin{equation}
\begin{aligned}\wh{\Psi}_{\conn}(\lmb,\nu;\rho) & \coloneqq\Big[\frac{1}{\nu}\Lmb Q_{\nu}+\nu T_{1}+(2\lmb-1)\nu S_{1}+\lmb(\lmb-1)\nu U_{1}\Big]\Big(\frac{\rho}{\nu}\Big)\\
 & \quad+\lmb(\lmb-1)U_{\infty}(\lmb;\rho)-c_{\conn}(\lmb)\frac{1}{\rho}h_{1}(\lmb;1-\rho^{2})-p(\nu;\lmb)\rho.
\end{aligned}
\label{eq:def-Psi-hat-conn}
\end{equation}
Then, we have the following pointwise estimates for $\rho\in[\nu,1]$
and $k\in\bbN$: 
\begin{equation}
|\wh{\Psi}_{\conn}|_{k}+|\nu\rd_{\nu}\wh{\Psi}_{\conn}|_{k}+|\rd_{\lmb}\wh{\Psi}_{\conn}|_{k}\aleq_{k}\nu^{2}\Big(\frac{1}{\rho^{3}}+\frac{\langle\log(\frac{\rho}{\nu})\rangle}{\rho}\Big).\label{eq:Psi-hat-conn-est}
\end{equation}
\end{itemize}
\end{lem}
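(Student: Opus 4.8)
The plan is to treat the three assertions in order, each reducing to estimates on the Taylor coefficients $c_n(\lmb)$, $d_n(\lmb)$ and the digamma data.

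\emph{Step 1 (connection coefficient).} By definition \eqref{eq:def-c-conn}, $c_{\conn}(\lmb)=2\Gamma(\tfrac{\lmb}{2}+1)\Gamma(\tfrac{\lmb+1}{2})/\Gamma(\lmb+\tfrac12)$. Since $\Gamma$ is analytic and nonvanishing on a neighborhood of $[0,1]$, the map $\lmb\mapsto c_{\conn}(\lmb)$ is analytic there, so $c_{\conn}$ and its first two $\lmb$-derivatives are uniformly bounded on $|\lmb|\aleq\frac{1}{|\log\nu|}\cup|\lmb-1|\aleq\frac{1}{|\log\nu|}$, giving the derivative bound in \eqref{eq:c-conn-est}. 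For the value, I would use $\Gamma(1)=1$, $\Gamma(\tfrac12)=\sqrt\pi$, $\Gamma(\tfrac32)=\tfrac{\sqrt\pi}{2}$, $\Gamma(2)=1$ to get $c_{\conn}(0)=2\Gamma(1)\Gamma(\tfrac12)/\Gamma(\tfrac12)=2$ and $c_{\conn}(1)=2\Gamma(\tfrac32)\Gamma(1)/\Gamma(\tfrac32)=2$; then a first-order Taylor expansion with the uniform derivative bound upgrades this to $c_{\conn}=2+O(\frac{1}{|\log\nu|})$ on the full parameter range, since $\lmb$ is within $O(\frac{1}{|\log\nu|})$ of $0$ or $1$.

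\emph{Step 2 (analytic extension and pointwise bounds for $U_\infty$).} From \eqref{eq:def-cn-dn}, fact (i) of the proof of Lemma~\ref{lem:OuterProfileEstimates} gives that $(\tfrac{\lmb}{2}+1)_n$ and $(\tfrac{\lmb+1}{2})_n$ grow polynomially in $n$ while $n!(n+1)!$ in the denominator dominates, so $c_n(\lmb)$ decays superexponentially; fact (ii) gives $|d_n(\lmb)|\aleq\log\langle n\rangle$. Hence $\sum_{n\geq1}c_n(\lmb)\rho^{2n}[\log\rho+\tfrac{d_n}{2}]$ converges absolutely and locally uniformly for all $\rho\in(0,\infty)$ (the $\log\rho$ prefactor is harmless away from $0$, and near $\rho=0$ the whole sum is $O(\rho^2\langle\log\rho\rangle)$), so \eqref{eq:def-U-infty} defines an analytic function on $(0,\infty)$; the factor $\rho$ out front plus the sum starting at $n=1$ yields the leading behavior $O(\rho^3\langle\log\rho\rangle)$. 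The pointwise bound \eqref{eq:U-infty-est} then follows by differentiating the series term by term (each $(\rho\rd_\rho)^\ell$ multiplies the $n$-th term by a polynomial in $n$, still summable) and noting that $\rd_\lmb$ acting on $c_n,d_n$ again produces only polynomially-growing-in-$n$ coefficients via fact (ii) applied to $\psi'$. I would record this as $|U_\infty|_k+|\rd_\lmb U_\infty|_k\aleq_k\rho^3\langle\log\rho\rangle$ on $(0,1]$.

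\emph{Step 3 (connection to the outer eigenfunction).} This is the main computational step and the expected obstacle. The point is that $\wh\Psi_{\conn}$ in \eqref{eq:def-Psi-hat-conn} is, by construction, the difference between the inner ansatz (profiles $T_1,S_1,U_1$ plus the $U_\infty$-correction) and $c_{\conn}$ times the $h_1$-expansion \eqref{eq:h1-expn-rho-variable}, after subtracting off the degenerate $\rho$-order mismatch $p(\nu;\lmb)\rho$. I would expand both sides in powers of $\rho$ on $[\nu,1]$: for the $h_1$ side use \eqref{eq:h1-expn-rho-variable}, multiplied by $c_{\conn}$ so the $\tfrac{1}{\rho}$ and $\rho\log\rho$, $\rho$-order terms match exactly with the inner side by the very definitions of $c_{\conn}$, $U_\infty$, and $p(\nu;\lmb)$ (this is precisely the matching heuristic made before the lemma); for the inner side use the asymptotics \eqref{eq:LmbQ-asymptotics}, \eqref{eq:T1S1U1-asymptotics} for $\Lmb Q$, $T_1$, $S_1$, $U_1$ at $y=\rho/\nu\geq1$. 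After the exact cancellations, what remains in $\wh\Psi_{\conn}$ are (a) the $O(\langle\log y\rangle/y)=O(\nu\langle\log(\rho/\nu)\rangle/\rho)$ remainders from \eqref{eq:T1S1U1-asymptotics} scaled back — contributing the $\nu^2\langle\log(\rho/\nu)\rangle/\rho$ term after accounting for the overall $\nu$-prefactors in \eqref{eq:def-Psi-hat-conn} — and (b) the tails $\sum_{n\geq2}$ of the $h_1$-series versus the $n\geq2$ part of $U_\infty$, which near $\rho\sim1$ are $O(1)$ but carry a genuine $\nu^2$ from the $c_{\conn}=2+O(1/|\log\nu|)$ correction interacting with the $\lmb(\lmb-1)=O(1/|\log\nu|^2)$ prefactor; and (c) the $\nu^2/\rho^3$ term, which comes from the $\frac{8\nu^2}{(\nu^2+\rho^2)^2}$-type lower-order pieces of $V_\nu/\rho^2$ that were dropped in passing to $-\Delta_1$ and from the subleading $1/y^3$ corrections in \eqref{eq:LmbQ-asymptotics}. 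The bookkeeping is delicate because three different small parameters ($\nu^2$, $1/|\log\nu|$, and $\rho$) interact, and one must be careful that the $p(\nu;\lmb)\rho$ subtraction removes exactly the resonant $\rho$-order term and nothing more. The $\nu\rd_\nu$ and $\rd_\lmb$ versions follow the same scheme: $\nu\rd_\nu$ hits only the $\nu$-dependence (the $(\rho/\nu)$ arguments and explicit $\nu$ powers), producing the same structure, while $\rd_\lmb$ uses the derivative bounds from Step 1 and Step 2 together with $\rd_\lmb d_0$, $\rd_\lmb c_n$, $\rd_\lmb d_n$ being polynomially controlled. I expect Step 3 to occupy the bulk of the proof, with the careful tracking of the $\nu^2/\rho^3$ versus $\nu^2\langle\log(\rho/\nu)\rangle/\rho$ split being the subtle point.
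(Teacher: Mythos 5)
Your Steps 1 and 3 are essentially on the right track, but Step 2 contains a genuine error that also undercuts the way you set up Step 3. The Taylor coefficients $c_{n}(\lmb)=\frac{(\frac{\lmb}{2}+1)_{n}(\frac{\lmb+1}{2})_{n}}{n!(n+1)!}$ do \emph{not} decay superexponentially: writing $n!=(1)_{n}$ and $(n+1)!=(2)_{n}$ and using $(a)_{n}/(b)_{n}\sim n^{a-b}$, one finds $c_{n}\sim n^{\lmb-\frac{3}{2}}$, i.e.\ only polynomial decay. Hence the series \eqref{eq:def-U-infty} has radius of convergence exactly $1$ in $\rho$ (consistent with the paper's footnote that it converges for $\rho\in(0,1)$), and for $\lmb$ near $1$ it even diverges at $\rho=1$. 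So the direct-series argument cannot produce the analytic extension of $U_{\infty}$ to $(0,+\infty)$, nor the pointwise bounds \eqref{eq:U-infty-est} on $[\tfrac{1}{2},1]$ (a fortiori not the higher derivatives, which worsen convergence further). The missing idea is the exact identity
\begin{equation*}
c_{\conn}(\lmb)\tfrac{1}{\rho}h_{1}(\lmb;1-\rho^{2})=\tfrac{2}{\rho}+\lmb(\lmb-1)\rho\big[\log\rho+\tfrac{d_{0}}{2}\big]+\lmb(\lmb-1)U_{\infty}(\lmb;\rho),
\end{equation*}
which one solves for $U_{\infty}$; the apparent singularities at $\lmb=0,1$ are removable because $c_{\conn}h_{1}-2=c_{\conn}(h_{1}-1)+(c_{\conn}-2)$ and $h_{1}-1$ carries an explicit factor $\lmb(\lmb-1)$. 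Since $h_{1}$ extends analytically over $z\in(-\infty,1)$, i.e.\ $\rho\in(0,+\infty)$, this alternative formula gives both the extension and the estimates near and beyond $\rho=1$; the direct series is only used for $\rho\leq\tfrac{1}{2}$.

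The same identity is what makes Step 3 clean, and your description of the remainders there is partly off. Because $U_{\infty}$ is \emph{by definition} the $n\geq1$ tail of the $h_{1}$-expansion, the terms $\lmb(\lmb-1)U_{\infty}-c_{\conn}\tfrac{1}{\rho}h_{1}$ in \eqref{eq:def-Psi-hat-conn} collapse exactly to $-\tfrac{2}{\rho}-\lmb(\lmb-1)\rho[\log\rho+\tfrac{d_{0}}{2}]$; there is no residual ``tail of the $h_{1}$-series versus the $n\geq2$ part of $U_{\infty}$'' to estimate, and the $\frac{8\nu^{2}}{(\nu^{2}+\rho^{2})^{2}}$ correction to the potential never enters $\wh{\Psi}_{\conn}$ (it lives in $\td{\varphi}_{\out}$, which is handled separately in Lemma~\ref{lem:properties-for-matching}). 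What remains is precisely $(\tfrac{1}{\nu}\Lmb Q(\tfrac{\rho}{\nu})-\tfrac{2}{\rho})+(\nu T_{1}(\tfrac{\rho}{\nu})+\tfrac{1}{3}\rho)+(2\lmb-1)(\nu S_{1}(\tfrac{\rho}{\nu})-\tfrac{1}{2}\rho)+\lmb(\lmb-1)(\nu U_{1}(\tfrac{\rho}{\nu})-\rho(\log(\tfrac{\rho}{\nu})-1))$, and \eqref{eq:LmbQ-asymptotics}, \eqref{eq:T1S1U1-asymptotics} give exactly the $\nu^{2}(\rho^{-3}+\rho^{-1}\langle\log(\rho/\nu)\rangle)$ bound (the $\nu^{2}/\rho^{3}$ piece comes solely from the $y^{-3}$ correction to $\Lmb Q$ and $\Lmb_0\Lmb Q$). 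With the exact cancellation in hand, the delicate bookkeeping you anticipate largely disappears.
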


\begin{proof}
(1) \eqref{eq:c-conn-est} is clear from the definition of \eqref{eq:def-c-conn};
we omit the proof.

(2) Since the power series \eqref{eq:def-U-infty} converges absolutely
when $\rho<1$, $U_{\infty}(\lmb;\rho)$ is well-defined for $\rho\in(0,1)$.
In particular, the pointwise estimate \eqref{eq:U-infty-est} in the
region $\rho\leq\frac{1}{2}$ directly follows from the series expansion
\eqref{eq:def-U-infty}.

For larger values of $\rho$, we recall that the definition of $U_{\infty}$
is related to $h_{1}$; by \eqref{eq:h1-expn-rho-variable}, \eqref{eq:def-c-conn},
and \eqref{eq:def-U-infty}, we have 
\begin{equation}
\begin{aligned} & c_{\conn}(\lmb)\frac{1}{\rho}h_{1}(\lmb;1-\rho^{2})\\
 & \quad=\frac{2}{\rho}+\lmb(\lmb-1)\rho(\log\rho+\frac{d_{0}(\lmb)}{2})+\lmb(\lmb-1)U_{\infty}(\lmb;\rho).
\end{aligned}
\label{eq:U-infty-identity}
\end{equation}
The above identity holds a priori for $\rho\in(0,1)$ but suggests
the following alternative definition of $U_{\infty}$: 
\begin{equation}
U_{\infty}(\lmb;\rho)=\frac{1}{\lmb(\lmb-1)}\cdot\frac{1}{\rho}\big(c_{\conn}(\lmb)h_{1}(\lmb;1-\rho^{2})-2\big)-\rho(\log\rho+\frac{d_{0}(\lmb)}{2}).\label{eq:def2-U-infty}
\end{equation}
In this alternative definition, $\lmb=0$ and $\lmb=1$ are \emph{removable
singularities} due to 
\[
c_{\conn}h_{1}-2=c_{\conn}(h_{1}-1)+(c_{\conn}-2),
\]
\eqref{eq:c-conn-est}, and (letting $z=1-\rho^{2}$) 
\[
h_{1}(\lmb;z)-1=\frac{\lmb(\lmb-1)}{4\lmb+2}F\Big(\frac{\lmb}{2}+1,\frac{\lmb+1}{2};\lmb+\frac{3}{2};z\Big)\cdot z.
\]
Note that the function $F(\frac{\lmb}{2}+1,\frac{\lmb+1}{2};\lmb+\frac{3}{2};z)$
extends analytically over the region $z\in(-\infty,1)$ by the same
reason as $h_{1}$. Therefore, the alternative definition \eqref{eq:def2-U-infty}
defines an analytic function on the region $z\in(-\infty,1)$, i.e.,
$\rho\in(0,+\infty)$. In particular, we have the pointwise estimate
\eqref{eq:U-infty-est} also for $\rho\in[\frac{1}{2},1]$.

(3) Let $\rho\in[\nu,1]$. Using the identity \eqref{eq:U-infty-identity},
we have 
\begin{align*}
\wh{\Psi}_{\conn} & =\Big[\frac{1}{\nu}\Lmb Q_{\nu}+\nu T_{1}+(2\lmb-1)\nu S_{1}+\lmb(\lmb-1)\nu U_{1}\Big](\frac{\rho}{\nu})\\
 & \qquad-\Big(\frac{2}{\rho}+\lmb(\lmb-1)\rho[\log\rho+\frac{d_{0}(\lmb)}{2}]\Big)-p(\nu;\lmb)\rho
\end{align*}
We reaarrange the above using the definition \eqref{eq:def-p} of
$p(\nu;\lmb)$ as 
\begin{align*}
\wh{\Psi}_{\conn} & =\Big(\frac{1}{\nu}\Lmb Q(\frac{\rho}{\nu})-\frac{2}{\rho}\Big)+\Big(\nu T_{1}(\frac{\rho}{\nu})+\frac{1}{3}\rho\Big)+(2\lmb-1)\Big(\nu S_{1}(\frac{\rho}{\nu})-\frac{1}{2}\rho\Big)\\
 & \qquad+\lmb(\lmb-1)\Big(\nu U_{1}(\frac{\rho}{\nu})-\rho\Big(\log(\frac{\rho}{\nu})-1\Big)\Big).
\end{align*}
Applying \eqref{eq:T1S1U1-asymptotics}, we have 
\[
|\wh{\Psi}_{\conn}|_{k}+|\nu\rd_{\nu}\wh{\Psi}_{\conn}|_{k}+|\rd_{\lmb}\wh{\Psi}_{\conn}|_{k}\aleq_{k}\frac{1}{\nu}\frac{1}{y^{3}}+\nu\frac{\langle\log y\rangle}{y}\aleq_{k}\nu^{2}\Big(\frac{1}{\rho^{3}}+\frac{\langle\log(\frac{\rho}{\nu})\rangle}{\rho}\Big).
\]
This completes the proof of \eqref{eq:Psi-hat-conn-est}.
\end{proof}
We are now ready to obtain a refined description of the inner eigenfunctions.
\begin{lem}[Refined inner eigenfunctions in the $y$-variable]
\label{lem:RefinedInner-y-var}For any $\lmb$ with either $|\lmb|\aleq\frac{1}{|\log\nu|}$
or $|\lmb-1|\aleq\frac{1}{|\log\nu|}$, there exists unique eigenfunction
$\phi_{\inn}(\lmb,\nu;y)$ in the region $y\in(0,\frac{2\delta_{0}}{\nu}]$
with the following properties.
\begin{itemize}
\item (Decomposition) The inner eigenfunction $\phi_{\inn}(\lmb,\nu;y)$
admits the decomposition 
\begin{align*}
\phi_{\inn}(\lmb,\nu;y) & =\Lmb Q(y)+\nu^{2}T_{1}(y)+(2\lmb-1)\nu^{2}S_{1}(y)+\lmb(\lmb-1)\nu^{2}U_{1}(y)\\
 & \quad+\lmb(\lmb-1)\nu\chi_{\gtrsim1}(y)U_{\infty}(\lmb;\nu y)+\td{\phi}_{\inn}(\lmb,\nu;y)
\end{align*}
with the following structure of the remainder 
\[
\td{\phi}_{\inn}(\lmb,\nu;y)=\td{\phi}_{\inn,1}(\lmb,\nu;y)+p(\nu;\lmb)\td{\phi}_{\inn,2}(\lmb,\nu;y).
\]
\item (Estimates for the remainder) For any $k\in\bbN$, we have 
\begin{align*}
|\td{\phi}_{\inn,1}|_{k}+|\nu\rd_{\nu}\td{\phi}_{\inn,1}|_{k} & \aleq_{k}\nu^{4}(\chf_{(0,1]}y^{5}+\chf_{[1,2\delta_{0}/\nu]}y\langle\log y\rangle^{2}),\\
|\rd_{\lmb}\td{\phi}_{\inn,1}|_{k} & \aleq_{k}\nu^{4}(\chf_{(0,1]}y^{5}+\chf_{[1,2\delta_{0}/\nu]}|\log\nu|y\langle\log y\rangle),
\end{align*}
and 
\[
|\td{\phi}_{\inn,2}|_{k}+|\nu\rd_{\nu}\td{\phi}_{\inn,2}|_{k}+|\rd_{\lmb}\td{\phi}_{\inn,2}|_{k}\aleq_{k}\nu^{4}\chf_{[1,2\delta_{0}/\nu]}y^{3}.
\]
\end{itemize}
\end{lem}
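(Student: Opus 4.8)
The plan is to rerun the fixed-point scheme from the proof of Lemma~\ref{lem:RoughInnerEigenfunction}, but starting from the \emph{refined} ansatz
\[
\phi_{\inn}^{\ast}(\lmb,\nu;y)\coloneqq\Lmb Q(y)+\nu^{2}T_{1}(y)+(2\lmb-1)\nu^{2}S_{1}(y)+\lmb(\lmb-1)\nu^{2}U_{1}(y)+\lmb(\lmb-1)\nu\chi_{\gtrsim1}(y)U_{\infty}(\lmb;\nu y),
\]
whose only new feature compared with Lemma~\ref{lem:RoughInnerEigenfunction} is the $U_{\infty}$-correction, localized to $\rho=\nu y\gtrsim\nu$ by $\chi_{\gtrsim1}$. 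First I would observe that, by the uniqueness statement of Lemma~\ref{lem:RoughInnerEigenfunction}, the eigenfunction $\phi_{\inn}$ here is literally the same function as the one produced there, so the only content is the finer decomposition; in particular $\td\phi_{\inn}=\td\phi_{\rough}-\lmb(\lmb-1)\nu\chi_{\gtrsim1}(y)U_{\infty}(\lmb;\nu y)$, which together with \eqref{eq:U-infty-est} already yields the crude bound $|\td\phi_{\inn}|_{k}\aleq_{k}\nu^{4}(\chf_{(0,1]}y^{5}+\chf_{[1,2\delta_{0}/\nu]}y^{3})$.

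The heart of the argument is the computation of the inhomogeneous error $\Psi\coloneqq[H+\nu^{2}(\Lmb_{0}+\lmb)(\Lmb+\lmb)]\phi_{\inn}^{\ast}$. Writing $\phi_{\inn}^{\ast}$ as the rough ansatz plus the $U_{\infty}$-correction and using the scaling $y\mapsto\rho=\nu y$, one gets
\[
\Psi=\Psi_{\rough}+\chi_{\gtrsim1}(y)\Psi_{U}+\Psi_{\mathrm{pot}}+\Psi_{\mathrm{comm}},
\]
where $\Psi_{\rough}$ is the error already bounded in \eqref{eq:RoughInnEig-6}; $\Psi_{\mathrm{pot}}=-8(1+y^{2})^{-2}\lmb(\lmb-1)\nu\chi_{\gtrsim1}(y)U_{\infty}(\lmb;\nu y)$ is the error of replacing $H$ by $-\Delta_{1}$ in the outer region; $\Psi_{\mathrm{comm}}$ is the commutator of the operator with $\chi_{\gtrsim1}$, supported in $y\sim1$; and the main new term $\Psi_{U}$ is computed \emph{exactly}. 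Indeed, since $-\Delta_{1}+(\Lmb_{0}+\lmb)(\Lmb+\lmb)$ annihilates $\rho^{-1}h_{1}(\lmb;1-\rho^{2})$, the identity \eqref{eq:U-infty-identity} together with the elementary computations $[-\Delta_{1}+(\Lmb_{0}+\lmb)(\Lmb+\lmb)]\rho^{-1}=\lmb(\lmb-1)\rho^{-1}$, $[-\Delta_{1}+(\Lmb_{0}+\lmb)(\Lmb+\lmb)]\rho=(1+\lmb)(2+\lmb)\rho$, and $[-\Delta_{1}+(\Lmb_{0}+\lmb)(\Lmb+\lmb)](\rho\log\rho)=-2\rho^{-1}+(1+\lmb)(2+\lmb)\rho\log\rho+(3+2\lmb)\rho$ give
\[
[-\Delta_{1}+(\Lmb_{0}+\lmb)(\Lmb+\lmb)][\lmb(\lmb-1)U_{\infty}(\lmb;\rho)]=-\lmb(\lmb-1)\big[(1+\lmb)(2+\lmb)\rho\log\rho+\kappa(\lmb)\rho\big],
\]
with $\kappa(\lmb)=3+2\lmb+\tfrac12 d_{0}(\lmb)(1+\lmb)(2+\lmb)$, and after rescaling this is $\Psi_{U}$. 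Now comes the algebraic fact for which $U_{\infty}$, $p(\nu;\lmb)$, and $\wh{\lmb}_{j}$ were designed: feeding the asymptotics \eqref{eq:T1S1U1-asymptotics} of $T_{1},S_{1},U_{1}$ into $\Psi_{\rough}$, the leading $y\log y$-terms of $\Psi_{\rough}$ and $\Psi_{U}$ cancel identically, while the remaining $y$-term of $\Psi_{\rough}+\Psi_{U}$ collapses exactly to $(1+\lmb)(2+\lmb)\,p(\nu;\lmb)\,\nu^{4}y$. Setting $\Psi_{2}\coloneqq(1+\lmb)(2+\lmb)\nu^{4}y\,\chi_{\gtrsim1}(y)$ and $\Psi_{1}\coloneqq\Psi-p(\nu;\lmb)\Psi_{2}$, the estimates \eqref{eq:T1S1U1-asymptotics}, \eqref{eq:U-infty-est}, and the digamma identities then yield $|\Psi_{1}|_{k}\aleq_{k}\nu^{4}(\chf_{(0,1]}y^{3}+\chf_{[1,2\delta_{0}/\nu]}y^{-1}\langle\log y\rangle)$ and $|\Psi_{2}|_{k}\aleq_{k}\nu^{4}\chf_{[1,2\delta_{0}/\nu]}y$, the same bounds for $\nu\rd_{\nu}\Psi_{i}$ (using $\nu\rd_{\nu}p=-\lmb(\lmb-1)$), and the analogue for $\rd_{\lmb}\Psi_{1}$ with one extra factor $|\log\nu|$ coming from $\rd_{\lmb}p(\nu;\lmb)\aleq|\log\nu|$.

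Given this structure, I would close as in Lemma~\ref{lem:RoughInnerEigenfunction}. Define $\td\phi_{\inn,1}$ and $\td\phi_{\inn,2}$ as the unique solutions of the \emph{decoupled} integral equations $\td\phi_{\inn,i}=-H^{-1}[\Psi_{i}+\nu^{2}(\Lmb_{0}+\lmb)(\Lmb+\lmb)\td\phi_{\inn,i}]$, solved by the same contraction (a gain of $(\delta_{0})^{2}$ exactly as in \eqref{eq:RoughInnEig-3}) in the weighted spaces adapted to the two sources; since $\Psi_{2}$ is supported in $\{y\gtrsim1\}$, so is $\td\phi_{\inn,2}$. The mapping property \eqref{eq:MappingHQinv} then turns the bounds on $\Psi_{i}$ into the claimed bounds on $\td\phi_{\inn,i}$ (roughly, $H^{-1}$ sends $y^{3}$, $y^{-1}\langle\log y\rangle$, and $y\chf_{[1,\infty)}$ to $y^{5}$, $y\langle\log y\rangle^{2}$, and $y^{3}\chf_{[1,\infty)}$, respectively). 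By linearity and $\Psi=\Psi_{1}+p(\nu;\lmb)\Psi_{2}$, the function $\td\phi_{\inn,1}+p(\nu;\lmb)\td\phi_{\inn,2}$ solves the same fixed-point equation for $\td\phi_{\inn}$ and hence coincides with it, giving the asserted decomposition. The $\nu\rd_{\nu}$- and $\rd_{\lmb}$-bounds follow by differentiating the integral equations and rerunning the contraction, exactly as $\nu\rd_{\nu}\td\phi_{\out}$ and $\rd_{\lmb}\td\phi_{\out}$ were treated in the proof of Lemma~\ref{lem:Outer-z-var}, and the higher $y$-derivative bounds come from rewriting the second-order ODE for $\td\phi_{\inn,i}$, as in the last step of the proof of Lemma~\ref{lem:RoughInnerEigenfunction}. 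The main obstacle is the bookkeeping in the middle step: isolating the \emph{exact} $\rho\log\rho$-cancellation and the identification of the $\rho$-order coefficient with $p(\nu;\lmb)$, while tracking every remaining contribution — the $O(\langle\log y\rangle/y)$ tails in \eqref{eq:T1S1U1-asymptotics}, the potential perturbation $\Psi_{\mathrm{pot}}$, and the cutoff commutator $\Psi_{\mathrm{comm}}$ — with sharp $\nu$- and $|\log\nu|$-weights; it is precisely this cancellation that both produces the factor $p(\nu;\lmb)$ and upgrades the inner remainder bound from $y^{3}$ to $y^{5}$.
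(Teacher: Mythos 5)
Your proposal is correct and follows essentially the same route as the paper: you compute the inhomogeneous error of the refined ansatz, use the fact that $-\Delta_{1}+(\Lmb_{0}+\lmb)(\Lmb+\lmb)$ annihilates $\rho^{-1}h_{1}(\lmb;1-\rho^{2})$ together with \eqref{eq:U-infty-identity} to see the exact $y\log y$-cancellation and identify the residual linear term with $p(\nu;\lmb)$ (your formula for $\kappa(\lmb)$ agrees with the paper's identity $(-\Delta_{1}+(\Lmb_{0}+\lmb)(\Lmb+\lmb))U_{\infty}=-(\Lmb_{0}+\lmb)(\Lmb+\lmb)\{\rho(\log\rho+\tfrac{d_{0}}{2})\}$), and then close with two decoupled $H^{-1}$-fixed-point arguments exactly as in the paper's Step 3. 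The only cosmetic difference is that your $\Psi_{2}$ omits the cutoff commutator of $(\Lmb_{0}+\lmb)(\Lmb+\lmb)$ with $\chi_{\gtrsim1}$ (an $O(\nu^{4})$ term at $y\sim1$ absorbed into $\Psi_{1}$), which is immaterial.
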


\begin{proof}
In the proof, we always assume $y\in(0,\frac{2\delta_{0}}{\nu}]$
and write $\rho=\nu y$. Let us write 
\begin{align*}
\wh{\phi}_{\rough}(\lmb,\nu;y) & \coloneqq\Lmb Q(y)+\nu^{2}T_{1}(y)+(2\lmb-1)\nu^{2}S_{1}(y)+\lmb(\lmb-1)\nu^{2}U_{1}(y),\\
\wh{\phi}_{\inn}(\lmb,\nu;y) & \coloneqq\wh{\phi}_{\rough}(\lmb,\nu;y)+\lmb(\lmb-1)\nu\chi_{\gtrsim1}(y)U_{\infty}(\lmb;\nu y).
\end{align*}

\textbf{Step 1.} Computation of the inhomogeneous error term.

In this step, we compute the total inhomogeneous error term 
\[
\Psi_{\inn}\coloneqq\big[H+\nu^{2}(\Lmb_{0}+\lmb)(\Lmb+\lmb)\big]\wh{\phi}_{\inn}.
\]
More precisely, our aim is to derive \eqref{eq:RefInner-8} below.

We begin by recalling the inhomogeneous error from the rough ansatz
\eqref{eq:def-Psi-rough}: 
\begin{equation}
\big[H+\nu^{2}(\Lmb_{0}+\lmb)(\Lmb+\lmb)\big]\wh{\phi}_{\rough}=\nu^{2}(\Lmb_{0}+\lmb)(\Lmb+\lmb)(\wh{\phi}_{\rough}-\Lmb Q).\label{eq:RefInner-1}
\end{equation}
Next, in order to incorporate the $U_{\infty}$-correction, we note
that 
\begin{equation}
\begin{aligned} & \big[H+\nu^{2}(\Lmb_{0}+\lmb)(\Lmb+\lmb)\big]\big(\lmb(\lmb-1)\nu\chi_{\ageq1}(y)U_{\infty}(\nu y)\big)\\
 & =\lmb(\lmb-1)\nu^{3}\chi_{\ageq1}(y)\big[\big(-\Delta_{1}+(\Lmb_{0}+\lmb)(\Lmb+\lmb)\big)U_{\infty}\big](\rho)+\Psi_{1},
\end{aligned}
\label{eq:RefInner-2}
\end{equation}
where 
\begin{equation}
\begin{aligned}\Psi_{1} & \coloneqq-\lmb(\lmb-1)\nu\Big\{\frac{8}{(1+y^{2})^{2}}\chi_{\ageq1}(y)\\
 & \qquad\qquad\qquad\qquad+\big[-\Delta_{1}+\nu^{2}(\Lmb_{0}+\lmb)(\Lmb+\lmb),\chi\big]\Big\}\big(U_{\infty}(\lmb;\nu y)\big).
\end{aligned}
\label{eq:RefInner-3}
\end{equation}
The first term in \eqref{eq:RefInner-2} can be computed using the
definitions of $U_{\infty}$, $c_{n}$, and $d_{n}$ (see \eqref{eq:def-U-infty}
and \eqref{eq:def-cn-dn}): 
\begin{align*}
 & \big(-\Delta_{1}+(\Lmb_{0}+\lmb)(\Lmb+\lmb)\big)U_{\infty}(\lmb;\rho)\\
 & =-\Delta_{1}\big\{ c_{1}\rho^{3}(\log\rho+\frac{d_{1}}{2})\big\}\\
 & =-(\Lmb_{0}+\lmb)(\Lmb+\lmb)\big\{\rho(\log\rho+\frac{d_{0}}{2})\big\}.
\end{align*}
Substituting this into \eqref{eq:RefInner-2} yields 
\begin{equation}
\begin{aligned} & \big[H+\nu^{2}(\Lmb_{0}+\lmb)(\Lmb+\lmb)\big]\big(\lmb(\lmb-1)\nu\chi_{\ageq1}(y)U_{\infty}(\nu y)\big)\\
 & =-\lmb(\lmb-1)\nu^{3}\chi_{\ageq1}(y)\cdot(\Lmb_{0}+\lmb)(\Lmb+\lmb)\big\{\rho(\log\rho+\frac{d_{0}}{2})\big\}+\Psi_{1}.
\end{aligned}
\label{eq:RefInner-4}
\end{equation}
Summing up \eqref{eq:RefInner-1} and \eqref{eq:RefInner-4}, we obtain
the total inhomogeneous error: 
\begin{equation}
\Psi_{\inn}=\Psi_{1}+\nu^{4}(\Lmb_{0}+\lmb)(\Lmb+\lmb)\Psi_{2},\label{eq:RefInner-5}
\end{equation}
where (recall that $\Psi_{1}$ was defined in \eqref{eq:RefInner-3})
\begin{equation}
\Psi_{2}\coloneqq\nu^{-2}\big(\wh{\phi}_{\rough}-\Lmb Q\big)-\lmb(\lmb-1)\chi_{\ageq1}(y)\cdot\big(y\log(\nu y)+\frac{d_{0}}{2}y\big).\label{eq:RefInner-6}
\end{equation}
Finally, we rewrite $\Psi_{2}$ as (c.f. the proof of \eqref{eq:Psi-hat-conn-est})
\begin{equation}
\begin{aligned}\Psi_{2} & =\Big(T_{1}+\chi_{\ageq1}\cdot\frac{1}{3}y\Big)+(2\lmb-1)\Big(S_{1}-\chi_{\ageq1}\cdot\frac{1}{2}y\Big)\\
 & \quad+\lmb(\lmb-1)\Big(U_{1}-\chi_{\ageq1}(y\log y-y)\Big)+\chi_{\ageq1}p(\nu;\lmb)y\\
 & \eqqcolon\Psi_{2,1}+\chi_{\ageq1}p(\nu;\lmb)y.
\end{aligned}
\label{eq:RefInner-7}
\end{equation}
Substituting \eqref{eq:RefInner-7} into \eqref{eq:RefInner-5}, we
finally obtain 
\begin{equation}
\Psi_{\inn}=\wh{\Psi}_{1}+p(\nu;\lmb)\wh{\Psi}_{2},\label{eq:RefInner-8}
\end{equation}
where 
\begin{align}
\wh{\Psi}_{1} & \coloneqq\Psi_{1}+\nu^{4}(\Lmb_{0}+\lmb)(\Lmb+\lmb)\Psi_{2,1},\label{eq:RefInner-9}\\
\wh{\Psi}_{2} & \coloneqq\nu^{4}(\Lmb_{0}+\lmb)(\Lmb+\lmb)(\chi_{\ageq1}\cdot y),\label{eq:RefInner-10}
\end{align}
and $\Psi_{1}$ and $\Psi_{2,1}$ were defined in \eqref{eq:RefInner-3}
and \eqref{eq:RefInner-7}.

\textbf{Step 2.} Estimates for $\Psi_{\inn}$.

We first estimate $\wh{\Psi}_{1}$. Recall that 
\[
\wh{\Psi}_{1}=\Psi_{1}+\nu^{4}(\Lmb_{0}+\lmb)(\Lmb+\lmb)\Psi_{2,1}.
\]
To estimate $\Psi_{1}$, we use \eqref{eq:RefInner-3}, $|\lmb(\lmb-1)|\aleq\frac{1}{|\log\nu|}$,
and the pointwise estimate \eqref{eq:U-infty-est} for $U_{\infty}$,
we have 
\begin{align*}
 & |\log\nu|\big(|\Psi_{1}|_{k}+|\nu\rd_{\nu}\Psi_{1}|_{k}\big)+|\rd_{\lmb}\Psi_{1}|_{k}\\
 & \lesssim_{k}\nu\Big(\chf_{[1,2\delta_{0}/\nu]}\frac{1}{y^{4}}|U_{\infty}(\lmb;\nu y)|_{k+1}+\chf_{[1,2]}|U_{\infty}(\lmb;\nu y)|_{k+2}\Big)\\
 & \qquad+\frac{\nu}{|\log\nu|}\Big(\chf_{[1,2\delta_{0}/\nu]}\frac{1}{y^{4}}|\rd_{\lmb}U_{\infty}(\lmb;\nu y)|_{k}+\chf_{[1,2]}|\rd_{\lmb}U_{\infty}(\lmb;\nu y)|_{k+1}\Big)\\
 & \lesssim_{k}\nu^{4}\chf_{[1,2\delta_{0}/\nu]}\frac{\langle\log(\nu y)\rangle}{y}.
\end{align*}
Next, using the definition \eqref{eq:RefInner-7} of $\Psi_{2,1}$
and the asymptotics \eqref{eq:T1S1U1-asymptotics} for the profiles
$T_{1},S_{1},U_{1}$, we obtain 
\[
|\Psi_{2,1}|_{k}+|\nu\rd_{\nu}\Psi_{2,1}|_{k}+|\rd_{\lmb}\Psi_{2,1}|_{k}\aleq_{k}\chf_{(0,1]}y^{3}+\chf_{[1,2\delta_{0}/\nu]}\frac{\langle\log y\rangle}{y}.
\]
Substituting the above estimates into \eqref{eq:RefInner-9}, we have
\begin{equation}
\left\{ \begin{aligned}|\wh{\Psi}_{1}|_{k}+|\nu\rd_{\nu}\wh{\Psi}_{1}|_{k} & \aleq_{k}\nu^{4}\Big(\chf_{(0,1]}y^{3}+\chf_{[1,2\delta_{0}/\nu]}\frac{\langle\log y\rangle}{y}\Big),\\
|\rd_{\lmb}\wh{\Psi}_{1}|_{k} & \aleq_{k}\nu^{4}\Big(\chf_{(0,1]}y^{3}+\chf_{[1,2\delta_{0}/\nu]}|\log\nu|\frac{1}{y}\Big).
\end{aligned}
\right.\label{eq:RefInner-11}
\end{equation}

We turn to estimate $\wh{\Psi}_{2}$. We recall from \eqref{eq:RefInner-10}
that 
\[
\wh{\Psi}_{2}=\nu^{4}(\Lmb_{0}+\lmb)(\Lmb+\lmb)(\chi_{\ageq1}\cdot y).
\]
Thus we easily have 
\begin{equation}
|\wh{\Psi}_{2}|_{k}+|\nu\rd_{\nu}\wh{\Psi}_{2}|_{k}+|\rd_{\lmb}\wh{\Psi}_{2}|_{k}\aleq_{k}\nu^{4}\chf_{[1,2\delta_{0}/\nu]}y.\label{eq:RefInner-12}
\end{equation}

\textbf{Step 3.} Completion of the proof.

In view of the structure \eqref{eq:RefInner-8} of $\Psi_{\inn}$,
we can construct $\td{\phi}_{\inn}$ by 
\[
\td{\phi}_{\inn}=\td{\phi}_{\inn,1}+p(\nu;\lmb)\td{\phi}_{\inn,2},
\]
where $\td{\phi}_{\inn,1}$ and $\td{\phi}_{\inn,2}$ solve the differential
equations 
\begin{align*}
H\td{\phi}_{\inn,1} & =-[\wh{\Psi}_{1}+\nu^{2}(\Lmb_{0}+\lmb)(\Lmb+\lmb)\td{\phi}_{\inn,1}],\\
H\td{\phi}_{\inn,2} & =-[\wh{\Psi}_{2}+\nu^{2}(\Lmb_{0}+\lmb)(\Lmb+\lmb)\td{\phi}_{\inn,2}],
\end{align*}
respectively. Introducing the short-hand notation
\[
H_{\mathrm{rem}}\coloneqq\nu^{2}(\Lmb_{0}+\lmb)(\Lmb+\lmb)
\]
and taking $\nu\rd_{\nu}$ and $\rd_{\lmb}$ to the above differential
equations, we arrive at the system 
\[
\left\{ \begin{aligned}\td{\phi}_{\inn,\ell} & =-H^{-1}\big[\wh{\Psi}_{\ell}+H_{\mathrm{rem}}\td{\phi}_{\inn,\ell}\big],\\
(\nu\rd_{\nu}\td{\phi}_{\inn,\ell}) & =-H^{-1}\big[(\nu\rd_{\nu}\wh{\Psi}_{\ell})+(\nu\rd_{\nu}H_{\mathrm{rem}})\td{\phi}_{\inn,\ell}+H_{\mathrm{rem}}(\nu\rd_{\nu}\td{\phi}_{\inn,\ell})\big],\\
(\rd_{\lmb}\td{\phi}_{\inn,\ell}) & =-H^{-1}\big[(\rd_{\lmb}\wh{\Psi}_{\ell})+(\rd_{\lmb}H_{\mathrm{rem}})\td{\phi}_{\inn,\ell}+H_{\mathrm{rem}}(\rd_{\lmb}\td{\phi}_{\inn,\ell})\big].
\end{aligned}
\right.
\]
for each $\ell\in\{1,2\}$. As in the proof of Lemma~\ref{lem:RoughInnerEigenfunction},
we set up the function space $X_{\ell}$ whose norm is defined by
the smallest number satisfying
\begin{align*}
|f_{1}|_{2}+|f_{2}|_{2} & \leq\|f\|_{X_{1}}(\chf_{(0,1]}y^{5}+\chf_{[1,2\delta_{0}/\nu]}y\langle\log y\rangle^{2}),\\
|f_{3}|_{2} & \leq\|f\|_{X_{1}}(\chf_{(0,1]}y^{5}+\chf_{[1,2\delta_{0}/\nu]}|\log\nu|y\langle\log y\rangle),
\end{align*}
and 
\[
|f_{1}|_{2}+|f_{2}|_{2}+|f_{3}|_{2}\leq\|f\|_{X_{2}}\cdot\chf_{[1,2\delta_{0}/\nu]}y^{3},
\]
respectively. Here, $f_{1},f_{2},f_{3}$ correspond to $\td{\phi}_{\inn,\ell},\nu\rd_{\nu}\td{\phi}_{\inn,\ell},\nu\rd_{\nu}\td{\phi}_{\inn,\ell}$.
Thanks to the estimates \eqref{eq:RefInner-11}-\eqref{eq:RefInner-12}
for $\wh{\Psi}_{\ell}$, \eqref{eq:MappingHQinv} for $H^{-1}$, and
the smallness $\delta_{0}\ll1$, one can apply the contraction principle.
As a result, one obtains the unique existence of $\td{\phi}_{\inn,\ell}$
in $X_{\ell}$ as well as the desired pointwise estimates. Finally,
smoothness and higher derivative estimates follow in a similar fashion
as in the proof of Lemma~\ref{lem:RoughInnerEigenfunction}, which
we will not repeat. This completes the proof.
\end{proof}
Translating the above results in terms of the self-similar variable
$\rho$, we get:
\begin{cor}[Refined inner eigenfunctions]
\label{cor:refined-inner-eigenfunction}For any $\lmb$ with either
$|\lmb|\aleq\frac{1}{|\log\nu|}$ or $|\lmb-1|\aleq\frac{1}{|\log\nu|}$,
the inner eigenfunction defined by 
\begin{equation}
\varphi_{\inn}(\lmb,\nu;\rho)\coloneqq\frac{1}{\nu}\phi_{\inn}(\lmb,\nu;\frac{\rho}{\nu})\label{eq:def-Inner-Eig}
\end{equation}
in the region $(0,2\delta_{0}]$ satisfies the following properties.
\begin{itemize}
\item (Decomposition) The inner eigenfunction $\varphi_{\inn}(\lmb,\nu;y)$
admits the decomposition 
\begin{equation}
\begin{aligned}\varphi_{\inn}(\lmb,\nu;\rho) & =\Big[\frac{1}{\nu}\Lmb Q+\nu T_{1}+(2\lmb-1)\nu S_{1}+\lmb(\lmb-1)\nu U_{1}\Big]\Big(\frac{\rho}{\nu}\Big)\\
 & \quad+\chi_{\gtrsim\nu}(\rho)\lmb(\lmb-1)U_{\infty}(\lmb;\rho)+\td{\varphi}_{\inn}(\lmb,\nu;\rho)
\end{aligned}
\label{eq:RefInnDecomp}
\end{equation}
with the following structure of the remainder 
\begin{equation}
\td{\varphi}_{\inn}(\lmb,\nu;\rho)=\td{\varphi}_{\inn,1}(\lmb,\nu;\rho)+p(\nu;\lmb)\td{\varphi}_{\inn,2}(\lmb,\nu;\rho).\label{eq:RefInnDecomp2}
\end{equation}
\item (Estimates for the remainder) For any $k\in\bbN$, we have 
\begin{equation}
\left\{ \begin{aligned}|\td{\varphi}_{\inn,1}|_{k}+|\nu\rd_{\nu}\td{\varphi}_{\inn,1}|_{k} & \aleq_{k}\nu^{2}\Big(\chf_{(0,\nu]}\rho(\frac{\rho}{\nu})^{4}+\chf_{[\nu,2\delta_{0}]}\rho\langle\log(\frac{\rho}{\nu})\rangle^{2}\Big),\\
|\rd_{\lmb}\td{\varphi}_{\inn,1}|_{k} & \aleq_{k}\nu^{2}\Big(\chf_{(0,\nu]}\rho(\frac{\rho}{\nu})^{4}+\chf_{[\nu,2\delta_{0}]}|\log\nu|\rho\langle\log(\frac{\rho}{\nu})\rangle\Big),
\end{aligned}
\right.\label{eq:RefInnEst1}
\end{equation}
and 
\begin{equation}
|\td{\varphi}_{\inn,2}|_{k}+|\nu\rd_{\nu}\td{\varphi}_{\inn,2}|_{k}+|\rd_{\lmb}\td{\varphi}_{\inn,2}|_{k}\aleq_{k}\chf_{[\nu,2\delta_{0}]}\rho^{3}.\label{eq:RefInnEst2}
\end{equation}
\end{itemize}
\end{cor}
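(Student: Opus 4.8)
The plan is to read off Corollary~\ref{cor:refined-inner-eigenfunction} from Lemma~\ref{lem:RefinedInner-y-var} via the change of variables $\rho=\nu y$, keeping track of how the weighted seminorms $|\cdot|_{k}$ and the operators $\rd_{\lmb}$, $\nu\rd_{\nu}$ behave under this rescaling. By the definition \eqref{eq:def-Inner-Eig} we have $\varphi_{\inn}(\lmb,\nu;\rho)=\frac{1}{\nu}\phi_{\inn}(\lmb,\nu;\frac{\rho}{\nu})$ on $\rho\in(0,2\delta_{0}]$ (the region $y\in(0,\frac{2\delta_{0}}{\nu}]$ becoming $\rho\in(0,2\delta_{0}]$). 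Substituting the decomposition of $\phi_{\inn}$ from Lemma~\ref{lem:RefinedInner-y-var} and using $\frac{1}{\nu}\cdot\nu^{2}=\nu$, $\nu y=\rho$, and $\chi_{\gtrsim1}(\frac{\rho}{\nu})=\chi_{\gtrsim\nu}(\rho)$, one immediately obtains \eqref{eq:RefInnDecomp} with $\td{\varphi}_{\inn,\ell}(\lmb,\nu;\rho):=\frac{1}{\nu}\td{\phi}_{\inn,\ell}(\lmb,\nu;\frac{\rho}{\nu})$, hence also \eqref{eq:RefInnDecomp2}.

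The second step is to record the elementary scaling rules. If $g(\rho)=f(\frac{\rho}{\nu})$, then $\rho^{\ell}\rd_{\rho}^{\ell}g(\rho)=(y^{\ell}\rd_{y}^{\ell}f)(\frac{\rho}{\nu})$, so $|g|_{k}(\rho)=|f|_{k}(\frac{\rho}{\nu})$; this is exactly the scale invariance for which the seminorm $|\cdot|_{k}$ was designed. Since the rescaling acts only on $\rho$, $\rd_{\lmb}$ commutes with it, so $\rd_{\lmb}\td{\varphi}_{\inn,\ell}=\frac{1}{\nu}(\rd_{\lmb}\td{\phi}_{\inn,\ell})(\frac{\rho}{\nu})$. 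The one point requiring a little care is $\nu\rd_{\nu}$: because $\nu$ occurs both in the prefactor $\frac{1}{\nu}$ and inside the argument $\frac{\rho}{\nu}$, the chain rule gives
\begin{equation*}
\nu\rd_{\nu}\td{\varphi}_{\inn,\ell}(\lmb,\nu;\rho)=\frac{1}{\nu}\big[(\nu\rd_{\nu}-\Lmb_{0})\td{\phi}_{\inn,\ell}\big]\Big(\lmb,\nu;\frac{\rho}{\nu}\Big),
\end{equation*}
with $\Lmb_{0}=y\rd_{y}+1$ acting in the $y$-variable. Hence the $\nu\rd_{\nu}$-bound for $\td{\varphi}_{\inn,\ell}$ follows from the $\nu\rd_{\nu}$-bound for $\td{\phi}_{\inn,\ell}$ plus a bound for $\Lmb_{0}\td{\phi}_{\inn,\ell}$, and the latter is controlled by $|\td{\phi}_{\inn,\ell}|_{k+1}$ since $|y\rd_{y}f|_{k}\aleq_{k}|f|_{k+1}$; both inputs are supplied by Lemma~\ref{lem:RefinedInner-y-var} because its bounds hold for every $k\in\bbN$.

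Finally, substitute the bounds of Lemma~\ref{lem:RefinedInner-y-var}. Using $|g|_{k}(\rho)=|f|_{k}(\frac{\rho}{\nu})$ together with $\chf_{(0,1]}(y)=\chf_{(0,\nu]}(\rho)$ and $\chf_{[1,2\delta_{0}/\nu]}(y)=\chf_{[\nu,2\delta_{0}]}(\rho)$, and the identities (with $y=\frac{\rho}{\nu}$) $\frac{1}{\nu}\cdot\nu^{4}y^{5}=\nu^{2}\rho(\frac{\rho}{\nu})^{4}$, $\frac{1}{\nu}\cdot\nu^{4}y\langle\log y\rangle^{2}=\nu^{2}\rho\langle\log(\frac{\rho}{\nu})\rangle^{2}$, $\frac{1}{\nu}\cdot\nu^{4}|\log\nu|\,y\langle\log y\rangle=\nu^{2}|\log\nu|\rho\langle\log(\frac{\rho}{\nu})\rangle$, and $\frac{1}{\nu}\cdot\nu^{4}y^{3}=\rho^{3}$, the three estimates for $\td{\phi}_{\inn,1}$ turn into \eqref{eq:RefInnEst1} and those for $\td{\phi}_{\inn,2}$ into \eqref{eq:RefInnEst2}. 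I do not expect a genuine obstacle here: the only things to watch are the chain-rule term for $\nu\rd_{\nu}$ displayed above and the compatibility of the two pieces of each bound at the transition scale $\rho\sim\nu$, where $\rho(\frac{\rho}{\nu})^{4}$, $\rho$, and $\rho\langle\log(\frac{\rho}{\nu})\rangle^{2}$ are all comparable. Smoothness and higher-order derivative estimates propagate exactly as in the passage from Lemma~\ref{lem:Outer-z-var} to Corollary~\ref{cor:OuterEigenfunctions}, so the details may be omitted.
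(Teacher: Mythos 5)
Your proposal is correct and follows exactly the paper's (very brief) argument: rescale via \eqref{eq:def-Inner-Eig}, use the scale invariance of $|\cdot|_{k}$, and apply the identity $\nu\rd_{\nu}[\frac{1}{\nu}f(\lmb,\nu;\frac{\rho}{\nu})]=\frac{1}{\nu}[(\nu\rd_{\nu}-\Lmb_{0})f](\lmb,\nu;\frac{\rho}{\nu})$, which is precisely the one remark the paper records before omitting the details. Your bookkeeping of the weights ($\frac{1}{\nu}\cdot\nu^{4}y^{5}=\nu^{2}\rho(\frac{\rho}{\nu})^{4}$, etc.) and the absorption of the extra $\Lmb_{0}$-term via $|y\rd_{y}f|_{k}\aleq_{k}|f|_{k+1}$ are both accurate.
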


\begin{proof}
This follows from the definition \eqref{eq:def-Inner-Eig} and Lemma~\ref{lem:RefinedInner-y-var}.
Note that 
\[
\nu\rd_{\nu}\Big[\frac{1}{\nu}f(\lmb,\nu;\frac{\rho}{\nu})\Big]=\frac{1}{\nu}[(\nu\rd_{\nu}-\Lmb_{0})f](\lmb,\nu;\frac{\rho}{\nu}).
\]
We omit the details.
\end{proof}

\subsection{\label{subsec:Matching}Matching}

In the previous subsections, we have constructed inner and outer eigenfunctions
\emph{for any} $\lmb$ with $|\lmb|\aleq\frac{1}{|\log\nu|}$ or $|\lmb-1|\aleq\frac{1}{|\log\nu|}$.
For any $\lmb$, we may consider the function 
\[
\varphi(\lmb,\nu;\rho)=\begin{cases}
\varphi_{\inn}(\lmb,\nu;\rho) & \text{if }\rho\leq\delta_{0},\\{}
[\varphi_{\inn}/\varphi_{\out}](\lmb,\nu;\delta_{0})\cdot\varphi_{\out}(\lmb,\nu;\rho) & \text{if }\rho\geq\delta_{0},
\end{cases}
\]
where $\varphi$ is well-defined at $\rho=\delta_{0}$. In general,
this function has different left and right derivatives at $\rho=\delta_{0}$,
and hence is not a solution to \eqref{eq:eigen-ft-rel}. However,
for non-generic values of $\lmb$, it is possible that $\varphi$
has the same left and right derivatives, which we call matching. When
the matching happens, $\varphi_{\inn}(\rho)=[\varphi_{\inn}/\varphi_{\out}](\delta_{0})\cdot\varphi_{\out}(\rho)$
for all $\rho\in(\frac{1}{2}\delta_{0},2\delta_{0})$ (by the uniqueness
of solutions to second-order ODEs) and hence $\varphi$ becomes a
smooth solution to \eqref{eq:eigen-ft-rel}.

The first goal of this subsection is to show that the matching happens
for some unique $\lmb=\lmb_{j}$ near $1-j$ and in fact $\lmb_{j}\approx\wh{\lmb}_{j}$
(see \eqref{eq:lmb-hat-asymp}). Using this, we also finish the proof
of Proposition~\ref{prop:FirstTwoEigenpairs}.
\begin{lem}[Some preliminaries for matching]
\label{lem:properties-for-matching}For any $\lmb$ with either $|\lmb|\aleq\frac{1}{|\log\nu|}$
or $|\lmb-1|\aleq\frac{1}{|\log\nu|}$, the following hold for $\rho\in[\frac{1}{2}\delta_{0},2\delta_{0}]$.
\begin{itemize}
\item (Relation between inner and outer eigenfunctions) We have 
\begin{equation}
\begin{aligned}\varphi_{\inn}(\lmb,\nu;\rho) & =c_{\conn}(\lmb)\varphi_{\out}(\lmb,\nu;\rho)+p(\nu;\lmb)(\rho+\td{\varphi}_{\inn,2}(\lmb,\nu;\rho))\\
 & \quad+\Psi_{\conn}(\lmb,\nu;\rho),
\end{aligned}
\label{eq:relation-inner-outer}
\end{equation}
where we recall $c_{\conn}(\lmb)$ from \eqref{eq:def-c-conn} and
$\Psi_{\conn}$ satisfies
\begin{equation}
|\Psi_{\conn}|_{k}+|\nu\rd_{\nu}\Psi_{\conn}|_{k}+|\rd_{\lmb}\Psi_{\conn}|_{k}\aleq_{k}\nu^{2}|\log\nu|^{2}\delta_{0},\qquad\forall k\in\bbN.\label{eq:Psi-conn-est}
\end{equation}
\item (Decay rates of outer eigenfunctions) We have 
\begin{align}
\Big|\frac{\varphi_{\out}'(\lmb,\nu;\rho)}{\varphi_{\out}(\lmb,\nu;\rho)}+\frac{1}{\rho}\Big| & \aleq\frac{1}{|\log\nu|}\delta_{0}|\log\delta_{0}|,\label{eq:EigftDecayRate1}\\
\Big|\nu\rd_{\nu}\Big(\frac{\varphi_{\out}'(\lmb,\nu;\rho)}{\varphi_{\out}(\lmb,\nu;\rho)}\Big)\Big| & \aleq_{\delta_{0}}\nu^{2},\label{eq:EigftDecayRate2}\\
\Big|\rd_{\lmb}\Big(\frac{\varphi_{\out}'(\lmb,\nu;\rho)}{\varphi_{\out}(\lmb,\nu;\rho)}\Big)\Big| & \aleq\delta_{0}|\log\delta_{0}|.\label{eq:EigftDecayRate3}
\end{align}
\item (Properties of $p(\nu;\lmb)$) We have 
\begin{align}
|\log\nu||\nu\rd_{\nu}p|+\big(|p|+|\nu\rd_{\nu}\rd_{\lmb}p|\big)+\frac{1}{|\log\nu|}\big(|\rd_{\lmb}p|+|\rd_{\lmb\lmb}p\big|) & \aleq1,\label{eq:properties-p}\\
\partial_{\lmb}p & \ageq|\log\nu|.\label{eq:d_lmb-p-lower-bound}
\end{align}
\end{itemize}
\end{lem}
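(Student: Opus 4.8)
The plan is to read off all three parts from the explicit inner/outer decompositions already in hand (Corollaries~\ref{cor:refined-inner-eigenfunction} and~\ref{cor:OuterEigenfunctions}), together with the connection estimate for $\wh{\Psi}_{\conn}$ (Lemma~\ref{lem:U-infty}) and the explicit formula \eqref{eq:def-p} for $p(\nu;\lmb)$; there is no genuinely hard step, only bookkeeping. Throughout I use that for $\rho\in[\tfrac12\delta_{0},2\delta_{0}]$ one has $\nu\ll\rho\ll1$, $|\log\rho|\asymp|\log\delta_{0}|$, and $\langle\log(\rho/\nu)\rangle\asymp|\log\nu|$, and that $\nu^{\ast}=\nu^{\ast}(\delta_{0})$ may be shrunk so any error of size $\aleq_{\delta_{0}}\nu^{2}$ is absorbed into the stated bounds (using $|\log\nu|\ge\delta_{0}^{-1}$). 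For part (1): on $[\tfrac12\delta_{0},2\delta_{0}]$ the cutoff $\chi_{\gtrsim\nu}(\rho)$ in \eqref{eq:RefInnDecomp} equals $1$, so Corollary~\ref{cor:refined-inner-eigenfunction} gives $\varphi_{\inn}=B+\td{\varphi}_{\inn,1}+p(\nu;\lmb)\td{\varphi}_{\inn,2}$, where $B$ denotes the bracketed inner profile plus $\lmb(\lmb-1)U_{\infty}(\lmb;\rho)$. By the definition \eqref{eq:def-Psi-hat-conn} of $\wh{\Psi}_{\conn}$ we have $B=\wh{\Psi}_{\conn}+c_{\conn}(\lmb)\tfrac1\rho h_{1}(\lmb;1-\rho^{2})+p(\nu;\lmb)\rho$, and by \eqref{eq:OuterEigDecomp} $\tfrac1\rho h_{1}(\lmb;1-\rho^{2})=\varphi_{\out}-\td{\varphi}_{\out}$, so \eqref{eq:relation-inner-outer} holds with
\[
\Psi_{\conn}=\wh{\Psi}_{\conn}+\td{\varphi}_{\inn,1}-c_{\conn}(\lmb)\td{\varphi}_{\out}.
\]
The bound \eqref{eq:Psi-conn-est} follows term by term at $\rho\sim\delta_{0}$: \eqref{eq:Psi-hat-conn-est} gives $\aleq_{k,\delta_{0}}\nu^{2}|\log\nu|$ for $\wh{\Psi}_{\conn}$ and its $\nu\rd_{\nu}$-, $\rd_{\lmb}$-derivatives; \eqref{eq:RefInnEst1} gives $\aleq_{k}\nu^{2}\delta_{0}|\log\nu|^{2}$ for $\td{\varphi}_{\inn,1}$ and its derivatives; and \eqref{eq:OuterEigEst} with \eqref{eq:c-conn-est} gives $\aleq_{k,\delta_{0}}\nu^{2}$ for $c_{\conn}\td{\varphi}_{\out}$ and its derivatives. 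All three are $\aleq_{k}\nu^{2}|\log\nu|^{2}\delta_{0}$ after shrinking $\nu^{\ast}=\nu^{\ast}(\delta_{0})$.

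For part (2), using the expansion \eqref{eq:h1-expn-rho-variable} and the definition \eqref{eq:def-c-conn} of $c_{\conn}$, write
\[
\varphi_{\out}(\lmb,\nu;\rho)=\frac{2}{c_{\conn}(\lmb)\,\rho}\,g(\lmb,\nu;\rho),\qquad g=1+\frac{\lmb(\lmb-1)}{2}\rho^{2}\sum_{n\ge0}c_{n}(\lmb)\rho^{2n}\Big[\log\rho+\frac{d_{n}(\lmb)}{2}\Big]+\frac{c_{\conn}(\lmb)}{2}\rho\,\td{\varphi}_{\out}.
\]
On the fixed compact interval $[\tfrac12\delta_{0},2\delta_{0}]\Subset(0,1)$ the series together with its termwise $\rd_{\rho}$- and $\rd_{\lmb}$-derivatives converges absolutely with absolute constants (polynomial growth of $c_{n},\rd_{\lmb}c_{n},d_{n},\rd_{\lmb}d_{n}$ against the geometric decay $\rho^{2n}$, exactly as in the proof of Lemma~\ref{lem:OuterProfileEstimates}). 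Since $|\lmb(\lmb-1)|\aleq|\log\nu|^{-1}$, $|\td{\varphi}_{\out}|_{1}+|\nu\rd_{\nu}\td{\varphi}_{\out}|_{1}+|\rd_{\lmb}\td{\varphi}_{\out}|_{1}\aleq_{\delta_{0}}\nu^{2}$ by \eqref{eq:OuterEigEst}, and $|\rd_{\lmb}c_{\conn}|\aleq1$ by \eqref{eq:c-conn-est}, we get on $[\tfrac12\delta_{0},2\delta_{0}]$ that $|g-1|\aleq\delta_{0}^{2}|\log\delta_{0}|/|\log\nu|\le\tfrac12$ (so $g$ is bounded away from $0$), $|\rd_{\rho}g|\aleq\delta_{0}|\log\delta_{0}|/|\log\nu|$, $|\nu\rd_{\nu}g|+|\nu\rd_{\nu}\rd_{\rho}g|\aleq_{\delta_{0}}\nu^{2}$ (only $\td{\varphi}_{\out}$ depends on $\nu$), $|\rd_{\lmb}g|\aleq\delta_{0}^{2}|\log\delta_{0}|$, and $|\rd_{\lmb}\rd_{\rho}g|\aleq\delta_{0}|\log\delta_{0}|$. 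Because $c_{\conn}(\lmb)$ is $\rho$-independent, $\tfrac{\varphi_{\out}'}{\varphi_{\out}}+\tfrac1\rho=(\log g)'=\tfrac{g'}{g}$, which is \eqref{eq:EigftDecayRate1}; applying $\nu\rd_{\nu}$ and $\rd_{\lmb}$ to $g'/g$ and inserting the above bounds yields \eqref{eq:EigftDecayRate2} and \eqref{eq:EigftDecayRate3}.

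For part (3), everything is direct from \eqref{eq:def-p}--\eqref{eq:def-d0}. On the range $|\lmb|\aleq|\log\nu|^{-1}$ or $|\lmb-1|\aleq|\log\nu|^{-1}$, the arguments $\tfrac\lmb2+1$ and $\tfrac{\lmb+1}2$ stay in a fixed compact subset of $(0,\infty)$, so $d_{0}(\lmb)$, $\rd_{\lmb}d_{0}(\lmb)$, $\rd_{\lmb\lmb}d_{0}(\lmb)$ are $O(1)$ by smoothness of $\psi,\psi',\psi''$; moreover $|\lmb(\lmb-1)|\aleq|\log\nu|^{-1}$, $\nu\rd_{\nu}|\log\nu|=-1$, and $|\log\nu|-1-\tfrac{d_{0}(\lmb)}{2}=|\log\nu|+O(1)\asymp|\log\nu|$. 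Differentiating \eqref{eq:def-p}: $p=O(1)$, $\nu\rd_{\nu}p=-\lmb(\lmb-1)=O(|\log\nu|^{-1})$, $\rd_{\lmb}p=(2\lmb-1)(|\log\nu|+O(1))-\tfrac{\lmb(\lmb-1)}{2}\rd_{\lmb}d_{0}+1=O(|\log\nu|)$, $\rd_{\lmb\lmb}p=2(|\log\nu|+O(1))+O(1)=O(|\log\nu|)$, and $\nu\rd_{\nu}\rd_{\lmb}p=-(2\lmb-1)=O(1)$; this is \eqref{eq:properties-p}. For \eqref{eq:d_lmb-p-lower-bound}, since $\lmb$ lies near $0$ or $1$ we have $|2\lmb-1|\ge1-O(|\log\nu|^{-1})\ge\tfrac12$ after shrinking $\nu^{\ast}$, whence $|\rd_{\lmb}p|\ge\tfrac12|\log\nu|-O(1)\ageq|\log\nu|$. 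The only thing requiring care in the whole lemma is the bookkeeping in parts (1)--(2): tracking which error terms carry a $|\log\nu|^{-1}$-gain and which carry $\delta_{0}$-smallness, and checking that the $\delta_{0}$-dependent constants produced by Corollary~\ref{cor:OuterEigenfunctions} are absorbed into the stated $\delta_{0}$-independent bounds once $\nu^{\ast}=\nu^{\ast}(\delta_{0})$ is chosen small enough.
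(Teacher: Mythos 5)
Your proof is correct and follows essentially the same route as the paper: part (1) uses the identical identity $\Psi_{\conn}=\wh{\Psi}_{\conn}+\td{\varphi}_{\inn,1}-c_{\conn}\td{\varphi}_{\out}$ with the estimates \eqref{eq:Psi-hat-conn-est}, \eqref{eq:RefInnEst1}, \eqref{eq:OuterEigEst} and the hierarchy $\nu^{\ast}\ll\delta_{0}$; part (2) is the same factorization $\varphi_{\out}=\tfrac{2}{c_{\conn}\rho}g$ and logarithmic-derivative computation via \eqref{eq:h-tilde-1-est}; part (3) is the direct differentiation of \eqref{eq:def-p} that the paper omits. No gaps.
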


\begin{proof}
In the proof, we always assume $\rho\in[\frac{1}{2}\delta_{0},2\delta_{0}]$
or equivalently $y\in[\frac{\delta_{0}}{2\nu},\frac{2\delta_{0}}{\nu}]$.
In particular, we can ignore the cutoff $\chi_{\gtrsim\nu}(\rho)$
in the refined ansatz for the inner eigenfunction.

First, we show \eqref{eq:Psi-conn-est}. By \eqref{eq:relation-inner-outer},
\eqref{eq:OuterEigDecomp}, \eqref{eq:RefInnDecomp}, and \eqref{eq:def-Psi-hat-conn},
we have 
\[
\Psi_{\conn}=\varphi_{\inn}-c_{\conn}\varphi_{\out}-p(\nu;\lmb)(\rho+\td{\varphi}_{\inn,2})=\wh{\Psi}_{\conn}+\td{\varphi}_{\inn,1}-c_{\conn}\td{\varphi}_{\out}.
\]
Applying \eqref{eq:Psi-hat-conn-est}, \eqref{eq:RefInnEst1}, and
\eqref{eq:OuterEigEst} to the above (with the parameter dependence
$\nu^{\ast}\ll\delta_{0}$) completes the proof of \eqref{eq:Psi-conn-est}.

Next, we show \eqref{eq:EigftDecayRate1}-\eqref{eq:EigftDecayRate3}.
Recall that 
\[
\varphi_{\out}(\lmb,\nu;\rho)=\frac{1}{\rho}h_{1}(\lmb;1-\rho^{2})+\td{\varphi}_{\out}(\lmb,\nu;\rho).
\]
Using the expansion \eqref{eq:h1-expn-rho-variable} of $h_{1}(\lmb;1-\rho^{2})$,
we have 
\[
\frac{1}{\rho}h_{1}(\lmb;1-\rho^{2})=\frac{\Gamma(\lmb+\frac{1}{2})}{\Gamma(\frac{\lmb}{2}+1)\Gamma(\frac{\lmb+1}{2})}\cdot\frac{1}{\rho}\Big(1+\td h_{1}(\lmb;\rho)\Big),
\]
where 
\[
\td h_{1}(\lmb;\rho)=\frac{\lmb(\lmb-1)}{2}\rho^{2}\sum_{n=0}^{\infty}c_{n}\rho^{2n}[\log\rho+\frac{d_{n}}{2}].
\]
Thus 
\[
\frac{\varphi_{\out}'}{\varphi_{\out}}=\frac{\rd_{\rho}\{\frac{1}{\rho}(1+\td h_{1})+\frac{1}{2}c_{\conn}\td{\varphi}_{\out}\}}{\frac{1}{\rho}(1+\td h_{1})+\frac{1}{2}c_{\conn}\td{\varphi}_{\out}}.
\]
Now \eqref{eq:EigftDecayRate1}-\eqref{eq:EigftDecayRate3} follow
from applying the pointwise estimates 
\begin{equation}
|\log\nu||\td h_{1}|_{k}+|\rd_{\lmb}\td h_{1}|_{k}\aleq_{k}\delta_{0}^{2}|\log\delta_{0}|,\qquad\forall k\in\bbN,\label{eq:h-tilde-1-est}
\end{equation}
$\nu\rd_{\nu}\td h_{1}\equiv0$, \eqref{eq:OuterEigEst}, and \eqref{eq:c-conn-est}.

Finally, \eqref{eq:properties-p} and \eqref{eq:d_lmb-p-lower-bound}
are immediate consequences of the definition \eqref{eq:def-p} of
$p(\nu;\lmb)$ and $|\lmb(\lmb-1)|\aleq\frac{1}{|\log\nu|}$. This
completes the proof.
\end{proof}
With Lemma~\ref{lem:properties-for-matching} at hand, we are now
ready to achieve the matching.
\begin{lem}[Matching]
For each $j\in\{0,1\}$, there exists unique $\lmb_{j}=\lmb_{j}(\nu)$
in the class $|\lmb_{j}-(1-j)|\aleq\frac{1}{|\log\nu|}$ such that
the matching condition 
\begin{equation}
\varphi_{\inn}'(\lmb_{j},\nu;\delta_{0})=\varphi_{\out}'(\lmb_{j},\nu;\delta_{0})\frac{\varphi_{\inn}(\lmb_{j},\nu;\delta_{0})}{\varphi_{\out}(\lmb_{j},\nu;\delta_{0})}\label{eq:matching-cond}
\end{equation}
holds. Moreover, we have the following.
\begin{itemize}
\item (Refined estimates for the eigenvalues) We have 
\begin{align}
|\lmb_{j}-\wh{\lmb}_{j}| & \aleq\nu^{2}|\log\nu|,\label{eq:eigval-1}\\
|\nu\rd_{\nu}(\lmb_{j}-\wh{\lmb}_{j})| & \aleq\nu^{2}|\log\nu|,\label{eq:eigval-2}
\end{align}
where we recall $\wh{\lmb}_{j}$ from \eqref{eq:def-lmb-hat}. In
particular, we have 
\begin{equation}
|\nu\rd_{\nu}\lmb_{j}|\aleq\frac{1}{|\log\nu|^{2}}.\label{eq:eigval-3}
\end{equation}
\item (Estimates for $p(\nu;\lmb_{j})$) We have 
\begin{align}
|p(\nu;\lmb_{j})| & \aleq\nu^{2}|\log\nu|^{2},\label{eq:p-bound-1}\\
|\nu\rd_{\nu}[p(\nu;\lmb_{j})]| & \aleq\nu^{2}|\log\nu|^{2}.\label{eq:p-bound-2}
\end{align}
\item (Estimates for the matching constants) The matching constants defined
by 
\begin{equation}
c_{j,\match}(\nu)\coloneqq\frac{\varphi_{\inn}(\lmb_{j}(\nu),\nu;\delta_{0})}{\varphi_{\out}(\lmb_{j}(\nu),\nu;\delta_{0})},\qquad j\in\{0,1\}\label{eq:def-c-match}
\end{equation}
satisfy 
\begin{equation}
|c_{j,\match}-c_{\conn}(\wh{\lmb}_{j})|+|\nu\rd_{\nu}(c_{j,\match}-c_{\conn}(\wh{\lmb}_{j}))|\aleq\nu^{2}|\log\nu|^{2}.\label{eq:c-match-est}
\end{equation}
\end{itemize}
\end{lem}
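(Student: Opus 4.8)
The plan is to recast the matching condition \eqref{eq:matching-cond} as the vanishing of a single scalar function of $\lmb$ and then run a one–variable implicit function theorem, keeping careful track of the \emph{algebraic structure} of the error. Fix $\nu\in(0,\nu^{\ast})$. First I would note that $\varphi_{\out}(\lmb,\nu;\delta_{0})=\frac{1}{\delta_{0}}h_{1}(\lmb;1-\delta_{0}^{2})+\td{\varphi}_{\out}(\lmb,\nu;\delta_{0})=\frac{2}{c_{\conn}(\lmb)\delta_{0}}(1+o(1))$ by \eqref{eq:OuterEigDecomp}, \eqref{eq:h1-expn-rho-variable}, \eqref{eq:c-conn-est}, \eqref{eq:OuterEigEst}, so that $\varphi_{\out}(\lmb,\nu;\delta_{0})\ageq\delta_{0}^{-1}$ is bounded away from $0$ and \eqref{eq:matching-cond} is equivalent to $\mathcal{W}(\lmb,\nu)=0$ with
\[
\mathcal{W}(\lmb,\nu)\coloneqq\rd_{\rho}\varphi_{\inn}(\lmb,\nu;\delta_{0})-\tfrac{\rd_{\rho}\varphi_{\out}(\lmb,\nu;\delta_{0})}{\varphi_{\out}(\lmb,\nu;\delta_{0})}\,\varphi_{\inn}(\lmb,\nu;\delta_{0}).
\]
Substituting \eqref{eq:relation-inner-outer}, the $c_{\conn}(\lmb)\varphi_{\out}$ contribution is annihilated by $\rd_{\rho}-(\rd_{\rho}\varphi_{\out}/\varphi_{\out})$, leaving (all quantities at $\rho=\delta_{0}$, primes $=\rd_{\rho}$)
\[
\mathcal{W}(\lmb,\nu)=a(\lmb,\nu)\,p(\nu;\lmb)+S(\lmb,\nu),\quad a=1+\td{\varphi}_{\inn,2}'-\tfrac{\varphi_{\out}'}{\varphi_{\out}}(\delta_{0}+\td{\varphi}_{\inn,2}),\quad S=\Psi_{\conn}'-\tfrac{\varphi_{\out}'}{\varphi_{\out}}\Psi_{\conn}.
\]
Using $\varphi_{\out}'/\varphi_{\out}=-\delta_{0}^{-1}+O(\tfrac{\delta_{0}|\log\delta_{0}|}{|\log\nu|})$ from \eqref{eq:EigftDecayRate1}, $|\td{\varphi}_{\inn,2}|_{1}(\delta_{0})\aleq\delta_{0}^{3}$ from \eqref{eq:RefInnEst2}, $|\Psi_{\conn}|_{1}(\delta_{0})\aleq\nu^{2}|\log\nu|^{2}\delta_{0}$ from \eqref{eq:Psi-conn-est}, together with \eqref{eq:EigftDecayRate2}, \eqref{eq:EigftDecayRate3} and the $\rd_{\lmb}$-, $\nu\rd_{\nu}$-parts of \eqref{eq:RefInnEst2}, \eqref{eq:Psi-conn-est}, one obtains
\[
a=2+O(\delta_{0}^{2}),\quad|\rd_{\lmb}a|+|\nu\rd_{\nu}a|\aleq\delta_{0}^{2},\quad|S|+|\rd_{\lmb}S|+|\nu\rd_{\nu}S|\aleq\nu^{2}|\log\nu|^{2},
\]
uniformly in $\lmb$ over $|\lmb-(1-j)|\aleq|\log\nu|^{-1}$. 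The crucial point is this multiplicative/additive split: the coefficient $a$ of $p$ is bounded away from $0$, while the genuinely additive error $S$ and all its derivatives are $O(\nu^{2}|\log\nu|^{2})$.

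Granting this, existence, uniqueness, and \eqref{eq:eigval-1} follow. Since $\rd_{\lmb}\mathcal{W}=a\,\rd_{\lmb}p+(\rd_{\lmb}a)p+\rd_{\lmb}S$ with $|a\,\rd_{\lmb}p|\ageq|\log\nu|$ by \eqref{eq:d_lmb-p-lower-bound} and $|(\rd_{\lmb}a)p|+|\rd_{\lmb}S|\aleq\delta_{0}^{2}+\nu^{2}|\log\nu|^{2}=o(|\log\nu|)$ by \eqref{eq:properties-p} (which also gives $|\rd_{\lmb}p|\aleq|\log\nu|$), the function $\mathcal{W}(\cdot,\nu)$ is strictly monotone on $|\lmb-(1-j)|\aleq|\log\nu|^{-1}$ with $|\rd_{\lmb}\mathcal{W}|\ageq|\log\nu|$; hence at most one zero in that class. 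Evaluating at $\lmb=\wh{\lmb}_{j}\pm C\nu^{2}|\log\nu|$: since $p(\nu;\wh{\lmb}_{j})=0$ and $|\rd_{\lmb}p|\sim|\log\nu|$, we have $|p(\nu;\wh{\lmb}_{j}\pm C\nu^{2}|\log\nu|)|\sim C\nu^{2}|\log\nu|^{2}$ with opposite signs, so $\mathcal{W}$ changes sign over this interval for $C$ large and has a (unique) zero $\lmb_{j}$ there, proving \eqref{eq:eigval-1}. Then $|p(\nu;\lmb_{j})|=|p(\nu;\lmb_{j})-p(\nu;\wh{\lmb}_{j})|\aleq|\rd_{\lmb}p|\,|\lmb_{j}-\wh{\lmb}_{j}|\aleq\nu^{2}|\log\nu|^{2}$, which is \eqref{eq:p-bound-1}.

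For the differentiated bounds, differentiating $\mathcal{W}(\lmb_{j}(\nu),\nu)=0$ and using $|\nu\rd_{\nu}\mathcal{W}(\lmb_{j},\nu)|\aleq\delta_{0}^{2}\nu^{2}|\log\nu|^{2}+|\log\nu|^{-1}+\nu^{2}|\log\nu|^{2}\aleq|\log\nu|^{-1}$ (via \eqref{eq:p-bound-1}, \eqref{eq:properties-p}) while $|\rd_{\lmb}\mathcal{W}|\ageq|\log\nu|$ gives $|\nu\rd_{\nu}\lmb_{j}|\aleq|\log\nu|^{-2}$, i.e.\ \eqref{eq:eigval-3}. To obtain the sharper \eqref{eq:eigval-2} and \eqref{eq:p-bound-2} I would \emph{not} Taylor-expand $\mathcal{W}-2p$ but exploit $a\neq0$: from $\mathcal{W}(\lmb_{j},\nu)=0$,
\[
p(\nu;\lmb_{j}(\nu))=-\,S(\lmb_{j}(\nu),\nu)\,/\,a(\lmb_{j}(\nu),\nu),
\]
so by the chain rule with \eqref{eq:eigval-3}, $|\nu\rd_{\nu}S(\lmb_{j}(\nu),\nu)|\aleq|\rd_{\lmb}S||\nu\rd_{\nu}\lmb_{j}|+|\nu\rd_{\nu}S|\aleq\nu^{2}|\log\nu|^{2}$ and $|\nu\rd_{\nu}a(\lmb_{j}(\nu),\nu)|\aleq\delta_{0}^{2}$, whence $|\nu\rd_{\nu}[p(\nu;\lmb_{j}(\nu))]|\aleq\nu^{2}|\log\nu|^{2}$, which is \eqref{eq:p-bound-2}. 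Finally, writing $\mu=\lmb_{j}-\wh{\lmb}_{j}$ and $p(\nu;\lmb_{j})=\mu\int_{0}^{1}\rd_{\lmb}p(\nu;\wh{\lmb}_{j}+t\mu)\,dt=D(\nu)\mu(\nu)$ with $|D|\sim|\log\nu|$ and $|\nu\rd_{\nu}D|\aleq1+|\log\nu||\nu\rd_{\nu}\mu|$ (using \eqref{eq:properties-p} and $|\nu\rd_{\nu}\wh{\lmb}_{j}|\aleq|\log\nu|^{-2}$, from differentiating $p(\nu;\wh{\lmb}_{j})=0$), one has $\nu\rd_{\nu}\mu=(\nu\rd_{\nu}[p(\nu;\lmb_{j})])/D-p(\nu;\lmb_{j})(\nu\rd_{\nu}D)/D^{2}$; feeding in \eqref{eq:p-bound-1}--\eqref{eq:p-bound-2} gives $|\nu\rd_{\nu}\mu|\aleq\nu^{2}|\log\nu|+\nu^{2}|\log\nu||\nu\rd_{\nu}\mu|$, hence $|\nu\rd_{\nu}\mu|\aleq\nu^{2}|\log\nu|$ after absorbing, i.e.\ \eqref{eq:eigval-2}. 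For \eqref{eq:c-match-est} I would evaluate \eqref{eq:relation-inner-outer} at $\rho=\delta_{0}$:
\[
c_{j,\match}(\nu)=c_{\conn}(\lmb_{j}(\nu))+\frac{p(\nu;\lmb_{j})(\delta_{0}+\td{\varphi}_{\inn,2}(\lmb_{j},\nu;\delta_{0}))+\Psi_{\conn}(\lmb_{j},\nu;\delta_{0})}{\varphi_{\out}(\lmb_{j},\nu;\delta_{0})},
\]
bound the fraction and its $\nu\rd_{\nu}$-derivative by $\aleq\nu^{2}|\log\nu|^{2}$ using $\varphi_{\out}(\delta_{0})^{-1}\aleq\delta_{0}$, \eqref{eq:p-bound-1}--\eqref{eq:p-bound-2}, \eqref{eq:RefInnEst2}, \eqref{eq:Psi-conn-est}, \eqref{eq:OuterEigEst}, \eqref{eq:eigval-3}, and estimate $|c_{\conn}(\lmb_{j})-c_{\conn}(\wh{\lmb}_{j})|+|\nu\rd_{\nu}(c_{\conn}(\lmb_{j})-c_{\conn}(\wh{\lmb}_{j}))|\aleq\nu^{2}|\log\nu|$ from \eqref{eq:c-conn-est} and \eqref{eq:eigval-1}--\eqref{eq:eigval-2}.

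The step I expect to be the main obstacle is the reduction in the first paragraph. One must verify that, after the exact cancellation of $c_{\conn}\varphi_{\out}$, the matching function has \emph{exactly} the form $a(\lmb,\nu)p(\nu;\lmb)+S(\lmb,\nu)$ with $a$ a multiplicative correction near $2$ and $S$ an additive remainder whose value and $\rd_{\lmb}$-, $\nu\rd_{\nu}$-derivatives are all $O(\nu^{2}|\log\nu|^{2})$ \emph{uniformly} in $\lmb$. Keeping $a$ as a \emph{factor} (rather than absorbing $a-2$ into the additive error) is what makes the identity $p(\nu;\lmb_{j})=-S/a$ available, and this forces $p(\nu;\lmb_{j})$ — and, through $p(\nu;\lmb_{j})=D\mu$, the discrepancy $\lmb_{j}-\wh{\lmb}_{j}$ together with its $\nu\rd_{\nu}$-derivative — down to size $O(\nu^{2}|\log\nu|)$ rather than the merely $O(|\log\nu|^{-1})$ one would get from a crude error bound $|\mathcal{W}-2p|\aleq\delta_{0}^{2}|p|+\nu^{2}|\log\nu|^{2}$, whose $\rd_{\lmb}$-derivative would carry an uncontrollable $\delta_{0}^{2}|\log\nu|$ term obstructing \eqref{eq:eigval-2}.
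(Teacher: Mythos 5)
Your proposal is correct and follows essentially the same route as the paper: your $\mathcal{W}$ is the paper's $\Phi$, your split $\mathcal{W}=a\,p+S$ is exactly the structure recorded in \eqref{eq:matching-1} and \eqref{eq:matching-3}--\eqref{eq:matching-5}, and existence, uniqueness, \eqref{eq:eigval-1}, and \eqref{eq:p-bound-1} are obtained by the same quantitative implicit-function/monotonicity argument based on \eqref{eq:matching-7}--\eqref{eq:matching-8}. The only (harmless) reorganization is in the derivative bounds: you extract \eqref{eq:p-bound-2} directly from the identity $p(\nu;\lmb_{j})=-S/a$ and then \eqref{eq:eigval-2} from $p=D\mu$, whereas the paper proves \eqref{eq:eigval-2} first via the quotient identity \eqref{eq:matching-9}--\eqref{eq:matching-10} and deduces \eqref{eq:p-bound-2} afterwards; both versions rest on the same cancellation, namely that $\nu\rd_{\nu}\Phi$ and $\rd_{\lmb}\Phi$ carry the common factor $\big(\rd_{\rho}-\varphi_{\out}'/\varphi_{\out}\big)(\rho+\td{\varphi}_{\inn,2})$ in front of $\nu\rd_{\nu}p$ and $\rd_{\lmb}p$.
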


\begin{proof}
Define 
\[
\Phi(\nu,\delta_{0};\lmb)\coloneqq\varphi_{\inn}'(\lmb,\nu;\delta_{0})-\varphi_{\out}'(\lmb,\nu;\delta_{0})\frac{\varphi_{\inn}(\lmb,\nu;\delta_{0})}{\varphi_{\out}(\lmb,\nu;\delta_{0})}.
\]
We rewrite $\Phi$ as 
\begin{equation}
\begin{aligned}\Phi(\nu,\delta_{0};\lmb) & =\Big[\Big(\rd_{\rho}-\frac{\varphi_{\out}'}{\varphi_{\out}}\Big)\varphi_{\inn}\Big](\lmb,\nu;\delta_{0})\\
 & =\Big[\Big(\rd_{\rho}-\frac{\varphi_{\out}'}{\varphi_{\out}}\Big)(\varphi_{\inn}-c_{\conn}\varphi_{\out})\Big](\lmb,\nu;\delta_{0})\\
 & =\Big[\Big(\rd_{\rho}-\frac{\varphi_{\out}'}{\varphi_{\out}}\Big)(p\cdot(\rho+\td{\varphi}_{\inn,2})+\Psi_{\conn})\Big](\lmb,\nu;\delta_{0}),
\end{aligned}
\label{eq:matching-1}
\end{equation}
where in the last equality we used \eqref{eq:relation-inner-outer}.
Note that the matching condition \eqref{eq:matching-cond} is equivalent
to 
\begin{equation}
\Phi(\nu,\delta_{0};\lmb_{j})=0.\label{eq:matching-2}
\end{equation}

\textbf{Step 1.} Preliminary estimates for $\Phi$.

In this step, we claim the following estimates for $\Phi$: 
\begin{align}
\Phi & =p\cdot\Big(\rd_{\rho}-\frac{\varphi_{\out}'}{\varphi_{\out}}\Big)(\rho+\td{\varphi}_{\inn,2})+O(\nu^{2}|\log\nu|^{2}),\label{eq:matching-3}\\
\nu\rd_{\nu}\Phi & =\nu\rd_{\nu}p\cdot\Big(\rd_{\rho}-\frac{\varphi_{\out}'}{\varphi_{\out}}\Big)(\rho+\td{\varphi}_{\inn,2})+p\cdot O(\delta_{0}^{2})+O(\nu^{2}|\log\nu|^{2}),\label{eq:matching-4}\\
\rd_{\lmb}\Phi & =\rd_{\lmb}p\cdot\Big(\rd_{\rho}-\frac{\varphi_{\out}'}{\varphi_{\out}}\Big)(\rho+\td{\varphi}_{\inn,2})+p\cdot O(\delta_{0}^{2}|\log\delta_{0}|)+O(\nu^{2}|\log\nu|^{2}).\label{eq:matching-5}
\end{align}
Indeed, \eqref{eq:matching-3} follows from \eqref{eq:matching-1}
and \eqref{eq:Psi-conn-est}. As the proofs of \eqref{eq:matching-4}
and \eqref{eq:matching-5} are very similar, so we only show \eqref{eq:matching-5}.
We start from writing 
\begin{equation}
\begin{aligned}\rd_{\lmb}\Phi & =\rd_{\lmb}p\cdot\Big(\rd_{\rho}-\frac{\varphi_{\out}'}{\varphi_{\out}}\Big)(\rho+\td{\varphi}_{\inn,2})\\
 & \quad+\Big(\rd_{\rho}-\frac{\varphi_{\out}'}{\varphi_{\out}}\Big)(p\cdot\rd_{\lmb}\td{\varphi}_{\inn,2}+\rd_{\lmb}\Psi_{\conn})\\
 & \quad-\rd_{\lmb}\Big(\frac{\varphi_{\out}'}{\varphi_{\out}}\Big)(p\cdot(\rho+\td{\varphi}_{\inn,2})+\Psi_{\conn}).
\end{aligned}
\label{eq:matching-6}
\end{equation}
We keep the first line of RHS\eqref{eq:matching-6}. For the second
line of RHS\eqref{eq:matching-6}, we use \eqref{eq:EigftDecayRate1},
\eqref{eq:RefInnEst2}, and \eqref{eq:Psi-conn-est} to have 
\[
\Big(\rd_{\rho}-\frac{\varphi_{\out}'}{\varphi_{\out}}\Big)(p\cdot\rd_{\lmb}\td{\varphi}_{\inn,2}+\rd_{\lmb}\Psi_{\conn})=p\cdot O(\delta_{0}^{2})+O(\nu^{2}|\log\nu|^{2}).
\]
For the third line of RHS\eqref{eq:matching-6}, we use \eqref{eq:EigftDecayRate3},
\eqref{eq:RefInnEst2}, and \eqref{eq:Psi-conn-est} to have 
\[
-\rd_{\lmb}\Big(\frac{\varphi_{\out}'}{\varphi_{\out}}\Big)(p\cdot(\rho+\td{\varphi}_{\inn,2})+\Psi_{\conn})=p\cdot O(\delta_{0}^{2}|\log\delta_{0}|)+O(\nu^{2}|\log\nu|^{2}).
\]
Substituting the above two displays into \eqref{eq:matching-6} completes
the proof of \eqref{eq:matching-5}.

\textbf{Step 2.} Unique existence of $\lmb_{j}$, and proofs of \eqref{eq:eigval-1}
and \eqref{eq:p-bound-1}.

We first claim that 
\begin{align}
|\Phi(\nu,\delta_{0};\wh{\lmb}_{j})| & \aleq\nu^{2}|\log\nu|^{2},\label{eq:matching-7}\\
\rd_{\lmb}\Phi(\nu,\delta_{0};\lmb) & \ageq|\log\nu|.\label{eq:matching-8}
\end{align}
Indeed, \eqref{eq:matching-7} follows from \eqref{eq:matching-1}
and $p(\nu;\wh{\lmb}_{j})=0$. The proof of \eqref{eq:matching-8}
follows from applying \eqref{eq:d_lmb-p-lower-bound}, \eqref{eq:EigftDecayRate1},
\eqref{eq:RefInnEst2}, and \eqref{eq:properties-p} to \eqref{eq:matching-5}:
\begin{align*}
\rd_{\lmb}\Phi & =\rd_{\lmb}p\cdot\Big(\rd_{\rho}-\frac{\varphi_{\out}'}{\varphi_{\out}}\Big)(\rho+\td{\varphi}_{\inn,2})+p\cdot O(\delta_{0}^{2}|\log\delta_{0}|)+O(\nu^{2}|\log\nu|^{2})\\
 & =\rd_{\lmb}p\cdot(2+O(\delta_{0}^{2}))+O(\delta_{0}^{2}|\log\delta_{0}|)\\
 & \gtrsim|\log\nu|.
\end{align*}

Having settled the proofs of the claims \eqref{eq:matching-7}-\eqref{eq:matching-8},
the unique existence of $\lmb_{j}$ satisfying $\Phi(\nu,\delta_{0};\lmb_{j})=0$
in the class $|\lmb-j|\aleq\frac{1}{|\log\nu|}$ and the bound \eqref{eq:eigval-1}
follow from the (quantitative) implicit function theorem. \eqref{eq:p-bound-1}
now follows from \eqref{eq:properties-p} and \eqref{eq:eigval-1}:
\[
|p(\nu;\lmb_{j})|=|p(\nu;\lmb_{j})-p(\nu;\lmb)|\aleq|\log\nu|\cdot\nu^{2}|\log\nu|\aleq\nu^{2}|\log\nu|^{2}.
\]

\textbf{Step 3.} Proofs of the derivative bounds \eqref{eq:eigval-2},
\eqref{eq:eigval-3}, and \eqref{eq:p-bound-2}.

We first show \eqref{eq:eigval-2}. We start from the identities 
\[
\nu\rd_{\nu}\lmb_{j}=-\Big[\frac{\nu\rd_{\nu}\Phi}{\rd_{\lmb}\Phi}\Big](\nu,\delta_{0};\lmb_{j})\qquad\text{and}\qquad\nu\rd_{\nu}\wh{\lmb}_{j}=-\Big[\frac{\nu\rd_{\nu}p}{\rd_{\lmb}p}\Big](\nu;\wh{\lmb}_{j}).
\]
This implies 
\begin{equation}
\nu\rd_{\nu}(\lmb_{j}-\wh{\lmb}_{j})=\Big[\frac{\nu\rd_{\nu}p\cdot\rd_{\lmb}\Phi-\rd_{\lmb}p\cdot\nu\rd_{\nu}\Phi}{\rd_{\lmb}p\cdot\rd_{\lmb}\Phi}\Big](\nu,\delta_{0};\lmb_{j})+\Big[\frac{\nu\rd_{\nu}p}{\rd_{\lmb}p}\Big]\bigg|_{\lmb=\lmb_{j}}^{\wh{\lmb}_{j}}.\label{eq:matching-9}
\end{equation}
The second term of RHS\eqref{eq:matching-9} is of size $O(\nu^{2})$
due to \eqref{eq:properties-p}, \eqref{eq:d_lmb-p-lower-bound},
and \eqref{eq:eigval-1}. For the first term of RHS\eqref{eq:matching-9},
we notice that its denominator has size $\ageq|\log\nu|^{2}$ due
to \eqref{eq:d_lmb-p-lower-bound} and \eqref{eq:matching-8}. Therefore,
we have proved that 
\begin{equation}
|\nu\rd_{\nu}(\lmb_{j}-\wh{\lmb}_{j})|\aleq\frac{|\nu\rd_{\nu}p\cdot\rd_{\lmb}\Phi-\rd_{\lmb}p\cdot\nu\rd_{\nu}\Phi|(\nu,\delta_{0};\lmb_{j})}{|\log\nu|^{2}}+\nu^{2}.\label{eq:matching-10}
\end{equation}
We then recall from \eqref{eq:matching-4} and \eqref{eq:matching-5}
(after applying \eqref{eq:p-bound-1}) that 
\begin{align*}
 & \nu\rd_{\nu}\Phi=\nu\rd_{\nu}p\cdot\Big(\rd_{\rho}-\frac{\varphi_{\out}'}{\varphi_{\out}}\Big)(\rho+\td{\varphi}_{\inn,2})+O(\nu^{2}|\log\nu|^{2}),\\
 & \rd_{\lmb}\Phi=\rd_{\lmb}p\cdot\Big(\rd_{\rho}-\frac{\varphi_{\out}'}{\varphi_{\out}}\Big)(\rho+\td{\varphi}_{\inn,2})+O(\nu^{2}|\log\nu|^{2}).
\end{align*}
Therefore, we have 
\[
|\nu\rd_{\nu}p\cdot\rd_{\lmb}\Phi-\rd_{\lmb}p\cdot\nu\rd_{\nu}\Phi|(\nu,\delta_{0};\lmb_{j})\aleq\nu^{2}|\log\nu|^{3}.
\]
Substituting this into \eqref{eq:matching-10} completes the proof
of \eqref{eq:eigval-2}.

Note that \eqref{eq:eigval-3} is immediate from \eqref{eq:eigval-2}
and $|\nu\rd_{\nu}\wh{\lmb}_{j}|\lesssim\frac{1}{|\log\nu|^{2}}$.
For the proof of \eqref{eq:p-bound-2}, note that 
\begin{align*}
\nu\rd_{\nu}[p(\nu;\lmb_{j})] & =\nu\rd_{\nu}[p(\nu;\lmb_{j})-p(\nu;\wh{\lmb}_{j})]\\
 & =[\nu\rd_{\nu}p](\nu;\lmb_{j})-[\nu\rd_{\nu}p](\nu;\wh{\lmb}_{j})\\
 & \peq+\nu\rd_{\nu}\lmb_{j}\cdot[\rd_{\lmb}p](\nu;\lmb_{j})-\nu\rd_{\nu}\wh{\lmb}_{j}\cdot[\rd_{\lmb}p](\nu;\wh{\lmb}_{j}).
\end{align*}
Thus \eqref{eq:p-bound-2} follows from \eqref{eq:properties-p},
\eqref{eq:eigval-1}, and \eqref{eq:eigval-2}: 
\begin{align*}
 & |\nu\rd_{\nu}[p(\nu;\lmb_{j})]|\\
 & \aleq(\sup|\nu\rd_{\nu}\rd_{\lmb}p|+\frac{1}{|\log\nu|^{2}}\sup|\rd_{\lmb\lmb}p|)|\lmb_{j}-\wh{\lmb}_{j}|+(\sup|\rd_{\lmb}p|)|\nu\rd_{\nu}(\lmb_{j}-\wh{\lmb}_{j})|\\
 & \aleq\nu^{2}|\log\nu|^{2},
\end{align*}
where the supremum is taken over $\lmb$ with $|\lmb-(1-j)|\aleq\frac{1}{|\log\nu|}$.

\textbf{Step 4.} Proof of \eqref{eq:c-match-est}.

In this last step, we show the estimate \eqref{eq:c-match-est} for
the matching constants. We start from writing 
\begin{align}
 & c_{j,\match}(\nu)-c_{\conn}(\wh{\lmb}_{j})\nonumber \\
 & =\Big[\frac{1}{\varphi_{\out}}\big(\varphi_{\inn}-c_{\conn}(\lmb_{j})\varphi_{\out}\big)\Big](\lmb_{j},\nu;\delta_{0})+\big(c_{\conn}(\lmb_{j})-c_{\conn}(\wh{\lmb}_{j})\big)\nonumber \\
 & =\Big[\frac{1}{\varphi_{\out}}\big(p\cdot(\rho+\td{\varphi}_{\inn,2})+\Psi_{\conn}\big)\Big](\lmb_{j},\nu;\delta_{0})+\big(c_{\conn}(\lmb_{j})-c_{\conn}(\wh{\lmb}_{j})\big),\label{eq:matching-11}
\end{align}
where in the last equality we used \eqref{eq:relation-inner-outer}.
Using $\frac{1}{\varphi_{\out}}\sim\rho\sim\delta_{0}$, \eqref{eq:p-bound-1},
and \eqref{eq:Psi-conn-est}, the first term of RHS\eqref{eq:matching-11}
is of size $O(\delta_{0}^{2}\nu^{2}|\log\nu|^{2})$. The same bound
also holds for $\nu\rd_{\nu}$ taken on the first term of RHS\eqref{eq:matching-11}.
On the other hand, using \eqref{eq:eigval-1}, the second term of
RHS\eqref{eq:matching-11} is of size $O(\nu^{2}|\log\nu|)$. The
same bound also holds after taking $\nu\rd_{\nu}$. This completes
the proof.
\end{proof}
We finish the proof of Proposition~\ref{prop:FirstTwoEigenpairs}.
\begin{proof}[Proof of Proposition~\ref{prop:FirstTwoEigenpairs}]
We fix small parameters $0<\delta_{0}\ll1$ and $0<\nu^{\ast}=\nu^{\ast}(\delta_{0})\ll1$
so that all the previous lemmas hold. Now we can safely ignore any
dependence on $\delta_{0}$ in the following analysis.

(1) Define 
\[
\varphi_{j}(\nu;\rho)\coloneqq\begin{cases}
\varphi_{\inn}(\lmb_{j}(\nu),\nu;\rho) & \text{if }\rho<\delta_{0},\\
c_{j,\match}(\nu)\varphi_{\out}(\lmb_{j}(\nu),\nu;\rho) & \text{if }\rho\geq\delta_{0},
\end{cases}
\]
and define $\td{\varphi}_{j}$ via the formula \eqref{eq:SharpEigftDecomp}.
By the definitions \eqref{eq:def-c-match} and \eqref{eq:matching-cond},
the values and the first derivatives of $\varphi_{\inn}(\lmb_{j}(\nu),\nu;\rho)$
and $c_{j,\match}(\nu)\varphi_{\out}(\lmb_{j}(\nu),\nu;\rho)$ at
$\rho=\delta_{0}$ are the same. Since both are solutions to the second-order
differential equation \eqref{eq:eigen-ft-rel}, whose singular points
do not belong to $(0,1)$, they are in fact equal for all $\rho\in(0,2\delta_{0}]$.
Therefore, $\varphi_{j}$ is a globally-defined analytic solution
to \eqref{eq:eigen-ft-rel}.

(2) The estimates \eqref{eq:sharp-eig-val} and \eqref{eq:nudnu-eig-val}
are already proved in \eqref{eq:eigval-1}-\eqref{eq:eigval-3}.

(3) We show the sharp eigenfunction estimate \eqref{eq:SharpEigftEst}.
In the inner region $\rho\in(0,\delta_{0}]$, we have 
\[
\td{\varphi}_{j}(\nu;\rho)=\td{\varphi}_{\inn}(\lmb_{j}(\nu),\nu;\rho).
\]
Substituting the structure \eqref{eq:RefInnDecomp2} of $\td{\varphi}_{\inn}$
into the above, we have
\begin{equation}
\td{\varphi}_{j}(\nu;\rho)=\td{\varphi}_{\inn,1}(\lmb_{j},\nu;\rho)+p(\nu;\lmb_{j})\td{\varphi}_{\inn,2}(\lmb_{j},\nu;\rho).\label{eq:SharpEigft-1}
\end{equation}
Taking $\nu\rd_{\nu}$, we also have 
\begin{align}
 & [\nu\rd_{\nu}\td{\varphi}_{j}](\nu;\rho)\nonumber \\
 & =\big[\nu\rd_{\nu}\td{\varphi}_{\inn,1}+(\nu\rd_{\nu}\lmb_{j})\rd_{\lmb}\td{\varphi}_{\inn,1}\big](\lmb_{j},\nu;\rho)+\nu\rd_{\nu}[p(\nu;\lmb_{j})]\cdot\td{\varphi}_{\inn,2}(\lmb_{j},\nu;\rho)\label{eq:SharpEigft-2}\\
 & \peq+p(\nu;\lmb_{j})\cdot\big[\big(\nu\rd_{\nu}\td{\varphi}_{\inn,2}+(\nu\rd_{\nu}\lmb_{j})\rd_{\lmb}\td{\varphi}_{\inn,2}\big](\lmb_{j},\nu;\rho).\nonumber 
\end{align}
Applying the estimates \eqref{eq:RefInnEst1}-\eqref{eq:RefInnEst2}
of $\td{\varphi}_{\inn}$ and \eqref{eq:p-bound-1}-\eqref{eq:p-bound-2}
of $p(\nu;\lmb_{j})$ to the equations \eqref{eq:SharpEigft-1}-\eqref{eq:SharpEigft-2},
we obtain \eqref{eq:SharpEigftEst} for $\rho\in(0,\delta_{0}]$.

In the outer region $\rho\in[\delta_{0},1]$, we use $\varphi_{j}=c_{j,\match}\varphi_{\out}$
and \eqref{eq:def-Psi-hat-conn} to have 
\begin{align*}
\td{\varphi}_{j}(\nu;\rho) & =c_{j,\match}(\nu)\varphi_{\out}(\lmb_{j},\nu;\rho)-c_{\conn}(\lmb_{j})\rho^{-1}h_{1}(\lmb_{j};1-\rho^{2})\\
 & \quad-\wh{\Psi}_{\conn}(\lmb_{j},\nu;\rho)-p(\nu;\lmb_{j})\rho.
\end{align*}
We rearrange the above as 
\begin{align}
\td{\varphi}_{j}(\nu;\rho) & =\big(c_{j,\match}(\nu)-c_{\conn}(\lmb_{j})\big)\rho^{-1}h_{1}(\lmb_{j};1-\rho^{2})\label{eq:SharpEigft-3}\\
 & \peq+c_{j,\match}(\nu)\td{\varphi}_{\out}(\lmb_{j},\nu;\rho)-\wh{\Psi}_{\conn}(\lmb_{j},\nu;\rho)-p(\nu;\lmb_{j})\rho.\nonumber 
\end{align}
Taking $\nu\rd_{\nu}$, we also have 
\begin{align}
[\nu\rd_{\nu}\td{\varphi}_{j}](\nu;\rho) & =\nu\rd_{\nu}\big(c_{j,\match}(\nu)-c_{\conn}(\lmb_{j})\big)\cdot\rho^{-1}h_{1}(\lmb_{j};1-\rho^{2})\label{eq:SharpEigft-4}\\
 & \peq+\big(c_{j,\match}(\nu)-c_{\conn}(\lmb_{j})\big)\cdot(\nu\rd_{\nu}\lmb_{j})\rho^{-1}[\rd_{\lmb}h_{1}](\lmb_{j};1-\rho^{2})\nonumber \\
 & \peq+[\nu\rd_{\nu}c_{j,\match}](\nu)\td{\varphi}_{\out}(\lmb_{j},\nu;\rho)\nonumber \\
 & \peq+c_{j,\match}(\nu)[\nu\rd_{\nu}\td{\varphi}_{\out}+(\nu\rd_{\nu}\lmb_{j})\rd_{\lmb}\td{\varphi}_{\out}](\lmb_{j},\nu;\rho)\nonumber \\
 & \peq-[\nu\rd_{\nu}\wh{\Psi}_{\conn}+(\nu\rd_{\nu}\lmb_{j})\rd_{\lmb}\wh{\Psi}_{\conn}](\lmb_{j},\nu;\rho)-\nu\rd_{\nu}[p(\nu;\lmb_{j})]\rho.\nonumber 
\end{align}
We note as a consequence of \eqref{eq:c-match-est} and $|\nu\rd_{\nu}\wh{\lmb}_{j}|\aleq|\log\nu|^{-2}$
that 
\[
|c_{j,\match}|\lesssim1\quad\text{and}\quad|\nu\rd_{\nu}c_{j,\match}|\lesssim|\log\nu|^{-2}.
\]
Applying the estimates \eqref{eq:c-match-est}, \eqref{eq:OuterEigEst},
\eqref{eq:Psi-hat-conn-est}, \eqref{eq:p-bound-1}-\eqref{eq:p-bound-2},
and \eqref{eq:nudnu-eig-val} to the displays \eqref{eq:SharpEigft-3}-\eqref{eq:SharpEigft-4}
above, we obtain \eqref{eq:SharpEigftEst} for $\rho\in[\delta_{0},1]$.

(4) Finally, we show \eqref{eq:EigenFuncEst-1}-\eqref{eq:EigenfuncEst-4}.
We note that \eqref{eq:EigenFuncEst-1} and \eqref{eq:EigenFuncEst-3}
are immediate from \eqref{eq:EigenFuncEst-2} and \eqref{eq:EigenfuncEst-4},
respectively. Henceforth, we show \eqref{eq:EigenFuncEst-2} and \eqref{eq:EigenfuncEst-4}.

Let $\rho\in(0,1]$. For the proof of \eqref{eq:EigenFuncEst-2},
we apply $|\lmb_{j}(\lmb_{j}-1)|\lesssim\frac{1}{|\log\nu|}$, \eqref{eq:U-infty-est},
and \eqref{eq:SharpEigftEst} to \eqref{eq:SharpEigftDecomp} to have
\begin{equation}
\Big|\Big(\varphi_{j}-\frac{1}{\nu}\Lmb Q_{\nu}\Big)-\nu\Big[T_{1}+(2\lmb_{j}-1)S_{1}+\lmb_{j}(\lmb_{j}-1)U_{1}\Big]_{\nu}\Big|_{k}\lesssim_{k}\frac{\rho^{3}\langle\log\rho\rangle}{|\log\nu|}.\label{eq:EigenFunc-01}
\end{equation}
Next, we use more precise asymptotics 
\begin{equation}
\lmb_{j}(\lmb_{j}-1)=\frac{-\frac{1}{6}+j}{|\log\nu|}+O\Big(\frac{1}{|\log\nu|^{2}}\Big)\label{eq:EigenFunc-02}
\end{equation}
and \eqref{eq:T1S1U1-asymptotics} to have 
\[
\chf_{(0,\nu^{-1}]}\big|T_{1}+(2\lmb_{j}-1)S_{1}+\lmb_{j}(\lmb_{j}-1)U_{1}\big|_{k}\lesssim_{k}\chf_{(0,1]}y^{3}+\chf_{[1,\nu^{-1}]}\frac{y\langle\log(\nu y)\rangle}{|\log\nu|}.
\]
Notice that for large $y\sim\nu^{-1}$ this bound is logarithmically
better than a rough bound $O(y)$ from \eqref{eq:T1S1U1-bound}. Substituting
the above display into \eqref{eq:EigenFunc-01} completes the proof
of \eqref{eq:EigenFuncEst-2}.

For the proof of \eqref{eq:EigenfuncEst-4}, we take $\nu\rd_{\nu}$
to \eqref{eq:SharpEigftDecomp} to have 
\begin{align}
\nu\rd_{\nu}\Big(\varphi_{j}-\frac{1}{\nu}\Lmb Q_{\nu}\Big) & =-\nu\Lmb_{2}\{T_{1}+(2\lmb_{j}-1)S_{1}\}_{\nu}+(\nu\rd_{\nu}\lmb_{j})2\nu[S_{1}]_{\nu}\label{eq:EigenFunc-03}\\
 & \peq+\nu\Big[-\lmb_{j}(\lmb_{j}-1)\Lmb_{2}U_{1}+(\nu\rd_{\nu}\lmb_{j})(2\lmb_{j}-1)U_{1}\Big]_{\nu}\nonumber \\
 & \peq+[\Lmb\chi]_{\nu}(\rho)\lmb_{j}(\lmb_{j}-1)U_{\infty}(\lmb_{j};\rho)+[\nu\rd_{\nu}\td{\varphi}_{j}](\nu;\rho)\nonumber \\
 & \peq+(\nu\rd_{\nu}\lmb_{j})\chi_{\gtrsim\nu}(\rho)\rd_{\lmb}[\lmb(\lmb-1)U_{\infty}](\lmb_{j};\rho).\nonumber 
\end{align}
For the first line of RHS\eqref{eq:EigenFunc-03}, we can apply \eqref{eq:T1S1U1-bound},
\eqref{eq:T1S1U1-asymptotics}, and \eqref{eq:nudnu-eig-val}. For
the third and fourth lines of RHS\eqref{eq:EigenFunc-03}, we can
apply $|\lmb_{j}(\lmb_{j}-1)|\lesssim\frac{1}{|\log\nu|}$, \eqref{eq:U-infty-est},
and \eqref{eq:SharpEigftEst}. As a result, we have 
\begin{align}
\Big|\nu\rd_{\nu}\Big(\varphi_{j}-\frac{1}{\nu}\Lmb Q_{\nu}\Big) & -\nu\Big[-\lmb_{j}(\lmb_{j}-1)\Lmb_{2}U_{1}+(\nu\rd_{\nu}\lmb_{j})(2\lmb_{j}-1)U_{1}\Big]_{\nu}\Big|_{k}\label{eq:EigenFunc-04}\\
 & \lesssim_{k}\chf_{(0,\nu]}\frac{\rho^{3}}{\nu^{2}}+\chf_{[\nu,1]}\Big(\frac{\rho}{|\log\nu|^{2}}+\frac{\nu^{2}\langle\log(\frac{\rho}{\nu})\rangle}{\rho}\Big).\nonumber 
\end{align}
Next, we observe using the asymptotics \eqref{eq:EigenFunc-02}, 
\[
(\nu\rd_{\nu}\lmb_{j})(2\lmb_{j}-1)=\frac{-\frac{1}{6}+j}{|\log\nu|^{2}}+O\Big(\frac{1}{|\log\nu|^{3}}\Big),
\]
and \eqref{eq:T1S1U1-asymptotics} to have 
\begin{align*}
 & \chf_{(0,\nu^{-1}]}\big|-\lmb_{j}(\lmb_{j}-1)\Lmb_{2}U_{1}+(\nu\rd_{\nu}\lmb_{j})(2\lmb_{j}-1)U_{1}\big|_{k}\\
 & \lesssim_{k}\chf_{(0,1]}\frac{y^{3}}{|\log\nu|}+\chf_{[1,\nu^{-1}]}\frac{y\langle\log(\nu y)\rangle}{|\log\nu|^{2}}.
\end{align*}
Substituting this into \eqref{eq:EigenFunc-04} completes the proof
of \eqref{eq:EigenfuncEst-4}. The proof of Proposition~\ref{prop:FirstTwoEigenpairs}
is complete.
\end{proof}

\section{\label{sec:Formal-invariant-subspace}Formal invariant subspace decomposition
for $\mathbf{M}_{\nu}$}

In this section, we exhibit a formal invariant subspace decomposition
for the operator $\mathbf{M}_{\nu}$. The eigenfunctions $\bm{\varphi}_{j}$
constructed in the previous section form a two-dimensional space $\mathrm{span}\{\bm{\varphi}_{0},\bm{\varphi}_{1}\}$
that is invariant under $\mathbf{M}_{\nu}$. From now on, we aim to
find two linear functionals $\ell_{0},\ell_{1}$ such that the kernel
of $\ell_{0},\ell_{1}$ complements the space $\mathrm{span}\{\bm{\varphi}_{0},\bm{\varphi}_{1}\}$,
(i.e., it is of codimension two and transversal to $\mathrm{span}\{\bm{\varphi}_{0},\bm{\varphi}_{1}\}$)
\emph{and} is invariant under $\mathbf{M}_{\nu}$.

Note that we did not specify the total space, say $X$, for the invariant
subspace decomposition. However, whatever the total space $X$ is,
if $X$ contains the eigenfunctions $\bm{\varphi}_{j}$ and the linear
functionals $\ell_{j}$ are well-defined (and $X$ has a dense subspace
where $\mathbf{M}_{\nu}$ is defined), then one should have the invariant
subspace decomposition 
\[
X=\mathrm{span}\{\bm{\varphi}_{0},\bm{\varphi}_{1}\}\oplus\ker_{X}\{\ell_{0},\ell_{1}\},
\]
where $\ker_{X}\{\ell_{0},\ell_{1}\}$ is the kernel of $\ell_{0}$
and $\ell_{1}$ in $X$. In this paper, the total space $H^{2}\times H^{1}$
restricted in the region $\rho\leq1$ will work.

For the computational purposes in our later blow-up analysis, we need
to find \emph{explicit formula} of each $\ell_{j}$. This requires
a new input because the operator $\mathbf{M}_{\nu}$ is in general
\emph{non-self-adjoint} on usual (weighted-)Sobolev spaces. To compare
the situation with the parabolic case (the harmonic map heat flow),
the analogue of $\mathbf{M}_{\nu}$ takes the form 
\begin{equation}
\rd_{\rho\rho}+\frac{1}{\rho}\rd_{\rho}-\frac{1}{2}\rho\rd_{\rho}-\frac{V_{\nu}}{\rho^{2}},\label{eq:Formal-2}
\end{equation}
and it is already a classical fact that this operator is self-adjoint
in the weighted $L^{2}$-space $L^{2}(w\rho d\rho)$ with the Gaussian
weight 
\[
w(\rho)=e^{-\frac{1}{4}\rho^{2}}.
\]
Therefore, one can simply choose $\ell_{j}=\langle\cdot,\varphi_{j}\rangle_{L^{2}(w\rho d\rho)}$
with the eigenfunctions $\varphi_{j}$ for the operator \eqref{eq:Formal-2},
in the parabolic case. In fact, the self-adjointness says more; one
can also obtain a \emph{spectral gap} estimate.

Let us come back to our operator $\mathbf{M}_{\nu}$. There seems
to be no natural Hilbert space that makes $\mathbf{M}_{\nu}$ self-adjoint.
However, we can find an explicit formula for $\ell_{j}$: 
\begin{align*}
\ell_{j}(\bm{\eps}) & =\langle(\lmb_{j}+\Lmb_{0})\eps+\dot{\eps},g_{j}\varphi_{j}\rangle,\tag{\ref{eq:def-ell_j}}\\
g_{j}(\nu;\rho) & =\chf_{(0,1]}(\rho)\cdot(1-\rho^{2})^{\lmb_{j}-\frac{1}{2}}.\tag{\ref{eq:def-g_j}}
\end{align*}
Notice that $g_{j}$ is sharply localized in the interior of the light
cone $\rho\leq1$. In Proposition~\ref{prop:formal-inv-subsp-decomp}
below, we will check the invariance and transversality properties
of $\ell_{j}$.

We formally derive $\ell_{j}$ as follows. We notice that finding
a linear functional $\ell$ whose kernel is invariant under $\mathbf{M}_{\nu}$
is equivalent to finding an eigenfunction of the \emph{transpose}
$\mathbf{M}_{\nu}^{t}$ of $\mathbf{M}_{\nu}$. If $\mathbf{M}_{\nu}$
has a real eigenvalue $\lmb$, then the transpose $\mathbf{M}_{\nu}^{t}$
might also have the eigenvalue $\lmb$. Thus we want to find $\ell$
such that 
\[
(\mathbf{M}_{\nu}^{t}-\lmb)\ell=0.
\]
Now, by considering some weighted $L^{2}$-space $L^{2}(g\rho d\rho)$
($g$ will be chosen soon), we identify $\mathbf{M}_{\nu}^{t}$ by
$\mathbf{M}_{\nu}^{\ast_{g}}$, where $A^{\ast_{g}}$ denotes the
formal adjoint of $A$ with respect to the $L^{2}(g\rho d\rho)$-inner
product. We then compute $\mathbf{M}_{\nu}^{t}-\lmb$. We start from
writing $\mathbf{M}_{\nu}-\lmb$ into a triangular form: 
\[
\mathbf{M}_{\nu}-\lmb=\begin{bmatrix}1 & 0\\
\Lmb+\lmb & 1
\end{bmatrix}\begin{bmatrix}0 & 1\\
-H_{\nu}-(\Lmb_{0}+\lmb)(\Lmb+\lmb) & -\Lmb_{0}-\Lmb-2\lmb
\end{bmatrix}\begin{bmatrix}1 & 0\\
-\Lmb-\lmb & 1
\end{bmatrix}.
\]
Next, we take the $L^{2}(g\rho d\rho)$-adjoint. If we choose (c.f.
\eqref{eq:def-g_j})
\[
g(\rho)=(1-\rho^{2})^{\lmb-\frac{1}{2}},
\]
then the operator $-H_{\nu}-(\Lmb_{0}+\lmb)(\Lmb+\lmb)$ becomes formally
self-adjoint (c.f. \eqref{eq:Invariance-2} below) and hence we have
\[
\mathbf{M}_{\nu}^{\ast_{g}}-\lmb=\begin{bmatrix}1 & -\Lmb^{\ast_{g}}-\lmb\\
0 & 1
\end{bmatrix}\begin{bmatrix}0 & -H_{\nu}-(\Lmb_{0}+\lmb)(\Lmb+\lmb)\\
1 & -\Lmb_{0}^{\ast_{g}}-\Lmb^{\ast_{g}}-2\lmb
\end{bmatrix}\begin{bmatrix}1 & \Lmb^{\ast_{g}}+\lmb\\
0 & 1
\end{bmatrix},
\]
where $\Lmb^{\ast_{g}}$ and $\Lmb_{0}^{\ast_{g}}$ denote the formal
$L^{2}(g\rho d\rho)$-adjoints of $\Lmb$ and $\Lmb_{0}$. From this,
we see that 
\[
\begin{bmatrix}1 & \Lmb^{\ast_{g}}+\lmb\\
0 & 1
\end{bmatrix}^{-1}\begin{bmatrix}(\Lmb_{0}^{\ast_{g}}+\Lmb^{\ast_{g}}+2\lmb)\varphi_{j}\\
\varphi_{j}
\end{bmatrix}=\begin{bmatrix}(\Lmb_{0}^{\ast_{g}}+\lmb)\varphi_{j}\\
\varphi_{j}
\end{bmatrix}
\]
is a kernel element of $\mathbf{M}_{\nu}^{\ast_{g}}-\lmb$. This means
that the linear functional $\ell$ defined by 
\begin{align*}
\ell(\bm{f}) & =\langle f,(\Lmb_{0}^{\ast_{g}}+\lmb)\varphi\rangle_{L^{2}(g\rho d\rho)}+\langle\dot{f},\varphi_{j}\rangle_{L^{2}(g\rho d\rho)}\\
 & \qquad=\langle(\Lmb_{0}+\lmb)f+\dot{f},\varphi\rangle_{L^{2}(g\rho d\rho)}=\langle(\Lmb_{0}+\lmb)f+\dot{f},g\varphi\rangle_{L^{2}}
\end{align*}
is a kernel element of $\mathbf{M}_{\nu}^{t}-\lmb$. This gives the
expression \eqref{eq:def-ell_j}.
\begin{prop}[Formal invariant subspace decomposition]
\label{prop:formal-inv-subsp-decomp}\ 
\begin{itemize}
\item (Invariance) For smooth $\bm{\eps}$, we have
\begin{equation}
\ell_{j}(\mathbf{M}_{\nu}\bm{\eps})=\lmb_{j}\ell_{j}(\bm{\eps}).\label{eq:Invariance}
\end{equation}
\item (Transversality) We have
\begin{equation}
\begin{bmatrix}\ell_{0}(\bm{\varphi}_{0}) & \ell_{1}(\bm{\varphi}_{0})\\
\ell_{0}(\bm{\varphi}_{1}) & \ell_{1}(\bm{\varphi}_{1})
\end{bmatrix}=\begin{bmatrix}4|\log\nu|+O(1) & 0\\
0 & -4|\log\nu|+O(1)
\end{bmatrix}.\label{eq:Transv}
\end{equation}
\end{itemize}
\end{prop}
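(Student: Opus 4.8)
\textbf{Overall strategy.} The proposition has two independent parts: the invariance identity $\ell_j(\mathbf{M}_\nu\bm\eps)=\lmb_j\ell_j(\bm\eps)$, and the transversality computation of the $2\times2$ matrix $(\ell_i(\bm\varphi_j))$. For the invariance part, the plan is to make rigorous the formal adjoint computation already sketched in the text: rather than pushing $\mathbf{M}_\nu$ onto the test function, I would verify the identity directly by substituting the definition $\ell_j(\bm\eps)=\langle(\lmb_j+\Lmb_0)\eps+\dot\eps,\,g_j\varphi_j\rangle$, writing out $\mathbf{M}_\nu\bm\eps=(-\Lmb\eps+\dot\eps,\,-H_\nu\eps-\Lmb_0\dot\eps)$, and collecting terms. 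The key algebraic fact to isolate is that, with the weight $g_j(\rho)=\chf_{(0,1]}(1-\rho^2)^{\lmb_j-\frac12}$, the operator $-H_\nu-(\Lmb_0+\lmb_j)(\Lmb+\lmb_j)$ is formally self-adjoint on $L^2(g_j\,\rho\,d\rho)$; this is what makes the two off-diagonal blocks in the triangular factorization of $\mathbf{M}_\nu-\lmb_j$ transpose into each other, and it is precisely where the exponent $\lmb_j-\frac12$ in $g_j$ comes from. I would state this self-adjointness as a short lemma (computing $\Lmb^{\ast_{g_j}}$ and $\Lmb_0^{\ast_{g_j}}$ explicitly, e.g. $\Lmb^{\ast_g}f=-\Lmb f-(\rho\,\rd_\rho\log g)f$), then combine it with the eigenfunction relation \eqref{eq:eigen-ft-rel} for $\varphi_j$ to conclude.

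\textbf{Boundary terms.} The main technical point — and the reason the factor $\lmb_j-\frac12>0$ (note $\lmb_0\approx1$, $\lmb_1\approx0$, so $\lmb_1-\frac12<0$, but the weight $g_1=(1-\rho^2)^{\lmb_1-1/2}$ is still integrable near $\rho=1$ since $\lmb_1-\frac12>-1$) is what it is — is that all integrations by parts on $(0,1]$ must produce no boundary contributions, neither at $\rho=0$ nor at the light cone $\rho=1$. At $\rho=0$ this is automatic from smoothness of $\varphi_j$ and the $\rho\,d\rho$ measure together with the $\rho^{-1}$ weight in $H_\nu$. At $\rho=1$ the weight $g_j$ vanishes like $(1-\rho)^{\lmb_j-1/2}$, which kills the boundary terms arising from $\rd_{\rho\rho}$ and $\frac1\rho\rd_\rho$ in $H_\nu$ and from $\Lmb$, $\Lmb_0$; this cancellation of the light-cone boundary term is exactly the "correct weight" phenomenon advertised in the introduction (Section~\ref{subsec:Strategy-of-the}, step 5). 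I expect this vanishing-at-$\rho=1$ bookkeeping to be the subtle step: one has to track each integration by parts and check the power of $(1-\rho^2)$ in the boundary term against the growth of $\varphi_j$ and $\eps$ near the cone, and it is cleanest to first prove it for $\bm\eps$ smooth and compactly supported in $\{\rho<1\}$, then extend by density in the relevant topology, or simply restrict attention — as the statement does — to smooth $\bm\eps$ for which $\varphi_j$'s estimates inside the cone (Proposition~\ref{prop:FirstTwoEigenpairs}) give enough decay.

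\textbf{Transversality.} For \eqref{eq:Transv} I would proceed by direct computation. Using $\bm\varphi_j=(\varphi_j,(\lmb_j+\Lmb)\varphi_j)$ and the definition of $\ell_i$,
\[
\ell_i(\bm\varphi_j)=\big\langle(\lmb_i+\Lmb_0)\varphi_j+(\lmb_j+\Lmb)\varphi_j,\ g_i\varphi_i\big\rangle=\big\langle(\lmb_i+\lmb_j)\varphi_j+(\Lmb_0+\Lmb)\varphi_j,\ g_i\varphi_i\big\rangle,
\]
and since $\Lmb_0+\Lmb=2\Lmb+1=2r\rd_r+2$ acting in two dimensions, $(\Lmb_0+\Lmb)\varphi_j$ is a total-derivative-type expression that can be integrated against $g_i\varphi_i$ with no boundary terms (again by the weight). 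The off-diagonal entries $\ell_1(\bm\varphi_0)$ and $\ell_0(\bm\varphi_1)$ I would show vanish using the bilinear symmetry: the same self-adjointness computation that proves invariance shows $\lmb_i\ell_i(\bm\varphi_j)=\ell_i(\mathbf{M}_\nu\bm\varphi_j)=\lmb_j\ell_i(\bm\varphi_j)$, forcing $\ell_i(\bm\varphi_j)=0$ whenever $\lmb_i\neq\lmb_j$, which holds since $\lmb_0\approx1\neq0\approx\lmb_1$ by \eqref{eq:lmb-hat-asymp}. For the diagonal entries, the leading behavior is extracted from the dominant part $\varphi_j\approx\frac1\nu\Lmb Q_\nu$ of Proposition~\ref{prop:FirstTwoEigenpairs}: on $(0,1]$, $\frac1\nu\Lmb Q_\nu(\rho)=\frac{2\nu/\rho}{... }$ behaves like $2/\rho$ for $\nu\ll\rho$, so $\langle\varphi_j,g_j\varphi_j\rangle\sim\int_0^1(2/\rho)^2(1-\rho^2)^{\lmb_j-1/2}\rho\,d\rho\sim 4\int_\nu^1 d\rho/\rho\sim 4|\log\nu|$, and a careful accounting of the $(\Lmb_0+\Lmb)\varphi_j$ contribution and of the sign (the factor $\lmb_0+\lmb_1$ versus $2\lmb_j$, and the direction of the resulting inequality) produces the $+4|\log\nu|+O(1)$ and $-4|\log\nu|+O(1)$ on the diagonal; the $O(1)$ errors come from the region $\rho\lesssim\nu$ and from the subleading terms $T_1,S_1,U_1,U_\infty,\td\varphi_j$ in \eqref{eq:SharpEigftDecomp}, all controlled by the estimates \eqref{eq:SharpEigftEst}, \eqref{eq:EigenFuncEst-1}, \eqref{eq:EigenFuncEst-2}. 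I expect the sign-tracking in the diagonal entries (why $\ell_1(\bm\varphi_1)$ comes out negative) to require the most care; it should follow from $\lmb_1-\tfrac12<0$ making $g_1$ and the structure of $(\Lmb_0+\Lmb)(\tfrac1\nu\Lmb Q_\nu)$ conspire, but I would double-check by an explicit computation with the exact profile $\Lmb Q(y)=2y/(1+y^2)$.
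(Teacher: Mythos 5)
Your proposal matches the paper's proof essentially step for step: the invariance is obtained from the formal self-adjointness of $H_{\nu}+(\lmb_{j}+\Lmb_{0})(\lmb_{j}+\Lmb)$ in $L^{2}(g_{j}\rho\,d\rho)$ (the paper writes this operator in the divergence form $-\tfrac{1}{\rho g_{j}}\rd_{\rho}\rho(1-\rho^{2})g_{j}\rd_{\rho}+\tfrac{V_{\nu}}{\rho^{2}}+\lmb_{j}(\lmb_{j}+1)$, which also settles your boundary-term concern at $\rho=1$ since $\rho(1-\rho^{2})g_{j}=\rho(1-\rho^{2})^{\lmb_{j}+1/2}$ vanishes there), the off-diagonal entries vanish by the same eigenvalue argument, and the diagonal entries reduce to $(2\lmb_{j}-1)\langle\tfrac{1}{\nu}\Lmb Q_{\nu},g_{j}\tfrac{1}{\nu}\Lmb Q_{\nu}\rangle+O(1)=(2\lmb_{j}-1)\cdot 4|\log\nu|+O(1)$. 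The one point to sharpen is your attribution of the sign of $\ell_{1}(\bm{\varphi}_{1})$: it comes from the scalar coefficient $2\lmb_{1}-1\approx-1$ produced by $\Lmb_{0}+\Lmb=2\Lmb_{0}-1$ in $(2\lmb_{j}+\Lmb_{0}+\Lmb)\varphi_{j}$, not from the exponent of $g_{1}$ or from $(\Lmb_{0}+\Lmb)(\tfrac{1}{\nu}\Lmb Q_{\nu})$, whose $2\Lmb_{0}$-part contributes only $O(1)$ thanks to the improved decay of $\Lmb_{0}\Lmb Q$.
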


\begin{rem}[Algebraic multiplicity of $\lmb_{j}$]
Each eigenvalue $\lmb_{j}$ has algebraic multiplicity $1$. Note
that the geometric multiplicity is $1$ because of the unique existence
in Proposition~\ref{prop:FirstTwoEigenpairs}. To show that the algebraic
multiplicity is $1$, suppose that $(\mathbf{M}_{\nu}-\lmb_{j})\bm{\zeta}_{j}=\bm{\varphi}_{j}$
for some $\bm{\zeta}_{j}$. By the invariance \eqref{eq:Invariance}
one has $0=\ell_{j}((\mathbf{M}_{\nu}-\lmb_{j})\bm{\zeta}_{j})=\ell_{j}(\bm{\varphi}_{j})$.
This is absurd due to \eqref{eq:Transv}.
\end{rem}

\begin{rem}[Extension to exact self-similar solutions]
\label{rem:ell_j-extension-exact-self-sim}The above formal derivation
of $\ell_{j}$ can be extended to the linearized operators around
exact self-similar solutions to wave equations. For example, for $(1+d)$-dimensional
wave maps, one may replace $\Lmb_{0}$ by $\rho\rd_{\rho}+1$ and
$g_{j}\rho d\rho$ by $\chf_{(0,1]}(\rho)\cdot(1-\rho^{2})^{\lmb-\frac{d-1}{2}}\rho^{d-1}d\rho$
in the formula of $\ell_{j}$. Note however that there are some subtleties
to use this formula for high dimensions (e.g., $d\geq5$ for $\lmb=1$)
because the weight $g$ is not integrable near $\rho=1$. We believe
that one may perform some renormalization (e.g., integrating by parts
the weight $(1-\rho^{2})^{\lmb-\frac{d-1}{2}}$ to soften the singularity)
to obtain a correct form of $\ell$. Using this, one may also obtain
an explicit formula of the Riesz projection onto its discrete spectrum,
as well as the analogue of the previous remark.
\end{rem}

\begin{rem}[On dissipativity]
\label{rem:on-dissip}This remark requires the notation in Section~\ref{subsec:RR-blow-up-sol}.
For $\bm{f}$, consider the orthogonality condition 
\begin{equation}
\langle f,[\chi_{M}\Lmb Q]_{\nu}\rangle=0=\langle\dot{f},[\chi_{M}\Lmb Q]_{\nu}\rangle,\label{eq:dissip-orthog}
\end{equation}
where $M\gg1$ is some large fixed constant (see Remark \ref{rem:Remark-M}).
We expect that the modified energy functional from \cite[Lemma 6.5]{RaphaelRodnianski2012Publ.Math.}
suggests the existence of some inner product $\langle\bm{f},\bm{g}\rangle_{X_{\nu}}$
defined on the space of functions satisfying \eqref{eq:dissip-orthog}
at the level $H^{2}\times H^{1}$ with the dissipativity property
\begin{equation}
\langle\mathbf{M}_{\nu}\bm{f},\bm{f}\rangle_{X_{\nu}}\leq-(1+o_{\nu\to0}(1))\langle\bm{f},\bm{f}\rangle_{X_{\nu}}.\label{eq:dissip}
\end{equation}
This is why we called $(\lmb_{0},\bm{\varphi}_{0})$ and $(\lmb_{1},\bm{\varphi}_{1})$
the first two eigenpairs. However, the orthogonality condition \eqref{eq:dissip-orthog}
is \emph{not preserved} under the flow of $\rd_{\tau}-\mathbf{M}_{\nu}$.
We do not know how to obtain analogous dissipativity under the orthogonality
condition $\ell_{0}(\bm{f})=0=\ell_{1}(\bm{f})$ which are invariant
under the flow of $\rd_{\tau}-\mathbf{M}_{\nu}$. More technically,
the implicit constants in the coercivity estimates \eqref{eq:loc-coercivity-1}-\eqref{eq:loc-coercivity-2}
start to depend on $\nu$ under this orthogonality condition and could
be dangerous.

Note that in the parabolic case the dissipativity easily manifests
after identifying the eigenvalues, thanks to the self-adjoint structure
(under the weighted Gaussian measure). In the dispersive case, proving
the dissipativity is in fact a difficulty. There are some results
on such dissipativity or decay estimates for dispersive equations
in the self-similar coordinates (in various topologies), with applications
to the (conditional) stability problem of self-similar blow-up. See
for example \cite{DonningerSchorkhuber2017AIHP,Donninger2017Duke,MerleRaphaelRodnianskiSzeftel2019arXiv2}.
Note that we circumvent this problem in our type-II blow-up analysis
by assuming the result of \cite{RaphaelRodnianski2012Publ.Math.}.
\end{rem}

\begin{proof}[Proof of Proposition~\ref{prop:formal-inv-subsp-decomp}]
\ 

\textbf{Step 1. }Proof of the invariance \eqref{eq:Invariance}.

This follows from a direct computation. We start from writing 
\begin{align*}
\ell_{j}(\mathbf{M}_{\nu}\bm{\eps}) & =\big\langle(\lmb_{j}+\Lmb_{0})(-\Lmb\eps+\dot{\eps})+(-H_{\nu}\eps-\Lmb_{0}\dot{\eps}),g_{j}\varphi_{j}\big\rangle\\
 & =-\big\langle[H_{\nu}+(\lmb_{j}+\Lmb_{0})(\lmb_{j}+\Lmb)]\eps,g_{j}\varphi_{j}\big\rangle+\lmb_{j}\ell_{j}(\bm{\eps}).
\end{align*}
Thus the proof of \eqref{eq:Invariance} is complete if we show 
\[
\big\langle[H_{\nu}+(\lmb_{j}+\Lmb_{0})(\lmb_{j}+\Lmb)]\eps,g_{j}\varphi_{j}\big\rangle=0.
\]
Now, we expand 
\begin{align*}
 & H_{\nu}+(\lmb_{j}+\Lmb_{0})(\lmb_{j}+\Lmb)\\
 & =-\rd_{\rho\rho}-\frac{1}{\rho}\rd_{\rho}+\frac{V_{\nu}}{\rho^{2}}+(\rho\rd_{\rho}+\lmb_{j}+1)(\rho\rd_{\rho}+\lmb_{j})\\
 & =-(1-\rho^{2})\rd_{\rho\rho}-\Big\{\frac{1}{\rho}-(2\lmb_{j}+2)\rho\Big\}\rd_{\rho}+\frac{V_{\nu}}{\rho^{2}}+\lmb_{j}(\lmb_{j}+1).
\end{align*}
Using the operator identity (for $\rho<1$) 
\[
(1-\rho^{2})\rd_{\rho\rho}+\Big\{\frac{1}{\rho}-(2\lmb_{j}+2)\rho\Big\}\rd_{\rho}=\frac{1}{\rho g_{j}}\rd_{\rho}\rho(1-\rho^{2})g_{j}\rd_{\rho},
\]
we obtain the following operator identity (for $\rho<1$): 
\begin{equation}
H_{\nu}+(\lmb_{j}+\Lmb_{0})(\lmb_{j}+\Lmb)=-\frac{1}{\rho g_{j}}\rd_{\rho}\rho(1-\rho^{2})g_{j}\rd_{\rho}+\frac{V_{\nu}}{\rho^{2}}+\lmb_{j}(\lmb_{j}+1).\label{eq:Invariance-2}
\end{equation}
Although the function $g_{j}$ is possibly singular at $\rho=1$,
the factor $(1-\rho^{2})$ in the principal symbol is enough to compensate
this singularity. Thus the above identity says that $H_{\nu}+(\lmb_{j}+\Lmb_{0})(\lmb_{j}+\Lmb)$
is formally self-adjoint with respect to the measure $g_{j}\cdot\rho d\rho$.
As a result, we have 
\begin{align*}
 & \big\langle[H_{\nu}+(\lmb_{j}+\Lmb_{0})(\lmb_{j}+\Lmb)]\eps,g_{j}\varphi_{j}\big\rangle\\
 & \qquad=\big\langle\eps,g_{j}[H_{\nu}+(\lmb_{j}+\Lmb_{0})(\lmb_{j}+\Lmb)]\varphi_{j}\big\rangle=0,
\end{align*}
where in the last equality we used \eqref{eq:eigen-ft-rel}. This
completes the proof of \eqref{eq:Invariance}.

\textbf{Step 2. }Proof of transversality \eqref{eq:Transv}.

The proof of \eqref{eq:Transv} for the off-diagonal entries, i.e.,
$\ell_{j}(\bm{\varphi}_{k})=0$ for $j\neq k$, is an easy consequence
of \eqref{eq:Invariance}. Indeed, since $\bm{\varphi}_{k}$ is a
smooth eigenfunction of $\mathbf{M}_{\nu}$ with the eigenvalue $\lmb_{k}$,
we have 
\[
\lmb_{k}\ell_{j}(\bm{\varphi}_{k})=\ell_{j}(\mathbf{M}_{\nu}\bm{\varphi}_{k})=\lmb_{j}\ell_{j}(\bm{\varphi}_{k}),
\]
where in the second equality we used \eqref{eq:Invariance}. Since
$\lmb_{0}\neq\lmb_{1}$, we have $\ell_{j}(\bm{\varphi}_{k})=0$ for
$j\neq k$.

Next, we show \eqref{eq:Transv} for the diagonal entries. Let $j\in\{0,1\}$.
We first claim that 
\begin{equation}
\ell_{j}(\bm{\varphi}_{j})=\Big\langle[2\lmb_{j}-1+2\Lmb_{0}]\frac{1}{\nu}\Lmb Q_{\nu},g_{j}\frac{1}{\nu}\Lmb Q_{\nu}\Big\rangle+O\Big(\frac{1}{|\log\nu|}\Big).\label{eq:Transv-1}
\end{equation}
To show this, we note by \eqref{eq:def-ell_j} and \eqref{eq:bmphi-to-phi}
that 
\[
\ell_{j}(\bm{\varphi}_{j})=\big\langle[2\lmb_{j}-1+2\Lmb_{0}]\varphi_{j},g_{j}\varphi_{j}\big\rangle.
\]
We rewrite this as 
\begin{align*}
\ell_{j}(\bm{\varphi}_{j}) & =\Big\langle[2\lmb_{j}-1+2\Lmb_{0}]\frac{1}{\nu}\Lmb Q_{\nu},g_{j}\frac{1}{\nu}\Lmb Q_{\nu}\Big\rangle\\
 & \peq+\Big\langle[2\lmb_{j}-1+2\Lmb_{0}]\frac{1}{\nu}\Lmb Q_{\nu},g_{j}\Big(\varphi_{j}-\frac{1}{\nu}\Lmb Q_{\nu}\Big)\Big\rangle\\
 & \peq+\Big\langle(2\lmb_{j}-1+2\Lmb_{0})\Big(\varphi_{j}-\frac{1}{\nu}\Lmb Q_{\nu}\Big),g_{j}\varphi_{j}\Big\rangle.
\end{align*}
In the above display, the last two terms are errors; we use \eqref{eq:EigenFuncEst-2},
$|\frac{1}{\nu}\Lmb Q_{\nu}|_{1}+|\varphi_{j}|_{1}\aleq\frac{1}{\rho}$,
and $g_{j}\aleq\chf_{(0,1]}\cdot(1-\rho)^{-\frac{3}{4}}$ to see that
these terms are of size $O(\frac{1}{|\log\nu|})$. Therefore, \eqref{eq:Transv-1}
follows.

Using the formula \eqref{eq:LmbQ-formula} for $\Lmb Q$ and the improved
decay \eqref{eq:LmbQ-asymptotics} for $\Lmb_{0}\Lmb Q$, we have\footnote{In Lemma~\ref{lem:conv-comp-for-mod-eqn} below, we will compute
these quantities more precisely.} 
\begin{align*}
\Big\langle\frac{1}{\nu}\Lmb Q_{\nu},g_{j}\frac{1}{\nu}\Lmb Q_{\nu}\Big\rangle & =4|\log\nu|+O(1),\\
\Big\langle\frac{1}{\nu}\Lmb_{0}\Lmb Q_{\nu},g_{j}\frac{1}{\nu}\Lmb Q_{\nu}\Big\rangle & =O(1).
\end{align*}
Substituting the above display and $|\lmb_{j}-(1-j)|\aleq\frac{1}{|\log\nu|}$
into \eqref{eq:Transv-1} completes the proof of \eqref{eq:Transv}.
\end{proof}
Computations with $O(1)$ errors in \eqref{eq:Transv} will not be
sufficient in the later refined modulation estimates, and we will
need the following more precise computations of relevant quantities.
\begin{lem}[Computations adapted to modulation equations]
\label{lem:conv-comp-for-mod-eqn}For each $j\in\{0,1\}$, define
\begin{align}
\frkb_{j} & \coloneqq\ell_{j}\Big(\frac{1}{\nu}\bm{\Lmb Q}_{\nu}\Big),\label{eq:def-frkb}\\
\frkc_{j} & \coloneqq\frac{1}{2}\Big\{\ell_{j}(\bm{\varphi}_{0}-\bm{\varphi}_{1})+\Big\langle\frac{1}{\nu}\Lmb Q_{\nu},g_{j}\varphi_{j}\Big\rangle\Big\}.\label{eq:def-frkc}
\end{align}
Then, we have the following. 
\begin{itemize}
\item (Expansions of $\frkb_{j}$ and $\frkc_{j}$) We have 
\begin{equation}
\left\{ \begin{aligned}\frkb_{0} & =4|\log\nu|+\Big(-\frac{14}{3}+4\log2\Big)+O\Big(\frac{1}{|\log\nu|}\Big),\\
\frkb_{1} & =-\frac{4}{3}+\frac{-\frac{5}{9}}{|\log\nu|}+O\Big(\frac{1}{|\log\nu|^{2}}\Big),
\end{aligned}
\right.\label{eq:frkb-asymp}
\end{equation}
and 
\begin{equation}
\left\{ \begin{aligned}\frkc_{0} & =4|\log\nu|+\Big(-\frac{14}{3}+4\log2\Big)+O\Big(\frac{1}{|\log\nu|}\Big),\\
\frkc_{1} & =4|\log\nu|+\Big(-\frac{2}{3}+4\log2\Big)+O\Big(\frac{1}{|\log\nu|}\Big).
\end{aligned}
\right.\label{eq:frkc-asymp}
\end{equation}
\item (Bounds for $\nu\rd_{\nu}\frkb_{j}$ and $\nu\rd_{\nu}\frkc_{j}$)
For each $j\in\{0,1\}$, we have 
\begin{equation}
|\nu\rd_{\nu}\frkb_{j}|+|\nu\rd_{\nu}\frkc_{j}|\aleq1.\label{eq:nudnu-frk}
\end{equation}
\item (One more computation) We have 
\begin{equation}
\ell_{j}(\nu\rd_{\nu}(\bm{\varphi}_{0}-\bm{\varphi}_{1}))-\Big\langle\frac{1}{\nu}\Lmb_{0}\Lmb Q_{\nu},g_{j}\varphi_{j}\Big\rangle=-4+O\Big(\frac{1}{|\log\nu|}\Big).\label{eq:OneMoreComp}
\end{equation}
\end{itemize}
\end{lem}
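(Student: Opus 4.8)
The plan is to reduce every quantity to the single explicit model integral $\langle\frac1\nu\Lmb Q_\nu, g_j\frac1\nu\Lmb Q_\nu\rangle$ (together with the elementary $\langle\Lmb_0\frac1\nu\Lmb Q_\nu, g_j\frac1\nu\Lmb Q_\nu\rangle$), to evaluate these in closed form, and then to carry out the $\frac1{|\log\nu|}$-expansion. First I would observe that each of $\frkb_j$, $\frkc_j$, and the left-hand side of \eqref{eq:OneMoreComp} is a pairing of $g_j\varphi_j$ against an explicit combination of $\frac1\nu\Lmb Q_\nu$, $\Lmb_0\frac1\nu\Lmb Q_\nu$, or (via the structure $\bm\varphi_j=(\varphi_j,(\lmb_j+\Lmb)\varphi_j)$ and $\Lmb_0=\Lmb+1$) of $\varphi_j$ itself; for $\frkc_j$ I would use the transversality \eqref{eq:Transv} to write $\ell_j(\bm\varphi_0-\bm\varphi_1)=\ell_j(\bm\varphi_j)$ for $j=0$ and $=-\ell_j(\bm\varphi_1)$ for $j=1$, with $\ell_j(\bm\varphi_j)=\langle(2\lmb_j-1+2\Lmb_0)\varphi_j,g_j\varphi_j\rangle$ as in the proof of Proposition~\ref{prop:formal-inv-subsp-decomp}. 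The decisive simplification is that \eqref{eq:EigenFuncEst-2} carries a $\frac1{|\log\nu|}$-gain in the bulk region $\rho\in[\nu,1]$ — itself a reflection of the matching identity $p(\nu;\lmb_j)=O(\nu^2|\log\nu|^2)$ from \eqref{eq:p-bound-1} — so one may replace $\varphi_j$ by $\frac1\nu\Lmb Q_\nu$ in every slot of these pairings at the cost of only $O(\frac1{|\log\nu|})$: in each pairing the factor multiplying $g_j(\varphi_j-\frac1\nu\Lmb Q_\nu)$ is $O(1/\rho)$ on $[\nu,1]$ (using $|\varphi_j-\frac1\nu\Lmb Q_\nu|_1$ to absorb the $\Lmb_0$'s), $g_j\aleq(1-\rho^2)^{\lmb_j-\frac12}$ is integrable against $\langle\log\rho\rangle\,\rho d\rho$ there, and the region $\rho\le\nu$ contributes only $O(\nu^2)$. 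This yields $\frkb_j=\langle(\lmb_j+\Lmb_0)\frac1\nu\Lmb Q_\nu,g_j\frac1\nu\Lmb Q_\nu\rangle+O(\frac1{|\log\nu|})$, $\frkc_0=\langle(\lmb_0+\Lmb_0)\frac1\nu\Lmb Q_\nu,g_0\frac1\nu\Lmb Q_\nu\rangle+O(\frac1{|\log\nu|})$ (so $\frkc_0=\frkb_0+O(\frac1{|\log\nu|})$), and $\frkc_1=(1-\lmb_1)\langle\frac1\nu\Lmb Q_\nu,g_1\frac1\nu\Lmb Q_\nu\rangle-\langle\Lmb_0\frac1\nu\Lmb Q_\nu,g_1\frac1\nu\Lmb Q_\nu\rangle+O(\frac1{|\log\nu|})$.

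Next I would evaluate the two model integrals. Since $\frac1\nu\Lmb Q_\nu(\rho)=\frac{2\rho}{\nu^2+\rho^2}$ and $\Lmb_0\frac1\nu\Lmb Q_\nu(\rho)=\frac{4\nu^2\rho}{(\nu^2+\rho^2)^2}$, the substitution $y=\rho/\nu$ gives $\langle\frac1\nu\Lmb Q_\nu,g_j\frac1\nu\Lmb Q_\nu\rangle=\int_0^{1/\nu}\frac{4y^3}{(1+y^2)^2}(1-\nu^2y^2)^{\lmb_j-\frac12}dy$ and $\langle\Lmb_0\frac1\nu\Lmb Q_\nu,g_j\frac1\nu\Lmb Q_\nu\rangle=\int_0^{1/\nu}\frac{8y^3}{(1+y^2)^3}(1-\nu^2y^2)^{\lmb_j-\frac12}dy$. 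The second equals $\int_0^\infty\frac{8y^3}{(1+y^2)^3}dy+O(\nu^2|\log\nu|)=2+O(\nu^2|\log\nu|)$. For the first I would split off the logarithmic divergence: $\int_0^{1/\nu}\frac{4y^3}{(1+y^2)^2}dy=2\log(1+\nu^{-2})+\frac2{1+\nu^{-2}}-2=4|\log\nu|-2+O(\nu^2)$, while the remainder, after $s=\nu^2y^2$, converges as $\nu\to0$ to $2\int_0^1\frac{(1-s)^{\lmb_j-\frac12}-1}{s}ds=2\big(\psi(1)-\psi(\lmb_j+\tfrac12)\big)$ by the classical identity $\int_0^1\frac{1-(1-s)^{a-1}}{s}ds=\psi(a)+\gamma=\psi(a)-\psi(1)$, the difference from the $\nu=0$ value being $O(\nu^2|\log\nu|)$. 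Hence $\langle\frac1\nu\Lmb Q_\nu,g_j\frac1\nu\Lmb Q_\nu\rangle=4|\log\nu|-2+2\big(\psi(1)-\psi(\lmb_j+\tfrac12)\big)+O(\nu^2|\log\nu|)$.

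Now I would assemble the expansions. Inserting $\psi(1)=-\gamma$, $\psi(\tfrac12)=-\gamma-2\log2$, $\psi(\tfrac32)=-\gamma-2\log2+2$ and the $\frac1{|\log\nu|}$-expansions \eqref{eq:lmb-hat-asymp} of $\lmb_j$ — to first order for $j=0$, to second order for $j=1$ — into the reductions above gives $\frkb_0=4|\log\nu|+(-\frac{14}3+4\log2)+O(\frac1{|\log\nu|})$, $\frkc_0=\frkb_0+O(\frac1{|\log\nu|})$, $\frkc_1=4|\log\nu|+(-\frac23+4\log2)+O(\frac1{|\log\nu|})$, and $\frkb_1=\lmb_1\big(4|\log\nu|-2+4\log2\big)+2+O(\frac1{|\log\nu|^2})=-\frac43+\frac{-5/9}{|\log\nu|}+O(\frac1{|\log\nu|^2})$, the surviving $\frac1{|\log\nu|}$-term arising precisely from the product of the second-order coefficient of $\lmb_1$ with the $O(1)$-part $-2+4\log2$ of the model integral. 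For \eqref{eq:OneMoreComp}, the sharp estimates \eqref{eq:EigenFuncEst-3}--\eqref{eq:EigenfuncEst-4} give $\nu\rd_\nu\varphi_j=-\frac1\nu\Lmb_0\Lmb Q_\nu$ up to an error that is negligible for $\ell_j$, so the first component of $\nu\rd_\nu(\bm\varphi_0-\bm\varphi_1)$ contributes only $O(\frac1{|\log\nu|})$ and its second component equals $-(\lmb_0-\lmb_1)\frac1\nu\Lmb_0\Lmb Q_\nu$ modulo such errors; applying $\ell_j$ and using $\langle\frac1\nu\Lmb_0\Lmb Q_\nu,g_j\varphi_j\rangle=2+O(\nu^2|\log\nu|)$ (the weight $\frac1\nu\Lmb_0\Lmb Q_\nu$ concentrates at $\rho\sim\nu$, where $\varphi_j\approx\frac1\nu\Lmb Q_\nu$) and $\lmb_0-\lmb_1=1+O(\frac1{|\log\nu|})$ yields $-2-2+O(\frac1{|\log\nu|})$. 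Finally the derivative bounds \eqref{eq:nudnu-frk} follow by differentiating the integral representations, using $\nu\rd_\nu(\frac1\nu\Lmb Q_\nu)=-\frac1\nu\Lmb_0\Lmb Q_\nu$, $\nu\rd_\nu g_j=(\nu\rd_\nu\lmb_j)\log(1-\rho^2)\,g_j$, $|\nu\rd_\nu\lmb_j|\aleq\frac1{|\log\nu|^2}$, and the $\nu\rd_\nu$-versions of \eqref{eq:EigenFuncEst-2}--\eqref{eq:EigenfuncEst-4} and of \eqref{eq:Transv}; here only $O(1)$ precision is required, so crude bounds suffice.

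The main obstacle is the bookkeeping in the assembly step — pinning down exactly which terms feed into the $O(1)$-constants (and, for $\frkb_1$, into the $O(\frac1{|\log\nu|})$-term) and checking that all discarded pieces are genuinely of the claimed order. What makes this manageable rather than a morass of competing $O(1)$-contributions is the $\frac1{|\log\nu|}$-gain in \eqref{eq:EigenFuncEst-2}: without it the bulk discrepancy $\varphi_j-\frac1\nu\Lmb Q_\nu$ would be $O(\rho)$ instead of $O(\rho/|\log\nu|)$, and each of several profile-correction integrals (involving $T_1$, $S_1$, $U_1$, $U_\infty$) would have to be evaluated separately. A secondary point demanding care throughout is the integrability of $g_j$ at the light cone: for $j=1$ the weight $(1-\rho^2)^{\lmb_1-\frac12}$ is singular at $\rho=1$, and one must keep $\lmb_1>-\tfrac12$ in view so that all boundary contributions remain finite.
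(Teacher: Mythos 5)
Your proposal is correct and follows essentially the same route as the paper: reduce everything to the two model integrals $\langle\frac{1}{\nu}\Lmb Q_{\nu},g_{j}\frac{1}{\nu}\Lmb Q_{\nu}\rangle$ and $\langle\frac{1}{\nu}\Lmb_{0}\Lmb Q_{\nu},g_{j}\frac{1}{\nu}\Lmb Q_{\nu}\rangle$ via the $\frac{1}{|\log\nu|}$-gain in \eqref{eq:EigenFuncEst-2} and the vanishing of the off-diagonal $\ell_{j}(\bm{\varphi}_{k})$, evaluate them with the digamma identity to get $4|\log\nu|-2+2(\psi(1)-\psi(\lmb_{j}+\tfrac{1}{2}))$ and $2$, and assemble with \eqref{eq:lmb-hat-asymp}; the treatment of \eqref{eq:OneMoreComp} via $\nu\rd_{\nu}\varphi_{j}\approx-\frac{1}{\nu}\Lmb_{0}\Lmb Q_{\nu}$ and the derivative bounds are likewise the paper's argument. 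One cosmetic slip: the $\frac{-5/9}{|\log\nu|}$ term of $\frkb_{1}$ comes from the \emph{sum} of (second-order coefficient of $\lmb_{1}$)$\times 4|\log\nu|$ and (first-order coefficient)$\times(-2+4\log2)$, not from the single product you name, though your displayed formula $\lmb_{1}(4|\log\nu|-2+4\log2)+2$ evaluates correctly.
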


\begin{proof}
\textbf{Step 1.} Computations of $\langle\frac{1}{\nu}\Lmb Q_{\nu},g_{j}\frac{1}{\nu}\Lmb Q_{\nu}\rangle$
and $\langle\frac{1}{\nu}\Lmb_{0}\Lmb Q_{\nu},g_{j}\frac{1}{\nu}\Lmb Q_{\nu}\rangle$.

As seen in the proof of \eqref{eq:Transv}, the key quantities are
$\langle\frac{1}{\nu}\Lmb Q_{\nu},g_{j}\frac{1}{\nu}\Lmb Q_{\nu}\rangle$
and $\langle\frac{1}{\nu}\Lmb_{0}\Lmb Q_{\nu},g_{j}\frac{1}{\nu}\Lmb Q_{\nu}\rangle$.
In this step, we claim 
\begin{align}
\Big\langle\frac{1}{\nu}\Lmb Q_{\nu},g_{j}\frac{1}{\nu}\Lmb Q_{\nu}\Big\rangle & =4|\log\nu|-2+2(\psi(1)-\psi(\lmb_{j}+\tfrac{1}{2}))+O(\nu^{2}|\log\nu|),\label{eq:ExactComp-1}\\
\Big\langle\frac{1}{\nu}\Lmb_{0}\Lmb Q_{\nu},g_{j}\frac{1}{\nu}\Lmb Q_{\nu}\Big\rangle & =2+O(\nu^{2}|\log\nu|).\label{eq:ExactComp-2}
\end{align}

We first show \eqref{eq:ExactComp-1}. We write 
\begin{align*}
\Big\langle\frac{1}{\nu}\Lmb Q_{\nu},g_{j}\frac{1}{\nu}\Lmb Q_{\nu}\Big\rangle & =\Big\langle\frac{1}{\nu}\Lmb Q_{\nu},\chf_{(0,1]}\frac{1}{\nu}\Lmb Q_{\nu}\Big\rangle+\Big\langle\frac{1}{\nu}\Lmb Q_{\nu},(g_{j}-\chf_{(0,1]})\frac{1}{\nu}\Lmb Q_{\nu}\Big\rangle\\
 & =\langle\Lmb Q,\chf_{(0,\nu^{-1}]}\Lmb Q\rangle+\Big\langle\frac{2}{\rho},(g_{j}-\chf_{(0,1]})\frac{2}{\rho}\Big\rangle+O(\nu^{2}|\log\nu|),
\end{align*}
where in the last equality we used $|g_{j}-\chf_{(0,1]}|\aleq\rho^{2}(1-\rho^{2})^{-\frac{3}{4}}$.
We then compute 
\begin{align*}
\langle\Lmb Q,\chf_{(0,\nu^{-1}]}\Lmb Q\rangle=\int_{0}^{\frac{1}{\nu}}\frac{4y^{3}}{(1+y^{2})^{2}}dy & =2\log\Big(1+\frac{1}{\nu^{2}}\Big)-\frac{2}{1+\nu^{2}}\\
 & =4|\log\nu|-2+O(\nu^{2})
\end{align*}
and 
\begin{align*}
\Big\langle\frac{2}{\rho},(g_{j}-\chf_{(0,1]})\frac{2}{\rho}\Big\rangle & =4\int_{0}^{1}((1-\rho^{2})^{\lmb_{j}-\frac{1}{2}}-1)\frac{d\rho}{\rho}\\
 & =2\int_{0}^{1}(z^{\lmb_{j}-\frac{1}{2}}-1)\frac{dz}{1-z}=2(\psi(1)-\psi(\lmb_{j}+\tfrac{1}{2})),
\end{align*}
where in the last equality we used the series expansion $\frac{1}{1-z}=1+z+z^{2}+\cdots$
and \eqref{eq:digamma-props}. This completes the proof of \eqref{eq:ExactComp-1}.

For the proof of \eqref{eq:ExactComp-2}, we can use the improved
spatial decay $|\Lmb_{0}\Lmb Q|\aleq\langle y\rangle^{-3}$ to have
\begin{align*}
 & \Big\langle\frac{1}{\nu}\Lmb_{0}\Lmb Q_{\nu},g_{j}\frac{1}{\nu}\Lmb Q_{\nu}\Big\rangle\\
 & =\Big\langle\frac{1}{\nu}\Lmb_{0}\Lmb Q_{\nu},\chf_{(0,1]}\frac{1}{\nu}\Lmb Q_{\nu}\Big\rangle+\Big\langle\frac{1}{\nu}\Lmb_{0}\Lmb Q_{\nu},(g_{j}-\chf_{(0,1]})\frac{1}{\nu}\Lmb Q_{\nu}\Big\rangle\\
 & =\langle\Lmb_{0}\Lmb Q,\chf_{(0,\nu^{-1}]}\Lmb Q\rangle+O(\nu^{2}|\log\nu|)\\
 & =2+O(\nu^{2}|\log\nu|).
\end{align*}
This completes the proof of \eqref{eq:ExactComp-2}.

\textbf{Step 2.} Expansions for $\langle\frac{1}{\nu}\Lmb Q_{\nu},g_{j}\varphi_{j}\rangle$
and $\langle\frac{1}{\nu}\Lmb_{0}\Lmb Q_{\nu},g_{j}\varphi_{j}\rangle$.

In this step, we claim 
\begin{align}
\Big\langle\frac{1}{\nu}\Lmb Q_{\nu},g_{j}\varphi_{j}\Big\rangle & =4|\log\nu|+(4j-6+4\log2)+O\Big(\frac{1}{|\log\nu|}\Big),\label{eq:ExactComp-3}\\
\Big\langle\frac{1}{\nu}\Lmb_{0}\Lmb Q_{\nu},g_{j}\varphi_{j}\Big\rangle & =2+O(\nu^{2}|\log\nu|).\label{eq:ExactComp-4}
\end{align}
To show \eqref{eq:ExactComp-3}, we use \eqref{eq:EigenFuncEst-2}
and $\frac{1}{\nu}\Lmb Q_{\nu}\aleq\frac{1}{\rho}$ to have 
\[
\Big\langle\frac{1}{\nu}\Lmb Q_{\nu},g_{j}\varphi_{j}\Big\rangle=\Big\langle\frac{1}{\nu}\Lmb Q_{\nu},g_{j}\varphi_{j}\Big\rangle+O\Big(\frac{1}{|\log\nu|}\Big).
\]
Applying \eqref{eq:ExactComp-1} and \eqref{eq:digamma-values} to
the above yields \eqref{eq:ExactComp-3}. To show \eqref{eq:ExactComp-4},
we use \eqref{eq:EigenFuncEst-2} and $\chf_{(0,\nu]}|\frac{1}{\nu}\Lmb_{0}\Lmb Q_{\nu}|\aleq\chf_{(0,\nu]}\frac{\rho}{\nu^{2}}+\chf_{[\nu,1]}\frac{\nu^{2}}{\rho^{3}}$
to have 
\[
\Big\langle\frac{1}{\nu}\Lmb_{0}\Lmb Q_{\nu},g_{j}\varphi_{j}\Big\rangle=\Big\langle\frac{1}{\nu}\Lmb_{0}\Lmb Q_{\nu},g_{j}\frac{1}{\nu}\Lmb Q_{\nu}\Big\rangle+O(\nu^{2}|\log\nu|).
\]
Applying \eqref{eq:ExactComp-2} to the above yields \eqref{eq:ExactComp-4}.

\textbf{Step 3.} Proof of \eqref{eq:frkb-asymp} and \eqref{eq:frkc-asymp}.

For \eqref{eq:frkb-asymp}, we note that 
\begin{equation}
\frkb_{j}=\ell_{j}\Big(\frac{1}{\nu}\bm{\Lmb Q}_{\nu}\Big)=\lmb_{j}\Big\langle\frac{1}{\nu}\Lmb Q_{\nu},g_{j}\varphi_{j}\Big\rangle+\Big\langle\frac{1}{\nu}\Lmb_{0}\Lmb Q_{\nu},g_{j}\varphi_{j}\Big\rangle.\label{eq:ExactComp-5}
\end{equation}
Substituting \eqref{eq:lmb-hat-asymp} and \eqref{eq:ExactComp-3}-\eqref{eq:ExactComp-4}
into the above completes the proof of \eqref{eq:frkb-asymp} (Notice
that $|\lmb_{1}|\aleq\frac{1}{|\log\nu|}$ is used to have the improved
$O(\frac{1}{|\log\nu|^{2}})$ error for $\frkb_{1}$).

For \eqref{eq:frkc-asymp}, we recall that 
\[
\frkc_{j}=\frac{1}{2}\Big\{\ell_{j}(\bm{\varphi}_{0}-\bm{\varphi}_{1})+\big\langle\frac{1}{\nu}\Lmb Q_{\nu},g_{j}\varphi_{j}\big\rangle\Big\}.
\]
Using $\ell_{j}(\bm{\varphi}_{k})=0$ for $j\neq k$, we have 
\begin{align}
\frkc_{j} & =\frac{1}{2}\Big\{(-1)^{j}\ell_{j}(\bm{\varphi}_{j})+\big\langle\frac{1}{\nu}\Lmb Q_{\nu},g_{j}\varphi_{j}\big\rangle\Big\}\nonumber \\
 & =\frac{1}{2}\big\langle(-1)^{j}(2\lmb_{j}-1+2\Lmb_{0})\varphi_{j}+\frac{1}{\nu}\Lmb Q_{\nu},g_{j}\varphi_{j}\big\rangle.\label{eq:ExactComp-6}
\end{align}
We rewrite this as 
\begin{align*}
\frkc_{j} & =\frac{1}{2}\Big\langle\big(1+(-1)^{j}(2\lmb_{j}-1)\big)\frac{1}{\nu}\Lmb Q_{\nu},g_{j}\varphi_{j}\Big\rangle+(-1)^{j}\Big\langle\frac{1}{\nu}\Lmb_{0}\Lmb Q_{\nu},g_{j}\varphi_{j}\Big\rangle\\
 & \peq+\frac{(-1)^{j}}{2}\Big\langle[2\lmb_{j}-1+2\Lmb_{0}]\Big(\varphi_{j}-\frac{1}{\nu}\Lmb Q_{\nu}\Big),g_{j}\varphi_{j}\Big\rangle.
\end{align*}
The last term of the above display is of size $O(\frac{1}{|\log\nu|})$,
thanks to \eqref{eq:EigenFuncEst-2} and 
\begin{equation}
|g_{j}\varphi_{j}|\aleq\chf_{(0,\frac{1}{2}]}\rho^{-1}+\chf_{[\frac{1}{2},1]}\cdot(1-\rho)^{-\frac{3}{4}}.\label{eq:ExactComp-6-2}
\end{equation}
Therefore, we have proved that 
\begin{align*}
\frkc_{0} & =\lmb_{0}\Big\langle\frac{1}{\nu}\Lmb Q_{\nu},g_{0}\varphi_{0}\Big\rangle+\Big\langle\frac{1}{\nu}\Lmb_{0}\Lmb Q_{\nu},g_{0}\varphi_{0}\Big\rangle+O\Big(\frac{1}{|\log\nu|}\Big),\\
\frkc_{1} & =(1-\lmb_{1})\Big\langle\frac{1}{\nu}\Lmb Q_{\nu},g_{1}\varphi_{1}\Big\rangle-\Big\langle\frac{1}{\nu}\Lmb_{0}\Lmb Q_{\nu},g_{1}\varphi_{1}\Big\rangle+O\Big(\frac{1}{|\log\nu|}\Big).
\end{align*}
Substituting \eqref{eq:lmb-hat-asymp} and \eqref{eq:ExactComp-3}-\eqref{eq:ExactComp-4}
into the above completes the proof of \eqref{eq:frkc-asymp}.

\textbf{Step 4.} Proof of \eqref{eq:nudnu-frk}.

Let us first note that 
\[
\nu\rd_{\nu}g_{j}=(\nu\rd_{\nu}\lmb_{j})\chf_{(0,1]}\cdot(1-\rho^{2})^{\lmb_{j}-\frac{1}{2}}\log(1-\rho^{2}).
\]
Using \eqref{eq:nudnu-eig-val}, we have the following estimate: 
\begin{equation}
|(\nu\rd_{\nu}\lmb_{j})g_{j}|+|\nu\rd_{\nu}g_{j}|\aleq\frac{1}{|\log\nu|^{2}}\chf_{(0,1]}\cdot(1-\rho)^{-\frac{3}{4}}.\label{eq:ExactComp-9}
\end{equation}

We show \eqref{eq:nudnu-frk} for $\frkb_{j}$. Taking $\nu\rd_{\nu}$
to \eqref{eq:ExactComp-5}, we have 
\begin{align}
\nu\rd_{\nu}\frkb_{j}=(\nu\rd_{\nu}\lmb_{j})\Big\langle\frac{1}{\nu}\Lmb Q_{\nu},g_{j}\varphi_{j}\Big\rangle & +\Big\langle(\lmb_{j}+\Lmb_{0})\frac{1}{\nu}\Lmb Q_{\nu},(\nu\rd_{\nu}g_{j})\varphi_{j}\Big\rangle\label{eq:ExactComp-7}\\
 & \peq+\Big\langle(\lmb_{j}+\Lmb_{0})\frac{1}{\nu}\Lmb Q_{\nu},g_{j}(\nu\rd_{\nu}\varphi_{j})\Big\rangle.\nonumber 
\end{align}
By \eqref{eq:ExactComp-9} and \eqref{eq:EigenFuncEst-1}, the first
line of RHS\eqref{eq:ExactComp-7} is of size $O(\frac{1}{|\log\nu|})$.
By \eqref{eq:EigenFuncEst-3}, the second line of RHS\eqref{eq:ExactComp-7}
is of size $O(1)$. This shows \eqref{eq:nudnu-frk} for $\frkb_{j}$.

We turn to show \eqref{eq:nudnu-frk} for $\frkc_{j}$. Taking $\nu\rd_{\nu}$
to \eqref{eq:ExactComp-6}, we have 
\begin{align}
\nu\rd_{\nu}\frkc_{j} & =(-1)^{j}\big\langle(\nu\rd_{\nu}\lmb_{j})\varphi_{j},g_{j}\varphi_{j}\big\rangle\label{eq:ExactComp-8}\\
 & \peq+\frac{1}{2}\big\langle(-1)^{j}(2\lmb_{j}-1+2\Lmb_{0})\varphi_{j}+\frac{1}{\nu}\Lmb Q_{\nu},(\nu\rd_{\nu}g_{j})\varphi_{j}\big\rangle\nonumber \\
 & \peq+\frac{1}{2}\big\langle(-1)^{j}[2\lmb_{j}-1+2\Lmb_{0}](\nu\rd_{\nu}\varphi_{j})-\frac{1}{\nu}\Lmb_{0}\Lmb Q_{\nu},g_{j}\varphi_{j}\big\rangle\nonumber \\
 & \peq+\frac{1}{2}\big\langle(-1)^{j}(2\lmb_{j}-1+2\Lmb_{0})\varphi_{j}+\frac{1}{\nu}\Lmb Q_{\nu},g_{j}(\nu\rd_{\nu}\varphi_{j})\big\rangle.\nonumber 
\end{align}
By \eqref{eq:ExactComp-9} and \eqref{eq:EigenFuncEst-1}, the first
two lines of RHS\eqref{eq:ExactComp-8} are of size $O(\frac{1}{|\log\nu|})$.
By \eqref{eq:EigenFuncEst-3}, the last two lines of RHS\eqref{eq:ExactComp-8}
are of size $O(1)$. This shows \eqref{eq:nudnu-frk} for $\frkc_{j}$.

\textbf{Step 5.} Proof of \eqref{eq:OneMoreComp}.

To prove \eqref{eq:OneMoreComp}, note that 
\begin{align*}
\text{LHS\eqref{eq:OneMoreComp}} & =\Big\langle(\lmb_{j}+\Lmb_{0})\nu\rd_{\nu}(\varphi_{0}-\varphi_{1})\\
 & \qquad+\nu\rd_{\nu}\big((\lmb_{0}+\Lmb)\varphi_{0}-(\lmb_{1}+\Lmb)\varphi_{1}\big)-\frac{1}{\nu}\Lmb_{0}\Lmb Q_{\nu},g_{j}\varphi_{j}\Big\rangle
\end{align*}
We rearrange this as 
\begin{align}
\text{LHS\eqref{eq:OneMoreComp}} & =\Big\langle-(\lmb_{0}-\lmb_{1}+1)\frac{1}{\nu}\Lmb_{0}\Lmb Q_{\nu},g_{j}\varphi_{j}\Big\rangle\label{eq:decomposition-temp6}\\
 & \peq+\Big\langle[\lmb_{j}+\Lmb_{0}+\Lmb]\nu\rd_{\nu}(\varphi_{0}-\varphi_{1}),g_{j}\varphi_{j}\Big\rangle\nonumber \\
 & \peq+\Big\langle\lmb_{0}\nu\rd_{\nu}\Big(\varphi_{0}-\frac{1}{\nu}\Lmb Q_{\nu}\Big)-\lmb_{1}\nu\rd_{\nu}\Big(\varphi_{1}-\frac{1}{\nu}\Lmb Q_{\nu}\Big),g_{j}\varphi_{j}\Big\rangle\nonumber \\
 & \peq+\Big\langle(\nu\rd_{\nu}\lmb_{0})\varphi_{0}-(\nu\rd_{\nu}\lmb_{1})\varphi_{1},g_{j}\varphi_{j}\Big\rangle.\nonumber 
\end{align}
By \eqref{eq:ExactComp-4}, the first line of RHS\eqref{eq:decomposition-temp6}
becomes 
\[
=-(\lmb_{0}-\lmb_{1}+1)\Big\{2+O\Big(\frac{1}{|\log\nu|}\Big)\Big\}=-4+O\Big(\frac{1}{|\log\nu|}\Big).
\]
The remaining terms are considered as errors. Indeed, using \eqref{eq:ExactComp-6-2}
and \eqref{eq:EigenfuncEst-4}, the second and third lines of RHS\eqref{eq:decomposition-temp6}
are of size $O(\frac{1}{|\log\nu|^{2}})$. Using \eqref{eq:nudnu-eig-val}
and \eqref{eq:EigenFuncEst-1}, the last line of RHS\eqref{eq:decomposition-temp6}
is of size $O(\frac{1}{|\log\nu|})$. This completes the proof of
\eqref{eq:OneMoreComp}. The proof is complete.
\end{proof}

\section{\label{sec:Blow-up-analysis}Blow-up analysis}

In this section, we prove Theorem~\ref{thm:MainThm}. We will perform
modulation analysis upon the result of Raphaël--Rodnianski \cite{RaphaelRodnianski2012Publ.Math.}.
In particular, we directly work with a finite-time blow-up solution
$\bm{u}$ as in Theorem~\ref{thm:RR-blow-up-intro}, whose construction
is established in \cite{RaphaelRodnianski2012Publ.Math.}. We then
combine this a priori information on $\bm{u}$ and the linear analysis
of the operator $\mathbf{M}_{\nu}$ in Sections \ref{sec:First-two-eigenpairs}
and \ref{sec:Formal-invariant-subspace} to derive refined modulation
equations, and hence the sharp blow-up rate in Theorem~\ref{thm:MainThm}.

We begin by recalling Raphaël--Rodnianski blow-up solutions (Section~\ref{subsec:RR-blow-up-sol}).
In Section~\ref{subsec:Decomposition-of-solutions}, we introduce
a new decomposition 
\[
\bm{v}=\bm{P}+\bm{\eps},
\]
where $\bm{v}$ is the renormalized solution in the self-similar coordinates
and $\bm{P}(\nu,b;\rho)$ is a new modified profile, which is carefully
chosen to resemble the modified profiles in \cite{RaphaelRodnianski2012Publ.Math.}.
In Section~\ref{subsec:Modulation-estimates}, we test the evolution
equation for $\bm{\eps}$ against $\ell_{j}$ to obtain refined modulation
equations for $\nu$ and $b$. These two subsections are the points
where all the properties of $\lmb_{j},\varphi_{j},\ell_{j}$ (Propositions
\ref{prop:FirstTwoEigenpairs} and \ref{prop:formal-inv-subsp-decomp})
as well as the explicit computations (Lemma~\ref{lem:conv-comp-for-mod-eqn})
gather. Finally, in Section~\ref{subsec:Proof-of-Theorem}, we integrate
the resulting modulation equations to finish the proof of Theorem~\ref{thm:MainThm}.

\subsection{\label{subsec:RR-blow-up-sol}Raphaël--Rodnianski blow-up solutions}

In this subsection, we recall refined properties of Raphaël--Rodnianski
blow-up solutions constructed in \cite{RaphaelRodnianski2012Publ.Math.}.
The readers familiar with it may assume Corollary~\ref{cor:RR-blow-up-self-similar}
below and jump to Section~\ref{subsec:Decomposition-of-solutions}.

Raphaël--Rodnianski blow-up solutions can be roughly described as
follows. One decomposes a solution $u$ to \eqref{eq:WM} as 
\[
u(t,r)=P^{\RR}(b(t);\frac{r}{\lmb(t)})+w^{\RR}(t,r),
\]
where $b=-\lmb_{t}$, $P^{\RR}(b;\cdot)$ is some modified profile
satisfying $P^{\RR}(0;\cdot)=Q$, and $w^{\RR}$ is an error (or,
radiative) part of $u$. At the initial time $t=0$, one assumes that
$w^{\RR}(0)$ is sufficiently small and $u_{t}(0)\approx\frac{b}{\lmb}\Lmb Q_{\lmb}$
so that $u$ contracts forwards in time.

The modified profile $P^{\RR}$ almost mimics the profile of formal
self-similar solutions to \eqref{eq:WM} (i.e., the solution to \eqref{eq:WM}
with $b(t)\equiv b$ constant), but it \emph{is different} at the
self-similar scale $\frac{r}{\lmb}\sim b^{-1}$; the profile $P^{\RR}$
as well as the formal $b_{t}$-equation 
\begin{equation}
\frac{b_{t}}{\lmb}+\frac{b^{2}}{2|\log b|}\approx0\label{eq:formal-bt-eq}
\end{equation}
are derived so that the profile $P^{\RR}(b;y)$ sufficiently degenerates
at the self-similar scale $y\sim b^{-1}$. This is the \emph{tail
computation} developed through the seminal works \cite{RaphaelRodnianski2012Publ.Math.,MerleRaphaelRodnianski2013InventMath,MerleRaphaelRodnianski2015CambJMath}.

On the other hand, the radiative part $w^{\RR}$ is controlled forwards
in time with the help of the \emph{repulsivity} of some conjugated
linearized operator. In more detail, let us begin with the \emph{self-dual
factorization} of $H_{\lmb}$: 
\[
H_{\lmb}=A_{\lmb}^{\ast}A_{\lmb},
\]
where $A_{\lmb}$ is a first-order differential operator 
\[
A_{\lmb}=\rd_{r}-\Big(\frac{1-(r/\lmb)^{2}}{1+(r/\lmb)^{2}}\Big)\frac{1}{r}.
\]
Then, the super-symmetric conjugate of $H_{\lmb}$ defined by 
\[
\td H_{\lmb}\coloneqq A_{\lmb}A_{\lmb}^{\ast}=-\rd_{rr}-\frac{1}{r}\rd_{r}+\frac{\td V_{\lmb}}{r^{2}},\quad\text{where}\quad\td V=\frac{4}{1+y^{2}},
\]
is nonnegative and \emph{repulsive} \cite{RodnianskiSterbenz2010Ann.Math.}:
\[
\td V_{\lmb}\geq0,\quad\rd_{r}\td V_{\lmb}\leq0,\quad\rd_{t}\td V_{\lmb}\leq0,
\]
where the last one is guaranteed in the blow-up regime $b=-\lmb_{t}>0$.
The conjugated linearized operator $\td H_{\lmb}$ naturally arises
after conjugating $A_{\lmb}$ to the equation for $w^{\RR}$. The
repulsivity enables certain Morawetz-type monotonicity for $A_{\lmb}w^{\RR}$
and $\rd_{t}(A_{\lmb}w^{\RR})$ and hence the radiative part $w^{\RR}$
can be controlled forwards in time; see \cite{RodnianskiSterbenz2010Ann.Math.,RaphaelRodnianski2012Publ.Math.}
for more details. After a careful bootstrap argument, one in particular
justifies the formal dynamics $\lmb_{t}=-b$ and \eqref{eq:formal-bt-eq},
which lead to the blow-up rate \eqref{eq:RR-blow-up-rate-intro}.

Let us simply write $A=A_{1}$, $\td H=\td H_{1}$, and so on. From
now on, we recall the precise definition of the modified profile in
\cite{RaphaelRodnianski2012Publ.Math.}. For $0<b\ll1$, this modified
profile is defined in the following form 
\begin{equation}
P^{\RR}(b;y)\coloneqq(1-\chi_{B_{1}}(y))\pi+\chi_{B_{1}}(y)(Q(y)+b^{2}T^{\RR}(b;y)),\label{eq:def-PRR}
\end{equation}
where $B_{1}\coloneqq|\log b|/b$ is a localization radius and $T^{\RR}(b;y)$
is some corrector to be defined in \eqref{eq:def-TRR} below. Define
\begin{equation}
\td T^{\RR}(b;y)\coloneqq J_{1}(y)\int_{1}^{y}g(b;y')J_{2}(y')y'dy'-J_{2}(y)\int_{0}^{y}g(b;y')J_{1}(y')y'dy',\label{eq:def-T-tilde-RR}
\end{equation}
where $J_{1}$ and $J_{2}$ are as in \eqref{eq:def-J1J2}, and 
\begin{align}
g & \coloneqq-\Lmb_{0}\Lmb Q+c_{b}\Lmb Q\chi_{B_{0}/4},\label{eq:def-g}\\
c_{b} & \coloneqq\frac{\langle\Lmb_{0}\Lmb Q,\Lmb Q\rangle}{\langle\chi_{\frac{B_{0}}{4}}\Lmb Q,\Lmb Q\rangle}=\frac{1}{2|\log b|}+O(\frac{1}{|\log b|^{2}}),\label{eq:def-cb}\\
B_{0} & \coloneqq1/(b\sqrt{3\tint{}{}y\chi(y)dy}),\label{eq:def-B0}
\end{align}
so that $H\td T^{\RR}=g$. We note that $\langle g,J_{1}\rangle=0$
so the $\int_{0}^{y}$-integral in \eqref{eq:def-T-tilde-RR} can
be replaced by the $-\int_{y}^{\infty}$-integral if necessary. Next,
we define $T^{\RR}=T^{\RR}(b;y)$ by 
\begin{equation}
T^{\RR}\coloneqq\td T^{\RR}-\frac{\langle\td T^{\RR},\chi_{M}\Lmb Q\rangle}{\langle\Lmb Q,\chi_{M}\Lmb Q\rangle}\Lmb Q,\label{eq:def-TRR}
\end{equation}
where $M>1$ is some fixed large constant (whose role is explained
in Remark \ref{rem:Remark-M} below) so that $T^{\RR}$ additionally
satisfies the orthogonality condition $\langle T^{\RR},\chi_{M}\Lmb Q\rangle=0$.
Using our notation \eqref{eq:def-HQ-inv}, we may express $T^{\RR}$
in an alternative form: 
\begin{equation}
T^{\RR}=H^{-1}g-c_{M,b}\Lmb Q,\quad\text{where}\quad c_{M,b}\coloneqq\frac{\langle H^{-1}g,\chi_{M}\Lmb Q\rangle}{\langle\Lmb Q,\chi_{M}\Lmb Q\rangle}.\label{eq:TRR-alt-def}
\end{equation}
With this $T^{\RR}$, the modified profile $P^{\RR}$ is defined by
\eqref{eq:def-PRR}. We are now ready to state refined properties
of the Raphaël--Rodnianski blow-up solutions of Theorem~\ref{thm:RR-blow-up-intro}.
\begin{prop}[Raphaël--Rodnianski stable blow-up \cite{RaphaelRodnianski2012Publ.Math.}]
\label{prop:RR-blow-up-original}There exists an open (initial data)
set $\bm{\calO}$ in $\bm{\calH}_{Q}^{2}$ (see \eqref{eq:def-H2Q})
with the following properties. For any $\bm{u}_{0}=(u_{0},\dot{u}_{0})\in\bm{\calO}$,
its forward-in-time solution $\bm{u}=(u,\dot{u})$ to \eqref{eq:WM}
blows up in finite time $T=T(\bm{u})\in(0,+\infty)$ with the following
descriptions:
\begin{itemize}
\item (Decomposition of the solution) There exist $0<b^{\ast}\ll1$ and
$C^{1}$ functions $(\wh{\lmb}(t),\wh b(t)):[0,T)\to(0,\infty)\times(0,b^{\ast})$
such that the solution $\bm{u}$ admits the decomposition 
\begin{equation}
u(t,r)=P^{\RR}(\wh b(t);\frac{r}{\wh{\lmb}(t)})+w^{\RR}(t,r)\label{eq:RR-decomp}
\end{equation}
with
\begin{align}
\langle w^{\RR},(\chi_{M}\Lmb Q)_{\wh{\lmb}}\rangle & =0,\label{eq:RR-orthog}\\
\wh{\lmb}_{t}+\wh b & =0.\label{eq:RR-b-def}
\end{align}
\item (Estimates for the modulation parameters $\wh{\lmb}$ and $\wh b$)
We have 
\begin{equation}
\wh{\lmb}|\wh b_{t}|\aleq\frac{\wh b^{2}}{|\log\wh b|}.\label{eq:b_t-RR-est}
\end{equation}
Moreover, as $t\to T^{-}$, we have 
\begin{gather}
\Big|\frac{\wh{\lmb}(t)}{T-t}-\wh b(t)\Big|\aleq\frac{\wh b(t)}{|\log\wh b(t)|},\label{eq:lmb-b-rel}\\
\wh{\lmb}(t)=(T-t)e^{-\sqrt{|\log(T-t)|}+O(1)}.\label{eq:lmb-RR-asymp}
\end{gather}
\item (Smallness of $w^{\RR}$) Let $W^{\RR}(t)\coloneqq A_{\lmb(t)}w^{\RR}(t)$.
We have 
\begin{align}
\|w^{\RR}\|_{\dot{H}_{1}^{1}}+\|\rd_{t}w^{\RR}\|_{L^{2}} & =o_{b^{\ast}\to0}(1),\label{eq:wRR-est-1}\\
\wh{\lmb}^{2}\int\Big(|\rd_{t}W^{\RR}|^{2}+|\rd_{r}W^{\RR}|^{2}+\frac{\td V_{\wh{\lmb}}}{r^{2}}|W^{\RR}|^{2}\Big) & \aleq\wh b^{4},\label{eq:wRR-est-2}\\
\wh{\lmb}^{2}\int\chf_{(0,4\wh{\lmb}/\wh b]}\Big(|\rd_{t}W^{\RR}|^{2}+|\rd_{r}W^{\RR}|^{2}+\frac{\td V_{\wh{\lmb}}}{r^{2}}|W^{\RR}|^{2}\Big) & \aleq\frac{\wh b^{4}}{|\log\wh b|^{2}},\label{eq:wRR-est-3}
\end{align}
\end{itemize}
\end{prop}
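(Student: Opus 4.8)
The statement is, in essence, a transcription into the present notation of the main results of \cite{RaphaelRodnianski2012Publ.Math.} specialized to $k=1$, so the plan is to match conventions rather than to rerun the construction. The open set $\bm{\calO}\subset\bm{\calH}_{Q}^{2}$ is the neighborhood of the soliton manifold cut out by the usual smallness of the radiative part together with a sign condition forcing the solution to move in the contracting direction; this is the trapped set of \cite{RaphaelRodnianski2012Publ.Math.}. The decomposition \eqref{eq:RR-decomp} is obtained by the standard modulation argument: the scale $\wh{\lmb}(t)$ is determined by the single orthogonality condition \eqref{eq:RR-orthog} of $w^{\RR}$ against $(\chi_{M}\Lmb Q)_{\wh{\lmb}}$, and $\wh b\coloneqq-\wh{\lmb}_{t}$ is then defined via \eqref{eq:RR-b-def}. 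First I would verify that the modified profile $P^{\RR}$ of \eqref{eq:def-PRR}--\eqref{eq:TRR-alt-def}, with its $-c_{M,b}\Lmb Q$ renormalization built in so that $\langle T^{\RR},\chi_{M}\Lmb Q\rangle=0$, agrees up to this inessential renormalization (which does not change the self-similar tail of the profile, hence not the modulation law) with the profile $Q_{b}$ produced by the tail computation of \cite{RaphaelRodnianski2012Publ.Math.}; throughout I would keep explicit track of the fixed constant $M$, since the $\ell_{j}$-based decomposition of Section~\ref{subsec:Decomposition-of-solutions} has to be reconciled with the one here.

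With the set-up fixed, the modulation bound \eqref{eq:b_t-RR-est} is the refined $\wh b_{t}$-law of \cite{RaphaelRodnianski2012Publ.Math.} (which in their normalization reads $\wh{\lmb}_{t}=-\wh b$, $\wh b_{t}=-\frac{\wh b^{2}}{2\wh{\lmb}|\log\wh b|}(1+O(|\log\wh b|^{-1}))$), and its integration gives the blow-up-rate asymptotics \eqref{eq:lmb-RR-asymp} exactly as in \cite{RaphaelRodnianski2012Publ.Math.}. The compatibility relation \eqref{eq:lmb-b-rel} is not stated verbatim there, so I would prove it in Appendix~\ref{sec:Proof-lmb-b-rel}: passing to the renormalized time $ds/dt=1/\wh{\lmb}$ pins down $\wh b$ as a function of $\wh{\lmb}$ precisely enough that, writing $T-t=\int_{t}^{T}\!ds'=\int_{0}^{\wh{\lmb}(t)}\!\frac{d\mu}{\wh b(\mu)}$ and using that $1/\wh b$ grows sub-polynomially as $\mu\to0$, the integral concentrates near $\mu=\wh{\lmb}(t)$ and equals $\frac{\wh{\lmb}(t)}{\wh b(t)}\big(1+O(|\log\wh b(t)|^{-1})\big)$, which rearranges to \eqref{eq:lmb-b-rel}.

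The radiation bounds \eqref{eq:wRR-est-1}--\eqref{eq:wRR-est-3} are the bootstrap estimates of \cite{RaphaelRodnianski2012Publ.Math.} on $W^{\RR}\coloneqq A_{\wh{\lmb}}w^{\RR}$ and $\rd_{t}W^{\RR}$. The $\dot{H}_{1}^{1}\times L^{2}$ smallness \eqref{eq:wRR-est-1} is inherited from the defining conditions of $\bm{\calO}$; the global bound \eqref{eq:wRR-est-2} and the localized bound \eqref{eq:wRR-est-3} with its $|\log\wh b|^{-2}$ gain inside the cone $r\aleq4\wh{\lmb}/\wh b$ come from conjugating the $w^{\RR}$-equation by $A_{\wh{\lmb}}$, after which the repulsivity $\td V_{\wh{\lmb}}\geq0$, $\rd_{r}\td V_{\wh{\lmb}}\leq0$, $\rd_{t}\td V_{\wh{\lmb}}\leq0$ of $\td H_{\wh{\lmb}}=A_{\wh{\lmb}}A_{\wh{\lmb}}^{\ast}$ powers the Morawetz-type monotonicity; I would transcribe these, translating the energy norm used there into the quadratic form $\int\big(|\rd_{t}W^{\RR}|^{2}+|\rd_{r}W^{\RR}|^{2}+\td V_{\wh{\lmb}}r^{-2}|W^{\RR}|^{2}\big)$ appearing in the statement.

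The main obstacle here is faithful bookkeeping rather than new analysis: one must be scrupulous about which orthogonality convention (hence which $M$) is in force, because the decomposition of Section~\ref{subsec:Decomposition-of-solutions} must be reconciled with this one, and about the precise shape of the localized estimate \eqref{eq:wRR-est-3}, since it is exactly the logarithmic gain there that is consumed when the refined modulation equations are integrated in Section~\ref{subsec:Proof-of-Theorem}. The only genuinely self-contained piece is the elementary ODE computation behind \eqref{eq:lmb-b-rel}, which, although short, requires a careful evaluation of the concentration integral above.
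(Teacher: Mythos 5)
Your proposal is correct, and for everything except \eqref{eq:lmb-b-rel} it coincides with what the paper does: Proposition~\ref{prop:RR-blow-up-original} is quoted from \cite{RaphaelRodnianski2012Publ.Math.} (the decomposition and orthogonality from their Section~5.2, the bounds \eqref{eq:b_t-RR-est} and \eqref{eq:wRR-est-1}--\eqref{eq:wRR-est-3} from their (5.34)--(5.36) and Lemma~6.1, and \eqref{eq:lmb-RR-asymp} is their blow-up rate), so the only item requiring a genuine argument is \eqref{eq:lmb-b-rel}. There your route differs from the paper's. Appendix~\ref{sec:Proof-lmb-b-rel} evaluates $\int_{t}^{T}\wh b^{2}$ in two ways: integrating $\wh b^{2}=-\wh b\wh{\lmb}_{t}$ by parts gives $\wh b\wh{\lmb}$ up to $O(\wh b^{2}/|\log\wh b|)$ errors, while a second integration by parts against $(T-t')$ uses the already-established asymptotics \eqref{eq:lmb-RR-asymp} to bound $\int_{t}^{T}\frac{\wh b}{\wh{\lmb}}(T-t')=O(T-t)$. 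You instead parametrize by $\mu=\wh{\lmb}$ (legitimate since $\wh{\lmb}_{t}=-\wh b<0$) and compute $T-t=\int_{0}^{\wh{\lmb}(t)}d\mu/\wh b(\mu)$ directly; note your ``$\int_{t}^{T}ds'$'' should read ``$\int_{t}^{T}dt'$''. This is valid and arguably cleaner, as it avoids invoking \eqref{eq:lmb-RR-asymp}; however, the one quantitative claim you assert without proof---that $1/\wh b(\mu)$ grows sub-polynomially---is exactly where the work sits. From \eqref{eq:b_t-RR-est} and \eqref{eq:RR-b-def} one gets $|d\log\wh b/d\log\wh{\lmb}|\aleq1/|\log\wh b|$, and this must be combined with the almost-monotonicity of $\wh b$ (the paper's \eqref{eq:apdx-a-1}, also imported from \cite{RaphaelRodnianski2012Publ.Math.}) to keep $|\log\wh b(\mu')|\ageq|\log\wh b(\wh{\lmb}(t))|$ over the whole range $\mu'\in(0,\wh{\lmb}(t)]$; this yields the two-sided comparison $(\mu/\wh{\lmb}(t))^{\eps}\leq\wh b(\mu)/\wh b(\wh{\lmb}(t))\leq(\wh{\lmb}(t)/\mu)^{\eps}$ with $\eps\aleq1/|\log\wh b(t)|$, whence $\int_{0}^{\wh{\lmb}(t)}d\mu/\wh b(\mu)=\frac{\wh{\lmb}(t)}{\wh b(t)}(1+O(\eps))$, which is \eqref{eq:lmb-b-rel}. (The integral does not actually concentrate near $\mu=\wh{\lmb}(t)$; it is spread essentially uniformly, and the point is rather that the integrand is constant up to the factor $(\wh{\lmb}(t)/\mu)^{\pm\eps}$.) With that comparison made explicit, your argument is complete.
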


\begin{rem}
\label{rem:Remark-M}We are implicitly assuming that a large constant
$M$ is fixed, and then there exists small $b^{\ast}=b^{\ast}(M)>0$
satisfying the above statements. The large constant $M$ is fixed
in order to have coercivity estimates (see \eqref{eq:coercivity-1}
and \eqref{eq:coercivity-2} below) and to gain a smallness factor
to close the bootstrap argument in \cite{RaphaelRodnianski2012Publ.Math.}.
\end{rem}

\begin{proof}
All the statements are essentially proved in \cite{RaphaelRodnianski2012Publ.Math.}.
For the decomposition part \eqref{eq:RR-decomp}-\eqref{eq:RR-b-def},
see Section~5.2 of \cite{RaphaelRodnianski2012Publ.Math.}. For \eqref{eq:b_t-RR-est}
and \eqref{eq:wRR-est-1}-\eqref{eq:wRR-est-3}, see (5.34)-(5.36)
and Lemma~6.1 of \cite{RaphaelRodnianski2012Publ.Math.}. Next, \eqref{eq:lmb-RR-asymp}
is the blow-up rate proved in \cite{RaphaelRodnianski2012Publ.Math.}.
Finally, the estimate \eqref{eq:lmb-b-rel} is not mentioned in \cite{RaphaelRodnianski2012Publ.Math.},
but it can be proved in a very similar manner; see Appendix \ref{sec:Proof-lmb-b-rel}
of this paper.
\end{proof}
For our blow-up analysis, (i) we rewrite the above proposition in
the first-order formulation and in terms of the self-similar variables,
and (ii) derive Hardy-type controls on the error part from the bounds
\eqref{eq:wRR-est-2} and \eqref{eq:wRR-est-3}.

For the former purpose, we also define the profile $\dot{P}^{\RR}(b;y)$
by 
\begin{align}
P^{\RR}(b;y) & =(1-\chi_{B_{1}}(y))\pi+\chi_{B_{1}}(y)(Q(y)+b^{2}T^{\RR}(b;y)),\tag{\ref{eq:def-PRR}}\nonumber \\
\dot{P}^{\RR}(b;y) & \coloneqq b\Lmb P^{RR}(b;y),\label{eq:def-Pdot-RR}
\end{align}
and set $\bm{P}^{\RR}$ as a vector $(P^{\RR},\dot{P}^{\RR})$.

For the latter purpose, we recall the adapted function spaces $\dot{\calH}_{1}^{2}$
and $\dot{H}_{1}^{1}$ introduced in Section~\ref{subsec:Main-results}:
\begin{align*}
\|f\|_{\dot{\calH}_{1}^{2}}^{2} & \coloneqq\|\rd_{yy}f\|_{L^{2}}^{2}+\Big\|\frac{1}{y\langle\log y\rangle}|f|_{-1}\Big\|_{L^{2}}^{2},\\
\|g\|_{\dot{H}_{1}^{1}}^{2} & \coloneqq\|\rd_{y}g\|_{L^{2}}^{2}+\Big\|\frac{1}{y}g\Big\|_{L^{2}}^{2}.
\end{align*}
These function spaces were introduced to satisfy the following coercivity
estimates (see \cite[Appendix B]{RaphaelRodnianski2012Publ.Math.}
and also Appendix \ref{sec:Localized-Hardy-inequalities} of this
paper): 
\begin{align}
\|A^{\ast}Af\|_{L^{2}} & \sim_{M}\|f\|_{\dot{\calH}_{1}^{2}},\qquad\forall f\in\dot{\calH}_{1}^{2}\cap\{\chi_{M}\Lmb Q\}^{\perp},\label{eq:coercivity-1}\\
\|Ag\|_{L^{2}} & \sim_{M}\|g\|_{\dot{H}_{1}^{1}},\qquad\forall g\in\dot{H}_{1}^{1}\cap\{\chi_{M}\Lmb Q\}^{\perp},\label{eq:coercivity-2}
\end{align}
provided that $M\gg1$.

Our blow-up analysis will be done in the backward lightcone $\rho\leq1$.
Thus we also define localized versions of the aforementioned norms
\begin{align*}
\|f\|_{(\dot{\calH}_{1}^{2})_{R}}^{2} & \coloneqq\|\chf_{(0,R]}\rd_{yy}f\|_{L^{2}}^{2}+\Big\|\chf_{(0,R]}\frac{1}{y\langle\log y\rangle}|f|_{-1}\Big\|_{L^{2}}^{2},\\
\|g\|_{(\dot{H}_{1}^{1})_{R}}^{2} & \coloneqq\|\chf_{(0,R]}\rd_{y}g\|_{L^{2}}^{2}+\Big\|\chf_{(0,R]}\frac{1}{y}g\Big\|_{L^{2}}^{2},
\end{align*}
as well as the scaled version of the local $\dot{\calH}_{1}^{2}$-norm
(restricted to the region $\rho\leq1$)
\begin{align*}
\|f\|_{(\dot{\calH}_{1}^{2})_{1}^{\nu}}^{2} & \coloneqq\|\chf_{(0,1]}\rd_{\rho\rho}f\|_{L^{2}}^{2}+\Big\|\chf_{(0,1]}\frac{1}{\rho\langle\log(\frac{\rho}{\nu})\rangle}|f|_{-1}\Big\|_{L^{2}}^{2}.
\end{align*}
With these localized norms we still have analogous coercivity estimates
for $R\gg M$: (see Appendix \ref{sec:Localized-Hardy-inequalities}
for the proof) 
\begin{align}
\|\chf_{(0,R]}A^{\ast}Af\|_{L^{2}} & \sim_{M}\|f\|_{(\dot{\calH}_{1}^{2})_{R}},\qquad\forall f\in\dot{\calH}_{1}^{2}\cap\{\chi_{M}\Lmb Q\}^{\perp},\label{eq:loc-coercivity-1}\\
\|\chf_{(0,R]}Ag\|_{L^{2}} & \sim_{M}\|g\|_{(\dot{H}_{1}^{1})_{R}},\qquad\forall g\in\dot{H}_{1}^{1}\cap\{\chi_{M}\Lmb Q\}^{\perp},\label{eq:loc-coercivity-2}
\end{align}
where the implicit constants can be chosen \emph{uniformly in $R$}.
The above coercivity estimates allow us to transfer the controls \eqref{eq:wRR-est-1}-\eqref{eq:wRR-est-3}
to the $\bm{\calH}^{2}$-norm of the remainder. As a result, we have
the following corollary.
\begin{cor}[Raphaël--Rodnianski blow-up solutions in self-similar variables]
\label{cor:RR-blow-up-self-similar}Let $\bm{u}=(u,\dot{u})$ be
a finite-time blow-up solution as in Proposition~\ref{prop:RR-blow-up-original}
and $T\in(0,\infty)$ be its blow-up time. Let $(\tau,\rho)$ be the
self-similar coordinates \eqref{eq:def-self-similar-variables} with
this $T$. Define $\bm{v}=(v,\dot{v})$ by 
\[
u(t,r)=v(\tau,\rho),\qquad\dot{u}(t,r)=\frac{1}{T-t}\dot{v}(\tau,\rho),
\]
and the modulation parameters $\wh{\nu}$ and $\wh b$ by\footnote{Abuse of notation for $\wh b$}
\[
\wh{\nu}(\tau)\coloneqq\frac{\wh{\lmb}(t)}{T-t}\quad\text{and}\quad\wh b(\tau)\coloneqq\wh b(t).
\]
Then, the following hold for all large $\tau$:
\begin{itemize}
\item (Decomposition) $\bm{v}$ admits the decomposition 
\[
\bm{v}(\tau,\rho)=\big[\bm{P}^{\RR}(\wh b(\tau);\cdot)+\bm{\eps}^{\RR}(\tau,\cdot)\big]_{\wh{\nu}(\tau)}(\rho).
\]
\item (Estimates for the modulation parameters $\wh{\nu}$ and $\wh b$)
As $\tau\to\infty$, we have 
\begin{align}
\Big|\frac{\wh{\nu}_{\tau}}{\wh{\nu}}\Big|+\Big|\frac{\wh b_{\tau}}{\wh b}\Big|+\Big|\frac{\wh b}{\wh{\nu}}-1\Big| & \aleq\frac{1}{|\log\wh{\nu}|},\label{eq:nu-b-RR-bound}\\
\wh{\nu}(\tau)\sim\wh b(\tau) & \sim e^{-\sqrt{\tau}}.\label{eq:nu-RR-asymp}
\end{align}
\item (Smallness of $\bm{\eps}^{\RR}$) We have 
\begin{align}
\|\eps^{\RR}\|_{\dot{H}_{1}^{1}}+\|\dot{\eps}^{\RR}\|_{L^{2}} & =o_{b^{\ast}\to0}(1),\label{eq:eRR-est1}\\
\|\eps^{\RR}\|_{\dot{\calH}_{1}^{2}}+\|\dot{\eps}^{\RR}\|_{\dot{H}_{1}^{1}} & \aleq\wh b^{2},\label{eq:eRR-est2}\\
\|\eps^{\RR}\|_{(\dot{\calH}_{1}^{2})_{2/\wh b}}+\|\dot{\eps}^{\RR}\|_{(\dot{H}_{1}^{1})_{2/\wh b}} & \aleq\frac{\wh b^{2}}{|\log\wh b|}.\label{eq:eRR-est3}
\end{align}
\end{itemize}
\end{cor}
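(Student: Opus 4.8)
The plan is to derive Corollary~\ref{cor:RR-blow-up-self-similar} as a routine change of variables applied to Proposition~\ref{prop:RR-blow-up-original}, together with the coercivity estimates \eqref{eq:coercivity-1}--\eqref{eq:loc-coercivity-2}. The only content is bookkeeping: translating the first-order/self-similar dictionary and converting the $A_\lmb$-level estimates on $W^\RR = A_{\wh\lmb}w^\RR$ into the $\bm{\calH}^2$-level estimates on $\bm{\eps}^\RR$.

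First I would set up the decomposition. Starting from \eqref{eq:RR-decomp}, $u(t,r) = P^\RR(\wh b(t);r/\wh\lmb(t)) + w^\RR(t,r)$, I write $\rho = r/(T-t)$, $y = r/\wh\lmb = \rho/\wh\nu$ with $\wh\nu = \wh\lmb/(T-t)$, so that $v(\tau,\rho) = u(t,r) = P^\RR(\wh b;\rho/\wh\nu) + w^\RR$ and the $y$-variable profile $P^\RR$ rescales to $[P^\RR]_{\wh\nu}(\rho)$. For the second component, $\dot{v} = (T-t)\dot u = (T-t)\rd_t u$; using $\rd_t u = -\wh\lmb^{-1}\wh\lmb_t \Lmb_y[\cdots] + (\text{time derivatives at fixed }y) = \wh\lmb^{-1}\wh b\,\Lmb(\text{profile}) + \cdots$ and the definition $\dot P^\RR(b;y) = b\Lmb P^\RR(b;y)$ from \eqref{eq:def-Pdot-RR}, one obtains $\bm{v} = [\bm{P}^\RR(\wh b;\cdot) + \bm{\eps}^\RR(\tau,\cdot)]_{\wh\nu}$ with $\eps^\RR = w^\RR$ (in $y$-variable) and $\dot\eps^\RR$ collecting $(T-t)\wh\lmb^{-1}\rd_t w^\RR$ plus the lower-order profile-modulation terms $\wh b_t$-corrections, all of which are absorbed into the smallness bounds below. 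I would be slightly careful that $\dot P^\RR$ is \emph{defined} to be $b\Lmb P^\RR$ rather than derived, so the mismatch between $b\Lmb P^\RR$ and the true $\rd_t$-generated term $\sim \wh b\Lmb P^\RR + \wh b_t \rd_b P^\RR$ is an error of size $O(\wh b|\wh b_t|\cdot\|\rd_b P^\RR\|)$, which by \eqref{eq:b_t-RR-est} is $O(\wh b^4/|\log\wh b|)$ in the relevant norm and hence harmless.

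Next, the modulation estimates \eqref{eq:nu-b-RR-bound}--\eqref{eq:nu-RR-asymp}. From $\wh\lmb_t = -\wh b$ and $\wh\nu = \wh\lmb/(T-t)$ one computes $\wh\nu_\tau/\wh\nu = -(T-t)\rd_t\log\wh\nu = -(T-t)(\wh\lmb_t/\wh\lmb + 1/(T-t)) = (T-t)\wh b/\wh\lmb - 1 = \wh b/\wh\nu - 1$; by \eqref{eq:lmb-b-rel} this is $O(1/|\log\wh b|) = O(1/|\log\wh\nu|)$, using $\wh\nu \sim \wh b$. Similarly $\wh b_\tau/\wh b = (T-t)\wh b_t/\wh b = \wh\lmb^{-1}(T-t)(\wh\lmb\wh b_t/\wh b)$, and $\wh\lmb\wh b_t = O(\wh b^2/|\log\wh b|)$ by \eqref{eq:b_t-RR-est}, while $(T-t)/\wh\lmb = (1+O(1/|\log\wh b|))/\wh b$ by \eqref{eq:lmb-b-rel}, giving the claimed $O(1/|\log\wh\nu|)$ bound. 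Finally \eqref{eq:nu-RR-asymp}: from \eqref{eq:lmb-RR-asymp}, $\wh\nu = \wh\lmb/(T-t) = e^{-\sqrt{|\log(T-t)|}+O(1)}$ and $\tau = -\log(T-t) = |\log(T-t)|$ for $\tau$ large, so $\wh\nu \sim e^{-\sqrt\tau}$, and $\wh b \sim \wh\nu$ follows from \eqref{eq:lmb-b-rel}.

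The last item, the smallness of $\bm{\eps}^\RR$ in \eqref{eq:eRR-est1}--\eqref{eq:eRR-est3}, is the only step with a little substance. I would invoke the coercivity estimates: the $\dot H_1^1$ and $L^2$ bound \eqref{eq:eRR-est1} is immediate from \eqref{eq:wRR-est-1} since scaling is an isometry of $\dot H_1^1$ and $L^2$ (with the $(T-t)$-normalization built into $\dot v$). For \eqref{eq:eRR-est2}, I apply \eqref{eq:coercivity-1} to $\eps^\RR = w^\RR$ (which satisfies the orthogonality $\langle w^\RR, (\chi_M\Lmb Q)_{\wh\lmb}\rangle = 0$ from \eqref{eq:RR-orthog}, hence after rescaling the $y$-variable profile is orthogonal to $\chi_M\Lmb Q$) to get $\|\eps^\RR\|_{\dot\calH_1^2} \sim_M \|A^*A\eps^\RR\|_{L^2} = \|H\eps^\RR\|_{L^2}$, and control the latter via $A_{\wh\lmb}w^\RR = W^\RR$ and \eqref{eq:wRR-est-2}: $\|\td H_{\wh\lmb}\text{-energy of }W^\RR\| \lesssim \wh b^4$ rescales to the $\dot\calH_1^2$-bound $\lesssim \wh b^2$ (the $\wh\lmb^2$ and $(T-t)$ weights conspire correctly because $\wh\nu \sim \wh b$). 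The $\dot\eps^\RR$ part uses \eqref{eq:coercivity-2} on $\rd_t w^\RR$ analogously with $\rd_t W^\RR$. For the localized estimate \eqref{eq:eRR-est3}, I use the localized coercivity \eqref{eq:loc-coercivity-1}--\eqref{eq:loc-coercivity-2} with $R = 2/\wh b$ (note $R \gg M$ for $\wh b$ small), matching the restricted region $\rho \le 1$, i.e. $y \le 1/\wh\nu \sim 1/\wh b$, against the cutoff $\chf_{(0,4\wh\lmb/\wh b]}$ in \eqref{eq:wRR-est-3}, and the logarithmic gain $\wh b^4/|\log\wh b|^2$ becomes $\wh b^2/|\log\wh b|$ after taking square roots and rescaling.

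The main obstacle, such as it is, is the careful accounting of the $(T-t)$ versus $\wh\lmb$ normalizations in the second component and in the localized norms — ensuring that the weight $\rho\langle\log(\rho/\wh\nu)\rangle^{-1}$ appearing in $\|\cdot\|_{(\dot\calH_1^2)_1^{\wh\nu}}$ (and hence implicitly in the definitions here) is the correct rescaling of the $y\langle\log y\rangle^{-1}$ weight, and that the domain $\{\rho\le 1\}$ corresponds under $y = \rho/\wh\nu$ to $\{y \le \wh\nu^{-1}\}$ which sits well inside $\{y \le 4\wh\lmb/(\wh b\wh\lmb)\} = \{y \le 4/\wh b\}$ using $\wh\nu \sim \wh b$ from the already-established \eqref{eq:nu-b-RR-bound}. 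None of this is conceptually hard; it is the kind of verification that \cite{RaphaelRodnianski2012Publ.Math.} carries out routinely, so I would state it as a direct consequence and omit the arithmetic.
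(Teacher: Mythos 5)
Your overall route is the same as the paper's (Appendix C): rescale, then upgrade the energy bounds \eqref{eq:wRR-est-2}--\eqref{eq:wRR-est-3} on $W^{\RR}=A_{\wh{\lmb}}w^{\RR}$ to $\dot{\calH}_{1}^{2}\times\dot{H}_{1}^{1}$ bounds via the (localized) coercivity estimates; the parameter estimates and the $\eps^{\RR}$ bounds are essentially right. But there is a genuine gap in your treatment of $\dot{\eps}^{\RR}$: you propose to ``use \eqref{eq:coercivity-2} on $\rd_{t}w^{\RR}$ analogously,'' yet \eqref{eq:coercivity-2} and \eqref{eq:loc-coercivity-2} require membership in $\{\chi_{M}\Lmb Q\}^{\perp}$, and $\rd_{t}w^{\RR}$ does \emph{not} satisfy this: differentiating \eqref{eq:RR-orthog} in $t$ gives $\langle\rd_{t}w^{\RR},(\chi_{M}\Lmb Q)_{\wh{\lmb}}\rangle=-\langle w^{\RR},\rd_{t}[(\chi_{M}\Lmb Q)_{\wh{\lmb}}]\rangle$, which is nonzero. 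The paper quantifies this failure in \eqref{eq:almost-orthog}: the inner product equals $-\wh b\langle\eps^{\RR},\Lmb_{0}(\chi_{M}\Lmb Q)\rangle=O_{M}(\wh b^{3}/|\log\wh b|)$, which crucially uses the already-established bound \eqref{eq:eRR-est3} for $\eps^{\RR}$ (so the two components must be handled in that order), and then runs the coercivity with this extra error term. Without this, the coercivity step for $\dot{\eps}^{\RR}$ is not justified. A related omission: $A_{\wh{\lmb}(t)}$ depends on $t$, so $A_{\wh{\lmb}}\rd_{t}w^{\RR}=\rd_{t}W^{\RR}+[A_{\wh{\lmb}},\rd_{t}]w^{\RR}$, and the commutator $-\frac{\wh b}{\wh{\lmb}^{2}}\big[\frac{4y}{(1+y^{2})^{2}}\big]_{\wh{\lmb}}w^{\RR}$ must be estimated (it is controlled by the $\eps^{\RR}$ bounds, but it is not zero and cannot be silently identified with $\rd_{t}W^{\RR}$).

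Two smaller points. First, in passing from the localized energy $\int\chf_{(0,R]}\big(|\rd_{y}A\eps^{\RR}|^{2}+\frac{\td V}{y^{2}}|A\eps^{\RR}|^{2}\big)$ to $\|\chf_{(0,R]}A^{\ast}A\eps^{\RR}\|_{L^{2}}^{2}$, the integration by parts produces a boundary term $\frac{2}{1+R^{2}}|A\eps^{\RR}|^{2}(R)$; the paper absorbs it as $o_{R\to\infty}(1)\cdot\|\eps^{\RR}\|_{(\dot{\calH}_{1}^{2})_{R}}^{2}$ using the local weighted $L^{\infty}$ estimate \eqref{eq:local-infty-est1}, a step your ``the weights conspire correctly'' glosses over. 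Second, the profile-modulation term in $\dot{\eps}^{\RR}$ is exactly $\wh{\lmb}\wh b_{t}[\rd_{b}P^{\RR}]$, and bounding it requires the pointwise estimate on $\rd_{b}P^{\RR}$ from \cite{RaphaelRodnianski2012Publ.Math.} (giving $\|\rd_{b}P^{\RR}\|_{\dot{H}_{1}^{1}}\aleq1/|\log\wh b|$ while $\|\rd_{b}P^{\RR}\|_{L^{2}}\aleq1/\wh b$ is large); your quoted size $O(\wh b^{4}/|\log\wh b|)$ is not the right bookkeeping, though the correct bound $\wh{\lmb}|\wh b_{t}|\cdot\|\rd_{b}P^{\RR}\|_{\dot{H}_{1}^{1}}\aleq\wh b^{2}/|\log\wh b|^{2}$ is indeed harmless.
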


The proof is given in Appendix \ref{sec:Proof-of-Corollary}.

\subsection{\label{subsec:Decomposition-of-solutions}Decomposition of solutions}

From now on, we pick a solution $\bm{v}$ as  in Corollary~\ref{cor:RR-blow-up-self-similar}
and work with that solution. Moreover, by considering only large $\tau$,
we may assume $\wh b(\tau)\ll\min\{b^{\ast},\nu^{\ast}\}$ as well.

The goal of this subsection (Proposition~\ref{prop:decomposition})
is to write $\bm{v}$ in a slightly different way: 
\[
\bm{v}(\tau,\rho)=\bm{P}(\nu(\tau),b(\tau);\rho)+\bm{\eps}(\tau,\rho),
\]
where $\bm{P}=\bm{P}(\nu,b;\rho)$ is a new modified profile that
satisfies $\bm{P}(\nu,0;\cdot)=\bm{Q}_{\nu}$ and is described in
terms of $\bm{Q}_{\nu}$ and $\bm{\varphi}_{j}$. Using such a profile
will enable more refined modulation estimates for $\nu$ and $b$
in view of the computations in Lemma~\ref{lem:conv-comp-for-mod-eqn}.
At the same time, we also want to use a priori information on $\bm{v}$
in Corollary~\ref{cor:RR-blow-up-self-similar}. Thus our modified
profile $\bm{P}$ will be chosen sufficiently close to $\bm{P}^{\RR}$
(Lemma~\ref{lem:ProfDiff}) so that the bounds of $\bm{\eps}^{\RR}$
can be transferred to those of $\bm{\eps}$ of our new decomposition.
Our $\bm{\eps}$ does not satisfy any orthogonality conditions, but
its smallness (c.f. \eqref{eq:eRR-est3}) will be sufficient.

From the above discussion, we define the \emph{modified profile} $\bm{P}=\bm{P}(\nu,b;\rho)$
by 
\begin{equation}
\bm{P}=\bm{Q}_{\nu}+\frac{b}{2}\Big(\bm{\varphi}_{0}-\bm{\varphi}_{1}+\begin{bmatrix}0\\
\frac{1}{\nu}\Lmb Q_{\nu}
\end{bmatrix}\Big).\label{eq:def-mod-prof}
\end{equation}
Denoting $\bm{P}=(P,\dot{P})$, this is equivalent to saying that
\begin{align*}
P & \coloneqq Q_{\nu}+\frac{b}{2}(\varphi_{0}-\varphi_{1}),\\
\dot{P} & \coloneqq\frac{b}{2}\Big((\lmb_{0}+\Lmb)\varphi_{0}-(\lmb_{1}+\Lmb)\varphi_{1}+\frac{1}{\nu}\Lmb Q_{\nu}\Big).
\end{align*}
The main proposition of this subsection is the following.
\begin{prop}[Decomposition]
\label{prop:decomposition}There exist $C^{1}$ functions $\nu(\tau)$
and $b(\tau)$ defined for all large $\tau$ such that 
\begin{equation}
\bm{v}(\tau,\rho)=\bm{P}(\nu(\tau),b(\tau);\rho)+\bm{\eps}(\tau,\rho)\label{eq:decomposition}
\end{equation}
with the following properties:
\begin{itemize}
\item (Estimates for the modulation parameters) We have 
\begin{align}
\Big|\frac{\nu_{\tau}}{\nu}\Big|+\Big|\frac{b_{\tau}}{b}\Big|+\Big|\frac{b}{\nu}-1\Big| & \aleq\frac{1}{|\log\nu|},\label{eq:nu-b-bound}\\
\nu(\tau)\sim b(\tau) & \sim e^{-\sqrt{\tau}}.\label{eq:rough-asymp-nu-b}
\end{align}
\item (Bounds for the remainder inside the light cone) We have 
\begin{align}
\|\eps\|_{(\dot{\calH}_{1}^{2})_{1}^{\nu}}+\|\dot{\eps}\|_{(\dot{H}_{1}^{1})_{1}} & \aleq\frac{\nu}{|\log\nu|},\label{eq:eps-bound-1}\\
\chf_{(0,1]}\Big\{\frac{1}{\langle\log(\frac{\rho}{\nu})\rangle^{\frac{1}{2}}}|\eps|_{-1}+|\dot{\eps}|\Big\} & \aleq\frac{\nu}{|\log\nu|},\label{eq:eps-bound-2}
\end{align}
and 
\begin{align}
|\ell_{j}(\bm{\eps})| & \aleq\frac{\nu}{|\log\nu|^{\frac{1}{2}}}.\label{eq:eps-bound-3}\\
|[\nu\rd_{\nu}\ell_{j}](\bm{\eps})| & \aleq\frac{\nu}{|\log\nu|^{\frac{5}{2}}}.\label{eq:eps-bound-4}
\end{align}
\end{itemize}
\end{prop}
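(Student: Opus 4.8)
\textbf{Plan of proof of Proposition~\ref{prop:decomposition}.}
The strategy is to start from the known Raphaël--Rodnianski decomposition $\bm{v}=[\bm{P}^{\RR}(\wh b;\cdot)+\bm{\eps}^{\RR}]_{\wh\nu}$ provided by Corollary~\ref{cor:RR-blow-up-self-similar} and to re-expand it around our modified profile $\bm{P}(\nu,b;\rho)$ by introducing a correction of the modulation parameters. First I would set up the parameter change via an implicit function theorem: I want to choose $(\nu,b)$ close to $(\wh\nu,\wh b)$ so that two normalization conditions — natural ones would be to match the $\rho^{-1}$-coefficients (i.e.\ the soliton scale) and the next $\rho$-order coefficient of $\bm{v}$ near $\rho=0$, or equivalently to impose that $\bm{\eps}$ has no component along $[\frac{1}{\nu}\bm\Lmb Q_\nu]$-type and along $\bm{\varphi}_0-\bm{\varphi}_1$-type in an appropriate finite-dimensional sense — are verified. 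The key input here is Lemma~\ref{lem:ProfDiff} (``ProfDiff''), which should tell us that $\bm{P}(\nu,b;\cdot)-[\bm{P}^{\RR}(\wh b;\cdot)]$ (after the scaling correction) is small, of size controlled by $b^2/|\log b|$-type quantities, so that the transition from $(\wh\nu,\wh b)$ to $(\nu,b)$ is a small perturbation and the IFT applies for all large $\tau$, producing $C^1$ dependence of $(\nu,b)$ on $\tau$ (since $\wh\nu,\wh b$ are $C^1$ and the profiles depend smoothly on the parameters).

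Once $(\nu,b)$ are defined, the estimates \eqref{eq:nu-b-bound} and \eqref{eq:rough-asymp-nu-b} follow by combining \eqref{eq:nu-b-RR-bound}--\eqref{eq:nu-RR-asymp} with the smallness of $|\nu-\wh\nu|+|b-\wh b|$ obtained from the IFT construction; in particular $\nu=\wh\nu(1+o(1))$ and $b=\wh b(1+o(1))$, and the bounds on $\nu_\tau/\nu$, $b_\tau/b$ are inherited after differentiating the defining relations in $\tau$ and using the corresponding bounds on $\wh\nu_\tau,\wh b_\tau$ together with the non-degeneracy of the IFT Jacobian (whose lower bound is essentially the transversality in \eqref{eq:Transv}, i.e.\ of size $|\log\nu|$). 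For the remainder bounds \eqref{eq:eps-bound-1}--\eqref{eq:eps-bound-2}: since $\bm{\eps}=[\bm{P}^{\RR}(\wh b;\cdot)]_{\wh\nu}-\bm{P}(\nu,b;\cdot)+[\bm{\eps}^{\RR}]_{\wh\nu}$, I would estimate the profile-difference part using Lemma~\ref{lem:ProfDiff} (this is where the logarithmic gain must be visible, matching the RHS $\nu/|\log\nu|$) and the $[\bm{\eps}^{\RR}]_{\wh\nu}$ part by rescaling the localized estimate \eqref{eq:eRR-est3} (noting that $2/\wh b$ in the $y$-variable corresponds to $\rho\lesssim 1$ up to constants, since $\wh\nu\sim\wh b$) into the $\rho$-variable norms $(\dot{\calH}_1^2)_1^\nu$ and $(\dot H_1^1)_1$; the pointwise bound \eqref{eq:eps-bound-2} then follows from \eqref{eq:eps-bound-1} by the localized Hardy inequalities alluded to in Appendix~\ref{sec:Localized-Hardy-inequalities} (the $\langle\log(\rho/\nu)\rangle^{1/2}$ weight is exactly what the $\dot{\calH}_1^2$-to-$|\cdot|_{-1}$ embedding produces in 2D).

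Finally, for \eqref{eq:eps-bound-3}--\eqref{eq:eps-bound-4} I would plug $\bm{\eps}$ into the explicit functional $\ell_j$ from \eqref{eq:def-ell_j}. Since $g_j\varphi_j$ is bounded by $\chf_{(0,1/2]}\rho^{-1}+\chf_{[1/2,1]}(1-\rho)^{-3/4}$ (as used repeatedly in the proof of Lemma~\ref{lem:conv-comp-for-mod-eqn}, see \eqref{eq:ExactComp-6-2}), the pairing $\langle(\lmb_j+\Lmb_0)\eps+\dot\eps,\,g_j\varphi_j\rangle$ is controlled by a weighted Hardy-type estimate against \eqref{eq:eps-bound-1}--\eqref{eq:eps-bound-2}; integrating the $\rho^{-1}$-singularity of $g_j\varphi_j$ against $|\eps|_{-1}/\langle\log(\rho/\nu)\rangle^{1/2}$ over $\rho\lesssim 1$ produces an extra factor $|\log\nu|^{1/2}$ relative to $\nu/|\log\nu|$, giving the stated $\nu/|\log\nu|^{1/2}$; and near the light cone the weight $(1-\rho)^{-3/4}$ is integrable, contributing a harmless $O(\nu/|\log\nu|)$. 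For \eqref{eq:eps-bound-4} one notes $\nu\rd_\nu\ell_j$ hits either $\nu\rd_\nu(g_j\varphi_j)$ — which by \eqref{eq:ExactComp-9} and \eqref{eq:EigenFuncEst-3} gains $|\log\nu|^{-2}$ relative to $g_j\varphi_j$ — or the coefficient $(\nu\rd_\nu\lmb_j)$, again $O(|\log\nu|^{-2})$ by \eqref{eq:nudnu-eig-val}, so the bound improves by $|\log\nu|^{-2}$ to $\nu/|\log\nu|^{5/2}$. The main obstacle I expect is the first step: carefully choosing the two normalization conditions defining $(\nu,b)$ so that (i) the IFT Jacobian is quantitatively invertible with the right $|\log\nu|$ lower bound, (ii) the resulting $\bm{\eps}$ genuinely inherits the logarithmic gain from \eqref{eq:eRR-est3} rather than only the weaker $\wh b^2$ bound of \eqref{eq:eRR-est2}, and (iii) the $C^1$-in-$\tau$ regularity and the modulation-derivative bounds come out cleanly; this requires Lemma~\ref{lem:ProfDiff} to be sharp enough that $\|\bm{P}(\nu,b;\cdot)-[\bm{P}^{\RR}(\wh b;\cdot)]_{\wh\nu/\nu}\|$ in the relevant localized norm is $O(\nu/|\log\nu|)$ with matching control on its $\nu\rd_\nu$- and parameter-derivatives.
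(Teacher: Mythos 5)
Your steps for transferring the Raphaël--Rodnianski bounds and for \eqref{eq:eps-bound-2}--\eqref{eq:eps-bound-4} match the paper: \eqref{eq:eps-bound-1} does follow from \eqref{eq:eRR-est3} plus the profile-difference estimate \eqref{eq:ProfDiff}, \eqref{eq:eps-bound-2} from the localized weighted $L^{\infty}$-estimates of Lemma~\ref{lem:Local-Linfty-est}, and \eqref{eq:eps-bound-3}--\eqref{eq:eps-bound-4} from exactly the mapping properties of $\ell_{j}$ you describe (this is Lemma~\ref{lem:ell_j-mapping}, including the $|\log\nu|^{1/2}$ loss from the $\rho^{-1}$-singularity of $g_{j}\varphi_{j}$ and the $|\log\nu|^{-2}$ gain for $\nu\rd_{\nu}\ell_{j}$). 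Where you genuinely diverge is the first step. The paper does \emph{not} define $(\nu,b)$ by an implicit function theorem with orthogonality conditions on $\bm{\eps}$; it deliberately abandons orthogonality conditions altogether and instead defines $(\nu,b)$ by the \emph{explicit, solution-independent} algebraic relations \eqref{eq:Rel-nu-and-nuhat}, namely $\log(\nu/\wh{\nu})=-c_{M,\wh b}\wh b^{2}$ and $\wh b=\tfrac{b}{2}(\lmb_{0}(\nu)-\lmb_{1}(\nu)+1)$. These relations are reverse-engineered so that two critical terms in the profile difference cancel \emph{exactly}: the second relation kills the coefficient mismatch of $\frac{1}{\nu}\Lmb Q_{\nu}$ in the $\dot{P}$-component (Step~5 of the proof of Lemma~\ref{lem:ProfDiff}), and the first makes $Q_{\wh{\nu}}-Q_{\nu}-c_{M,\wh b}\wh b^{2}\Lmb Q_{\wh{\nu}}$ quadratically small. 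The $C^{1}$ regularity and the bounds \eqref{eq:nu-b-bound} then follow trivially from the chain rule and \eqref{eq:ProfDiff-Param3}, with no Jacobian inversion.

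The practical consequence for your route: Lemma~\ref{lem:ProfDiff} as stated is only proved \emph{for} the parameters defined by \eqref{eq:Rel-nu-and-nuhat}, so you cannot invoke it to set up your IFT and then use it again for the resulting (different) parameters without extra work. Concretely, the tolerances are very tight: the $(\dot{\calH}_{1}^{2})_{1}^{\nu}$-norm of $Q_{\wh{\nu}}-Q_{\nu}$ scales like $\nu^{-1}|\log(\nu/\wh{\nu})|$, so $\nu$ must be pinned down to relative accuracy $O(\nu^{2})$, and since $\|\frac{1}{\nu}\Lmb Q_{\nu}\|_{(\dot{H}_{1}^{1})_{1}}\sim\nu^{-1}|\log\nu|^{1/2}$, the combination $\wh b-\tfrac{b}{2}(\lmb_{0}-\lmb_{1}+1)$ must vanish to order $\nu^{2}/|\log\nu|^{3/2}$. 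An orthogonality condition shifts $b$ by roughly $\langle\bm{\eps}^{\RR},Z\rangle/\langle\rd_{b}\bm{P},Z\rangle$, which for natural choices of $Z$ is of exactly this borderline size; so your approach can be made to work only if the test functions are chosen so that the induced parameter shift is absorbed as a multiple of $\rd_{b}\bm{P}$ in the norm (standard modulation theory), and you would still have to reprove the profile-difference estimate for the shifted parameters. This is precisely the "main obstacle" you flag, and it is not resolved in your proposal; the paper's explicit reparametrization is designed to avoid it entirely.
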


First, let us show that the profiles $\bm{P}^{\RR}=(P^{\RR},\dot{P}^{\RR})$
and $\bm{P}$ are close enough in the self-similar region $\rho\leq1$,
after changing the parameters $\nu$ and $b$ slightly.
\begin{lem}[Difference estimate for profiles]
\label{lem:ProfDiff}For $\wh b$ and $\wh{\nu}$ satisfying 
\[
0<\wh b,\wh{\nu}\ll\min\{b^{\ast},\nu^{\ast}\}\quad\text{and}\quad\Big|\frac{\wh b}{\wh{\nu}}-1\Big|\aleq\frac{1}{|\log\wh{\nu}|},
\]
define $\nu$ and $b$ by the relations (see \eqref{eq:TRR-alt-def}
for the definition of $c_{M,\wh b}$)
\begin{equation}
\log\Big(\frac{\nu}{\wh{\nu}}\Big)=-c_{M,\wh b}\wh b^{2}\quad\text{and}\quad\wh b=b\cdot\frac{1}{2}(\lmb_{0}(\nu)-\lmb_{1}(\nu)+1).\label{eq:Rel-nu-and-nuhat}
\end{equation}
Then, the following hold.
\begin{itemize}
\item (Parameter changes are small) We have 
\begin{align}
\Big|\frac{\nu}{\wh{\nu}}-1\Big|+\Big|\frac{b}{\wh{\nu}}-1\Big|+\Big|\frac{\wh b}{\wh{\nu}}-1\Big| & \aleq\frac{\nu}{|\log\nu|},\label{eq:ProfDiff-Param1}\\
\Big|\frac{\nu}{\wh{\nu}}-1\Big| & \aleq|c_{M,\wh b}|\nu^{2}\aleq\nu^{2-}.\label{eq:ProfDiff-Param2}
\end{align}
\item (Derivative estimates) We have 
\begin{equation}
\begin{pmatrix}\rd_{\wh{\nu}}\nu & \rd_{\wh b}\nu\\
\rd_{\wh{\nu}}b & \rd_{\wh b}b
\end{pmatrix}=\begin{pmatrix}1 & 0\\
0 & 1
\end{pmatrix}+O\Big(\frac{1}{|\log\nu|}\Big).\label{eq:ProfDiff-Param3}
\end{equation}
\item (Main difference estimate) We have 
\begin{equation}
\Big\| P^{\RR}(\wh b;\frac{\rho}{\wh{\nu}})-P(\nu,b;\rho)\Big\|_{(\dot{\calH}_{1}^{2})_{1}^{\nu}}+\Big\|\frac{1}{\wh{\nu}}P^{\RR}(\wh b;\frac{\rho}{\wh{\nu}})-\dot{P}(\nu,b;\rho)\Big\|_{(\dot{H}_{1}^{1})_{1}}\aleq\frac{\nu}{|\log\nu|}.\label{eq:ProfDiff}
\end{equation}
\end{itemize}
\end{lem}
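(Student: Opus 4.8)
\textbf{Plan for the proof of Lemma~\ref{lem:ProfDiff}.}

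The plan is to unwind the definitions of $P^{\RR}$ and $P$, identify the leading-order corrector of each (both are a $O(b^{2})$ perturbation of $Q_{\nu}$ written through $H^{-1}$ applied to something like $-\Lmb_0 \Lmb Q$), and show the difference is lower order after the parameter substitution \eqref{eq:Rel-nu-and-nuhat}. First I would establish the parameter estimates \eqref{eq:ProfDiff-Param1}--\eqref{eq:ProfDiff-Param3}. These are elementary: from \eqref{eq:TRR-alt-def} and \eqref{eq:def-cb} one has $|c_{M,\wh b}|\lesssim |\log\wh b|^{-1}\lesssim 1$ (in fact the relevant bound is $|c_{M,\wh b}|\wh b^2\lesssim \nu^{2-}$, which gives \eqref{eq:ProfDiff-Param2} directly since $\log(\nu/\wh\nu)=-c_{M,\wh b}\wh b^2$ is tiny), so $\nu/\wh\nu=1+O(\nu^{2-})$. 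For $b$, the second relation in \eqref{eq:Rel-nu-and-nuhat} together with $\lmb_0-\lmb_1 = 1 + O(1/|\log\nu|)$ from \eqref{eq:lmb-hat-asymp} gives $\wh b = b(1+O(1/|\log\nu|))$, hence $|b/\wh\nu-1|\lesssim 1/|\log\nu|$; the bound $\lesssim \nu/|\log\nu|$ in \eqref{eq:ProfDiff-Param1} as stated seems too strong unless there is a cancellation I'm missing — I would double-check whether the intended RHS is $1/|\log\nu|$ rather than $\nu/|\log\nu|$ for the $b$-terms, or whether \eqref{eq:ProfDiff-Param1} is really only claimed for $|\nu/\wh\nu-1|$. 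The Jacobian estimate \eqref{eq:ProfDiff-Param3} follows by differentiating \eqref{eq:Rel-nu-and-nuhat} and using $\nu\rd_\nu \lmb_j = O(1/|\log\nu|^2)$ from \eqref{eq:nudnu-eig-val} together with $\rd_{\wh b}(c_{M,\wh b}\wh b^2) = O(\wh b) = O(\nu)$.

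Next I would attack the main difference estimate \eqref{eq:ProfDiff}. The strategy is to compare the correctors. On the left: $P^{\RR}(\wh b; \rho/\wh\nu) = Q(\rho/\wh\nu) + \wh b^2 T^{\RR}(\wh b; \rho/\wh\nu)$ in the region $\rho/\wh\nu \leq B_1 = |\log\wh b|/\wh b$, which certainly covers $\rho\leq 1$ since $\wh\nu B_1 \sim |\log\wh b|/(\wh b/\wh\nu)\gg 1$; and $T^{\RR} = H^{-1}g - c_{M,\wh b}\Lmb Q$ with $g = -\Lmb_0\Lmb Q + c_{\wh b}\Lmb Q\chi_{B_0/4}$. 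On the right, from \eqref{eq:def-mod-prof}, $P(\nu,b;\rho) = Q_\nu + \frac b2(\varphi_0-\varphi_1)(\nu;\rho)$, and by \eqref{eq:SharpEigftDecomp} the difference $\varphi_0-\varphi_1$ has a very explicit form: the $\frac1\nu\Lmb Q$ and $\nu T_1$ pieces cancel, leaving $\frac b2(\varphi_0-\varphi_1) = \frac b2\big[ 2(\lmb_0-\lmb_1)\nu S_1 + (\lmb_0(\lmb_0-1)-\lmb_1(\lmb_1-1))\nu U_1 \big]_\nu + (\text{$U_\infty$-terms}) + (\text{$\tilde\varphi$-remainders})$. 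Recalling $S_1 = -H^{-1}\Lmb_0\Lmb Q$ and using $\lmb_j(\lmb_j-1) = (-\frac16+j)/|\log\nu| + O(1/|\log\nu|^2)$ from \eqref{eq:EigenFunc-02}, the $U_1$-contributions are $O(b\nu/|\log\nu|)$ in the relevant norm, hence subsumed into the error, while $\frac b2 \cdot 2(\lmb_0-\lmb_1)\nu S_1$ is $\approx b\nu S_1 = -b\nu H^{-1}\Lmb_0\Lmb Q$ — structurally the same as the leading part $\wh b^2 H^{-1}(-\Lmb_0\Lmb Q)$ of the RR corrector once one notes $b\nu = \wh b^2(1+O(1/|\log\nu|))$. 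The remaining discrepancies to bound are: (i) $Q(\rho/\wh\nu)$ vs $Q(\rho/\nu)$, controlled by $|\nu/\wh\nu-1|\lesssim\nu^{2-}$ and $\rd_\nu Q_\nu = -\frac1\nu\Lmb_0 Q_\nu \in \dot{\mathcal H}^2$ locally; (ii) the $c_{\wh b}\Lmb Q\chi_{B_0/4}$ piece of $g$, which produces $\wh b^2 c_{\wh b} H^{-1}(\Lmb Q\chi_{B_0/4})$ — on $\rho\leq 1$, i.e. $y = \rho/\wh\nu \leq 1/\wh\nu \ll B_0$, this equals $\wh b^2 c_{\wh b} H^{-1}(\Lmb Q) \sim \frac{\wh b^2}{|\log\wh b|} U_1$, again $O(\nu/|\log\nu|)$-type; (iii) the $-c_{M,\wh b}\wh b^2\Lmb Q$ term in $T^{\RR}$, absorbed precisely by the first relation in \eqref{eq:Rel-nu-and-nuhat} (this is why that relation is chosen!); (iv) the $U_\infty$ and $\tilde\varphi$ remainders from \eqref{eq:SharpEigftDecomp}--\eqref{eq:SharpEigftEst}, which carry $\lmb_j(\lmb_j-1) = O(1/|\log\nu|)$ and are $O(\nu^2|\log\nu|^2)$-small, hence negligible; and likewise for the velocity components $\dot P^{\RR}$ vs $\dot P$, using $\dot P^{\RR} = \wh b\Lmb P^{\RR}$ and the structure $\dot\varphi_j = (\lmb_j+\Lmb)\varphi_j$.

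For the norm bookkeeping I would rely on: the explicit formulas \eqref{eq:LmbQ-formula} and the pointwise bounds \eqref{eq:LmbQ-bound}--\eqref{eq:T1S1U1-asymptotics} for $\Lmb Q, S_1, U_1$; the mapping property \eqref{eq:MappingHQinv} of $H^{-1}$; and the fact that $\|f\|_{(\dot{\mathcal H}^2_1)^\nu_1}$ sees only $\rho\leq 1$, i.e. $y\leq 1/\nu$, where all the profiles grow at most like $y\langle\log y\rangle$ so that the $(\dot{\mathcal H}^2_1)^\nu_1$-norm of $\nu[\,\cdot\,]_\nu$ is $O(\nu)$ up to logarithms. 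The main obstacle I expect is item (i)/(iii): getting the constant in $\log(\nu/\wh\nu) = -c_{M,\wh b}\wh b^2$ to exactly cancel the $-c_{M,\wh b}\wh b^2\Lmb Q$ corrector to the claimed precision $O(\nu/|\log\nu|)$ — this requires carefully expanding $Q(\rho/\wh\nu) = Q(\rho/\nu) + (\log(\nu/\wh\nu))\Lmb_0 Q(\rho/\nu) + O((\log\nu/\wh\nu)^2)$, matching the linear term against $-c_{M,\wh b}\wh b^2 \cdot \frac1\nu\Lmb Q_\nu$... wait, that is $\Lmb Q$ not $\Lmb_0 Q$, so in fact the cancellation is not exact and one must check the mismatch $\wh b^2 c_{M,\wh b}(\Lmb_0 Q - \Lmb Q) = \wh b^2 c_{M,\wh b} Q$ (since $\Lmb_0 - \Lmb = 1$ acting as multiplication) is $O(\nu^{2-})$ in the local norm, which it is since $\wh b^2 c_{M,\wh b} = O(\nu^{2-})$ and $\chf_{\rho\leq1}Q(\rho/\nu)$ is bounded in $(\dot{\mathcal H}^2_1)^\nu_1$ up to logs. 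With these cancellations verified, \eqref{eq:ProfDiff} follows by the triangle inequality over the finitely many error terms, each of which is $O(\nu/|\log\nu|)$ or better. I would present the argument by first recording the algebraic identity for $P - P^{\RR}$ (and $\dot P - \dot P^{\RR}$) as a sum of explicitly labelled error terms, then estimating each in turn, leaving the genuinely routine $H^{-1}$-mapping computations to the reader with a pointer to \eqref{eq:MappingHQinv}.
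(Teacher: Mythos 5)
Your overall route coincides with the paper's: compare the two correctors, expand $\varphi_{0}-\varphi_{1}$ via \eqref{eq:SharpEigftDecomp} so that the $\frac{1}{\nu}\Lmb Q$ and $T_{1}$ pieces cancel, and use the two relations in \eqref{eq:Rel-nu-and-nuhat} to absorb the $c_{M,\wh b}\wh b^{2}\Lmb Q$ term and the $(\lmb_{0}-\lmb_{1}+1)$ prefactor. Your flag on \eqref{eq:ProfDiff-Param1} is legitimate: the hypothesis only gives $|\wh b/\wh\nu-1|\aleq1/|\log\wh\nu|$, and the paper's own proof only produces $1/|\log\nu|$ for the $b$- and $\wh b$-terms (only $|\nu/\wh\nu-1|$ is $O(\nu^{2-})$); the stated right-hand side $\nu/|\log\nu|$ is evidently a typo for $1/|\log\nu|$, and what is actually used later is $\wh b\,|\wh b/\wh\nu-1|=O(\nu/|\log\nu|)$.

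However, there is a genuine gap in your treatment of the $U_{1}$-type terms. You discard both $\frku\,b\nu[U_{1}]_{\nu}$ (with $\frku=\frac{1}{2}(\lmb_{0}(\lmb_{0}-1)-\lmb_{1}(\lmb_{1}-1))=-\frac{1}{2|\log\nu|}+O(|\log\nu|^{-2})$) and $\wh b^{2}c_{\wh b}[U_{1}]_{\wh\nu}$ as "$O(\nu/|\log\nu|)$, hence subsumed into the error". This is false: because $U_{1}(y)\approx y\log y-y$ at infinity, one has $\|U_{1}\|_{(\dot{\calH}_{1}^{2})_{1/\nu}}\sim|\log\nu|^{1/2}$ (the tail $\|\chf_{[1,\nu^{-1}]}y^{-1}\|_{L^{2}}$), so each of these terms is only $O(\nu/|\log\nu|^{1/2})$ individually — larger than the target $\nu/|\log\nu|$, and large enough to degrade $\ell_{j}(\bm{\eps})$ from $O(\nu/|\log\nu|^{1/2})$ to $O(\nu)$ and hence break the $|\log\nu|^{-5/2}$-precision needed in \eqref{eq:SharpMod}. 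The proof must pair them: the coefficient $\frac{\wh b^{2}}{\wh\nu}c_{\wh b}+\frku b$ is $O(\nu/|\log\nu|^{2})$ precisely because the leading $\frac{1}{2|\log\wh b|}$ asymptotics of $c_{\wh b}$ from \eqref{eq:def-cb} cancels against the leading $-\frac{1}{2|\log\nu|}$ of $\frku$ from \eqref{eq:lmb-hat-asymp}; only then does the $U_{1}$ contribution come out as $O(\nu/|\log\nu|^{3/2})$. This cancellation is a structural point of the lemma, not bookkeeping. Two smaller slips: $|c_{M,\wh b}|\aleq1/|\log\wh b|$ is false ($c_{M,\wh b}=O_{M}(1)$ only, which still suffices for \eqref{eq:ProfDiff-Param2}); and the expansion of $Q(\rho/\wh\nu)$ involves $\Lmb Q=\rho\rd_{\rho}Q$, not $\Lmb_{0}Q$, so the mismatch $\wh b^{2}c_{M,\wh b}Q$ you introduce does not exist — which is fortunate, since your claimed bound for it is also wrong ($\|\chf_{(0,1]}Q_{\nu}\|_{(\dot{\calH}_{1}^{2})_{1}^{\nu}}=O(1/\nu)$, so that term would be $O(\nu)$, not $O(\nu^{2-})$).
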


\begin{proof}
\textbf{Step 1.} Bounds of $H^{-1}g$ and $\wh b\rd_{\wh b}(H^{-1}g)$
and structure of $H^{-1}g$.

Recall the definitions \eqref{eq:def-g}-\eqref{eq:def-B0}, where
$b$ is replaced by $\wh b$. In other words, we will use $g=g(\wh b;y)$,
$c_{\wh b}$, and $B_{0}=B_{0}(\wh b)$. In this step, we claim the
following pointwise bounds on $H^{-1}g$ and $\wh b\rd_{\wh b}(H^{-1}g)$:
\begin{align}
|H^{-1}g|_{2} & \aleq\chf_{(0,1]}y^{3}+\chf_{[1,+\infty)}\frac{y\langle\log_{-}(\wh by)\rangle}{|\log\wh b|},\label{eq:ProfDiffStep1-1}\\
|\wh b\rd_{\wh b}(H^{-1}g)|_{2} & \aleq\chf_{(0,1]}\frac{y^{3}}{|\log\wh b|^{2}}+\chf_{[1,B_{0}/4]}\frac{y}{|\log\wh b|}+\chf_{[B_{0}/4,+\infty)}\frac{1}{\wh b^{2}|\log\wh b|y},\label{eq:ProfDiffStep1-2}
\end{align}
and the structure of $H^{-1}g$: 
\begin{equation}
|H^{-1}g-(S_{1}-c_{\wh b}U_{1})|_{2}\aleq\chf_{[B_{0}/4,+\infty)}\frac{y\langle\log(\wh by)\rangle}{|\log\wh b|}.\label{eq:ProfDiffStep1-3}
\end{equation}

Let us first record the following pointwise bounds on $g$ and $\wh b\rd_{\wh b}g$,
which are immediate consequences of \eqref{eq:def-g}, \eqref{eq:def-cb},
and $|\wh b\rd_{\wh b}c_{\wh b}|\aleq\frac{1}{|\log\wh b|^{2}}$:
\begin{align*}
|g| & \aleq\chf_{(0,1]}y+\chf_{[1,+\infty)}\frac{1}{y^{3}}+\chf_{[1,B_{0}/2]}\frac{1}{|\log\wh b|y},\\
|\wh b\rd_{\wh b}g| & \aleq\chf_{(0,1]}\frac{y}{|\log\wh b|^{2}}+\chf_{[1,B_{0}/4]}\frac{1}{|\log\wh b|^{2}y}+\chf_{[B_{0}/4,B_{0}/2]}\frac{1}{|\log\wh b|y}.
\end{align*}

Now we show \eqref{eq:ProfDiffStep1-1} and \eqref{eq:ProfDiffStep1-2}.
Note that $\wh b\rd_{\wh b}(H^{-1}g)=H^{-1}(\wh b\rd_{\wh b}g)$.
In the region $y\leq1$, taking $H^{-1}$ is essentially the multiplication
by $y^{2}$ and hence \eqref{eq:ProfDiffStep1-1} and \eqref{eq:ProfDiffStep1-2}
follow. Henceforth, let us focus on the region $y\geq1$. Since $g$
and $\wh b\rd_{\wh b}g$ satisfy the orthogonality conditions $\langle g,J_{1}\rangle=\langle\wh b\rd_{\wh b}g,J_{1}\rangle=0$
(recall that $J_{1}=\Lmb Q$), we have 
\[
H^{-1}F=J_{1}\int_{0}^{y}FJ_{2}y'dy'+J_{2}\int_{y}^{+\infty}FJ_{1}y'dy',\qquad\text{for }F\in\{g,\wh b\rd_{\wh b}g\}.
\]
The first integral is estimated by (for $y\geq1$)
\begin{align*}
\Big|J_{1}\int_{0}^{y}gJ_{2}y'dy'\Big|_{2} & \aleq\frac{\langle\log y\rangle}{y}+\frac{y}{|\log\wh b|},\\
\Big|J_{1}\int_{0}^{y}(\wh b\rd_{\wh b}g)J_{2}y'dy'\Big|_{2} & \aleq\frac{y}{|\log\wh b|^{2}}+\chf_{[B_{0}/4,+\infty)}\frac{1}{\wh b^{2}|\log\wh b|y}.
\end{align*}
The second integral is estimated by (for $y\geq1$) 
\begin{align*}
\Big|J_{2}\int_{y}^{+\infty}gJ_{1}y'dy'\Big|_{2} & \aleq\frac{1}{y}+\chf_{[1,B_{0}/2]}\frac{y\langle\log(\wh by)\rangle}{|\log\wh b|},\\
\Big|J_{2}\int_{y}^{+\infty}(\wh b\rd_{\wh b}g)J_{1}y'dy'\Big|_{2} & \aleq\chf_{[1,B_{0}/2]}\frac{y}{|\log\wh b|}.
\end{align*}
Summing up the above estimates yields \eqref{eq:ProfDiffStep1-1}
and \eqref{eq:ProfDiffStep1-2} for $y\geq1$. The proof of \eqref{eq:ProfDiffStep1-1}
and \eqref{eq:ProfDiffStep1-2} is complete.

We turn to show \eqref{eq:ProfDiffStep1-3}. By the definition \eqref{eq:def-T1S1U1},
we have 
\[
H^{-1}g-(S_{1}-c_{\wh b}U_{1})=H^{-1}\big(g+(\Lmb_{0}\Lmb Q-c_{\wh b}\Lmb Q)\big)=-c_{\wh b}H^{-1}(\chi_{\ageq B_{0}/4}\Lmb Q).
\]
Applying \eqref{eq:def-cb} and \eqref{eq:MappingHQinv}, we have
\[
|H^{-1}g-(S_{1}-c_{\wh b}U_{1})|_{2}\aleq\chf_{[B_{0}/4,+\infty)}\frac{y\langle\log(\wh by)\rangle}{|\log\wh b|},
\]
completing the proof of \eqref{eq:ProfDiffStep1-3}.

\textbf{Step 2.} Estimates for $c_{M,\wh b}$ and $\wh b\rd_{\wh b}c_{M,\wh b}$.

Recall the definition \eqref{eq:TRR-alt-def} of $c_{M,\wh b}$. By
\eqref{eq:ProfDiffStep1-1} and \eqref{eq:ProfDiffStep1-2}, we in
particular have 
\begin{align*}
\chf_{(0,2M]}|H^{-1}g| & \aleq\chf_{(0,1]}y^{3}+\chf_{[1,2M]}y\\
\chf_{(0,2M]}|\wh b\rd_{\wh b}(H^{-1}g)| & \aleq\chf_{(0,1]}\frac{y^{3}}{|\log\wh b|^{2}}+\chf_{[1,2M]}\frac{y}{|\log\wh b|}.
\end{align*}
Therefore, we have the following estimates for $c_{M,\wh b}$ and
$\wh b\rd_{\wh b}c_{M,\wh b}$.
\begin{align}
|c_{M,\wh b}| & \aleq\frac{M^{2}}{\log M}=O_{M}(1),\label{eq:ProfDiffStep2-1}\\
|\wh b\rd_{\wh b}c_{M,\wh b}| & \aleq\frac{1}{|\log\wh b|}\cdot\frac{M^{2}}{\log M}=O_{M}\Big(\frac{1}{|\log\wh b|}\Big).\label{eq:ProfDiffStep2-2}
\end{align}

\textbf{Step 3.} Proof of \eqref{eq:ProfDiff-Param1}-\eqref{eq:ProfDiff-Param3}.

We note that \eqref{eq:ProfDiff-Param1} and \eqref{eq:ProfDiff-Param2}
are immediate from \eqref{eq:ProfDiffStep2-1} and $\lmb_{0}-\lmb_{1}+1=2+O(\frac{1}{|\log\nu|})$.
Henceforth, we show \eqref{eq:ProfDiff-Param3}. First, we differentiate
the first equation of \eqref{eq:Rel-nu-and-nuhat}, and use \eqref{eq:ProfDiff-Param1}
and \eqref{eq:ProfDiffStep2-1}-\eqref{eq:ProfDiffStep2-2} to have
\begin{align*}
\rd_{\wh{\nu}}\nu & =\frac{\nu}{\wh{\nu}}=1+\Big(\frac{1}{|\log\nu|}\Big),\\
\rd_{\wh b}\nu & =-\nu((\rd_{\wh b}c_{M,\wh b})\wh b^{2}+2c_{M,\wh b}\wh b)=O_{M}(\nu^{2})=O(\nu^{2-}).
\end{align*}
Next, we differentiate the second equation of \eqref{eq:Rel-nu-and-nuhat},
and use the above display, \eqref{eq:ProfDiff-Param1}, and \eqref{eq:nudnu-eig-val}
to have 
\begin{align*}
\rd_{\wh{\nu}}b & =\frac{-\frac{1}{2}b(\rd_{\wh{\nu}}\nu)\rd_{\nu}(\lmb_{0}-\lmb_{1})}{\frac{1}{2}(\lmb_{0}(\nu)-\lmb_{1}(\nu)+1)}=O\Big(\frac{1}{|\log\nu|^{2}}\Big),\\
\rd_{\wh b}b & =\frac{1-\frac{1}{2}b(\rd_{\wh b}\nu)\rd_{\nu}(\lmb_{0}-\lmb_{1})}{\frac{1}{2}(\lmb_{0}(\nu)-\lmb_{1}(\nu)+1)}=1+O\Big(\frac{1}{|\log\nu|}\Big).
\end{align*}
This completes the proof of \eqref{eq:ProfDiff-Param3}.

\textbf{Step 4.} Difference of $P^{\RR}$ and $P$.

In this step, we show \eqref{eq:ProfDiff} for $P$. We are only interested
in the estimates inside the light cone, i.e., $\rho\leq1$. Thus the
cutoff $\chi_{B_{1}}$ in the definition of $P^{\RR}$ does not play
any role. For $\rho\leq1$, we note by the definitions \eqref{eq:def-PRR},
\eqref{eq:TRR-alt-def}, and \eqref{eq:def-mod-prof} that 
\begin{align}
 & P^{\RR}(\wh b;\frac{\rho}{\wh{\nu}})-P(\nu,b;\rho)\label{eq:ProfDiffStep4-1}\\
 & =\Big(Q_{\wh{\nu}}+\wh b^{2}(H^{-1}g-c_{M,\wh b}\Lmb Q)_{\wh{\nu}}\Big)-\Big(Q_{\nu}+\frac{b}{2}[\varphi_{0}-\varphi_{1}](\nu;\rho)\Big).\nonumber 
\end{align}
For the first term of RHS\eqref{eq:ProfDiffStep4-1}, we use \eqref{eq:ProfDiffStep1-3}
in the form
\[
\|H^{-1}g-(S_{1}-c_{\wh b}U_{1})\|_{(\dot{\calH}_{1}^{2})_{1/\nu}}\aleq\Big\|\chf_{[B_{0}/4,1/\nu]}\frac{1}{|\log\wh b|y}\Big\|_{L^{2}}\lesssim\frac{1}{|\log\nu|}
\]
to have 
\begin{equation}
\wh b^{2}[H^{-1}g]_{\wh{\nu}}=\wh b^{2}\Big(S_{1}-c_{\wh b}U_{1}\Big)_{\wh{\nu}}+O_{(\dot{\calH}_{1}^{2})_{1}^{\nu}}\Big(\frac{\nu}{|\log\nu|}\Big).\label{eq:ProfDiffStep4-6}
\end{equation}
For the second term of RHS\eqref{eq:ProfDiffStep4-1}, we use \eqref{eq:EigenFunc-01}
to have 
\begin{equation}
\frac{1}{2}(\varphi_{0}-\varphi_{1})=\nu[\frks S_{1}+\frku U_{1}]_{\nu}+O_{(\dot{\calH}_{1}^{2})_{1}^{\nu}}\Big(\frac{1}{|\log\nu|}\Big).\label{eq:ProfDiffStep4-7}
\end{equation}
where 
\begin{align*}
\frks(\nu) & \coloneqq\lmb_{0}-\lmb_{1}=1+O\Big(\frac{1}{|\log\nu|}\Big),\\
\frku(\nu) & \coloneqq\frac{1}{2}\big(\lmb_{0}(\lmb_{0}-1)-\lmb_{1}(\lmb_{1}-1)\big)=-\frac{1}{2|\log\nu|}+O\Big(\frac{1}{|\log\nu|^{2}}\Big).
\end{align*}
Substituting \eqref{eq:ProfDiffStep4-6} and \eqref{eq:ProfDiffStep4-7}
into \eqref{eq:ProfDiffStep4-1}, we have 
\begin{align}
 & P^{\RR}(\wh b;\frac{\rho}{\wh{\nu}})-P(\nu,b;\rho)\label{eq:ProfDiffStep4-2}\\
 & =\big(Q_{\wh{\nu}}-Q_{\nu}-c_{M,\wh b}\wh b^{2}\Lmb Q_{\wh{\nu}}\big)+\big(\wh b^{2}[S_{1}]_{\wh{\nu}}-\frks b\nu[S_{1}]_{\nu}\big)\nonumber \\
 & \peq-\Big(\wh b^{2}c_{\wh b}[U_{1}]_{\wh{\nu}}+\frku b\nu[U_{1}]_{\nu}\Big)+O_{(\dot{\calH}_{1}^{2})_{1}^{\nu}}\Big(\frac{\nu}{|\log\nu|}\Big).\nonumber 
\end{align}
Therefore, it suffices to show that each term of RHS\eqref{eq:ProfDiffStep4-2}
is of $O_{(\dot{\calH}_{1}^{2})_{1}^{\nu}}(\frac{\nu}{|\log\nu|})$.

For the first term of RHS\eqref{eq:ProfDiffStep4-2}, we note that
\[
Q_{\wh{\nu}}-Q_{\nu}-c_{M,\wh b}\wh b^{2}\Lmb Q_{\wh{\nu}}=(Q-Q_{\nu/\wh{\nu}}-c_{M,\wh b}\wh b^{2}\Lmb Q)_{\wh{\nu}}.
\]
Thanks to our relation \eqref{eq:Rel-nu-and-nuhat}, we can write
\[
Q-Q_{\nu/\wh{\nu}}-c_{M,\wh b}\wh b^{2}\Lmb Q=\int_{\nu/\wh{\nu}}^{1}(\Lmb Q_{a}-\Lmb Q)\frac{da}{a}=\int_{\nu/\wh{\nu}}^{1}\int_{a}^{1}\Lmb\Lmb Q_{a'}\frac{da'}{a'}\frac{da}{a}.
\]
Hence by \eqref{eq:ProfDiff-Param2} we have 
\begin{equation}
\begin{aligned} & \big\|(Q-Q_{\nu/\wh{\nu}}-c_{M,\wh b}\wh b^{2}\Lmb Q)_{\wh{\nu}}\big\|_{(\dot{\calH}_{1}^{2})_{1}^{\nu}}\\
 & \quad=\frac{1}{\nu}\Big\|(Q-Q_{\nu/\wh{\nu}}-c_{M,\wh b}\wh b^{2}\Lmb Q)_{\wh{\nu}/\nu}\Big\|_{(\dot{\calH}_{1}^{2})_{1/\nu}}\aleq\frac{1}{\nu}\Big|\log\Big(\frac{\nu}{\wh{\nu}}\Big)\Big|^{2}\aleq\nu^{3-}.
\end{aligned}
\label{eq:ProfDiffStep4-3}
\end{equation}

For the second term of RHS\eqref{eq:ProfDiffStep4-2}, we write 
\[
\wh b^{2}[S_{1}]_{\wh{\nu}}-\frks b\nu[S_{1}]_{\nu}=\frac{\wh b^{2}-\frks b\wh{\nu}}{\wh{\nu}}[\wh{\nu}S_{1}]_{\wh{\nu}}+\frks b([\wh{\nu}S_{1}]_{\wh{\nu}}-[\nu S_{1}]_{\nu}).
\]
We estimate each term using \eqref{eq:ProfDiff-Param1}-\eqref{eq:ProfDiff-Param2}
and \eqref{eq:T1S1U1-asymptotics} (in particular improved spatial
decay of $\rd_{yy}S_{1}$ and $\Lmb_{2}S_{1}$): 
\begin{align*}
\Big\|\frac{\wh b^{2}-\frks b\wh{\nu}}{\wh{\nu}}[\wh{\nu}S_{1}]_{\wh{\nu}}\Big\|_{(\dot{\calH}_{1}^{2})_{1}^{\nu}} & =\Big|\frac{\wh b^{2}-\frks b\wh{\nu}}{\wh{\nu}}\Big|\|[S_{1}]_{\wh{\nu}/\nu}\|_{(\dot{\calH}_{1}^{2})_{1/\nu}}\aleq\frac{\nu}{|\log\nu|},\\
\|\frks b([\wh{\nu}S_{1}]_{\wh{\nu}}-[\nu S_{1}]_{\nu})\Big\|_{(\dot{\calH}_{1}^{2})_{1}^{\nu}} & =|\frks b|\Big\|\int_{1}^{\wh{\nu}/\nu}[a\Lmb_{2}S_{1}]_{a}\frac{da}{a}\Big\|_{(\dot{\calH}_{1}^{2})_{1/\nu}}\aleq\nu\Big|\log\Big(\frac{\nu}{\wh{\nu}}\Big)\Big|\aleq\nu^{3-}.
\end{align*}
Thus we have proved that
\begin{equation}
\Big\|\wh b^{2}[S_{1}]_{\wh{\nu}}-\frks b\nu[S_{1}]_{\nu}\Big\|_{(\dot{\calH}_{1}^{2})_{1}^{\nu}}\aleq\frac{\nu}{|\log\nu|}.\label{eq:ProfDiffStep4-4}
\end{equation}

Finally, for the third term of RHS\eqref{eq:ProfDiffStep4-2}, we
write 
\begin{align*}
 & \wh b^{2}c_{\wh b}[U_{1}]_{\wh{\nu}}+\frku b\nu[U_{1}]_{\nu}\\
 & \quad=\Big(\frac{\wh b^{2}}{\wh{\nu}}c_{\wh b}+\frku b\Big)[\wh{\nu}U_{1}]_{\wh{\nu}}+\frku b([\wh{\nu}U_{1}]_{\wh{\nu}}-[\nu U_{1}]_{\nu}).
\end{align*}
As in the proof of \eqref{eq:ProfDiffStep4-4}, we use \eqref{eq:ProfDiff-Param1}-\eqref{eq:ProfDiff-Param2}
and the logarithmically improved spatial decay for $y\geq1$
\[
|\rd_{yy}U_{1}|+\frac{1}{y^{2}}|\Lmb_{2}U_{1}|_{2}\aleq\frac{1}{y}
\]
to have (note that $\|\chf_{[1,\nu^{-1}]}\frac{1}{y}\|_{L^{2}}\sim|\log\nu|^{\frac{1}{2}}$)
\begin{align*}
\Big\|\Big(\frac{\wh b^{2}}{\wh{\nu}}c_{\wh b}+\frku b\Big)[\wh{\nu}U_{1}]_{\wh{\nu}}\Big\|_{(\dot{\calH}_{1}^{2})_{1}^{\nu}} & \aleq\frac{\nu}{|\log\nu|^{2}}\cdot|\log\nu|^{\frac{1}{2}}\aleq\frac{\nu}{|\log\nu|^{\frac{3}{2}}},\\
\Big\|\frku b([\wh{\nu}U_{1}]_{\wh{\nu}}-[\nu U_{1}]_{\nu})\Big\|_{(\dot{\calH}_{1}^{2})_{1}^{\nu}} & \aleq\frac{\nu}{|\log\nu|}\cdot\Big|\log\Big(\frac{\nu}{\wh{\nu}}\Big)\Big||\log\nu|^{\frac{1}{2}}\aleq\nu^{3-}.
\end{align*}
Thus we have proved that 
\begin{equation}
\Big\|\wh b^{2}c_{\wh b}[U_{1}]_{\wh{\nu}}+\frku b[U_{1}]_{\nu}\Big\|_{(\dot{\calH}_{1}^{2})_{1}^{\nu}}\aleq\frac{\nu}{|\log\nu|^{\frac{3}{2}}}.\label{eq:ProfDiffStep4-5}
\end{equation}
Substituting \eqref{eq:ProfDiffStep4-3}-\eqref{eq:ProfDiffStep4-5}
into \eqref{eq:ProfDiffStep4-2} completes the proof of \eqref{eq:ProfDiff}
for $P$.

\textbf{Step 5.} Difference of $\dot{P}^{RR}$ and $\dot{P}$.

In this step, we show \eqref{eq:ProfDiff} for $\dot{P}$. As in the
previous step, we will only consider the region $\rho\leq1$ and the
cutoff $\chi_{B_{1}}$ in the definition of $P$ plays no role. For
$\rho\leq1$, we note by the definitions \eqref{eq:def-Pdot-RR} and
\eqref{eq:def-mod-prof} that 
\begin{equation}
\begin{aligned}\frac{1}{\wh{\nu}}\dot{P}^{\RR}(\wh b;\frac{\rho}{\wh{\nu}})-\dot{P}(\nu,b;\rho) & =\frac{\wh b}{\wh{\nu}}\Big(\Lmb Q_{\wh{\nu}}+\wh b^{2}\Lmb(H^{-1}g-c_{M,\wh b}\Lmb Q)_{\wh{\nu}}\Big)\\
 & \peq-\frac{b}{2}\Big((\lmb_{0}+\Lmb)\varphi_{0}-(\lmb_{1}+\Lmb)\varphi_{1}+\frac{1}{\nu}\Lmb Q_{\nu}\Big),
\end{aligned}
\label{eq:ProfDiffStep5-1}
\end{equation}
where we abbreviated $\lmb_{j}=\lmb_{j}(\nu)$. By \eqref{eq:ProfDiffStep1-1}
and \eqref{eq:ProfDiffStep2-1}, we have 
\begin{align*}
\Big\|\frac{\wh b^{3}}{\wh{\nu}}\Lmb(H^{-1}g)_{\wh{\nu}}\Big\|_{(\dot{H}_{1}^{1})_{1}} & =\Big|\frac{\wh b^{3}}{\wh{\nu}}\Big|\|\Lmb H^{-1}g\|_{(\dot{H}_{1}^{1})_{1/\wh{\nu}}}\aleq\frac{\nu}{|\log\nu|},\\
\Big\|\frac{\wh b^{3}}{\wh{\nu}}c_{M,\wh b}\Lmb Q_{\wh{\nu}}\Big\|_{(\dot{H}_{1}^{1})_{1}} & =\Big|\frac{\wh b^{3}}{\wh{\nu}}c_{M,\wh b}\Big|\|\Lmb Q\|_{(\dot{H}_{1}^{1})_{1/\wh{\nu}}}\aleq\nu^{4-}.
\end{align*}
By \eqref{eq:EigenFuncEst-2}, we have 
\[
\Big\|\varphi_{j}-\frac{1}{\nu}\Lmb Q_{\nu}\Big\|_{(\dot{H}_{1}^{1})_{1}}\aleq\frac{1}{|\log\nu|}.
\]
Substituting the above two displays into \eqref{eq:ProfDiffStep5-1}
yields 
\begin{equation}
\begin{aligned} & \frac{1}{\wh{\nu}}\dot{P}^{\RR}(\wh b;\frac{\rho}{\wh{\nu}})-\dot{P}(\nu,b;\rho)\\
 & \quad=\Big(\frac{\wh b}{\wh{\nu}}\Lmb Q_{\wh{\nu}}-\frac{b}{2}(\lmb_{0}-\lmb_{1}+1)\frac{1}{\nu}\Lmb Q_{\nu}\Big)+O_{(\dot{H}_{1}^{1})_{1}}\Big(\frac{\nu}{|\log\nu|}\Big).
\end{aligned}
\label{eq:ProfDiffStep5-2}
\end{equation}
Next, we write 
\begin{align*}
 & \frac{\wh b}{\wh{\nu}}\Lmb Q_{\wh{\nu}}-\frac{b}{2}(\lmb_{0}-\lmb_{1}+1)\frac{1}{\nu}\Lmb Q_{\nu}\\
 & \quad=\wh b\Big(\frac{1}{\wh{\nu}}\Lmb Q_{\wh{\nu}}-\frac{1}{\nu}\Lmb Q_{\nu}\Big)+\Big(\wh b-\frac{b}{2}(\lmb_{0}-\lmb_{1}+1)\Big)\frac{1}{\nu}\Lmb Q_{\nu}.
\end{align*}
Notice that the second term \emph{vanishes} thanks to the relation
\eqref{eq:Rel-nu-and-nuhat}. The first term is small due to \eqref{eq:ProfDiff-Param2}:
\[
\Big\|\wh b\Big(\frac{1}{\wh{\nu}}\Lmb Q_{\wh{\nu}}-\frac{1}{\nu}\Lmb Q_{\nu}\Big)\Big\|_{(\dot{H}_{1}^{1})_{1}}\aleq\frac{\wh b}{\nu}\int_{1}^{\wh{\nu}/\nu}\Big\|\frac{1}{a}\Lmb_{0}\Lmb Q_{a}\Big\|_{(\dot{H}_{1}^{1})_{1/\nu}}\frac{da}{a}\aleq\Big|\log\Big(\frac{\wh{\nu}}{\td{\nu}}\Big)\Big|\aleq\nu^{2-}.
\]
Substituting this into \eqref{eq:ProfDiffStep5-2} completes the proof
of \eqref{eq:ProfDiff} for $\dot{P}$.
\end{proof}
To show the $\ell_{j}(\bm{\eps})$ bounds \eqref{eq:eps-bound-3}-\eqref{eq:eps-bound-4}
in Proposition~\ref{prop:decomposition}, we also need the following
mapping properties of $\ell_{j}$.
\begin{lem}[Mapping properties of $\ell_{j}$]
\label{lem:ell_j-mapping}We have 
\begin{align}
|\ell_{j}(\bm{f})| & \aleq|\log\nu|^{\frac{1}{2}}\|f\|_{(\dot{\calH}_{1}^{2})_{1}^{\nu}}+\|\dot{f}\|_{(\dot{H}_{1}^{1})_{1}},\label{eq:ell_j-mapping}\\
|[\nu\rd_{\nu}\ell_{j}](\bm{f})| & \aleq\frac{1}{|\log\nu|^{2}}(|\log\nu|^{\frac{1}{2}}\|f\|_{(\dot{\calH}_{1}^{2})_{1}^{\nu}}+\|\dot{f}\|_{(\dot{H}_{1}^{1})_{1}}).\label{eq:rd_nu-ell_j-mapping}
\end{align}
\end{lem}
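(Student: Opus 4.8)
The plan is to reduce both bounds to pointwise estimates on the kernel density $g_j\varphi_j$ — which are read off from Proposition~\ref{prop:FirstTwoEigenpairs} and the definition \eqref{eq:def-g_j} — together with an $L^\infty$-control of $f$ on the light-cone interval $[\nu,1]$ by the norm $\|f\|_{(\dot{\calH}_1^2)_1^\nu}$, losing only a factor $|\log\nu|^{1/2}$. First, since $\lmb_j-\frac12\geq-\frac34$ for $\nu$ small (by \eqref{eq:lmb-hat-asymp}), the bound \eqref{eq:EigenFuncEst-1} gives
\[
|g_j\varphi_j|\aleq\chf_{(0,\nu]}\tfrac{\rho}{\nu^2}+\chf_{[\nu,\frac12]}\tfrac1\rho+\chf_{[\frac12,1]}(1-\rho)^{-\frac34},
\]
whence $\int_{(0,1]}|g_j\varphi_j|\,\rho\,d\rho\aleq1$ and $\int_{(0,1]}\rho^2|g_j\varphi_j|\,d\rho\aleq1$. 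I would write $\ell_j(\bm f)=\langle(\lmb_j+\Lmb_0)f,g_j\varphi_j\rangle+\langle\dot f,g_j\varphi_j\rangle$ and treat the $\dot f$-term by splitting at $\rho=\frac12$: on $(0,\frac12]$ a Cauchy--Schwarz against $\|\chf_{(0,1]}\frac1\rho\dot f\|_{L^2}\leq\|\dot f\|_{(\dot H_1^1)_1}$ yields $O(\|\dot f\|_{(\dot H_1^1)_1})$, while on $[\frac12,1]$ the one-dimensional Sobolev embedding $H^1([\frac12,1])\hookrightarrow L^\infty$ gives $\|\dot f\|_{L^\infty([\frac12,1])}\aleq\|\dot f\|_{(\dot H_1^1)_1}$, so $\int_{1/2}^1|\dot f||g_j\varphi_j|\rho\,d\rho\aleq\|\dot f\|_{(\dot H_1^1)_1}\int_{1/2}^1(1-\rho)^{-3/4}\,d\rho\aleq\|\dot f\|_{(\dot H_1^1)_1}$.

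For the $f$-term I would use $|(\lmb_j+\Lmb_0)f|\aleq|f|+\rho|\rd_\rho f|\aleq\rho|f|_{-1}$ and split $(0,1]=(0,\nu]\cup[\nu,1]$. On $(0,\nu]$ the contribution is $\aleq\frac1{\nu^2}\int_0^\nu\rho^3|f|_{-1}\,d\rho\aleq\nu^2\|f\|_{(\dot{\calH}_1^2)_1^\nu}$ after a Cauchy--Schwarz against the log-weighted part of the norm, hence negligible. On $[\nu,1]$ the heart of the matter is the pointwise estimate
\[
\chf_{[\nu,1]}|f|_{-1}(\rho)\aleq|\log\nu|^{1/2}\,\|f\|_{(\dot{\calH}_1^2)_1^\nu},
\]
which I would obtain by reconstructing $f$ from $\rd_{\rho\rho}f\in L^2(\rho\,d\rho)$ with interior data at $\rho=\nu$: averaging $\|\chf_{(0,1]}\frac{|f|_{-1}}{\rho\langle\log(\frac{\rho}{\nu})\rangle}\|_{L^2}$ over $[\tfrac\nu2,2\nu]$ — where $\rho\langle\log(\frac{\rho}{\nu})\rangle\sim\nu$ — produces a point $\rho_\ast\sim\nu$ with $|f|_{-1}(\rho_\ast)\aleq\|f\|_{(\dot{\calH}_1^2)_1^\nu}$, and then the fundamental theorem of calculus together with the bound on $\|\rd_{\rho\rho}f\|_{L^2(\rho\,d\rho)}$ give $|f(\nu)|\aleq\nu\|f\|$, $|\rd_\rho f(\nu)|\aleq\|f\|$, and for $\rho\in[\nu,1]$
\[
|\rd_\rho f(\rho)|\leq|\rd_\rho f(\nu)|+\Big(\int_\nu^\rho\tfrac{ds}s\Big)^{1/2}\Big(\int_\nu^\rho|\rd_{ss}f|^2\,s\,ds\Big)^{1/2}\aleq|\log\nu|^{1/2}\|f\|,
\]
and similarly $|f(\rho)|,\,|f(\rho)|/\rho\aleq|\log\nu|^{1/2}\|f\|$. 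With this, $\big|\langle(\lmb_j+\Lmb_0)f,g_j\varphi_j\rangle_{[\nu,1]}\big|\leq\|(\lmb_j+\Lmb_0)f\|_{L^\infty([\nu,1])}\int_\nu^1|g_j\varphi_j|\rho\,d\rho\aleq|\log\nu|^{1/2}\|f\|_{(\dot{\calH}_1^2)_1^\nu}$, which gives \eqref{eq:ell_j-mapping}.

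For \eqref{eq:rd_nu-ell_j-mapping} I would differentiate \eqref{eq:def-ell_j} in $\nu$ with $\bm f$ held fixed, obtaining three terms carrying $\nu\rd_\nu\lmb_j$, $(\nu\rd_\nu g_j)\varphi_j$, and $g_j\,\nu\rd_\nu\varphi_j$. By \eqref{eq:nudnu-eig-val} one has $|\nu\rd_\nu\lmb_j|\aleq|\log\nu|^{-2}$; by \eqref{eq:ExactComp-9}, $|\nu\rd_\nu g_j|\aleq|\log\nu|^{-2}\chf_{(0,1]}(1-\rho)^{-3/4}$, so $(\nu\rd_\nu g_j)\varphi_j$ obeys the same integral bounds as $g_j\varphi_j$ up to the factor $|\log\nu|^{-2}$; and \eqref{eq:EigenFuncEst-3} shows that $g_j\,\nu\rd_\nu\varphi_j$ integrates against $\rho\,d\rho$ and $\rho^2\,d\rho$ to $O(|\log\nu|^{-2})$, the additional piece $\chf_{[\nu,1]}\nu^2\rho^{-3}$ there being harmless. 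Re-running the argument of the previous two paragraphs with $g_j\varphi_j$ replaced by each of these densities yields \eqref{eq:rd_nu-ell_j-mapping}.

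The hard part is to extract the sharp exponent $\tfrac12$ rather than $1$: the naive estimate $|\langle(\lmb_j+\Lmb_0)f,g_j\varphi_j\rangle|\aleq\int_0^1\rho|f|_{-1}|g_j\varphi_j|\rho\,d\rho$, handled by Cauchy--Schwarz against $\|\chf_{(0,1]}\frac{|f|_{-1}}{\rho\langle\log(\frac{\rho}{\nu})\rangle}\|_{L^2}\leq\|f\|$, costs $\|\rho\langle\log(\frac{\rho}{\nu})\rangle\|_{L^2((0,1],\rho\,d\rho)}\sim|\log\nu|$ near $\rho=1$; the improvement to $|\log\nu|^{1/2}$ is precisely the replacement of this step by the $L^\infty$-reconstruction of $f$ on $[\nu,1]$, where the \emph{finite} logarithm $\int_\nu^1\frac{ds}s=|\log\nu|$ (rather than the divergent $\int_0^1\frac{ds}s$) produces the exponent after the square root. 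One must also be slightly careful with the light-cone weight $g_j\sim(1-\rho)^{\lmb_j-1/2}$, which is worst for $j=1$ (where $\lmb_1<0$), but $\lmb_j-\tfrac12\geq-\tfrac34$ keeps both $g_j\varphi_j$ and $\dot f\,g_j\varphi_j$ in $L^1((0,1],\rho\,d\rho)$, which is all that is needed.
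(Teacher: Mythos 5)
Your proposal is correct and follows essentially the same route as the paper: both reduce the bounds to a weighted $L^{\infty}$ control of $f$ and $\dot f$ (losing exactly one factor $|\log\nu|^{\frac{1}{2}}$ on $f$) combined with $L^{1}(\rho\,d\rho)$ integration of the kernels $g_{j}\varphi_{j}$, $(\nu\rd_{\nu}g_{j})\varphi_{j}$, $g_{j}\nu\rd_{\nu}\varphi_{j}$ via \eqref{eq:ExactComp-6-2}, \eqref{eq:ExactComp-9}, \eqref{eq:nudnu-eig-val}, and \eqref{eq:EigenFuncEst-3}, after the same three-term expansion of $\nu\rd_{\nu}\ell_{j}$ as in \eqref{eq:mapping-03}. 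The only difference is that you re-derive the $L^{\infty}$ control of $|f|_{-1}$ on $[\nu,1]$ by hand (averaging at $\rho\sim\nu$ plus the fundamental theorem of calculus), whereas the paper invokes the sharper pointwise bound $|f|_{1}\aleq\rho\langle\log(\rho/\nu)\rangle^{\frac{1}{2}}\|f\|_{(\dot{\calH}_{1}^{2})_{1}^{\nu}}$ from Lemma~\ref{lem:Local-Linfty-est}; your cruder uniform bound suffices here.
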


\begin{proof}
In the proof, we use the weighted $L^{\infty}$-bounds in Lemma~\ref{lem:Local-Linfty-est},
which by scaling reads 
\begin{align}
\chf_{(0,1]}|f|_{1} & \aleq\|f\|_{(\dot{\calH}_{1}^{2})_{1}^{\nu}}\cdot\rho\langle\log(\frac{\rho}{\nu})\rangle^{\frac{1}{2}},\label{eq:mapping-01}\\
\chf_{(0,1]}|\dot{f}| & \aleq\|\dot{f}\|_{(\dot{H}_{1}^{1})_{1}}.\label{eq:mapping-02}
\end{align}
One important point here is that we only have power $\frac{1}{2}$
in the log factor of RHS\eqref{eq:mapping-01}. (Compare this with
the factor $\langle\log y\rangle^{-1}$ in the $\dot{\calH}_{1}^{2}$-norm.)

We first show \eqref{eq:ell_j-mapping}. Recall that 
\begin{align*}
\ell_{j}(\bm{f}) & =\langle(\lmb_{j}+\Lmb)f+\dot{f},g_{j}\varphi_{j}\rangle,\tag{\ref{eq:def-ell_j}}\\
|g_{j}\varphi_{j}| & \aleq\chf_{(0,\frac{1}{2}]}\rho^{-1}+\chf_{[\frac{1}{2},1]}\cdot(1-\rho)^{-\frac{3}{4}}.\tag{\ref{eq:ExactComp-6-2}}
\end{align*}
Thus we have (by \eqref{eq:mapping-01})
\begin{align*}
\big|\langle(\lmb_{j}+\Lmb)f,g_{j}\varphi_{j}\rangle\big| & \aleq\big\||f|_{1}\cdot|g_{j}\varphi_{j}|\big\|_{L^{1}}\\
 & \aleq\Big(\int_{0}^{\frac{1}{2}}\rho\langle\log(\frac{\rho}{\nu})\rangle^{\frac{1}{2}}d\rho+\int_{\frac{1}{2}}^{1}|\log\nu|^{\frac{1}{2}}(1-\rho)^{-\frac{3}{4}}d\rho\Big)\|f\|_{(\dot{\calH}_{1}^{2})_{1}^{\nu}}\\
 & \aleq|\log\nu|^{\frac{1}{2}}\|f\|_{(\dot{\calH}_{1}^{2})_{1}^{\nu}}
\end{align*}
and (by \eqref{eq:mapping-02})
\[
\big|\langle\dot{f},g_{j}\varphi_{j}\rangle\big|\aleq\Big(\int_{0}^{\frac{1}{2}}d\rho+\int_{\frac{1}{2}}^{1}(1-\rho)^{-\frac{3}{4}}d\rho\Big)\|\dot{f}\|_{(\dot{H}_{1}^{1})_{1}}\aleq\|\dot{f}\|_{(\dot{H}_{1}^{1})_{1}}.
\]
This completes the proof of \eqref{eq:ell_j-mapping}.

We turn to show \eqref{eq:rd_nu-ell_j-mapping}. Note that 
\begin{align}
[\nu\rd_{\nu}\ell_{j}](\bm{f}) & =(\nu\rd_{\nu}\lmb_{j})\langle f,g_{j}\varphi_{j}\rangle+\langle(\lmb_{j}+\Lmb_{0})f+\dot{f},(\nu\rd_{\nu}g_{j})\varphi_{j}\rangle\label{eq:mapping-03}\\
 & \peq+\langle(\lmb_{j}+\Lmb_{0})f+\dot{f},g_{j}(\nu\rd_{\nu}\varphi_{j})\rangle.\nonumber 
\end{align}
We also recall that 
\[
|(\nu\rd_{\nu}\lmb_{j})g_{j}|+|\nu\rd_{\nu}g_{j}|\aleq\frac{1}{|\log\nu|^{2}}\chf_{(0,1]}\cdot(1-\rho)^{-\frac{3}{4}},\tag{\ref{eq:ExactComp-9}}
\]
which is just $\frac{1}{|\log\nu|^{2}}$ times \eqref{eq:ExactComp-6-2}.
Thus the first line of RHS\eqref{eq:mapping-03} is estimated by 
\[
\frac{1}{|\log\nu|^{2}}(|\log\nu|^{\frac{1}{2}}\|f\|_{(\dot{\calH}_{1}^{2})_{1}^{\nu}}+\|\dot{f}\|_{(\dot{H}_{1}^{1})_{1}})
\]
by the proof of \eqref{eq:ell_j-mapping}. Using \eqref{eq:EigenFuncEst-3},
the second line of RHS\eqref{eq:mapping-03} can be estimated by 
\begin{align*}
 & \big|\langle(\lmb_{j}+\Lmb_{0})f,g_{j}(\nu\rd_{\nu}\varphi_{j})\rangle\big|\\
 & \aleq\bigg\{\int_{0}^{\nu}\frac{\rho^{3}\langle\log(\frac{\rho}{\nu})\rangle^{\frac{1}{2}}}{\nu^{2}}d\rho+\int_{\nu}^{\frac{1}{2}}\Big(\frac{\nu^{2}}{\rho}+\frac{\rho^{3}\langle\log\rho\rangle}{|\log\nu|^{2}}\Big)\langle\log(\frac{\rho}{\nu})\rangle^{\frac{1}{2}}d\rho\\
 & \peq\qquad\qquad\qquad\qquad+\int_{\frac{1}{2}}^{1}\frac{(1-\rho)^{-\frac{3}{4}}}{|\log\nu|^{\frac{3}{2}}}d\rho\bigg\}\|f\|_{(\dot{\calH}_{1}^{2})_{1}^{\nu}}\aleq\frac{1}{|\log\nu|^{\frac{3}{2}}}\|f\|_{(\dot{\calH}_{1}^{2})_{1}^{\nu}}
\end{align*}
and 
\begin{align*}
 & \big|\langle\dot{f},g_{j}(\nu\rd_{\nu}\varphi_{j})\rangle\big|\\
 & \aleq\bigg\{\int_{0}^{\nu}\frac{\rho^{2}}{\nu^{2}}d\rho+\int_{\nu}^{\frac{1}{2}}\Big(\frac{\nu^{2}}{\rho^{2}}+\frac{\rho^{2}\langle\log\rho\rangle}{|\log\nu|^{2}}\Big)d\rho+\int_{\frac{1}{2}}^{1}\frac{(1-\rho)^{-\frac{3}{4}}}{|\log\nu|^{2}}d\rho\bigg\}\|\dot{f}\|_{(\dot{H}_{1}^{1})_{1}}\\
 & \aleq\frac{1}{|\log\nu|^{2}}\|\dot{f}\|_{(\dot{H}_{1}^{1})_{1}}.
\end{align*}
This completes the proof of \eqref{eq:rd_nu-ell_j-mapping}.
\end{proof}
\begin{proof}[Proof of Proposition~\ref{prop:decomposition}]
For all large $\tau$, define $\nu(\tau)$ and $b(\tau)$ using
Lemma~\ref{lem:ProfDiff}. The estimates \eqref{eq:nu-b-bound} and
\eqref{eq:rough-asymp-nu-b} follow from combining the corresponding
statements \eqref{eq:nu-b-RR-bound} and \eqref{eq:nu-RR-asymp} for
$\wh{\nu}(\tau)$ and $\wh b(\tau)$ with \eqref{eq:ProfDiff-Param1}-\eqref{eq:ProfDiff-Param3}.
The bound \eqref{eq:eps-bound-1} follows from \eqref{eq:eRR-est3}
and \eqref{eq:ProfDiff}. Now \eqref{eq:eps-bound-1} implies \eqref{eq:eps-bound-2},
\eqref{eq:eps-bound-3}, and \eqref{eq:eps-bound-4} thanks to \eqref{eq:mapping-01}-\eqref{eq:mapping-02}
and Lemma~\ref{lem:ell_j-mapping}. This completes the proof of Proposition~\ref{prop:decomposition}.
\end{proof}

\subsection{\label{subsec:Modulation-estimates}Modulation estimates}

So far, we started with a blow-up solution $\bm{u}=(u,\dot{u})$ described
in Proposition~\ref{prop:RR-blow-up-original}, applied the self-similar
transform to $\bm{u}$ to get $\bm{v}=(v,\dot{v})$ as in Corollary~\ref{cor:RR-blow-up-self-similar},
and finally decomposed $\bm{v}$ according to Proposition~\ref{prop:decomposition}.
The main goal (Proposition~\ref{prop:ModulationEstimates}) of this
subsection is to obtain refined modulation equations for the parameters
$\nu$ and $b$ by testing the evolution equation of $\bm{\eps}$
against $\ell_{j}$, where we recall 
\begin{align*}
\ell_{j}(\bm{\eps}) & =\langle(\lmb_{j}+\Lmb_{0})\eps+\dot{\eps},g_{j}\varphi_{j}\rangle,\\
g_{j}(\nu;\rho) & =\chf_{(0,1]}(\rho)\cdot(1-\rho^{2})^{\lmb_{j}-\frac{1}{2}}.
\end{align*}

It is instructive to see how testing against $\ell_{j}$ improves
the modulation estimates in \cite{RaphaelRodnianski2012Publ.Math.}.
The modulation estimates in \cite[Section 7]{RaphaelRodnianski2012Publ.Math.}
are obtained by testing the equation for $\eps^{\RR}$ against $\Lmb P_{B_{0}}^{\RR}$,
where $P_{B_{0}}^{\RR}$ is defined similarly with \eqref{eq:def-PRR}
but the cutoff $\chi_{B_{1}}$ is replaced by $\chi_{B_{0}}$. This
motivates some function $G(b)\approx4b|\log b|$ with the property
\begin{equation}
\lmb\{G(b)+O(b)\}_{t}=-2b^{2}+O\Big(\frac{b^{2}}{|\log b|}\Big).\label{eq:RR-ref-mod-eqn}
\end{equation}
Note that \eqref{eq:RR-ref-mod-eqn} is not sufficient to deduce the
sharp blow-up rate \eqref{eq:sharp-blow-up-rate} because \eqref{eq:sharp-blow-up-rate}
requires more refined information on both the $O(b)$-term of LHS\eqref{eq:RR-ref-mod-eqn}
and the $O(\frac{b^{2}}{|\log b|})$-order term of RHS\eqref{eq:RR-ref-mod-eqn}.

The problem is that there are several error terms in \eqref{eq:RR-ref-mod-eqn}
which we do not know whether they can be improved or not. One type
of such terms arises from the cutoff errors; even in the definition
of $G(b)$ there is a cutoff error of size $O(b)$. Here the spatial
decays of the involved profiles are critical in the sense that changing
the cutoff radius does not improve the error. Another type of such
error terms in RHS\eqref{eq:RR-ref-mod-eqn} would be the linear error
term
\begin{equation}
\langle H_{B_{1}}\eps^{\RR},\Lmb P_{B_{0}}^{\RR}\rangle,\label{eq:RR-ref-mod-eqn-2}
\end{equation}
where $H_{B_{1}}\eps=-\rd_{yy}\eps-\frac{1}{y}\rd_{y}\eps+b^{2}\Lmb_{0}\Lmb\eps+\frac{1}{y^{2}}\cos(P_{B_{0}}^{\RR})\eps$.
From the construction of $\Lmb P_{B_{0}}^{\RR}$, \eqref{eq:RR-ref-mod-eqn-2}
essentially becomes $\langle\eps^{\RR},\Lmb\Psi_{B_{0}}+2\Psi_{B_{0}}\rangle$,
where $\Psi_{B_{0}}$ is the equation error for the modified profile
$\Lmb P_{B_{0}}^{\RR}$. This error is of critical size $O(\frac{b^{2}}{|\log b|})$,
and improving this error requires either the improvements on the size
of $\eps^{\RR}$ or the modified profile ansatz, which seem to be
difficult.

Our key observation is that, without changing the modified profiles
or improving the bounds for $\eps$, one can still derive refined
modulation equations \emph{if one changes the test function}. We use
$\ell_{j}$ in the self-similar coordinates, instead of $\langle\cdot,\Lmb P_{B_{0}}^{\RR}\rangle$
in the inner coordinates for $\eps^{\RR}$. The linear functional
$\ell_{j}$ is defined explicitly \eqref{eq:def-ell_j} and the sharp
cutoff $\chf_{(0,1]}$ in its definition does not cause any problem
in the computations, thanks to the choice of the weight $g_{j}$ (c.f.
the proof of \eqref{eq:Invariance}). Moreover, due to the invariance
\eqref{eq:Invariance}, the linear error term (the analogue of \eqref{eq:RR-ref-mod-eqn-2}
in this case) has additional structure, which is simply 
\[
\ell_{j}(\mathbf{M}_{\nu}\bm{\eps})=\lmb_{j}\ell_{j}(\bm{\eps}).
\]
In particular, this error can be improved by a logarithmic factor
when $j=1$, thanks to $|\lmb_{1}|\lesssim\frac{1}{|\log\nu|}$.

We now derive the evolution equation for $\bm{\eps}$. Recall \eqref{eq:v-vec-eqn}:
\[
\rd_{\tau}\bm{v}=\rd_{\tau}\begin{bmatrix}v\\
\dot{v}
\end{bmatrix}=\begin{bmatrix}-\Lmb v+\dot{v}\\
\rd_{\rho\rho}v+\frac{1}{\rho}\rd_{\rho}v-\frac{\sin(2v)}{2\rho^{2}}-\Lmb_{0}\dot{v}
\end{bmatrix}.
\]
Substituting the decomposition \eqref{eq:decomposition} of $\bm{v}$
into the above, we have 
\begin{equation}
\rd_{\tau}(\bm{P}+\bm{\eps})+\bm{\Lmb Q}_{\nu}=\mathbf{M}_{\nu}\td{\bm{v}}-\begin{bmatrix}0\\
R_{\mathrm{NL}}(\td v)
\end{bmatrix},\label{eq:evol-eqn-for-mod-est}
\end{equation}
where 
\begin{align}
\td{\bm{v}} & \coloneqq\bm{v}-\bm{Q}_{\nu}=(\bm{P}-\bm{Q}_{\nu})+\bm{\eps},\label{eq:def-v-tilde}\\
R_{\mathrm{NL}}(\td v) & \coloneqq\frac{1}{2\rho^{2}}\{\sin(2v)-\sin(2Q_{\nu})-2\cos(2Q_{\nu})\td v\}.\label{eq:def-R-NL}
\end{align}
We remark that the nonlinear error term $R_{\mathrm{NL}}(\td v)$
will not affect the modulation equations.
\begin{prop}[Modulation estimates]
\label{prop:ModulationEstimates}We have a rough estimate 
\begin{equation}
|\rd_{\tau}[\ell_{j}(\bm{\eps})]|\aleq\nu,\qquad\forall j\in\{0,1\}.\label{eq:eps-bound-5}
\end{equation}
Moreover, we have the following refined modulation estimates 
\begin{equation}
\bigg|\frac{\nu_{\tau}}{\nu}+\Big(\frac{b}{\nu}-1\Big)+\frac{\nu_{1}}{|\log\nu|}+\sum_{j=0}^{1}L_{j}^{(\nu)}(\rd_{\tau}-\lmb_{j})[\ell_{j}(\bm{\eps})]\bigg|\aleq\frac{1}{|\log\nu|^{2}},\label{eq:RefMod-nu}
\end{equation}
and 
\begin{equation}
\begin{aligned}\bigg|\frac{b_{\tau}}{b}+\Big(\frac{b}{\nu}-1\Big)\frac{b_{-1}}{|\log\nu|}+\frac{b_{1}}{|\log\nu|}+\frac{b_{2}}{|\log\nu|^{2}}\qquad\qquad\\
+\sum_{j=0}^{1}L_{j}^{(b)}(\rd_{\tau}-\lmb_{j})[\ell_{j}(\bm{\eps})]\bigg| & \aleq\frac{1}{|\log\nu|^{3}},
\end{aligned}
\label{eq:RefMod-b}
\end{equation}
where 
\begin{equation}
\nu_{1}=\frac{1}{3},\quad b_{-1}=\frac{1}{2},\quad b_{1}=\frac{1}{2},\quad b_{2}=\frac{5}{12}-\frac{\log2}{2},\label{eq:values-nu1-b-12}
\end{equation}
and $L_{j}^{(\nu)}$ and $L_{j}^{(b)}$ are functions satisfying 
\begin{align}
|L_{j}^{(\nu)}|+|L_{j}^{(b)}| & \aleq\frac{1}{\nu|\log\nu|},\label{eq:Lj-1}\\
|\rd_{\tau}L_{j}^{(\nu)}|+|\rd_{\tau}L_{j}^{(b)}| & \aleq\frac{1}{\nu|\log\nu|^{2}},\label{eq:Lj-2}\\
|L_{0}^{(b)}|+|L_{1}^{(\nu)}-L_{1}^{(b)}| & \aleq\frac{1}{\nu|\log\nu|^{2}}.\label{eq:Lj-3}
\end{align}
\end{prop}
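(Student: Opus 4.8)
\textbf{Proof strategy for Proposition~\ref{prop:ModulationEstimates}.}

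The plan is to test the evolution equation \eqref{eq:evol-eqn-for-mod-est} for $\bm{P}+\bm{\eps}$ against each $\ell_j$ and extract the ODE system for $\nu$ and $b$ from the resulting scalar identities. First I would compute $\rd_\tau[\ell_j(\bm{\eps})]$ directly. Since $\ell_j$ depends on $\tau$ only through $\nu(\tau)$ (via $\lmb_j(\nu)$, $g_j(\nu;\cdot)$, $\varphi_j(\nu;\cdot)$), we have $\rd_\tau[\ell_j(\bm{\eps})] = \ell_j(\rd_\tau\bm{\eps}) + \nu_\tau[\nu\rd_\nu\ell_j/\nu](\bm{\eps})$, and then substitute $\rd_\tau\bm{\eps} = -\rd_\tau\bm{P} - \bm{\Lmb Q}_\nu + \mathbf{M}_\nu\td{\bm{v}} - (0,R_{\mathrm{NL}})^t$ from \eqref{eq:evol-eqn-for-mod-est}. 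The term $\ell_j(\mathbf{M}_\nu\td{\bm{v}})$ is handled by the invariance \eqref{eq:Invariance}, giving $\lmb_j\ell_j(\td{\bm{v}}) = \lmb_j\ell_j(\bm{P}-\bm{Q}_\nu) + \lmb_j\ell_j(\bm{\eps})$; this is the key algebraic simplification that moves $\lmb_j\ell_j(\bm{\eps})$ to the left side (producing the $(\rd_\tau - \lmb_j)[\ell_j(\bm{\eps})]$ combination in \eqref{eq:RefMod-nu}--\eqref{eq:RefMod-b}). The rough bound \eqref{eq:eps-bound-5} should follow immediately from this identity together with the crude size bounds: $\ell_j$ applied to the explicitly-known profile pieces is $O(\nu|\log\nu|)$ at worst, $\ell_j(\bm{\eps})$ is $O(\nu/|\log\nu|^{1/2})$ by \eqref{eq:eps-bound-3}, $R_{\mathrm{NL}}$ is quadratically small, and the $\mathbf{M}_\nu$-term is controlled by invariance; after dividing through one gets the stated $O(\nu)$.

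For the refined equations, the main work is computing $\ell_j(\rd_\tau\bm{P})$ and $\ell_j(\bm{\Lmb Q}_\nu)$ precisely. Using the definition \eqref{eq:def-mod-prof}, $\rd_\tau\bm{P} = \nu_\tau\rd_\nu\bm{P} + b_\tau\rd_b\bm{P}$. The $\rd_b\bm{P}$ part is $\frac12\{\bm{\varphi}_0 - \bm{\varphi}_1 + (0,\tfrac1\nu\Lmb Q_\nu)^t\}$, so $\ell_j(\rd_b\bm{P}) = \frkc_j$ by the very definition \eqref{eq:def-frkc}; the $\rd_\nu\bm{P}$ part requires $\ell_j(\rd_\nu\bm{Q}_\nu) = \tfrac1\nu\ell_j(-\bm{\Lmb Q}_\nu) = -\tfrac1\nu\frkb_j$ and the quantity $\ell_j(\nu\rd_\nu(\bm{\varphi}_0 - \bm{\varphi}_1))$, which is where \eqref{eq:OneMoreComp} enters. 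Assembling these with the sharp expansions \eqref{eq:frkb-asymp}, \eqref{eq:frkc-asymp}, the transversality \eqref{eq:Transv}, and the eigenvalue asymptotics \eqref{eq:lmb-hat-asymp}, I would get two linear equations in $(\nu_\tau/\nu, b_\tau/b)$ with coefficients of size $\sim|\log\nu|$ on the diagonal (from $\frkb_0, \frkc_j \sim 4|\log\nu|$) and explicit constant off-diagonal/lower-order terms. Solving this $2\times2$ system — the coefficient matrix is invertible because its determinant is $\sim|\log\nu|^2$ — and carefully tracking the $1/|\log\nu|$ and $1/|\log\nu|^2$ terms yields \eqref{eq:RefMod-nu}--\eqref{eq:RefMod-b} with the explicit constants \eqref{eq:values-nu1-b-12}. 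The functions $L_j^{(\nu)}, L_j^{(b)}$ are the entries of the inverse matrix (times $1/\nu$ from the $\ell_j(\rd_\nu\bm{Q}_\nu)$ normalization), so \eqref{eq:Lj-1} follows from $|\ell_j(\bm{\varphi}_j)|\sim|\log\nu|$; \eqref{eq:Lj-2} follows from the $\nu\rd_\nu$ bounds \eqref{eq:nudnu-frk} and \eqref{eq:nudnu-eig-val}; and \eqref{eq:Lj-3} requires noticing that the $j=1$ contributions to the $\nu$- and $b$-equations differ only at higher order (because $\lmb_1 = O(1/|\log\nu|)$ and $\frkc_1 - \frkb_1$-type cancellations), while $L_0^{(b)}$ is small because the $b$-equation decouples from the $\bm{\varphi}_0$ direction to leading order.

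The \textbf{main obstacle} I anticipate is the bookkeeping of error terms at the $1/|\log\nu|^2$ level in the $b$-equation \eqref{eq:RefMod-b}. Several sources must all be shown to be genuinely $O(1/|\log\nu|^3)$ after the division: the nonlinear term $\ell_j(0,R_{\mathrm{NL}})^t$ (needs the Hardy-type control \eqref{eq:eps-bound-2} on $\eps$ inside the cone, together with $|g_j\varphi_j|\lesssim \chf_{(0,1/2]}\rho^{-1} + \chf_{[1/2,1]}(1-\rho)^{-3/4}$ from \eqref{eq:ExactComp-6-2}, and the quadratic smallness); the cross term $\lmb_j\ell_j(\bm{P}-\bm{Q}_\nu)$, which is $\frac{b\lmb_j}{2}\ell_j(\bm{\varphi}_0-\bm{\varphi}_1) + \frac{b\lmb_j}{2}\langle\tfrac1\nu\Lmb Q_\nu, g_j\varphi_j\rangle = b\lmb_j\frkc_j$ — for $j=1$ this is $O(b)$ since $\lmb_1\frkc_1 = O(1)$, so it contributes at the right order but its precise constant must be retained; and the $\nu_\tau(\nu\rd_\nu\ell_j)(\bm{\eps})/\nu$ correction, controlled by \eqref{eq:eps-bound-4}. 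Getting the rational constants $b_1 = \frac12$, $b_2 = \frac{5}{12} - \frac{\log 2}{2}$ exactly right demands that the $1/|\log\nu|^2$ expansion of $\frkb_1$ in \eqref{eq:frkb-asymp} (which used the precise $1/|\log\nu|^2$ term of $\lmb_1$), the expansions of $\frkc_0, \frkc_1$, and the $d_0$-value $1 - 2\log 2$ from \eqref{eq:digamma-values} all be combined without arithmetic slips — this is routine but long, and is the part where I would be most careful to cross-check against the parabolic-case computations in \cite{CollotGhoulMasmoudiNguyen2019arXiv2} and the self-consistency of \eqref{eq:sharp-blow-up-rate}.
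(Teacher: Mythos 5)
Your proposal follows essentially the same route as the paper: test the $\bm{\eps}$-equation against $\ell_{j}$, use the invariance \eqref{eq:Invariance} to produce the $(\rd_{\tau}-\lmb_{j})[\ell_{j}(\bm{\eps})]$ combination, compute $\ell_{j}(\rd_{b}\bm{P})=\frkc_{j}$ and $\ell_{j}(\rd_{\nu}\bm{P})$ via \eqref{eq:def-frkc}, \eqref{eq:def-frkb}, and \eqref{eq:OneMoreComp}, treat $R_{\mathrm{NL}}$ and $\nu_{\tau}[\rd_{\nu}\ell_{j}](\bm{\eps})$ as errors, and invert the resulting $2\times2$ system using \eqref{eq:lmb-hat-asymp} and \eqref{eq:frkb-asymp}--\eqref{eq:frkc-asymp}. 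One point to make explicit: for the rough bound \eqref{eq:eps-bound-5} with $j=0$, the profile terms $\nu\frkb_{0}$ and $b\lmb_{0}\frkc_{0}$ are each of size $\nu|\log\nu|$, and it is only their \emph{difference} that is $O(\nu)$ (via $\lmb_{0}=1+O(1/|\log\nu|)$ and $b/\nu=1+O(1/|\log\nu|)$), so this cancellation, rather than crude size bounds alone, is what yields the stated estimate.
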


\begin{rem}
\label{rem:mod-est}In order to prove the universality and to determine
the precise constant of the blow-up rate (see the proof of Theorem~\ref{thm:MainThm}
in Section~\ref{subsec:Proof-of-Theorem}), it is necessary to know
precise $\frac{1}{|\log\nu|^{2}}$-order terms (i.e., the values of
$b_{-1}$ and $b_{2}$) of the modulation equation \eqref{eq:RefMod-b}
for $b_{\tau}$. In contrast, the $\nu_{\tau}$-equation \eqref{eq:RefMod-nu}
does not need to be as precise as the $b_{\tau}$-equation. This fact
allows us to simplify some of the computations of $\lmb_{j},\frkb_{j},\frkc_{j}$
(see Lemma~\ref{lem:conv-comp-for-mod-eqn}); only $\lmb_{1}$ and
$\frkb_{1}$ require precise $\frac{1}{|\log\nu|}$-order terms (see
\eqref{eq:RefModStep5-5}).

In principle, it is possible to compute precise $\frac{1}{|\log\nu|^{2}}$-order
terms of \eqref{eq:RefMod-nu}. However, one needs to use the delicate
information \eqref{eq:SharpEigftDecomp} on the eigenfunctions, and
the computations of definite integrals are much more involved.
\end{rem}

\begin{rem}
On the other hand, because of the necessary accuracies for the modulation
equations, we cannot simply regard $(\rd_{\tau}-\lmb_{j})\ell_{j}(\bm{\eps})$-type
terms in \eqref{eq:RefMod-nu}-\eqref{eq:RefMod-b} as perturbative
terms (mostly due to the rough estimate \eqref{eq:eps-bound-5}).
When integrating the modulation equations in Section~\ref{subsec:Proof-of-Theorem},
we will always need to incorporate these $\bm{\eps}$-dependent terms
as \emph{corrections} to our modulation parameters $\nu$ and $b$,
and also utilize the structure \eqref{eq:Lj-1}-\eqref{eq:Lj-3} of
$L_{j}$'s.
\end{rem}

\begin{proof}[Proof of Proposition~\ref{prop:ModulationEstimates}]
Recall $\frkb_{j}$ and $\frkc_{j}$ defined from Lemma~\ref{lem:conv-comp-for-mod-eqn}.
These quantities will be important to compute the universal constants
shown in \eqref{eq:values-nu1-b-12}.

\textbf{Step 1.} Algebraic computations.

In this step, we claim the following algebraic identity: 
\begin{equation}
\begin{aligned}\nu_{\tau}\ell_{j}(\rd_{\nu}\bm{P})+b_{\tau}\ell_{j}(\rd_{b}\bm{P})+(\rd_{\tau}-\lmb_{j})[\ell_{j}(\bm{\eps})] & +\nu\frkb_{j}-b\lmb_{j}\frkc_{j}\\
 & =\nu_{\tau}[\rd_{\nu}\ell_{j}](\bm{\eps})-\langle R_{\mathrm{NL}}(\td v),g_{j}\varphi_{j}\rangle.
\end{aligned}
\label{eq:mod-eqn-step1-claim}
\end{equation}
To see this, we start from taking $\ell_{j}$ to the equation \eqref{eq:evol-eqn-for-mod-est}:
\begin{equation}
\ell_{j}(\rd_{\tau}\bm{P})+\ell_{j}(\rd_{\tau}\bm{\eps})+\ell_{j}(\bm{\Lmb Q}_{\nu})=\ell_{j}({\bf M}_{\nu}\td{\bm{v}})-\langle R_{\mathrm{NL}}(\td v),g_{j}\varphi_{j}\rangle.\label{eq:mod-eqn-step1-2}
\end{equation}
For the first term of LHS\eqref{eq:mod-eqn-step1-2}, we write 
\[
\ell_{j}(\rd_{\tau}\bm{P})=\nu_{\tau}\ell_{j}(\rd_{\nu}\bm{P})+b_{\tau}\ell_{j}(\rd_{b}\bm{P}).
\]
For the second term of LHS\eqref{eq:mod-eqn-step1-2}, we write 
\[
\ell_{j}(\rd_{\tau}\bm{\eps})=\rd_{\tau}[\ell_{j}(\bm{\eps})]-\nu_{\tau}[\rd_{\nu}\ell_{j}](\bm{\eps}).
\]
For the third term of LHS\eqref{eq:mod-eqn-step1-2}, by \eqref{eq:def-frkb}
we have 
\[
\ell_{j}(\bm{\Lmb Q}_{\nu})=\nu\frkb_{j}
\]
For the first term of RHS\eqref{eq:mod-eqn-step1-2}, we use the invariance
\eqref{eq:Invariance} of ${\bf M}_{\nu}$ and the definitions of
$\td{\bm{v}}$, $\bm{P}$, and $\frkc_{j}$ (see \eqref{eq:def-v-tilde},
\eqref{eq:def-mod-prof}, and \eqref{eq:def-frkc}) to write 
\[
\ell_{j}({\bf M}_{\nu}\td{\bm{v}})=\lmb_{j}\ell_{j}(\td{\bm{v}})=\lmb_{j}\ell_{j}(\bm{P}-\bm{Q}_{\nu})+\lmb_{j}\ell_{j}(\bm{\eps})=b\lmb_{j}\frkc_{j}+\lmb_{j}\ell_{j}(\bm{\eps}).
\]
Substituting the previous displays into \eqref{eq:mod-eqn-step1-2}
completes the proof of \eqref{eq:mod-eqn-step1-claim}.

\textbf{Step 2. }Computation of $\ell_{j}(\rd_{\nu}\bm{P})$ and $\ell_{j}(\rd_{b}\bm{P})$.

In this step, we claim 
\begin{align}
\ell_{j}(\rd_{\nu}\bm{P}) & =-(\frkb_{j}+2)+O\Big(\frac{1}{|\log\nu|}\Big),\label{eq:ell-rd_nu-P}\\
\ell_{j}(\rd_{b}\bm{P}) & =\frkc_{j}.\label{eq:ell-rd_b-P}
\end{align}
Assuming these claims and using \eqref{eq:nu-b-bound}, \eqref{eq:mod-eqn-step1-claim}
becomes 
\begin{equation}
\begin{aligned}-\nu_{\tau}(\frkb_{j}+2)+b_{\tau}\frkc_{j} & +(\rd_{\tau}-\lmb_{j})[\ell_{j}(\bm{\eps})]+\nu\frkb_{j}-b\lmb_{j}\frkc_{j}\\
 & =\nu_{\tau}[\rd_{\nu}\ell_{j}](\bm{\eps})-\langle R_{\mathrm{NL}}(\td v),g_{j}\varphi_{j}\rangle+O\Big(\frac{\nu}{|\log\nu|^{2}}\Big).
\end{aligned}
\label{eq:mod-eqn-step2-1}
\end{equation}
From now on, we show \eqref{eq:ell-rd_nu-P} and \eqref{eq:ell-rd_b-P}.

\emph{Proof of \eqref{eq:ell-rd_nu-P}.} Observe that 
\[
\ell_{j}(\rd_{\nu}\bm{P})=-\ell_{j}\Big(\frac{1}{\nu}\bm{\Lmb Q}_{\nu}\Big)+\frac{b}{2\nu}\Big\{\ell_{j}(\nu\rd_{\nu}(\bm{\varphi}_{0}-\bm{\varphi}_{1}))-\Big\langle\frac{1}{\nu}\Lmb_{0}\Lmb Q_{\nu},g_{j}\varphi_{j}\Big\rangle\Big\}.
\]
Applying \eqref{eq:def-frkb}, \eqref{eq:OneMoreComp}, and $|\frac{b}{\nu}-1|\aleq\frac{1}{|\log\nu|}$
to the above, we get \eqref{eq:ell-rd_nu-P}.

\emph{Proof of \eqref{eq:ell-rd_b-P}.} This directly follows from
the definition of $\bm{P}$ and $\frkc_{j}$ (see \eqref{eq:def-mod-prof}
and \eqref{eq:def-frkc}).

\textbf{Step 3.} Treatment of the error terms.

In this step, we claim that RHS\eqref{eq:mod-eqn-step2-1} is treated
as an error: 
\begin{align}
\nu_{\tau}|[\rd_{\nu}\ell_{j}](\bm{\eps})| & \aleq\frac{\nu}{|\log\nu|^{\frac{7}{2}}},\label{eq:RefModStep3-1}\\
|\langle R_{\mathrm{NL}}(\td v),g_{j}\varphi_{j}\rangle| & \aleq\nu^{3}|\log\nu|.\label{eq:RefModStep3-2}
\end{align}
Assuming these claims, \eqref{eq:mod-eqn-step2-1} becomes 
\begin{equation}
\big|-\nu_{\tau}(\frkb_{j}+2)+b_{\tau}\frkc_{j}+(\rd_{\tau}-\lmb_{j})[\ell_{j}(\bm{\eps})]+\nu\frkb_{j}-b\lmb_{j}\frkc_{j}\big|\aleq\frac{\nu}{|\log\nu|^{2}}.\label{eq:RefModStep3-3}
\end{equation}
Dividing the both hand sides by $-\nu$, we arrive at 
\begin{equation}
\bigg|\frac{\nu_{\tau}}{\nu}(\frkb_{j}+2)-\frac{b_{\tau}}{b}\Big(\frac{b}{\nu}\frkc_{j}\Big)+\frac{b}{\nu}\lmb_{j}\frkc_{j}-\frkb_{j}-\frac{1}{\nu}(\rd_{\tau}-\lmb_{j})[\ell_{j}(\bm{\eps})]\bigg|\aleq\frac{1}{|\log\nu|^{2}}.\label{eq:RefModStep3-4}
\end{equation}
From now on, we show \eqref{eq:RefModStep3-1} and \eqref{eq:RefModStep3-2}.

\emph{Proof of \eqref{eq:RefModStep3-1}.} This is indeed immediate
from \eqref{eq:nu-b-bound} and \eqref{eq:eps-bound-4}.

\emph{Proof of \eqref{eq:RefModStep3-2}.} Recall the definition \eqref{eq:def-R-NL}
of $R_{\mathrm{NL}}(\td v)$. We apply the trigonometric identity
to have 
\begin{align*}
R_{\mathrm{NL}}(\td v) & =\frac{1}{2\rho^{2}}\{\sin(2Q_{\nu}+2\td v)-\sin(2Q_{\nu})-2\cos(2Q_{\nu})\td v\}\\
 & =\frac{1}{2\rho^{2}}\{\sin(2Q_{\nu})(\cos(2\td v)-1)+\cos(2Q_{\nu})(\sin(2\td v)-2\td v)\}.
\end{align*}
Applying $\sin(2Q_{\nu})=2\cos(Q_{\nu})\Lmb Q_{\nu}$, we have 
\begin{align*}
|R_{\mathrm{NL}}(\td v)| & \aleq\frac{1}{\rho^{2}}(|\Lmb Q_{\nu}||\cos(2\td v)-1|+|\sin(2\td v)-2\td v|)\\
 & \aleq\frac{1}{\rho^{2}}(|\Lmb Q_{\nu}||\td v|^{2}+|\td v|^{3}).
\end{align*}
Note that $|\td v|\aleq b|\varphi_{0}-\varphi_{1}|+|\eps|$ by the
definition \eqref{eq:def-v-tilde} of $\td v$. Using $b\sim\nu$,
\eqref{eq:EigenFuncEst-2}, and \eqref{eq:eps-bound-2}, we have 
\begin{align*}
\chf_{(0,1]}|\td v| & \aleq\chf_{(0,1]}\{b|\varphi_{0}-\varphi_{1}|+|\eps|\}\\
 & \aleq\nu\cdot\Big(\chf_{(0,\nu]}\rho\Big(\frac{\rho}{\nu}\Big)^{2}+\chf_{[\nu,1]}\frac{\rho\langle\log\rho\rangle}{|\log\nu|}+\chf_{(0,1]}\frac{\rho\langle\log(\frac{\rho}{\nu})\rangle^{\frac{1}{2}}}{|\log\nu|}\Big)\aleq\chf_{(0,1]}\nu\rho.
\end{align*}
Using $|\Lmb Q_{\nu}|\aleq\frac{\nu}{\rho}$, we have a rough pointwise
bound of $R_{\mathrm{NL}}(\td v)$: 
\[
\chf_{(0,1]}|R_{\mathrm{NL}}(\td v)|\aleq\chf_{(0,1]}\frac{\nu^{3}}{\rho}.
\]
Combining this with the pointwise bound 
\[
|g_{j}\varphi_{j}|\aleq g_{j}\cdot\frac{1}{\nu}\Lmb Q_{\nu}\aleq\chf_{(0,\nu]}\frac{\rho}{\nu^{2}}+\chf_{[\nu,1]}\frac{1}{\rho}(1-\rho)^{-\frac{3}{4}}
\]
gives 
\[
|\langle R_{\mathrm{NL}}(\td v),g_{j}\varphi_{j}\rangle|\aleq\int_{0}^{1}\Big(\chf_{(0,\nu]}\nu+\chf_{[\nu,1]}\frac{\nu^{3}}{\rho^{2}}(1-\rho)^{-\frac{3}{4}}\Big)\rho d\rho\aleq\nu^{3}|\log\nu|,
\]
completing the proof of \eqref{eq:RefModStep3-2}.

\textbf{Step 4.} Proof of the rough estimate \eqref{eq:eps-bound-5}.

The rough estimate \eqref{eq:eps-bound-5} directly follows from estimating
each term of \eqref{eq:RefModStep3-3} except $\rd_{\tau}[\ell_{j}(\bm{\eps})]$
using \eqref{eq:nu-b-bound}, \eqref{eq:frkb-asymp}-\eqref{eq:frkc-asymp},
and \eqref{eq:eps-bound-3}. Note that although each of $\nu\frkb_{0}$
and $b\lmb_{0}\frkc_{0}$ is of size $O(\nu|\log\nu|)$, $\nu\frkb_{0}-b\lmb_{0}\frkc_{0}$
is of size $O(\nu)$.

\textbf{Step 5.} Proofs of the refined modulation estimates \eqref{eq:RefMod-nu}
and \eqref{eq:RefMod-b}.

In this step, we show \eqref{eq:RefMod-nu} and \eqref{eq:RefMod-b}.
Our starting point is \eqref{eq:RefModStep3-4}.

We first show \eqref{eq:RefMod-nu}. We subtract \eqref{eq:RefModStep3-4}
for $j=1$ from \eqref{eq:RefModStep3-4} for $j=0$ and then divide
it by $\frkb_{0}-\frkb_{1}$ to have 
\begin{equation}
\begin{aligned}\bigg|\frac{\nu_{\tau}}{\nu}-\frac{b_{\tau}}{b}\Big(\frac{\frac{b}{\nu}\cdot(\frkc_{0}-\frkc_{1})}{\frkb_{0}-\frkb_{1}}\Big)+\frac{b}{\nu}\frac{\lmb_{0}\frkc_{0}-\lmb_{1}\frkc_{1}}{\frkb_{0}-\frkb_{1}}-1\qquad\qquad\\
+\sum_{j=0}^{1}L_{j}^{(\nu)}(\rd_{\tau}-\lmb_{j})[\ell_{j}(\bm{\eps})]\bigg| & \aleq\frac{1}{|\log\nu|^{3}},
\end{aligned}
\label{eq:RefModStep5-3}
\end{equation}
where 
\[
L_{0}^{(\nu)}\coloneqq-\frac{1}{\nu(\frkb_{0}-\frkb_{1})}\qquad\text{and}\qquad L_{1}^{(\nu)}\coloneqq\frac{1}{\nu(\frkb_{0}-\frkb_{1})}.
\]
We look at each term of \eqref{eq:RefModStep5-3}. First, we use \eqref{eq:nu-b-bound}
and \eqref{eq:frkb-asymp}-\eqref{eq:frkc-asymp} to have 
\[
\Big|-\frac{b_{\tau}}{b}\Big(\frac{\frac{b}{\nu}\cdot(\frkc_{0}-\frkc_{1})}{\frkb_{0}-\frkb_{1}}\Big)\Big|\aleq\frac{1}{|\log\nu|}\cdot\frac{1}{|\log\nu|}\aleq\frac{1}{|\log\nu|^{2}}.
\]
Next, we use \eqref{eq:nu-b-bound}, \eqref{eq:lmb-hat-asymp}, and
\eqref{eq:frkb-asymp}-\eqref{eq:frkc-asymp} to have 
\begin{align*}
\frac{b}{\nu}\frac{\lmb_{0}\frkc_{0}-\lmb_{1}\frkc_{1}}{\frkb_{0}-\frkb_{1}}-1 & =\frac{b}{\nu}\Big\{1+\frac{\frac{1}{3}}{|\log\nu|}+O\Big(\frac{1}{|\log\nu|^{2}}\Big)\Big\}-1\\
 & =\Big(\frac{b}{\nu}-1\Big)+\frac{\frac{1}{3}}{|\log\nu|}+O\Big(\frac{1}{|\log\nu|^{2}}\Big).
\end{align*}
Substituting the previous two displays into \eqref{eq:RefModStep5-3},
we have proved 
\begin{equation}
\bigg|\frac{\nu_{\tau}}{\nu}+\Big(\frac{b}{\nu}-1\Big)+\frac{\frac{1}{3}}{|\log\nu|}+\sum_{j=0}^{1}L_{j}^{(\nu)}(\rd_{\tau}-\lmb_{j})[\ell_{j}(\bm{\eps})]\bigg|\aleq\frac{1}{|\log\nu|^{2}},\label{eq:RefModStep5-4}
\end{equation}
which is \eqref{eq:RefMod-nu}.

We turn to the proof of \eqref{eq:RefMod-b}. We divide \eqref{eq:RefModStep3-4}
for $j=1$ by $-\frac{b}{\nu}\frkc_{1}$ to obtain 
\begin{equation}
\bigg|\frac{b_{\tau}}{b}-\frac{\nu_{\tau}}{\nu}\Big(\frac{\nu}{b}\frac{\frkb_{1}+2}{\frkc_{1}}\Big)+\frac{\nu}{b}\frac{\frkb_{1}}{\frkc_{1}}-\lmb_{1}+\wh L_{1}^{(b)}(\rd_{\tau}-\lmb_{1})[\ell_{1}(\bm{\eps})]\bigg|\aleq\frac{1}{|\log\nu|^{3}},\label{eq:RefModStep5-1}
\end{equation}
where 
\[
\wh L_{0}^{(b)}\coloneqq0,\qquad\wh L_{1}^{(b)}\coloneqq\frac{1}{b\frkc_{1}}.
\]
Let us look at each term of \eqref{eq:RefModStep5-1}. First, we use
\eqref{eq:nu-b-bound} and \eqref{eq:frkb-asymp}-\eqref{eq:frkc-asymp}
to have 
\[
-\frac{\nu_{\tau}}{\nu}\Big(\frac{\nu}{b}\frac{\frkb_{1}+2}{\frkc_{1}}\Big)=\frac{\nu_{\tau}}{\nu}\Big(\frac{-\frac{1}{6}}{|\log\nu|}+O\Big(\frac{1}{|\log\nu|^{2}}\Big)\Big)=\frac{\nu_{\tau}}{\nu}\frac{-\frac{1}{6}}{|\log\nu|}+O\Big(\frac{1}{|\log\nu|^{3}}\Big).
\]
Next, we use $\frac{\nu}{b}=1-(\frac{b}{\nu}-1)+O(|\frac{b}{\nu}-1|^{2})=1-(\frac{b}{\nu}-1)+O(\frac{1}{|\log\nu|^{2}})$,
\eqref{eq:frkb-asymp}-\eqref{eq:frkc-asymp}, and \eqref{eq:lmb-hat-asymp}
to have
\begin{equation}
\begin{aligned}\frac{\nu}{b}\frac{\frkb_{1}}{\frkc_{1}}-\lmb_{1} & =\Big(\frac{b}{\nu}-1\Big)\frac{-\frkb_{1}}{\frkc_{1}}+\Big(\frac{\frkb_{1}}{\frkc_{1}}-\lmb_{1}\Big)+O\Big(\frac{1}{|\log\nu|^{3}}\Big)\\
 & =\Big(\frac{b}{\nu}-1\Big)\frac{\frac{1}{3}}{|\log\nu|}+\Big(\frac{\frac{1}{2}}{|\log\nu|}+\frac{\frac{13}{36}-\frac{\log2}{2}}{|\log\nu|^{2}}\Big)+O\Big(\frac{1}{|\log\nu|^{3}}\Big).
\end{aligned}
\label{eq:RefModStep5-5}
\end{equation}
Substituting the previous two displays into \eqref{eq:RefModStep5-1},
we have 
\begin{equation}
\begin{aligned}\bigg|\frac{b_{\tau}}{b}+\frac{\nu_{\tau}}{\nu}\frac{-\frac{1}{6}}{|\log\nu|}+\Big(\frac{b}{\nu}-1\Big)\frac{\frac{1}{3}}{|\log\nu|}+\Big(\frac{\frac{1}{2}}{|\log\nu|}+\frac{\frac{13}{36}-\frac{\log2}{2}}{|\log\nu|^{2}}\Big)\\
+\wh L_{1}^{(b)}(\rd_{\tau}-\lmb_{1})[\ell_{1}(\bm{\eps})]\bigg|\aleq & \frac{1}{|\log\nu|^{3}}.
\end{aligned}
\label{eq:RefModStep5-2}
\end{equation}
Finally, substituting \eqref{eq:RefModStep5-4} into \eqref{eq:RefModStep5-2}
(in other words, we add \eqref{eq:RefModStep5-2} and $\frac{1}{6|\log\nu|}$
times \eqref{eq:RefModStep5-4}), we obtain 
\[
\bigg|\frac{b_{\tau}}{b}+\Big(\frac{b}{\nu}-1\Big)\frac{\frac{1}{2}}{|\log\nu|}+\frac{\frac{1}{2}}{|\log\nu|}+\frac{\frac{5}{12}-\frac{\log2}{2}}{|\log\nu|^{2}}+\sum_{j=0}^{1}L_{j}^{(b)}(\rd_{\tau}-\lmb_{j})[\ell_{j}(\bm{\eps})]\bigg|\aleq\frac{1}{|\log\nu|^{3}},
\]
where 
\[
L_{j}^{(b)}\coloneqq\wh L_{j}^{(b)}+\frac{L_{j}^{(\nu)}}{6|\log\nu|}.
\]
This completes the proof of \eqref{eq:RefMod-b}.

\textbf{Step 6.} Proofs of \eqref{eq:Lj-1}-\eqref{eq:Lj-3}.

We finish the proof by showing \eqref{eq:Lj-1}-\eqref{eq:Lj-3}.
Recall that 
\begin{align*}
L_{0}^{(\nu)} & =-\frac{1}{\nu(\frkb_{0}-\frkb_{1})}, & L_{1}^{(\nu)} & =\frac{1}{\nu(\frkb_{0}-\frkb_{1})},\\
L_{0}^{(b)} & =\frac{L_{0}^{(\nu)}}{6|\log\nu|}, & L_{1}^{(b)} & =\frac{1}{b\frkc_{1}}+\frac{L_{1}^{(\nu)}}{6|\log\nu|}.
\end{align*}
Note that \eqref{eq:Lj-1} is immediate from \eqref{eq:frkb-asymp}-\eqref{eq:frkc-asymp}.
The proof of \eqref{eq:Lj-3} follows from \eqref{eq:Lj-1}, \eqref{eq:frkb-asymp}-\eqref{eq:frkc-asymp},
and $|\frac{b}{\nu}-1|\aleq\frac{1}{|\log\nu|}$:
\begin{align*}
|L_{0}^{(b)}| & \aleq\frac{|L_{0}^{(\nu)}|}{|\log\nu|}\aleq\frac{1}{\nu|\log\nu|^{2}},\\
|L_{1}^{(\nu)}-L_{1}^{(b)}| & \leq\Big|\frac{1}{\nu(\frkb_{0}-\frkb_{1})}-\frac{1}{b\frkc_{1}}\Big|+\frac{|L_{1}^{(\nu)}|}{6|\log\nu|}\aleq\frac{1}{\nu|\log\nu|^{2}}.
\end{align*}
It remains to show \eqref{eq:Lj-2}. From the formulas of $L_{j}^{(\nu)}$
and $L_{j}^{(b)}$ and \eqref{eq:nu-b-bound}, we have 
\begin{align*}
|\rd_{\tau}L_{j}^{(\nu)}|+|\rd_{\tau}L_{j}^{(b)}| & \aleq\Big(\Big|\frac{\nu_{\tau}}{\nu}\Big|+\Big|\frac{b_{\tau}}{b}\Big|\Big)\frac{1}{\nu|\log\nu|}+\frac{|\rd_{\tau}(\frkb_{0}-\frkb_{1})|+|\rd_{\tau}\frkc_{1}|}{\nu|\log\nu|^{2}}\\
 & \aleq\frac{1}{\nu|\log\nu|^{2}}+\frac{|\nu\rd_{\nu}(\frkb_{0}-\frkb_{1})|+|\nu\rd_{\nu}\frkc_{1}|}{\nu|\log\nu|^{3}}.
\end{align*}
Applying \eqref{eq:nudnu-frk} to the above yields \eqref{eq:Lj-2}.
This completes the proof.
\end{proof}

\subsection{\label{subsec:Proof-of-Theorem}Proof of Theorem~\ref{thm:MainThm}}

In this final subsection, we finish the proof of Theorem~\ref{thm:MainThm}.
In the following lemma, we show that \eqref{eq:RefMod-nu} and \eqref{eq:RefMod-b}
improve the rough estimate \eqref{eq:nu-b-bound} to a more refined
relation between $b$ and $\nu$.
\begin{lem}[Compatibility of $\nu$ and $b$ for blow-up]
\label{lem:compatibility}We have 
\begin{equation}
\frac{b}{\nu}=1+\frac{b_{1}-\nu_{1}}{|\log\nu|}+L_{0}^{(\nu)}\ell_{0}(\bm{\eps})+O\Big(\frac{1}{|\log\nu|^{2}}\Big),\label{eq:Compatibility}
\end{equation}
\end{lem}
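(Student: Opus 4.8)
The plan is to collapse the two modulation equations \eqref{eq:RefMod-nu} and \eqref{eq:RefMod-b} into a single scalar ODE for $w\coloneqq\frac{b}{\nu}-1$, which by \eqref{eq:nu-b-bound} is $O(\frac{1}{|\log\nu|})$, and then to exploit that this ODE is exponentially unstable so that its value is pinned down by a backward-in-time integration. First I would subtract \eqref{eq:RefMod-nu} from \eqref{eq:RefMod-b} and use $\frac{b_\tau}{b}-\frac{\nu_\tau}{\nu}=\rd_\tau\log(1+w)=\frac{w_\tau}{1+w}$. Since $w=O(\frac{1}{|\log\nu|})$, the factor $(1+w)$ and the term $w\cdot\frac{b_{-1}}{|\log\nu|}$ are absorbed into an $O(\frac{1}{|\log\nu|^2})$ error; moreover, using $|L_1^{(b)}-L_1^{(\nu)}|\aleq\frac{1}{\nu|\log\nu|^2}$ from \eqref{eq:Lj-3} together with $|(\rd_\tau-\lmb_1)[\ell_1(\bm{\eps})]|\aleq\nu$ (from \eqref{eq:eps-bound-5}, \eqref{eq:eps-bound-3} and $|\lmb_1|\aleq\frac{1}{|\log\nu|}$), the $j=1$ contribution is $O(\frac{1}{|\log\nu|^2})$ as well. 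With $L_0^{(b)}-L_0^{(\nu)}=-L_0^{(\nu)}(1-\frac{1}{6|\log\nu|})$ this leaves
\[
w_\tau = w-\frac{b_1-\nu_1}{|\log\nu|}+L_0^{(\nu)}\Big(1-\tfrac{1}{6|\log\nu|}\Big)(\rd_\tau-\lmb_0)[\ell_0(\bm{\eps})]+O\Big(\frac{1}{|\log\nu|^2}\Big).
\]

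The crucial point is that the $j=0$ linear-error term is \emph{not} perturbative: $|L_0^{(\nu)}|\aleq\frac{1}{\nu|\log\nu|}$ and $|(\rd_\tau-\lmb_0)[\ell_0(\bm{\eps})]|\aleq\nu$ give it size $\frac{1}{|\log\nu|}$, the same order as the main term. I would therefore carry it through via two splittings. Write $L_0^{(\nu)}\rd_\tau[\ell_0(\bm{\eps})]=\rd_\tau[L_0^{(\nu)}\ell_0(\bm{\eps})]-(\rd_\tau L_0^{(\nu)})\ell_0(\bm{\eps})$, where $|(\rd_\tau L_0^{(\nu)})\ell_0(\bm{\eps})|\aleq\frac{1}{|\log\nu|^{5/2}}$ by \eqref{eq:Lj-2} and \eqref{eq:eps-bound-3}; and write $\lmb_0L_0^{(\nu)}\ell_0(\bm{\eps})=L_0^{(\nu)}\ell_0(\bm{\eps})+O(\frac{1}{|\log\nu|^{5/2}})$ since $\lmb_0=1+O(\frac{1}{|\log\nu|})$ and $|L_0^{(\nu)}\ell_0(\bm{\eps})|\aleq\frac{1}{|\log\nu|^{3/2}}$. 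The extra $\frac{1}{6|\log\nu|}$ factor costs only $O(\frac{1}{|\log\nu|^2})$. Setting $W\coloneqq w-L_0^{(\nu)}\ell_0(\bm{\eps})$ — which is bounded, in fact $O(\frac{1}{|\log\nu|})\to0$ as $\tau\to\infty$ by \eqref{eq:rough-asymp-nu-b} — the total-derivative piece $\rd_\tau[L_0^{(\nu)}\ell_0(\bm{\eps})]$ is absorbed into $\rd_\tau W$, while the piece $-L_0^{(\nu)}\ell_0(\bm{\eps})$ recombines with $+w$ to produce $+W$, giving the clean scalar equation
\[
\rd_\tau W = W-\frac{b_1-\nu_1}{|\log\nu|}+O\Big(\frac{1}{|\log\nu|^2}\Big).
\]

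Finally I would integrate this backward in time. From $\rd_\tau(e^{-\tau}W)=-e^{-\tau}\big(\frac{b_1-\nu_1}{|\log\nu|}+O(\frac{1}{|\log\nu|^2})\big)$, integrating from $\tau$ to $\tau_1$ and letting $\tau_1\to\infty$ (the boundary term $e^{-\tau_1}W(\tau_1)$ vanishes since $W$ is bounded) yields
\[
W(\tau)=\int_\tau^\infty e^{\tau-s}\Big(\frac{b_1-\nu_1}{|\log\nu(s)|}+O\big(\tfrac{1}{|\log\nu(s)|^2}\big)\Big)\,ds.
\]
Since $s\mapsto|\log\nu(s)|$ is slowly varying along the flow ($|\rd_s|\log\nu||=|\nu_s/\nu|\aleq\frac{1}{|\log\nu|}$ by \eqref{eq:nu-b-bound}, and $|\log\nu(s)|\gtrsim|\log\nu(\tau)|$ for $s\geq\tau$ by \eqref{eq:rough-asymp-nu-b}), one has $\frac{1}{|\log\nu(\tau+u)|}=\frac{1}{|\log\nu(\tau)|}+O\big(\frac{u}{|\log\nu(\tau)|^3}\big)$ uniformly in $u\geq0$, so $\int_\tau^\infty e^{\tau-s}\frac{b_1-\nu_1}{|\log\nu(s)|}\,ds=\frac{b_1-\nu_1}{|\log\nu|}+O(\frac{1}{|\log\nu|^2})$, and the $O(\frac{1}{|\log\nu|^2})$-part of the integrand integrates to $O(\frac{1}{|\log\nu|^2})$. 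Hence $W(\tau)=\frac{b_1-\nu_1}{|\log\nu|}+O(\frac{1}{|\log\nu|^2})$, and recalling $w=W+L_0^{(\nu)}\ell_0(\bm{\eps})$ gives \eqref{eq:Compatibility} (with $b_1-\nu_1=\frac{1}{6}$).

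The main obstacle is the middle step: recognizing that the $j=0$ term $L_0^{(\nu)}(\rd_\tau-\lmb_0)[\ell_0(\bm{\eps})]$ is of leading order and must be retained, and organizing the two algebraic splittings so that one part becomes a genuine $\tau$-derivative absorbed into the unknown $W$ and the other, because $\lmb_0\approx1$, exactly regenerates the unstable $+W$ term — leaving behind only honest $O(\frac{1}{|\log\nu|^2})$ errors. The exponential instability and the slow variation of $|\log\nu|$ then make the backward integration routine.
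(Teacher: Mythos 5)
Your proof is correct and follows essentially the same route as the paper: the paper defines $\beta\coloneqq\frac{b}{\nu}-1-\frac{b_1-\nu_1}{|\log\nu|}-L_0^{(\nu)}\ell_0(\bm{\eps})$, derives $\beta_\tau=\beta+O(\tau^{-1})$ by exactly your splittings (total derivative into the unknown, $\lmb_0\approx1$ regenerating the unstable term, \eqref{eq:Lj-3} killing the remaining $L$-terms), and integrates backward from $\tau=+\infty$. The only cosmetic difference is that you keep $\frac{b_1-\nu_1}{|\log\nu|}$ as a forcing term and evaluate the resulting Duhamel integral via the slow variation of $|\log\nu(s)|$, whereas the paper absorbs it into the corrected unknown so the forcing is purely $O(\tau^{-1})$.
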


\begin{proof}
Let 
\begin{equation}
\beta\coloneqq\frac{b}{\nu}-1-\frac{\beta_{1}}{|\log\nu|}-L_{0}^{(\nu)}\ell_{0}(\bm{\eps}),\label{eq:Compat-1}
\end{equation}
where $\beta_{1}\coloneqq b_{1}-\nu_{1}$. It suffices to show that
$|\beta|\aleq\frac{1}{|\log\nu|^{2}}$.

\textbf{Step 1.} Computation of $\beta_{\tau}$.

Thanks to \eqref{eq:nu-b-bound}, we have 
\[
\beta_{\tau}=\frac{b}{\nu}\Big(\frac{b_{\tau}}{b}-\frac{\nu_{\tau}}{\nu}\Big)-\Big(L_{0}^{(\nu)}\rd_{\tau}[\ell_{0}(\bm{\eps})]+(\rd_{\tau}L_{0}^{(\nu)})\ell_{0}(\bm{\eps})\Big)+O\Big(\frac{1}{|\log\nu|^{3}}\Big).
\]
Thanks to \eqref{eq:nu-b-bound}, \eqref{eq:Lj-2}, and \eqref{eq:eps-bound-3},
we have 
\[
\beta_{\tau}=\Big(\frac{b_{\tau}}{b}-\frac{\nu_{\tau}}{\nu}\Big)-L_{0}^{(\nu)}\rd_{\tau}[\ell_{0}(\bm{\eps})]+O\Big(\frac{1}{|\log\nu|^{2}}\Big).
\]
Next, we apply \eqref{eq:RefMod-nu}, \eqref{eq:RefMod-b}, and \eqref{eq:nu-b-bound}
to have 
\begin{align*}
\beta_{\tau} & =\Big(\frac{b}{\nu}-1\Big)+\frac{\nu_{1}-b_{1}}{|\log\nu|}-L_{0}^{(\nu)}\lmb_{0}\ell_{0}(\bm{\eps})\\
 & \peq-L_{0}^{(b)}(\rd_{\tau}-\lmb_{0})[\ell_{0}(\bm{\eps})]+(L_{1}^{(\nu)}-L_{1}^{(b)})(\rd_{\tau}-\lmb_{1})[\ell_{1}(\bm{\eps})]+O\Big(\frac{1}{|\log\nu|^{2}}\Big).
\end{align*}
Using \eqref{eq:Lj-3}, \eqref{eq:eps-bound-5}, and \eqref{eq:eps-bound-3},
the terms in the second line of the above display are of size $O(\frac{1}{|\log\nu|^{2}})$.
Thus we have 
\[
\beta_{\tau}=\Big(\frac{b}{\nu}-1\Big)+\frac{\nu_{1}-b_{1}}{|\log\nu|}-L_{0}^{(\nu)}\lmb_{0}\ell_{0}(\bm{\eps})+O\Big(\frac{1}{|\log\nu|^{2}}\Big).
\]
In view of \eqref{eq:Compat-1}, we replace $\frac{b}{\nu}-1$ by
$\frac{\beta_{1}}{|\log\nu|}+L_{0}^{(\nu)}\ell_{0}(\bm{\eps})+\beta$
to arrive at 
\begin{equation}
\beta_{\tau}=\frac{\beta_{1}+\nu_{1}-b_{1}}{|\log\nu|}+(1-\lmb_{0})L_{0}^{(\nu)}\ell_{0}(\bm{\eps})+\beta+O\Big(\frac{1}{|\log\nu|^{2}}\Big).\label{eq:Compat-1-1}
\end{equation}

Let us look at each term of \eqref{eq:Compat-1-1}. The first term
\emph{vanishes} thanks to the choice $\beta_{1}=b_{1}-\nu_{1}$. For
the second term, we use \eqref{eq:Lj-1} and \eqref{eq:eps-bound-3}
to have 
\[
\big|(1-\lmb_{0})L_{0}^{(\nu)}\ell_{0}(\bm{\eps})\big|\aleq\frac{1}{|\log\nu|}\cdot\frac{1}{\nu|\log\nu|}\cdot\frac{\nu}{|\log\nu|^{\frac{1}{2}}}\aleq\frac{1}{|\log\nu|^{\frac{5}{2}}}.
\]
Finally by \eqref{eq:rough-asymp-nu-b} we have $\frac{1}{|\log\nu|^{2}}\sim\tau^{-1}$.
As a result, \eqref{eq:Compat-1-1} becomes 
\begin{equation}
\beta_{\tau}=\beta+O(\tau^{-1}).\label{eq:Compat-2}
\end{equation}

\textbf{Step 2.} Backward integration of the $\beta_{\tau}$-equation.

To conclude \eqref{eq:Compatibility}, we integrate the $\beta_{\tau}$-equation
\eqref{eq:Compat-2} backwards in time from $\tau=+\infty$. We rewrite
\eqref{eq:Compat-2} as 
\[
|(e^{-\tau}\beta(\tau))_{\tau}|\aleq e^{-\tau}\tau^{-1}.
\]
Integrating this from $+\infty$ to $\tau$ yields 
\[
|\beta(\tau)|\aleq e^{\tau}\int_{\tau}^{\infty}e^{-\tau'}(\tau')^{-1}d\tau'\aleq\tau^{-1}\sim\frac{1}{|\log\nu|^{2}}.
\]
This completes the proof of \eqref{eq:Compatibility}.
\end{proof}
Substituting the refined relation \eqref{eq:Compatibility} into the
the modulation equation for $b$, we obtain the following sharp modulation
equation only in terms of $b$ and $\bm{\eps}$.
\begin{cor}[Sharp modulation equation for $b$]
\label{cor:Sharp-Mod}We have 
\begin{equation}
\Big|\frac{b_{\tau}}{b}+\frac{\frac{1}{2}}{|\log b|}+\frac{\frac{1}{2}-\frac{\log2}{2}}{|\log b|^{2}}+\sum_{j=0}^{1}L_{j}^{(b)}\rd_{\tau}[\ell_{j}(\bm{\eps})]\Big|\aleq\frac{1}{|\log b|^{\frac{5}{2}}}.\label{eq:SharpMod}
\end{equation}
\end{cor}

\begin{proof}
Substituting \eqref{eq:Compatibility} into \eqref{eq:RefMod-b} yields
\begin{equation}
\begin{aligned}\Big|\frac{b_{\tau}}{b}+\frac{b_{1}}{|\log\nu|}+\frac{b_{2}+(b_{1}-\nu_{1})b_{-1}}{|\log\nu|^{2}}+\frac{b_{-1}L_{0}^{(\nu)}}{|\log\nu|}\ell_{0}(\bm{\eps})\qquad\\
+\sum_{j=0}^{1}L_{j}^{(b)}(\rd_{\tau}-\lmb_{j})[\ell_{j}(\bm{\eps})]\Big| & \aleq\frac{1}{|\log\nu|^{3}}.
\end{aligned}
\label{eq:SharpMod-2}
\end{equation}
In the above display, the following terms are considered as errors.
We use \eqref{eq:Lj-1}, $|\lmb_{1}|\aleq\frac{1}{|\log\nu|}$ and
\eqref{eq:eps-bound-3} to have 
\[
\Big|\frac{b_{-1}L_{0}^{(\nu)}}{|\log\nu|}\ell_{0}(\bm{\eps})\Big|+\Big|-L_{1}^{(b)}\lmb_{1}\ell_{1}(\bm{\eps})\Big|\aleq\frac{1}{|\log\nu|^{\frac{5}{2}}}.
\]
Next, we use \eqref{eq:Lj-3} and \eqref{eq:eps-bound-3} to have
\[
\Big|-L_{0}^{(b)}\lmb_{0}\ell_{0}(\bm{\eps})\Big|\aleq\frac{1}{|\log\nu|^{\frac{5}{2}}}.
\]
Substituting the above error estimates into \eqref{eq:SharpMod-2},
we have 
\[
\Big|\frac{b_{\tau}}{b}+\frac{b_{1}}{|\log\nu|}+\frac{b_{2}+(b_{1}-\nu_{1})b_{-1}}{|\log\nu|^{2}}+\sum_{j=0}^{1}L_{j}^{(b)}\rd_{\tau}[\ell_{j}(\bm{\eps})]\Big|\aleq\frac{1}{|\log\nu|^{\frac{5}{2}}}.
\]
Replacing $\nu_{1},b_{-1},b_{1},b_{2}$ by their values \eqref{eq:values-nu1-b-12}
and applying $\log\nu=\log b+O(\frac{1}{|\log b|})$ (due to \eqref{eq:nu-b-bound})
completes the proof.
\end{proof}
We are now ready to prove Theorem~\ref{thm:MainThm}.
\begin{proof}[Proof of Theorem~\ref{thm:MainThm}]
The proof follows from integrating \eqref{eq:SharpMod}.

\textbf{Step 1.} A correction $\ul b$.

Let us introduce a correction $\ul b$ to $b$: 
\begin{equation}
\ul b\coloneqq b\Big(1+\sum_{j=0}^{1}L_{j}^{(b)}\ell_{j}(\bm{\eps})\Big).\label{eq:SharpMod-3}
\end{equation}
Using \eqref{eq:Lj-1} and \eqref{eq:eps-bound-3}, we see that $\ul b$
is a correction of $b$ in the sense that 
\begin{equation}
\ul b=b\Big(1+O\Big(\frac{1}{|\log\nu|^{\frac{3}{2}}}\Big)\Big).\label{eq:SharpMod-4}
\end{equation}
With this $\ul b$, we have 
\[
\frac{\ul b_{\tau}}{\ul b}=\frac{b_{\tau}}{b}+\frac{\sum_{j=0}^{1}L_{j}^{(b)}\rd_{\tau}[\ell_{j}(\bm{\eps})]}{1+\sum_{j=0}^{1}L_{j}^{(b)}\ell_{j}(\bm{\eps})}+\frac{\sum_{j=0}^{1}(\rd_{\tau}L_{j}^{(b)})\ell_{j}(\bm{\eps})}{1+\sum_{j=0}^{1}L_{j}^{(b)}\ell_{j}(\bm{\eps})}.
\]
Using \eqref{eq:Lj-1}, \eqref{eq:Lj-2}, \eqref{eq:eps-bound-3},
and \eqref{eq:eps-bound-5}, we have 
\begin{align*}
\frac{\ul b_{\tau}}{\ul b} & =\frac{b_{\tau}}{b}+\Big(\sum_{j=0}^{1}L_{j}^{(b)}\rd_{\tau}[\ell_{j}(\bm{\eps})]\Big)\Big(1+O\Big(\frac{1}{|\log\nu|^{\frac{3}{2}}}\Big)\Big)+O\Big(\frac{1}{|\log\nu|^{\frac{5}{2}}}\Big).\\
 & =\frac{b_{\tau}}{b}+\sum_{j=0}^{1}L_{j}^{(b)}\rd_{\tau}[\ell_{j}(\bm{\eps})]+O\Big(\frac{1}{|\log\nu|^{\frac{5}{2}}}\Big).
\end{align*}
Substituting \eqref{eq:SharpMod} into the above, we get 
\[
\Big|\frac{\ul b_{\tau}}{\ul b}+\frac{\frac{1}{2}}{|\log b|}+\frac{\frac{1}{2}-\frac{\log2}{2}}{|\log b|^{2}}\Big|\aleq\frac{1}{|\log b|^{\frac{5}{2}}}.
\]
Finally using \eqref{eq:SharpMod-4}, we have 
\begin{equation}
\Big|\frac{\ul b_{\tau}}{\ul b}+\frac{\frac{1}{2}}{|\log\ul b|}+\frac{\frac{1}{2}-\frac{\log2}{2}}{|\log\ul b|^{2}}\Big|\aleq\frac{1}{|\log\ul b|^{\frac{5}{2}}}.\label{eq:SharpMod-5}
\end{equation}

\textbf{Step 2.} Integration of \eqref{eq:SharpMod-5}.

It is more convenient to introduce the variable 
\[
\mu\coloneqq|\log\ul b|=-\log\ul b\sim\sqrt{\tau}
\]
so that \eqref{eq:SharpMod-5} becomes 
\[
\bigg|\mu_{\tau}-\frac{\frac{1}{2}}{\mu}-\frac{\frac{1}{2}-\frac{\log2}{2}}{\mu^{2}}\bigg|\aleq\frac{1}{\tau^{5/4}}.
\]
Multiplying the above by $2\mu$ and using $\mu=\sqrt{\tau}+O(1)$
from \eqref{eq:rough-asymp-nu-b}, we have 
\[
\Big|(\mu^{2})_{\tau}-1-\frac{1-\log2}{\sqrt{\tau}}\Big|\aleq\frac{1}{\tau^{3/4}}.
\]
Integrating this, we have 
\[
\mu^{2}=\tau+(1-\log2)\cdot2\sqrt{\tau}+O(\tau^{1/4})
\]
and hence 
\[
\mu=\sqrt{\tau}+(1-\log2)+O(\tau^{-1/4}).
\]
Going back to the variable $\ul b$, we have proved that 
\begin{equation}
\ul b=2e^{-1}e^{-\sqrt{\tau}}(1+o_{\tau\to\infty}(1)).\label{eq:SharpMod-6}
\end{equation}

\textbf{Step 3.} Conclusion.

By \eqref{eq:nu-b-bound} and \eqref{eq:SharpMod-4}, we have 
\[
\nu=\ul b(1+o_{\tau\to\infty}(1)).
\]
Thus by \eqref{eq:SharpMod-6}, we have 
\[
\nu(\tau)=2e^{-1}e^{-\sqrt{\tau}}(1+o_{\tau\to\infty}(1)).
\]
In terms of the original spacetime coordinates $(t,r)$, this reads
\[
\lmb(t)=2e^{-1}(T-t)e^{-\sqrt{|\log(T-t)|}}(1+o_{t\to T}(1)).
\]
This completes the proof of Theorem~\ref{thm:MainThm}.
\end{proof}

\appendix

\section{\label{sec:Proof-lmb-b-rel}Proof of \eqref{eq:lmb-b-rel}}
\begin{proof}[Proof of \eqref{eq:lmb-b-rel}]
In this proof, we denote $\wh{\lmb}$ and $\wh b$ by $\lmb$ and
$b$, respectively, and abbreviate the integral $\int_{t}^{T}dt'$
as $\int_{t}^{T}$. We will also rely on the following facts:
\begin{itemize}
\item \cite[p.108]{RaphaelRodnianski2012Publ.Math.} Almost monotonicity
of $b$: for $t'\in[t,T)$,
\begin{equation}
\frac{b^{2}(t')}{|\log b(t')|}\leq2\frac{b^{2}(t)}{|\log b(t)|}.\label{eq:apdx-a-1}
\end{equation}
\item \cite[(5.34)]{RaphaelRodnianski2012Publ.Math.} Control of $b_{t}$:
\begin{equation}
\lmb|b_{t}|\aleq\frac{b^{2}}{|\log b|}.\label{eq:apdx-a-2}
\end{equation}
\end{itemize}
The overall scheme of the proof is to follow the proof in \cite[pp.107--108 Step 2]{RaphaelRodnianski2012Publ.Math.},
but we also need to keep $\frac{1}{|\log b|}$-smallness. We study
the quantity 
\[
\int_{t}^{T}b^{2}(t')dt'
\]
in two ways.

On one hand, using $\lmb_{t}+b=0$ and \eqref{eq:apdx-a-2}, we observe
\[
\int_{t}^{T}b^{2}=\int_{t}^{T}-b\lmb_{t}=b(t)\lmb(t)+\int_{t}^{T}\lmb b_{t}=b(t)\lmb(t)+\int_{t}^{T}O\Big(\frac{b^{2}}{|\log b|}\Big).
\]
Thus we have 
\begin{equation}
b(t)\lmb(t)=\int_{t}^{T}\Big(1+O\Big(\frac{1}{|\log b|}\Big)\Big)b^{2}=\Big(1+O\Big(\frac{1}{|\log b(t)|}\Big)\Big)\int_{t}^{T}b^{2},\label{eq:Appendix-tmp1}
\end{equation}
where in the last equality we used the almost monotonicity \eqref{eq:apdx-a-1}
of $b$.

On the other hand, we integrate by parts and use \eqref{eq:apdx-a-1}
to have 
\begin{align*}
\bigg|\frac{1}{(T-t)b^{2}(t)}\int_{t}^{T}b^{2}-1\bigg| & =\frac{2}{(T-t)b^{2}(t)}\bigg|\int_{t}^{T}bb_{t}\cdot(T-t')\bigg|\\
 & \aleq\frac{1}{(T-t)|\log b(t)|}\int_{t}^{T}\frac{b}{\lmb}\cdot(T-t').
\end{align*}
Using $\log\lmb(t)=\log(T-t)-\sqrt{|\log(T-t)|}+O(1)$ of \eqref{eq:lmb-RR-asymp},
we have 
\[
\int_{t}^{T}\frac{b}{\lmb}(T-t')=\int_{t}^{T}\frac{-\lmb_{t}}{\lmb}(T-t')=(T-t)\log\lmb(t)-\int_{t}^{T}\log\lmb=O(T-t).
\]
Combining the above two displays yields 
\begin{equation}
\frac{1}{(T-t)b^{2}(t)}\int_{t}^{T}b^{2}=1+O\Big(\frac{1}{|\log b(t)|}\Big).\label{eq:Appendix-tmp2}
\end{equation}

The estimates \eqref{eq:Appendix-tmp1} and \eqref{eq:Appendix-tmp2}
yield 
\[
\frac{\lmb(t)}{(T-t)b(t)}=\frac{1}{(T-t)b^{2}(t)}\cdot b(t)\lmb(t)=1+O\Big(\frac{1}{|\log b(t)|}\Big),
\]
completing the proof of \eqref{eq:lmb-b-rel}.
\end{proof}

\section{\label{sec:Localized-Hardy-inequalities}Localized Hardy inequalities}

In this appendix, we show the localized coercivity estimates \eqref{eq:loc-coercivity-1}-\eqref{eq:loc-coercivity-2}
as well as the weighted $L^{\infty}$-estimates (Lemma~\ref{lem:Local-Linfty-est})
used in Sections~\ref{subsec:RR-blow-up-sol}-\ref{subsec:Decomposition-of-solutions}.
\begin{lem}[Local subcoercivity estimates and kernel characterization]
\label{lem:loc-subcoer}For $R\gg1$, we have 
\begin{align}
\|\chf_{(0,R]}A^{\ast}Af\|_{L^{2}}+\|\chf_{y\sim1}f\|_{L^{2}} & \sim\|f\|_{(\dot{\calH}_{1}^{2})_{R}},\qquad\forall f\in\dot{\calH}_{1}^{2},\label{eq:loc-subcoer-1}\\
\|\chf_{(0,R]}Ag\|_{L^{2}}+\|\chf_{y\sim1}g\|_{L^{2}} & \sim\|g\|_{(\dot{H}_{1}^{1})_{R}},\qquad\forall g\in\dot{H}_{1}^{1},\label{eq:loc-subcoer-2}
\end{align}
where the implicit constants are uniform in $R$. Moreover, the kernels
of $A^{\ast}A:(\dot{\calH}_{1}^{2})_{R}\to L^{2}(r<R)$ and $A:(\dot{H}_{1}^{1})_{R}\to L^{2}(r<R)$
are equal to the span of $\Lmb Q$.
\end{lem}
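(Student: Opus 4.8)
\textbf{Proof proposal for Lemma~\ref{lem:loc-subcoer}.}

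The plan is to reduce everything to the global (unlocalized) Hardy-type coercivity estimates of \cite[Appendix B]{RaphaelRodnianski2012Publ.Math.} (quoted here as \eqref{eq:coercivity-1}--\eqref{eq:coercivity-2}) by a standard partition/extension argument, and to treat the subcoercive term $\|\chf_{y\sim1}f\|_{L^2}$ as a harmless bounded perturbation. First I would establish the "easy" directions $\|\chf_{(0,R]}A^\ast Af\|_{L^2}+\|\chf_{y\sim1}f\|_{L^2}\lesssim\|f\|_{(\dot{\calH}_1^2)_R}$ and $\|\chf_{(0,R]}Ag\|_{L^2}+\|\chf_{y\sim1}g\|_{L^2}\lesssim\|g\|_{(\dot H_1^1)_R}$ directly by expanding $A=\rd_r-\frac{1-y^2}{1+y^2}\frac1y$ (with $y=r$), $A^\ast A=H=-\rd_{rr}-\frac1r\rd_r+\frac{V}{r^2}$, and using the pointwise bounds $|\frac{1-y^2}{1+y^2}\frac1y|\lesssim\frac1y$ and $|V|\lesssim1$; the weight $\frac{1}{y\langle\log y\rangle}|f|_{-1}$ in the $(\dot{\calH}_1^2)_R$-norm controls both $\frac1{y^2}f$ and $\frac1y\rd_y f$ on $(0,R]$ up to the $\langle\log y\rangle$ loss, which is exactly compensated by the definition of the norm.

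For the nontrivial directions I would argue as follows. Given $f\in\dot{\calH}_1^2$, I would not try to modify $f$ away from the light cone but rather run the contradiction/compactness scheme of \cite{RaphaelRodnianski2012Publ.Math.} localized: suppose $\|f_n\|_{(\dot{\calH}_1^2)_R}=1$ while $\|\chf_{(0,R]}A^\ast Af_n\|_{L^2}+\|\chf_{y\sim1}f_n\|_{L^2}\to0$. On $(0,R]$ the functions $f_n$ are bounded in a local $H^2$ sense, so up to a subsequence they converge weakly locally to some $f_\infty$ with $\chf_{(0,R]}A^\ast Af_\infty=0$, i.e. $Hf_\infty=0$ on $(0,R)$; since the fundamental system of $H$ is $\{J_1=\Lmb Q,\ J_2\}$ with $J_2$ singular at the origin, and $f_\infty$ must lie in the local energy space (finiteness of $\|\frac{1}{y\langle\log y\rangle}|f_\infty|_{-1}\|_{L^2}$ near $0$ forces the coefficient of $J_2$ to vanish), we get $f_\infty\in\operatorname{span}\{\Lmb Q\}$; but $\|\chf_{y\sim1}f_\infty\|_{L^2}=0$ kills it, so $f_\infty=0$. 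Then a Rellich-type local compactness argument upgrades weak to strong convergence of the lower-order pieces on compact subsets, and combined with the vanishing of $\|\chf_{(0,R]}A^\ast Af_n\|_{L^2}$ this forces $\|f_n\|_{(\dot{\calH}_1^2)_R}\to0$, a contradiction; uniformity in $R$ (for $R\gg1$) follows because the only place $R$ enters is the cutoff location and the estimates degrade monotonically. The same scheme, one derivative lower, gives \eqref{eq:loc-subcoer-2}, using that $\ker A$ in the relevant class is spanned by $\Lmb Q$ (which solves $A(\Lmb Q)=0$). The kernel characterization statement is then just the byproduct: an element of $\ker(A^\ast A)$ or $\ker A$ in $(\dot{\calH}_1^2)_R$ resp. $(\dot H_1^1)_R$ satisfies the ODE $Hf=0$ resp. $Af=0$ on $(0,R)$, regularity at $0$ (encoded by finiteness of the $|\cdot|_{-1}$-weighted norm, which excludes $J_2$ and the $\log$-singular primitive) forces proportionality to $\Lmb Q$, and $\Lmb Q$ indeed belongs to the spaces.

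The main obstacle I anticipate is the bookkeeping of the logarithmic weight near $\rho=\nu$ (equivalently $y\sim1$ after rescaling) in the norm $(\dot{\calH}_1^2)_1^\nu$ versus $(\dot{\calH}_1^2)_R$: one must check that the weight $\frac{1}{y\langle\log y\rangle}$ is the \emph{sharp} one for which $H$ is invertible with the outgoing Green's function $H^{-1}$ of \eqref{eq:def-HQ-inv} acting boundedly, i.e. that $\|H^{-1}F\|_{(\dot{\calH}_1^2)_R}\lesssim\|\chf_{(0,R]}F\|_{L^2}$ after quotienting by $\Lmb Q$. This is where the $\langle\log y\rangle$ in the weight is not optional — it is exactly the gain needed because $U_1=-H^{-1}\Lmb Q$ grows like $y\log y$ (see \eqref{eq:T1S1U1-asymptotics}), so a cleaner Hardy weight $\frac1y$ would fail. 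I would handle this by proving the weighted Hardy inequality $\|\frac{1}{y\langle\log y\rangle}g\|_{L^2(y<R)}\lesssim\|\rd_y g\|_{L^2(y<R)}+\|\chf_{y\sim1}g\|_{L^2}$ for functions vanishing suitably at $0$ (a one-dimensional computation with the weight $\frac{dy}{y\langle\log y\rangle^2}$, integrable at $0$), and its second-order analogue, then feeding these into the factorization $A^\ast A=H$; everything else is routine. Once Lemma~\ref{lem:loc-subcoer} is in place, \eqref{eq:loc-coercivity-1}--\eqref{eq:loc-coercivity-2} follow by absorbing the subcoercive term using the orthogonality $\langle f,\chi_M\Lmb Q\rangle=0$ exactly as in the unlocalized case, and the weighted $L^\infty$ bounds of Lemma~\ref{lem:Local-Linfty-est} follow from the fundamental theorem of calculus combined with these $L^2$ controls.
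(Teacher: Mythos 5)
Your proposal diverges from the paper's proof in a significant way: the paper's argument is entirely direct and quantitative (no compactness), and the two places where your compactness scheme is genuinely incomplete are exactly the places where the paper has to work.

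First, uniformity in $R$. A contradiction/compactness argument at fixed $R$ produces a constant $c(R)$ that a priori depends on $R$, and your justification (``the only place $R$ enters is the cutoff location and the estimates degrade monotonically'') does not repair this: both sides of \eqref{eq:loc-subcoer-1} grow with $R$, and knowing the inequality for $R_{0}$ does not give it for $R>R_{0}$ since the left side can grow much faster than the right. To run compactness uniformly you would have to allow $R_{n}\to\infty$ and then rule out mass escaping to the annulus $y\sim R_{n}$, which requires a separate direct estimate in the far region where $A^{\ast}A\approx-\Delta_{1}$ — at which point you are reconstructing the paper's argument. The paper instead proves \eqref{eq:loc-subcoer-2} by one integration by parts, observing that the boundary term $\frac{R^{2}-1}{R^{2}+1}|g|^{2}(R)$ has a favorable sign for $R\gg1$, and proves \eqref{eq:loc-subcoer-1} by comparing $\|\chf_{(0,R]}A^{\ast}Af\|_{L^{2}}$ with $\|\chf_{(0,R]}\Delta_{1}f\|_{L^{2}}$ and $\|\chf_{(0,R]}\rd_{yy}f\|_{L^{2}}$ in $L^{2}$ (not in quadratic form, so no boundary terms), with constants manifestly independent of $R$; the subcoercive term $\|\chf_{y\sim1}f\|_{L^{2}}$ enters only to absorb the potential difference $V-1=O((1+y^{2})^{-2})$ on $[r_{0}^{-1},r_{0}]$.

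Second, the ``Rellich-type upgrade'' you call routine conceals the actual analytic content. The norm $\|f\|_{(\dot{\calH}_{1}^{2})_{R}}$ contains $\|\chf_{(0,R]}\frac{1}{y\langle\log y\rangle}|f|_{-1}\|_{L^{2}}$, i.e.\ both $\frac{1}{y\langle\log y\rangle}\rd_{y}f$ and $\frac{1}{y^{2}\langle\log y\rangle}f$, and neither $\frac{1}{y}\rd_{y}f$ nor $\frac{1}{y^{2}}f$ is separately finite for $f\in\dot{\calH}_{1}^{2}$; only the combination $\frac{1}{y}(\rd_{y}-\frac{1}{y})f$ is controlled. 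The paper's Step 2 is precisely the chain $\|\chf_{(0,R]}\rd_{yy}f\|\sim\|\chf_{(0,R]}|(\rd_{y}-\frac{1}{y})f|_{-1}\|\sim\|\chf_{(0,R]}\Delta_{1}f\|$, obtained from the factorizations $\Delta_{1}=(\rd_{y}+\frac{2}{y})(\rd_{y}-\frac{1}{y})$ and $\rd_{yy}=(\rd_{y}+\frac{1}{y})(\rd_{y}-\frac{1}{y})$ together with Hardy's inequality for $\rd_{y}+\frac{k}{y}$, $k>0$, whose boundary contribution at the origin is killed by the degeneracy $(\rd_{y}-\frac{1}{y})f=O(y^{2})$ for smooth $1$-equivariant data (a density argument). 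The logarithmic Hardy inequality is then applied to $\frac{1}{y}(\rd_{y}-\frac{1}{y})f$, not to $\rd_{y}f$ as in your stated inequality. Without this factorization, neither the strong convergence of the ``lower-order'' pieces nor the recovery of $\|\chf_{(0,R]}\rd_{yy}f_{n}\|\to0$ from $\|\chf_{(0,R]}A^{\ast}Af_{n}\|\to0$ goes through, since $\rd_{yy}f_{n}=-A^{\ast}Af_{n}-\frac{1}{y}\rd_{y}f_{n}+\frac{V}{y^{2}}f_{n}$ and the last two terms are individually out of control near the origin. Your kernel characterization and the easy ($\lesssim$) directions are fine and match the paper.
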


\begin{proof}
Without localization $\chf_{(0,R]}$, these subcoercivity estimates
are already proved in \cite[Lemma B.2]{RaphaelRodnianski2012Publ.Math.}.
The key point here is to keep track of the localization $\chf_{(0,R]}$
while we reproduce the proof.

\textbf{Step 1.} Proof of \eqref{eq:loc-subcoer-2}.

Let us first show \eqref{eq:loc-subcoer-2}. As the $\aleq$-inequality
is clear, we focus on the proof of the $\ageq$-inequality. An integration
by parts yields the identity 
\[
\int_{0}^{R}|Ag|^{2}ydy=\int_{0}^{R}\Big(|\rd_{y}g|^{2}+\frac{V}{y^{2}}|g|^{2}\Big)ydy+\frac{-1+R^{2}}{1+R^{2}}|g|^{2}(R).
\]
For $R\gg1$, the last term is \emph{nonnegative} (which is a good
sign). Thus we have 
\begin{align*}
\int_{0}^{R}|Ag|^{2}ydy & \geq\int_{0}^{R}\Big(|\rd_{y}g|^{2}+\frac{1}{y^{2}}|g|^{2}+\frac{V-1}{y^{2}}|g|^{2}\Big)ydy\\
 & =\|g\|_{(\dot{H}_{1}^{1})_{R}}^{2}-O\Big(\int_{0}^{R}\frac{1}{(1+y^{2})^{2}}|g|^{2}ydy\Big).
\end{align*}
Since 
\[
\int_{0}^{R}\frac{1}{(1+y^{2})^{2}}|g|^{2}ydy\aleq\|\chf_{[r_{0}^{-1},r_{0}]}g\|_{L^{2}}^{2}+r_{0}^{-2}\|g\|_{(\dot{H}_{1}^{1})_{R}}^{2},
\]
taking $r_{0}>1$ large yields 
\[
\int_{0}^{R}|Ag|^{2}ydy+\|\chf_{[r_{0}^{-1},r_{0}]}g\|_{L^{2}}^{2}\ageq\|g\|_{(\dot{H}_{1}^{1})_{R}}^{2}.
\]
This completes the proof of \eqref{eq:loc-subcoer-2}.

\textbf{Step 2.} Localized Hardy controls from $\Delta_{1}$ and $\rd_{yy}$.

From now on, we turn to the more delicate proof of \eqref{eq:loc-subcoer-1}.
We follow the proof of \cite[Lemma A.7]{KimKwonOh2020arXiv}, but
we also need to keep track of the localization $\chf_{(0,R]}$. In
this step, we claim the following Hardy controls from $-\Delta_{1}$
and $\rd_{yy}$:
\begin{equation}
\|\chf_{(0,R]}\Delta_{1}f\|_{L^{2}}\sim\Big\|\chf_{(0,R]}\Big|\Big(\rd_{y}-\frac{1}{y}\Big)f\Big|_{-1}\Big\|_{L^{2}}\sim\|\chf_{(0,R]}\rd_{yy}f\|_{L^{2}}.\label{eq:appendix-a4}
\end{equation}
To see this, let us note the operator identity 
\begin{align*}
\Delta_{1} & =(\rd_{y}+\frac{2}{y})(\rd_{y}-\frac{1}{y}),\\
\rd_{yy} & =(\rd_{y}+\frac{1}{y})(\rd_{y}-\frac{1}{y}).
\end{align*}
Now we recall Hardy's inequality for $\rd_{y}+\frac{k}{y}$: For $k>0$
and $0<r_{1}<r_{2}<\infty$, we have 
\[
\int_{r_{1}}^{r_{2}}\Big|\frac{g}{y}\Big|^{2}ydy\aleq\int_{r_{1}}^{r_{2}}\Big|\Big(\rd_{y}+\frac{k}{y}\Big)g\Big|^{2}ydy+|g|^{2}(r_{1}).
\]
We let $k\in\{1,2\}$, $g=(\rd_{y}-\frac{1}{y})f$, $r_{2}=R$, and
take $r_{1}\to0$. Here, $|g|^{2}(r_{1})\to0$ thanks to the density
argument and the degeneracy $|g(y)|\leq C(f)\cdot y^{2}$ for the
radial part $f$ of a smooth $1$-equivariant function (See for example
\cite[Proposition 2.3]{RaphaelRodnianski2012Publ.Math.}). Thus we
have 
\[
\int_{0}^{R}\Big|\frac{1}{y}\Big(\rd_{y}-\frac{1}{y}\Big)f\Big|^{2}ydy\aleq\int_{0}^{R}\Big|\Big(\rd_{y}+\frac{k}{y}\Big)\Big(\rd_{y}-\frac{1}{y}\Big)f\Big|^{2}ydy
\]
for $k\in\{1,2\}$. This implies \eqref{eq:appendix-a4}.

\textbf{Step 3.} Proof of \eqref{eq:loc-subcoer-1}.

In this step, we use the Hardy controls \eqref{eq:appendix-a4} to
prove \eqref{eq:loc-subcoer-1}. We use 
\[
A^{\ast}A=-\rd_{yy}-\frac{1}{y}\rd_{y}+\frac{V}{y^{2}}=-\Delta_{1}+O\Big(\frac{1}{(1+y^{2})^{2}}\Big),
\]
to have 
\begin{equation}
\big|\|\chf_{(0,R]}A^{\ast}Af\|_{L^{2}}-\|\chf_{(0,R]}\Delta_{1}f\|_{L^{2}}\big|\aleq\|\chf_{[r_{0}^{-1},r_{0}]}f\|_{L^{2}}+r_{0}^{-2+}\|f\|_{(\dot{\calH}_{1}^{2})_{R}}.\label{eq:appendix-a3}
\end{equation}
Now the $\aleq$-inequality of \eqref{eq:loc-subcoer-1} easily follows
from \eqref{eq:appendix-a3} and \eqref{eq:appendix-a4}: 
\begin{align*}
\|\chf_{(0,R]}A^{\ast}Af\|_{L^{2}} & \aleq\|\chf_{(0,R]}\Delta_{1}f\|_{L^{2}}+\|f\|_{(\dot{\calH}_{1}^{2})_{R}}\\
 & \aleq\|\chf_{(0,R]}\rd_{yy}f\|_{L^{2}}+\|f\|_{(\dot{\calH}_{1}^{2})_{R}}\aleq\|f\|_{(\dot{\calH}_{1}^{2})_{R}}.
\end{align*}
We turn to the proof of the $\ageq$-inequality of \eqref{eq:loc-subcoer-1}.
First, by \eqref{eq:appendix-a4} and \eqref{eq:appendix-a3}, we
have 
\begin{equation}
\begin{aligned} & \Big\|\chf_{(0,R]}\Big|\Big(\rd_{y}-\frac{1}{y}\Big)f\Big|_{-1}\Big\|_{L^{2}}\\
 & \quad\aleq\|\chf_{(0,R]}\Delta_{1}f\|_{L^{2}}\aleq\|\chf_{(0,R]}A^{\ast}Af\|_{L^{2}}+\|\chf_{[r_{0}^{-1},r_{0}]}f\|_{L^{2}}+r_{0}^{-2+}\|f\|_{(\dot{\calH}_{1}^{2})_{R}}.
\end{aligned}
\label{eq:appendix-a5}
\end{equation}
Next, we use 
\[
\Big\|\chf_{(0,R]}\frac{1}{y\langle\log y\rangle}|f|_{-1}\Big\|_{L^{2}}\aleq\Big\|\chf_{(0,R]}\frac{1}{y\langle\log y\rangle}\Big(\rd_{y}-\frac{1}{y}\Big)f\Big\|_{L^{2}}+\Big\|\chf_{(0,R]}\frac{1}{y^{2}\langle\log y\rangle}f\Big\|_{L^{2}},
\]
the following logarithmic Hardy's inequality
\[
\Big\|\chf_{(0,R]}\frac{1}{y^{2}\langle\log y\rangle}f\Big\|_{L^{2}}\aleq\Big\|\chf_{(0,R]}\frac{1}{y}\Big(\rd_{y}-\frac{1}{y}\Big)f\Big\|_{L^{2}}+\|\chf_{[1,2]}f\|_{L^{2}},
\]
and \eqref{eq:appendix-a5} to have 
\begin{equation}
\begin{aligned}\Big\|\chf_{(0,R]}\frac{1}{y\langle\log y\rangle} & |f|_{-1}\Big\|_{L^{2}}\aleq\Big\|\chf_{(0,R]}\Big|\Big(\rd_{y}-\frac{1}{y}\Big)f\Big|_{-1}\Big\|_{L^{2}}+\|\chf_{[1,2]}f\|_{L^{2}}\\
 & \aleq\|\chf_{(0,R]}A^{\ast}Af\|_{L^{2}}+\|\chf_{[r_{0}^{-1},r_{0}]}f\|_{L^{2}}+r_{0}^{-2+}\|f\|_{(\dot{\calH}_{1}^{2})_{R}}.
\end{aligned}
\label{eq:appendix-a6}
\end{equation}
Combining \eqref{eq:appendix-a5} and \eqref{eq:appendix-a6}, we
have shown that 
\begin{align*}
\|f\|_{(\dot{\calH}_{1}^{2})_{R}} & \aleq\Big\|\chf_{(0,R]}\Big|\Big(\rd_{y}-\frac{1}{y}\Big)f\Big|_{-1}\Big\|_{L^{2}}+\Big\|\chf_{(0,R]}\frac{1}{y\langle\log y\rangle}|f|_{-1}\Big\|_{L^{2}}\\
 & \aleq\|\chf_{(0,R]}A^{\ast}Af\|_{L^{2}}+\|\chf_{[r_{0}^{-1},r_{0}]}f\|_{L^{2}}+r_{0}^{-2+}\|f\|_{(\dot{\calH}_{1}^{2})_{R}}.
\end{align*}
Taking $r_{0}>1$ large, the $\ageq$-inequality of \eqref{eq:loc-subcoer-1}
follows.

For the kernel characterization, by the standard ODE theory, $A^{\ast}Af=0$
implies that $f$ is a linear combination of $J_{1}$ and $J_{2}$.
However, $J_{2}$ is singular at the origin ($J_{2}\sim y^{-1}$ as
$y\to0$) so $J_{2}\notin(\dot{\calH}_{1}^{2})_{R}$. Thus $f$ must
be parallel to $J_{1}$; the kernel of $A^{\ast}A$ is the span of
$J_{1}$. Note that the assertion for $A$ is obvious. This completes
the proof.
\end{proof}
By the standard argument (see e.g., \cite[Lemma B.2]{RaphaelRodnianski2012Publ.Math.}),
we can derive the desired coercivity estimates from the above subcoercivity
estimates.
\begin{cor}[Local coercivity estimates]
For $R\gg M$, we have 
\begin{align*}
\|\chf_{(0,R]}A^{\ast}Af\|_{L^{2}} & \sim_{M}\|f\|_{(\dot{\calH}_{1}^{2})_{R}},\qquad\forall f\in\dot{\calH}_{1}^{2}\cap\{\chi_{M}\Lmb Q\}^{\perp},\tag{\ref{eq:loc-coercivity-1}}\\
\|\chf_{(0,R]}Ag\|_{L^{2}} & \sim_{M}\|g\|_{(\dot{H}_{1}^{1})_{R}},\qquad\forall g\in\dot{H}_{1}^{1}\cap\{\chi_{M}\Lmb Q\}^{\perp},\tag{\ref{eq:loc-coercivity-2}}
\end{align*}
where the implicit constants are uniform in $R$.
\end{cor}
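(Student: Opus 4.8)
The plan is to deduce the coercivity estimates \eqref{eq:loc-coercivity-1}--\eqref{eq:loc-coercivity-2} from the subcoercivity estimates \eqref{eq:loc-subcoer-1}--\eqref{eq:loc-subcoer-2} of Lemma~\ref{lem:loc-subcoer} together with the kernel characterization (that the kernel of $A^{\ast}A$, resp.\ $A$, on the localized spaces is exactly $\mathrm{span}\{\Lmb Q\}$), via the standard compactness-plus-contradiction argument. Since the $\aleq$-direction is already built into \eqref{eq:loc-subcoer-1}--\eqref{eq:loc-subcoer-2}, only the $\ageq$-direction needs work. I would first reduce, by a scaling-free reformulation, to proving: for $R\gg M$, there is $c=c_M>0$ (uniform in $R$) with $\|\chf_{(0,R]}A^{\ast}Af\|_{L^2}\geq c\|f\|_{(\dot{\calH}_1^2)_R}$ for all $f\in\dot{\calH}_1^2$ with $\langle f,\chi_M\Lmb Q\rangle=0$, and similarly for $A$.

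For the contradiction step I would suppose the $\ageq$-inequality fails, producing, for each large $n$, a radius $R_n\gg M$ and a function $f_n\in(\dot{\calH}_1^2)_{R_n}$ with $\langle f_n,\chi_M\Lmb Q\rangle=0$, $\|f_n\|_{(\dot{\calH}_1^2)_{R_n}}=1$, and $\|\chf_{(0,R_n]}A^{\ast}Af_n\|_{L^2}\to 0$. The key subtlety here — compared with the unlocalized case in \cite[Lemma B.2]{RaphaelRodnianski2012Publ.Math.} — is that the domain radius $R_n$ may vary; this is exactly why the implicit constants in \eqref{eq:loc-subcoer-1}--\eqref{eq:loc-subcoer-2} were stated to be \emph{uniform in $R$}. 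Using that uniformity, $\{f_n\}$ is bounded in the (fixed) space $\dot{\calH}_1^2$ after extending by the harmonic-type solution past $R_n$ (or simply working with the restrictions, noting the uniform bounds on each fixed compact annulus), so along a subsequence $f_n\rightharpoonup f_\infty$ in $\dot{\calH}_1^2$ and $f_n\to f_\infty$ strongly in $L^2$ on every compact set of $(0,\infty)$, in particular $\|\chf_{y\sim1}(f_n-f_\infty)\|_{L^2}\to 0$. By weak lower semicontinuity and $\|\chf_{(0,R_n]}A^{\ast}Af_n\|_{L^2}\to0$, the limit satisfies $A^{\ast}Af_\infty=0$ on all of $(0,\infty)$ (using $R_n\to\infty$, or at worst $R_n\geq R_\ast$ for some fixed large $R_\ast$, which is all we need), hence by the kernel characterization $f_\infty=\alpha\Lmb Q$; but the orthogonality $\langle f_n,\chi_M\Lmb Q\rangle=0$ passes to the limit and forces $\alpha\langle\Lmb Q,\chi_M\Lmb Q\rangle=0$, so $f_\infty=0$. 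Then the subcoercivity estimate \eqref{eq:loc-subcoer-1} applied to $f_n$ gives $1=\|f_n\|_{(\dot{\calH}_1^2)_{R_n}}\aleq\|\chf_{(0,R_n]}A^{\ast}Af_n\|_{L^2}+\|\chf_{y\sim1}f_n\|_{L^2}\to 0$, a contradiction. The estimate \eqref{eq:loc-coercivity-2} for $A$ is handled identically with \eqref{eq:loc-subcoer-2} in place of \eqref{eq:loc-subcoer-1}, and is in fact easier since $A$ is first order.

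I would expect the main obstacle to be the bookkeeping around the varying radius $R_n$: one must be careful that the extracted weak limit lives in a fixed function space and that ``$A^{\ast}Af_\infty=0$ on $(0,\infty)$'' is legitimately obtained — e.g.\ by first passing to a subsequence along which either $R_n\to\infty$ (then the conclusion is immediate on every compact set) or $R_n\to R_\infty<\infty$ with $R_\infty\gg M$ (then one gets $A^{\ast}Af_\infty=0$ on $(0,R_\infty)$, which by the same ODE argument as in Lemma~\ref{lem:loc-subcoer} still forces $f_\infty\in\mathrm{span}\{\Lmb Q\}$ because $J_2\notin(\dot{\calH}_1^2)_{R_\infty}$). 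A clean way to avoid this dichotomy altogether is to prove the estimate only for a \emph{fixed} $R\gg M$ first, with a constant depending on $R$, and then upgrade to $R$-uniformity by a monotonicity observation: restricting to a smaller radius only decreases the left side by a controlled amount (the boundary term in the integration-by-parts identity for $\int_0^R|Ag|^2$ is nonnegative for $R\gg1$, as already exploited in Step~1 of the proof of Lemma~\ref{lem:loc-subcoer}), while the norm $\|f\|_{(\dot{\calH}_1^2)_R}$ is monotone in $R$; this lets one transfer the fixed-$R$ coercivity to all larger radii with the same constant. Everything else — the algebraic identities, Hardy inequalities, and the potential-perturbation bounds $V/y^2=1/y^2+O((1+y^2)^{-2})$ — is already available from Lemma~\ref{lem:loc-subcoer} and its proof.
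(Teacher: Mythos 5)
Your primary (compactness) argument is correct and rests on the same two ingredients as the paper's proof: the $R$-uniform subcoercivity estimates of Lemma~\ref{lem:loc-subcoer} and the kernel characterization. The organizational difference is that the paper first proves the coercivity for a single fixed $R_{0}\gg M$ by the standard contradiction argument of \cite[Lemma B.2]{RaphaelRodnianski2012Publ.Math.} and then upgrades to all $R\geq R_{0}$ with a uniform constant, whereas you run the compactness argument directly with a varying radius $R_{n}$, which forces you to handle the dichotomy $R_{n}\to\infty$ versus $R_{n}\to R_{\infty}<\infty$; you do handle it correctly (the key points being that $\chi_{M}\Lmb Q$ is compactly supported, that $J_{2}\notin(\dot{\calH}_{1}^{2})_{R_{\infty}}$, and that the compact term $\|\chf_{y\sim1}f_{n}\|_{L^{2}}$ vanishes in the limit).

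However, your proposed ``clean way'' to avoid the dichotomy does not close as stated. Monotonicity in $R$ gives $\|\chf_{(0,R]}A^{\ast}Af\|_{L^{2}}\geq\|\chf_{(0,R_{0}]}A^{\ast}Af\|_{L^{2}}\geq c(M)\|f\|_{(\dot{\calH}_{1}^{2})_{R_{0}}}$, but the quantity you must dominate, $\|f\|_{(\dot{\calH}_{1}^{2})_{R}}$, is \emph{also} increasing in $R$, so this chain only controls the smaller norm $\|f\|_{(\dot{\calH}_{1}^{2})_{R_{0}}}$ and the argument stalls; the nonnegativity of the boundary term is irrelevant here. The correct upgrade (the paper's Step 1) must invoke the $R$-uniform subcoercivity \eqref{eq:loc-subcoer-1} \emph{at radius $R$}, namely $\|f\|_{(\dot{\calH}_{1}^{2})_{R}}\lesssim\|\chf_{(0,R]}A^{\ast}Af\|_{L^{2}}+\|\chf_{y\sim1}f\|_{L^{2}}$, and then absorb the compact term via the fixed-radius coercivity: $\|\chf_{y\sim1}f\|_{L^{2}}\lesssim\|f\|_{(\dot{\calH}_{1}^{2})_{R_{0}}}\lesssim_{M}\|\chf_{(0,R_{0}]}A^{\ast}Af\|_{L^{2}}\leq\|\chf_{(0,R]}A^{\ast}Af\|_{L^{2}}$. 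Since your main route already proves the corollary, this is a defect only in the optional shortcut, but as written that shortcut is not a valid proof.
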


\begin{proof}
We claim that, once the estimates \eqref{eq:loc-coercivity-1}-\eqref{eq:loc-coercivity-2}
hold for some $R_{0}\gg M$, then \eqref{eq:loc-coercivity-1}-\eqref{eq:loc-coercivity-2}
hold uniformly for all $R\geq R_{0}$. Indeed, suppose that we have
an estimate 
\begin{equation}
\|\chf_{(0,R_{0}]}A^{\ast}Af\|_{L^{2}}\geq c(M)\|f\|_{(\dot{\calH}_{1}^{2})_{R_{0}}}\label{eq:loc-coer-tmp1}
\end{equation}
for some $R_{0}\gg M$. Then for any $R\geq R_{0}$, by \eqref{eq:loc-subcoer-1}
and \eqref{eq:loc-coer-tmp1}, we have 
\begin{align*}
\|f\|_{(\dot{\calH}_{1}^{2})_{R}} & \aleq\|\chf_{(0,R]}A^{\ast}Af\|_{L^{2}}+\|\chf_{y\sim1}f\|_{L^{2}}\\
 & \aleq\|\chf_{(0,R]}A^{\ast}Af\|_{L^{2}}+(c(M))^{-1}\|\chf_{(0,R_{0}]}A^{\ast}Af\|_{L^{2}}\\
 & \aleq_{M}\|\chf_{(0,R]}A^{\ast}Af\|_{L^{2}}.
\end{align*}
The argument for $\|\chf_{(0,R]}Ag\|_{L^{2}}$ is very similar and
we omit its proof.

By the previous claim, it suffices to prove \eqref{eq:loc-coercivity-1}-\eqref{eq:loc-coercivity-2}
for some $R_{0}\gg M$. With the help of Lemma~\ref{lem:loc-subcoer},
this can be proved in a standard manner; we refer to \cite[Lemma B.2]{RaphaelRodnianski2012Publ.Math.}
for a closely related proof.
\end{proof}
Finally, let us record some localized weighted $L^{\infty}$-estimates.
\begin{lem}[Local weighted $L^{\infty}$-estimates]
\label{lem:Local-Linfty-est}For any $R>10$, we have 
\begin{align}
\Big\|\chf_{(0,R]}\frac{1}{y\langle\log y\rangle^{1/2}}|f|_{1}\Big\|_{L^{\infty}} & \aleq\|f\|_{(\dot{\calH}_{1}^{2})_{R}},\qquad\forall f\in\dot{\calH}_{1}^{2},\label{eq:local-infty-est1}\\
\|\chf_{(0,R]}g\|_{L^{\infty}} & \aleq\|g\|_{(\dot{H}_{1}^{1})_{R}},\qquad\forall g\in\dot{H}_{1}^{1}.\label{eq:local-infty-est2}
\end{align}
\end{lem}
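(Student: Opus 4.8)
\textbf{Proof proposal for Lemma~\ref{lem:Local-Linfty-est}.}

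The plan is to reduce both estimates to one-dimensional Sobolev/Hardy-type inequalities on the half-line and then to exploit the factorization $\rd_y - \frac{1}{y}$ that is already in use in Lemma~\ref{lem:loc-subcoer}. First I would establish \eqref{eq:local-infty-est2}, which is the easier one: for a $1$-equivariant $g$ one has the fundamental theorem of calculus representation $g(y)^2 = -\int_y^R \rd_{y'}(g^2)\,dy' + g(R)^2$, but since we only want the localized bound and $g$ need not have a sign at $y=R$, it is cleaner to write, for $y\le R$,
\[
g(y)^2 = g(y_1)^2 - \int_{y}^{y_1} 2 g \rd_{y'} g\, dy'
\]
for a suitable reference point $y_1 \in [R/2, R]$ (chosen so that $|g(y_1)| \lesssim \|g\|_{(\dot H^1_1)_R}$, which is possible by averaging over $[R/2,R]$ using the $\|y^{-1}g\|_{L^2(rdr)}$-control), and then bound the integral by Cauchy--Schwarz using $\|\rd_y g\|_{L^2}$ and $\|y^{-1}g\|_{L^2}$ on $(0,R]$. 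The degeneracy $g = O(y)$ near the origin (valid for the radial part of a smooth $1$-equivariant map, see the density argument invoked in the proof of Lemma~\ref{lem:loc-subcoer}) guarantees there are no endpoint issues at $y=0$; a density argument then extends the bound to all of $\dot H^1_1$.

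For \eqref{eq:local-infty-est1} I would argue similarly but one derivative higher, handling the $|f|_1$ quantity by treating $f$ and $y\rd_y f$ separately. Writing $h \coloneqq (\rd_y - \frac{1}{y})f$, the Hardy machinery from Step~2 of the proof of Lemma~\ref{lem:loc-subcoer} gives $\|\chf_{(0,R]}|h|_{-1}\|_{L^2} \lesssim \|f\|_{(\dot{\calH}_1^2)_R}$ and also controls $\|\chf_{(0,R]}\frac{1}{y\langle\log y\rangle}|f|_{-1}\|_{L^2}$. The point is then that $|f|_1 \lesssim |f| + |yh|$ (since $y\rd_y f = yh + f$), so the weighted $L^\infty$ bound on $\frac{1}{y\langle\log y\rangle^{1/2}}|f|_1$ follows from a weighted $L^\infty$ bound on $\frac{1}{y\langle\log y\rangle^{1/2}} f$ and on $\frac{1}{\langle\log y\rangle^{1/2}} h$. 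Each of these is obtained by the same FTC-plus-Cauchy--Schwarz scheme as above: for the $h$-piece one uses $h(y)^2 = h(y_1)^2 - \int_y^{y_1} 2h\rd_{y'}h$ together with the $L^2$-controls on $h$ and $\rd_y h = \rd_y(\rd_y - y^{-1})f$, where the logarithmic weight $\langle\log y\rangle^{1/2}$ is precisely what one loses when passing from an $L^2(\frac{dy}{y})$-type bound to an $L^\infty$-bound via $\int_y^{y_1}\frac{dy'}{y'} \sim \langle\log y\rangle$ for $y$ small; for the $f$-piece one uses the representation $f(y)/y$-bound coming from the logarithmic Hardy inequality already proved in Step~3 of Lemma~\ref{lem:loc-subcoer}.

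The main obstacle I anticipate is bookkeeping of the logarithmic weights near $y=0$: one must check that the weight $\langle\log y\rangle^{1/2}$ appearing in \eqref{eq:local-infty-est1} is sharp enough to absorb the $\int^{y}\frac{dy'}{y'}$ arising in the Cauchy--Schwarz step, i.e. that $\big(\int_y^{1}\frac{dy'}{y'}\big)^{1/2} \sim \langle\log y\rangle^{1/2}$ matches exactly, with no loss, against the $\frac{1}{y'\langle\log y'\rangle}$-weighted $L^2$ norm in the definition of $\|\cdot\|_{(\dot{\calH}_1^2)_R}$ — this is a genuine borderline computation where the exponents must line up. The behavior at the outer scale $y\sim R$ is comparatively harmless since the weights $\langle\log y\rangle$ are bounded below by a constant and one only needs a crude reference-point estimate; uniformity in $R$ is automatic because all the integrals are localized to $(0,R]$ and the constants in the Hardy inequalities of Lemma~\ref{lem:loc-subcoer} are $R$-independent.
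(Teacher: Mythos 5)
Your overall scheme --- fundamental theorem of calculus plus Cauchy--Schwarz, exploiting that $h\coloneqq(\rd_y-\tfrac1y)f$ enjoys the full Hardy control $\|\chf_{(0,R]}|h|_{-1}\|_{L^2}\aleq\|f\|_{(\dot{\calH}_1^2)_R}$ --- is the same mechanism the paper uses, and your treatment of \eqref{eq:local-infty-est2} and of the $h$-piece of \eqref{eq:local-infty-est1} is sound (for the $h$-piece you in fact get $|h|\aleq\|f\|_{(\dot{\calH}_1^2)_R}$ with no logarithm at all, since both $h/y$ and $\rd_y h$ are square-integrable against $y\,dy$).

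The gap is in the zeroth-order piece $\frac{|f(y)|}{y\langle\log y\rangle^{1/2}}$. First, the "logarithmic Hardy inequality already proved in Step 3" is an $L^2$ statement; it does not by itself produce a pointwise bound, so you still need an FTC argument for $f/y$. Second, if you run that FTC argument from your outer anchor $y_1\in[R/2,R]$, you get $|\tfrac{f(y)}{y}-\tfrac{f(y_1)}{y_1}|\le\int_y^{y_1}\tfrac{|h|}{y'}\,dy'\le\|\tfrac{h}{y}\|_{L^2(y\,dy)}\big(\log\tfrac{y_1}{y}\big)^{1/2}$, and at $y\sim1$ this factor is $(\log R)^{1/2}$ while the target weight $\langle\log y\rangle^{1/2}$ is $O(1)$ there. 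So the constant you obtain is \emph{not} uniform in $R$ --- the outer scale is precisely where your argument loses, not where it is "comparatively harmless." The repair is either to anchor the FTC at $y\sim1$ (splitting into $y\le1$ and $y\ge1$, so the Cauchy--Schwarz log is $\langle\log y\rangle$ rather than $\log R$, with $|f(1)|$ controlled by averaging over $[1,2]$), or to do what the paper does: differentiate the weighted squares $\frac{|f|^2}{y^2\langle\log y\rangle}$ and $\frac{|\rd_y f|^2}{\langle\log y\rangle}$, bound the derivatives by products of the two $L^2$-controlled densities $\frac1y|(\rd_y-\tfrac1y)f|$ and $\frac{|f|_{-1}}{y\langle\log y\rangle}$, and integrate outward from the origin, where the quantities vanish by the degeneracy $|f|_{-1}\le C(f)$ of smooth $1$-equivariant data together with a density argument. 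Either way, no reference point at scale $R$ should appear in the $f$-piece.
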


\begin{proof}
For the proof of \eqref{eq:local-infty-est1}, the fact that $(\rd_{y}-\frac{1}{y})f$
enjoys better Hardy control than $\rd_{y}f$ will play a crucial role.
More precisely, we will use the following control (as a consequence
of \eqref{eq:appendix-a4}): 
\[
\Big\|\chf_{(0,R]}\frac{1}{y}\Big(\rd_{y}-\frac{1}{y}\Big)f\Big\|_{L^{2}}\aleq\|f\|_{(\dot{\calH}_{1}^{2})_{R}}.
\]
Noting that 
\[
\rd_{y}\Big(\frac{|f|^{2}}{y^{2}\langle\log y\rangle}\Big)=\frac{1}{\langle\log y\rangle}\rd_{y}\Big(\frac{|f|^{2}}{y^{2}}\Big)-\frac{\log y}{y^{3}\langle\log y\rangle^{3}}|f|^{2},
\]
we have 
\begin{equation}
\Big|\rd_{y}\Big(\frac{|f|^{2}}{y^{2}\langle\log y\rangle}\Big)\Big|\aleq\frac{1}{y}\Big|\Big(\rd_{y}-\frac{1}{y}\Big)f\Big|\cdot\frac{|f|}{y\langle\log y\rangle}+\frac{|f|^{2}}{y^{3}\langle\log y\rangle^{2}}.\label{eq:appendix-a7}
\end{equation}
Similarly, we have 
\begin{equation}
\Big|\rd_{y}\Big(\frac{|\rd_{y}f|^{2}}{\langle\log y\rangle}\Big)\Big|\aleq|\rd_{yy}f|\cdot\frac{|\rd_{y}f|}{\langle\log y\rangle}+\frac{|\rd_{y}f|^{2}}{y\langle\log y\rangle^{2}}.\label{eq:appendix-a8}
\end{equation}
Integrating \eqref{eq:appendix-a7} and \eqref{eq:appendix-a8} over
$(0,y]$, exploiting $|f|_{-1}\leq C(f)$ as $y\to0$ if $f\in\dot{\calH}_{1}^{2}$
is smooth at the origin (due to the density argument), and using $|\rd_{yy}f|\aleq|(\rd_{y}-\frac{1}{y})f|_{-1}$,
we have 
\[
\chf_{(0,R]}\frac{|f|^{2}}{y^{2}\langle\log y\rangle}\aleq\chf_{(0,R]}\int_{0}^{y}\bigg\{\Big|\Big(\rd_{y}-\frac{1}{y'}\Big)f\Big|_{-1}^{2}+\Big|\frac{|f|_{-1}}{y'\langle\log y'\rangle}\Big|^{2}\bigg\} y'dy'\aleq\|f\|_{(\dot{\calH}_{1}^{2})_{R}}^{2}.
\]
This completes the proof of \eqref{eq:local-infty-est1}. The proof
of \eqref{eq:local-infty-est2} is much easier and we omit the proof.
\end{proof}

\section{\label{sec:Proof-of-Corollary}Proof of Corollary~\ref{cor:RR-blow-up-self-similar}}
\begin{proof}[Proof of Corollary~\ref{cor:RR-blow-up-self-similar}]
We only show the estimates \eqref{eq:eRR-est1}-\eqref{eq:eRR-est3}
as all the other statements are immediate from Proposition~\ref{prop:RR-blow-up-original}.
For the sake of simplicity we write $\wh{\lmb}=\lmb$, $\wh b=b$,
and $R=4/b$ in the proof. We keep $M$-dependences in the following
estimates, but one can simply ignore them because $M$ is already
fixed in Proposition~\ref{prop:RR-blow-up-original} (see also Remark~\ref{rem:Remark-M}).
Notice that we stated \eqref{eq:eRR-est1}-\eqref{eq:eRR-est3} without
$M$-dependence.

\textbf{Step 1.} Proof of the estimates \eqref{eq:eRR-est1}-\eqref{eq:eRR-est3}
for $\eps^{\RR}$.

By definition, we have 
\begin{equation}
\left\{ \begin{aligned}\eps^{\RR}(\tau,y) & =w^{\RR}(t,\lmb(t)y),\\
A\eps^{\RR}(\tau,y) & =\lmb(t)W^{\RR}(t,\lmb(t)y),
\end{aligned}
\right.\label{eq:eRR-wRR}
\end{equation}
and this $\eps^{\RR}$ satisfies the orthogonality 
\begin{equation}
\langle\eps^{\RR},\chi_{M}\Lmb Q\rangle=0.\label{eq:eRR-orthog}
\end{equation}
Note that \eqref{eq:eRR-est1} for $\eps^{\RR}$ easily follows from
\eqref{eq:wRR-est-1} and scaling. To show the estimates \eqref{eq:eRR-est2}-\eqref{eq:eRR-est3}
for $\eps^{\RR}$, we first apply the coercivity estimates \eqref{eq:coercivity-1}
and \eqref{eq:loc-coercivity-1} to have 
\begin{align*}
\|\eps^{\RR}\|_{\dot{\calH}_{1}^{2}}^{2} & \sim_{M}\|A^{\ast}A\eps^{\RR}\|_{L^{2}}^{2},\\
\|\eps^{\RR}\|_{(\dot{\calH}_{1}^{2})_{R}}^{2} & \sim_{M}\|\chf_{(0,R]}A^{\ast}A\eps^{\RR}\|_{L^{2}}^{2}.
\end{align*}
An integration by parts shows that the RHS of the above display can
be written as 
\begin{align*}
\|A^{\ast}A\eps^{\RR}\|_{L^{2}}^{2} & =\int\Big(|\rd_{y}A\eps^{\RR}|^{2}+\frac{\td V}{y^{2}}|A\eps^{\RR}|^{2}\Big),\\
\|\chf_{(0,R]}A^{\ast}A\eps^{\RR}\|_{L^{2}}^{2} & =\int\chf_{(0,R]}\Big(|\rd_{y}A\eps^{\RR}|^{2}+\frac{\td V}{y^{2}}|A\eps^{\RR}|^{2}\Big)+\frac{2}{1+R^{2}}|A\eps^{\RR}|^{2}(R).
\end{align*}
By \eqref{eq:eRR-wRR} and \eqref{eq:wRR-est-2}-\eqref{eq:wRR-est-3},
we have 
\begin{align*}
\int\Big(|\rd_{y}(A\eps^{\RR})|^{2}+\frac{\td V}{y^{2}}|A\eps^{\RR}|^{2}\Big) & \aleq b^{4},\\
\int\chf_{(0,R]}\Big(|\rd_{y}(A\eps^{\RR})|^{2}+\frac{\td V}{y^{2}}|A\eps^{\RR}|^{2}\Big) & \aleq\frac{b^{4}}{|\log b|}.
\end{align*}
By the localized $L^{\infty}$-control \eqref{eq:local-infty-est1},
the boundary term is controlled by 
\[
\frac{2}{1+R^{2}}|A\eps^{\RR}|^{2}(R)\aleq\frac{\log R}{R^{2}}\cdot\frac{|\eps^{\RR}|_{-1}^{2}(R)}{\log R}\aleq o_{b\to0}(1)\cdot\|\eps^{\RR}\|_{(\dot{\calH}_{1}^{2})_{R}}^{2}.
\]
Therefore, we have shown that 
\begin{align*}
\|\eps^{\RR}\|_{\dot{\calH}_{1}^{2}}^{2} & \aleq_{M}b^{4},\\
\|\eps^{\RR}\|_{(\dot{\calH}_{1}^{2})_{R}}^{2} & \aleq_{M}\frac{b^{4}}{|\log b|}+o_{b\to0}(1)\cdot\|\eps^{\RR}\|_{(\dot{\calH}_{1}^{2})_{R}}^{2}.
\end{align*}
This completes the proof of \eqref{eq:eRR-est2}-\eqref{eq:eRR-est3}
for $\eps^{\RR}$.

\textbf{Step 2.} Proof of the estimates \eqref{eq:eRR-est1}-\eqref{eq:eRR-est3}
for $\dot{\eps}^{\RR}$.

By definition, we have 
\[
\dot{\eps}^{\RR}(\tau,y)=\lmb(t)[\rd_{t}w^{\RR}](t,\lmb(t)y)+\lmb(t)b_{t}(t)[\rd_{b}P^{\RR}](b(t);y).
\]

First, we show that \eqref{eq:eRR-est1}-\eqref{eq:eRR-est3} holds
for $\lmb b_{t}[\rd_{b}P^{\RR}](b;y)$ in place of $\dot{\eps}^{\RR}$.
Note that we know $|\lmb b_{t}|\aleq\frac{b^{2}}{|\log b|}$ by \eqref{eq:b_t-RR-est}.
For $\rd_{b}P^{\RR}$, one has the following pointwise estimate \cite[(3.61)]{RaphaelRodnianski2012Publ.Math.}:
\begin{align*}
|\rd_{b}P^{\RR}|_{2} & \aleq\chf_{(0,\frac{B_{0}}{2}]}\frac{b}{|\log b|}y\langle\log b(1+y)\rangle+\chf_{[\frac{B_{0}}{2},2B_{1}]}\frac{1}{b|\log b|y}\\
 & \quad+\chf_{[\frac{B_{1}}{2},2B_{1}]}\frac{1}{by}+C(M)\frac{by}{1+y^{2}}.
\end{align*}
Therefore, we have (recall that $B_{0}\sim b^{-1}$ and $B_{1}\sim b^{-1}|\log b|$)
\begin{align*}
\|\rd_{b}P^{\RR}\|_{L^{2}} & \aleq\frac{1}{b}+C(M)b|\log b|\aleq\frac{1}{b},\\
\|\rd_{b}P^{\RR}\|_{\dot{H}_{1}^{1}} & \aleq\frac{1}{|\log b|}+C(M)b\aleq\frac{1}{|\log b|}.
\end{align*}
Multiplying the above estimates by $|\lmb b_{t}|\aleq\frac{b^{2}}{|\log b|}$
completes the proof of \eqref{eq:eRR-est1}-\eqref{eq:eRR-est3} for
$\lmb b_{t}[\rd_{b}P^{\RR}](b;y)$.

It remains to show \eqref{eq:eRR-est1}-\eqref{eq:eRR-est3} for $\lmb(t)[\rd_{t}w^{\RR}](t,\lmb(t)y)\eqqcolon\td{\eps}(\tau,y)$
in place of $\dot{\eps}^{\RR}$. Note that \eqref{eq:eRR-est1} for
$\td{\eps}$ is immediate from \eqref{eq:wRR-est-1}. For the proof
of \eqref{eq:eRR-est2}-\eqref{eq:eRR-est3} for $\td{\eps}$, we
would like to use the coercivity estimates \eqref{eq:coercivity-2}
and \eqref{eq:loc-coercivity-2} as in the case of $\eps^{\RR}$,
but the orthogonality condition \eqref{eq:eRR-orthog} for $\td{\eps}$
does not hold. However, the orthogonality condition is almost satisfied
in the sense that 
\begin{equation}
\begin{aligned}\langle\td{\eps},\chi_{M}\Lmb Q\rangle & =\big\langle\rd_{t}w^{\RR},\frac{1}{\lmb}[\chi_{M}\Lmb Q]_{\lmb}\big\rangle\\
= & -\big\langle w^{\RR},\rd_{t}(\frac{1}{\lmb}[\chi_{M}\Lmb Q]_{\lmb})\big\rangle=-b\langle\eps^{\RR},\Lmb_{0}(\chi_{M}\Lmb Q)\rangle\aleq_{M}\frac{b^{3}}{|\log b|},
\end{aligned}
\label{eq:almost-orthog}
\end{equation}
where in the last inequality we used \eqref{eq:eRR-est3} for $\eps^{\RR}$.
Thus \eqref{eq:coercivity-2} and \eqref{eq:loc-coercivity-2} become
\begin{align*}
\|\td{\eps}\|_{\dot{H}_{1}^{1}} & \aleq_{M}\|A\td{\eps}\|_{L^{2}}+|\langle\td{\eps},\chi_{M}\Lmb Q\rangle|\aleq_{M}\|A\td{\eps}\|_{L^{2}}+\frac{b^{3}}{|\log b|},\\
\|\td{\eps}\|_{(\dot{H}_{1}^{1})_{R}} & \aleq_{M}\|\chf_{(0,R]}A\td{\eps}\|_{L^{2}}+|\langle\td{\eps},\chi_{M}\Lmb Q\rangle|\aleq_{M}\|\chf_{(0,R]}A\td{\eps}\|_{L^{2}}+\frac{b^{3}}{|\log b|}.
\end{align*}
Next, we use the identity 
\[
A_{\lmb}\rd_{t}w=\rd_{t}W+[A_{\lmb},\rd_{t}]w=\rd_{t}W-\frac{b}{\lmb^{2}}\Big[\frac{4y}{(1+y^{2})^{2}}\Big]_{\lmb}w
\]
to have 
\begin{align*}
\|A\td{\eps}\|_{L^{2}} & =\lmb\|A_{\lmb}\rd_{t}w\|_{L^{2}}\aleq\lmb\|\rd_{t}W\|_{L^{2}}+b\Big\|\frac{4y}{(1+y^{2})^{2}}\eps^{\RR}\Big\|_{L^{2}},\\
\|\chf_{(0,R]}A\td{\eps}\|_{L^{2}} & =\lmb\|\chf_{(0,\lmb R]}A_{\lmb}\rd_{t}w\|_{L^{2}}\aleq\lmb\|\chf_{(0,\lmb R]}\rd_{t}W\|_{L^{2}}+b\Big\|\chf_{(0,R]}\frac{4y}{(1+y^{2})^{2}}\eps^{\RR}\Big\|_{L^{2}}.
\end{align*}
Applying \eqref{eq:wRR-est-2}-\eqref{eq:wRR-est-3} for $\rd_{t}W$
and \eqref{eq:eRR-est2}-\eqref{eq:eRR-est3} for $\eps^{\RR}$, we
obtain 
\[
\|A\td{\eps}\|_{L^{2}}\aleq b^{2}\qquad\text{and}\qquad\|\chf_{(0,R]}A\td{\eps}\|_{L^{2}}\aleq\frac{b^{2}}{|\log b|}.
\]
Therefore, we conclude that 
\begin{align*}
\|\td{\eps}\|_{\dot{H}_{1}^{1}} & \aleq_{M}\|A\td{\eps}\|_{L^{2}}+\frac{b^{3}}{|\log b|}\aleq_{M}b^{2},\\
\|\td{\eps}\|_{(\dot{H}_{1}^{1})_{R}} & \aleq_{M}\|\chf_{(0,R]}A\td{\eps}\|_{L^{2}}+\frac{b^{3}}{|\log b|}\aleq_{M}\frac{b^{2}}{|\log b|}.
\end{align*}
This completes the proof of \eqref{eq:eRR-est2}-\eqref{eq:eRR-est3}
for $\td{\eps}$. The proof is complete.
\end{proof}

\bibliographystyle{abbrv}
\bibliography{References}

\end{document}